\newtheorem{lemma}{Lemma}[section]
\newtheorem{teo}[lemma]{Theorem}
\newtheorem{prop}[lemma]{Proposition}
\newtheorem{cor}[lemma]{Corollary}
\theoremstyle{definition}
\newtheorem{defn}[lemma]{Definition}
\newtheorem{example}[lemma]{Example}
\newtheorem{ex}[lemma]{Exercise} 
\theoremstyle{remark}
\newtheorem{rem}[lemma]{Remark}
\newcommand{\matR} {\ensuremath {\mathbb{R}}}
\newcommand{\matZ} {\ensuremath {\mathbb{Z}}}
\newcommand{\matH} {\ensuremath {\mathbb{H}}}
\newcommand{\matRP} {\ensuremath {\mathbb{RP}}}
\newcommand{\matCP} {\ensuremath {\mathbb{CP}}}
\newcommand{\calN} {\ensuremath {\mathcal{N}}}
\newcommand{\calM} {\ensuremath {\mathcal{M}}}
\newcommand{\calC} {\ensuremath {\mathcal{C}}}
\newcommand{\calP} {\ensuremath {\mathcal{P}}}
\newcommand{\interior}[1]{{\rm int}(#1)}
\newfont{\Got}{eufm10 scaled 1200}
\newcommand{\lk}{{\rm lk}}
\newcommand{\st}{{\rm st}}
\newcommand{\cref}{c^{\rm ref}}
\author{Bruno Martelli}
\address{Dipartimento di Matematica ``Tonelli'', Largo Pontecorvo 5, 56127 Pisa, Italy}
\email{martelli at dm dot unipi dot it}
\title[Complexity of PL-manifolds]{Complexity of PL-manifolds}
\begin{document}

\begin{abstract}
We extend Matveev's complexity of $3$-manifolds to PL compact manifolds of arbitrary dimension, and we study its properties. The complexity of a manifold is the minimum number of vertices in a simple spine. We study how this quantity changes under the most common topological operations (handle additions, finite coverings, drilling and surgery of spheres, products, connected sums) and its relations with some geometric invariants (Gromov norm, spherical volume, volume entropy, systolic constant). 

Complexity distinguishes some homotopically equivalent manifolds and is positive on all closed aspherical manifolds (in particular, on manifolds with non-positive sectional curvature). There are finitely many closed hyperbolic manifolds of any given complexity. On the other hand, there are many closed 4-manifolds of complexity zero (manifolds without 3-handles, doubles of 2-handlebodies, infinitely many exotic K3 surfaces, symplectic manifolds with arbitrary fundamental group).

\end{abstract}

\maketitle

\section*{Introduction}
The \emph{complexity} $c(M)$ of a compact 3-manifold $M$ (possibly with boundary) was defined in a nice paper of Matveev \cite{Mat} as the minimum number of vertices of an \emph{almost simple spine} of $M$. In that paper he proved the following properties:
\begin{description}
\item[additivity] $c(M\# M')=c(M) + c(M')$ for any (boundary-)connected sum.
\item[finiteness] There are finitely many closed irreducible (or cusped hyperbolic) 3-manifolds of bounded complexity.
\item[monotonicity] If $M_S$ is obtained by cutting $M$ along an incompressible surface $S$, then $c(M_S)\leqslant c(M)$.
\end{description}
Thanks to the combinatorial nature of spines, it is not hard to classify all manifolds having increasing complexity $0,1,2,\ldots$ Tables have been produced in various contexts, see \cite{Bu, CaHiWe, FriMaPe3, survey, MaPe, Mat11, Mat:book} and the references therein (and Table \ref{census:table} below). Some of these classifications were actually done using the dual viewpoint of singular triangulations, which turns out to be equivalent to Matveev's for the most interesting 3-manifolds.

We extend here Matveev's complexity from dimension 3 to arbitrary dimension.
To do this, we need to choose an appropriate notion of spine. In another paper \cite{Mat:special} written in 1973, Matveev defined and studied \emph{simple spines} of manifolds in arbitrary dimension. A simple spine of a compact manifold is a (locally flat) codimension-1 subpolyhedron with generic singularities, onto which the manifold collapses. If the manifold is closed there cannot be any collapse at all and we therefore need to priorly remove one ball.

Simple spines are actually not flexible enough for defining a good complexity. In dimension 3, as an example, any simple spine for $S^3$ (or, equivalently, $D^3$) is a too complicated and unnatural object, such as Bing's house or the abalone. Every simple spine of $D^3$ has at least one vertex: however, a reasonable complexity must be zero on discs and spheres. 

To gain more flexibility, Matveev defined in 1988 the more general class of \emph{almost simple} spines \cite{Mat88, Mat} of 3-manifolds. An almost simple polyhedron is a compact polyhedron that can be locally embedded in a simple one. This more general definition allows one to use very natural objects as spines, such as a point for $D^3$ or a circle for $D^2\times S^1$: a point is \emph{not} a vertex by definition, and hence $c(D^3)=0$, as required. We show here that the notion of almost simple spine extends naturally to all dimensions.

This is in fact not the only way to gain more flexibility. A different possibility consists in enlarging the notion of spine by admitting an arbitrary number of open balls in its complement. Following that road, a 2-sphere is a simple spine of $S^3$ (or $D^3$) without vertices, and hence $c(D^3)=0$ again. One might also allow simultaneously almost simple polyhedra \emph{and} more balls in their complement.

In our attempt to define a suitable complexity in any dimension $n$, we are apparently forced to choose among three different definitions of complexity, and the choice seems only a matter of taste: as a spine for $S^n$, do we allow a point, an equator $(n-1)$-sphere, or both?

Luckily, these three definitions are actually equivalent and lead to the same complexity $c(M^n)$, in every dimension $n$. This non-trivial fact shows that $c(M^n)$ is indeed a very natural quantity to associate to a compact manifold $M^n$. For the sake of clarity, we choose in Section \ref{simple:section} the simplest definition, which takes simple polyhedra and admits more balls in their complement. The other definitions and the proof of their equivalence is deferred to Section \ref{alternative:section}.

Having settled the problem of defining $c$, we turn to studying its properties. Three-dimensional complexity is already widely studied, and 1- and 2-dimensional ones are quite boring, so in this introduction we focus mainly on dimension 4. 

We start by studying how $c$ varies when a handle is added. Quite surprisingly, complexity can always be controlled. If a 4-manifold $N$ is obtained from $M$ by adding a handle of index $i>0$, we have $c(N)\leqslant c(M)$, except when $i=3$: in that case we get the opposite inequality $c(N)\geqslant c(M)$. When $i=4$ we actually have $c(N)=c(M)$. 

These simple inequalities already allow to prove many things, and namely that plenty of 4-manifolds have complexity zero, in contrast with the 3-dimensional case. These include all 4-manifolds (with or without boundary) having a handle-decomposition without 3-handles, and all the doubles of 2-handlebodies (\emph{i.e.}~manifolds decomposing without 3- and 4-handles). The first set includes many simply connected manifolds (maybe all), the second set contains closed manifolds with arbitrary (finitely presented) fundamental group.

We can find more. It is easy to see that a non-trivial product $M^k\times N^{n-k}$ with boundary has a spine without vertices. Therefore every closed 4-manifold obtained from a non-trivial product by adding handles of index $\neq 3$ has complexity zero. Among manifolds that may constructed in this way, we find the infinitely many exotic K3 surfaces discovered by Fintushel and Stern in \cite{FiSt} and the closed symplectic manifolds with arbitrary fundamental group exhibited by Gompf in \cite{Go} (both types of manifolds are built by attaching handles of index $\neq 3$ to a product $M^3\times S^1$). 

As we have seen, there are plenty of 4-manifolds of complexity zero, although in many cases describing explicitly their spines is not obvious. One could guess that complexity is just zero on all 4-manifolds. Luckily, this is not the case. Various non-triviality results (in all dimensions $n$) are proved in this paper. 

A closed $n$-manifold $M$ with complexity zero must indeed fulfill some strict requirements. First of all, it cannot be aspherical. Moreover, its Gromov norm $||M||$ vanishes. If $\pi_1(M)$ is infinite and (virtually) torsion-free, some other geometric invariants of $M$ also vanish: the spherical volume $T(M)$ defined by Besson, Courtois, and Gallot in \cite{BCG}, the volume entropy $\lambda (M)$, and the systolic constant $\sigma(M)$, defined by Gromov in \cite{Gro2}. 

Concerning Gromov norm, we actually have $c(M)\geqslant ||M||$ for every closed aspherical manifold. This shows in particular that there are closed manifolds of arbitrarily high complexity in all dimensions. It also implies that there are finitely many closed hyperbolic $n$-manifolds of bounded complexity: this is a mild extension of the 3-dimensional finiteness property, proved by Matveev for all closed irreducible 3-manifolds.

The triviality and non-triviality results just stated suggest that $c(M)$ is a well-balanced quantity which could reasonably measure how ``complicate'' a manifold is. We hope that this new invariant will help to understand better the enormous set of PL (equivalently, smooth) compact 4-manifolds. 

\subsection*{Structure of the paper}
In Section \ref{main:section} we list all the properties of $c$ that are proved in this paper.
Some basic notions of piecewise-linear topology are collected in Section \ref{PL:section}. Simple spines and complexity are then introduced in Section \ref{simple:section}. Some of our definitions are somehow different from the ones given by Matveev: in Sections \ref{collars:section} and \ref{alternative:section} we prove that they are equivalent. 

In Section \ref{triangulations:section} we construct simple spines as objects dual to triangulations. In Section \ref{drilling:section} we show how to modify correspondingly a spine when the manifold is drilled along some sbpolyhedron. This basic operation will be used at many stages in the rest of the paper. 

In Section \ref{handles:section} we study how complexity changes under handle addition, sphere drilling, and connected sum. In Section \ref{coverings:section} we study the complexity of products and of finite coverings. In Section \ref{normal:section} we introduce a generalization of normal surfaces to arbitrary dimension and show how to ``cut'' a simple spine along a normal hypersurface.

In Section \ref{nerve:section} we study the \emph{nerve} of a simple spine $P$: the nerve is a simplicial complex determined by the stratification of $P$, which contains many informations on the topology of the manifold. The nerve is the key tool to prove various non-triviality results for $c$.
The relations between complexity and homotopy invariants, Gromov norm, and riemannian geometry are then studied in Sections \ref{homotopy:section}, \ref{norm:section}, and \ref{riemannian:section}. Finally, Section \ref{four:manifolds:section} is devoted to four-manifolds.

\subsection*{Acknowledgements}
We would like to thank Katya Pervova for suggesting improvements on an earlier version of the manuscript, and Roberto Frigerio for the many discussions on bounded cohomology and Gromov norm.

\section{Main results} \label{main:section}
We define the complexity $c(M)$ of any compact PL manifold $M$ in Section \ref{simple:section}. 
The definition is of course also applicable to every smooth compact manifold by taking its unique PL structure \cite{Wh}.
In this section we collect all the properties of $c$ proved in this paper.

\subsection*{Topological operations}
Simple spines are flexible. Most topological operations on manifolds can be translated into some corresponding modifications of their spines. Various estimates on the complexity are therefore proved by examining how the number of vertices may vary along these modifications.

We collect here some estimates. We start with products.

\begin{description}
\item[product with boundary] A product $N=M\times M'$ with non-empty boundary has $c(N)=0$.
\end{description}
In other words, if either $M$ or $M'$ is bounded, then $c(M\times M')=0$. If both $M$ and $M'$ are closed, we may have $c(M\times M')>0$: this holds for instance if both $M$ and $M'$ are aspherical (and so $N$ is), for instance if $M=M'=S^1$. On the other hand, we have the following.

\begin{description}
\item[sphere product] We have $c(M\times S^n)=0$ if $n\geqslant 2$.
\end{description}
Note that $||M\times S^n||=0$ for any $n\geqslant 1$. We are not aware of any general inequality relating $c(M)$, $c(M')$, and $c(M\times M')$ when both manifolds are closed. We turn to coverings.

\begin{description}
\item[covering] If $\widetilde M \to M$ is a degree-$d$ covering, then $c(\widetilde M)\leqslant dc(M)$.
\end{description}
In contrast with Gromov norm, this inequality is far from being an equality in general. For instance, lens spaces have arbitrarily high complexity while their universal covering $S^3$ has complexity zero.

We investigate the effect of adding a $i$-handle to a $n$-manifold $M$. Quite surprisingly, we always get a one-side estimate when $n\geqslant 4$, which depends only on the codimension $n-i$.

\begin{description}
\item[handles] Let $N$ be obtained from $M$ by adding a handle of index $i$. We have:
\begin{itemize}
\item $c(M)\leqslant c(N)$ if $i<n-1$ and $n\geqslant 3$,
\item $c(M)\geqslant c(N)$ if $i=n-1$ and $n\geqslant 4$,
\item $c(M)=c(N)$ if $i=n$.
\end{itemize}
\end{description}

These estimates imply a series of inequalities concerning connected sums and drilling along spheres of any dimension.

\begin{description}
\item[connected sum] We have $c(M^n\# N^n)\leqslant c(M^n)+c(N^n)$ for every (boundary) connected sum in dimension $n\geqslant 3$.
\end{description}
Matveev proved that an equality holds in dimension three \cite{Mat}. We do not know if it still holds in dimension $n\geqslant 4$.

We turn to sphere drilling. If $S\subset M$ is a submanifold, we denote by $M_S$ the manifold obtained by removing from $M$ an open regular neighborhood of $S$. As for handle addition, if $S$ is a $k$-sphere and $n\geqslant 4$ we get a one-side estimate which depends only on the dimension $k$.

\begin{description} 
\item[sphere drilling] Let $S\subset M$ be a $k$-sphere. We have
\begin{itemize}
\item $c(M_S)\leqslant c(M)$ if $k=1$ and $n\geqslant 4$,
\item $c(M_S)\geqslant c(M)$ if $k>1$.
\end{itemize}
\end{description}
The (PL-)sphere $S$ does not need to be locally flat. If $S$ has a product regular neighborhood $D^{n-k}\times S$ we can perform a \emph{surgery} by substituting this neighborhood with $S^{n-k-1}\times D^{k+1}$ along some map. If $k=1$, the previous result implies the following.

\begin{description}
\item[surgery] If $N$ is obtained from $M$ by surgery along a simple closed curve and $n\geqslant 4$, then $c(N)\leqslant c(M)$.
\end{description}
A strict inequality holds in some cases. 
\begin{description}
\item[strict inequality] If $M$ is closed with $c(M)>0$ and $n\geqslant 4$, there is a simple closed curve $\gamma\subset M$ such that $c(N)<c(M)$ if $N$ is obtained by drilling or surgery along $\gamma$.
\end{description}
This implies the following result. Very often in dimension 4 a complicate manifold becomes ``simpler'' after summing it with $S^2\times S^2$. The complexity might estimate this phenomenon as follows.

\begin{description}
\item[stabilization]
If $M$ is a simply connected closed 4-manifold with $c(M)>0$ then $c\big(M\#(S^2\times S^2)\big)<c(M)$.
\end{description}
However, we do not know if there exists any simply connected 4-manifold of positive complexity!

Finally, an important result of 3-dimensional complexity, due to Matveev \cite{Mat}, says that $c(M_S)\leqslant c(M)$ whenever $S$ is an incompressible surface. Unfortunately, the notion of incompressibility does not extend appropriately to higher dimensions. Having in mind that every class in $H_2(M^3,\matZ)$ is represented by an incompressible surface, we extend a weaker version of Matveev's result as follows.

\begin{description}
\item[hypersurfaces] Every class in $H_{n-1}(M,\matZ_2)$ is represented by a hypersurface $S$ such that $c(M_S)\leqslant c(M)$.
\end{description}

This result is proved by extending the 3-dimensional notion of normal surface to any dimension: this extension might be of independent interest. 

The results just stated are proved in Sections \ref{handles:section}, \ref{coverings:section}, and \ref{normal:section}.

\subsection*{Gromov norm and triangulations}
Let $||M||$ and $t(M)$ be respectively the Gromov norm \cite{Gro} and the minimum number of simplexes in a triangulation of $M$.
\begin{description}
\item[Gromov norm (1)] If $M$ is closed with virtually torsion-free $\pi_1(M)$, then 
$$||M||\leqslant c(M)\leqslant t(M).$$
\end{description}
Note that if $M$ is aspherical then $\pi_1(M)$ is torsion-free and hence the inequalities hold for any aspherical manifold $M$. Actually, only the left inequality requires this hypothesis on $\pi_1(M)$, and we do not know if it is really necessary. Both inequalities might be justified informally by saying that simples spines are more flexible than triangulations, but not as flexible as real homology cycles. 

The above result can be strengthened in complexity zero, by dropping the hypothesis on $\pi_1(M)$ and admitting amenable boundary. The boundary $\partial M$ is amenable if the image of every connected component of $\partial M$ in $\pi_1(M)$ is an amenable group.
\begin{description}
\item[Gromov norm (2)] Let $M$ be a manifold with (possibly empty) amenable boundary. If $c(M)=0$ then $||M||=0$.
\end{description}
The amenability hypothesis is necessary, since a genus-2 handlebody has complexity zero and positive Gromov norm. 

The results just stated are proved in Sections \ref{triangulations:section} and \ref{norm:section}.

\subsection*{Homotopy type}
It might be that every simply connected manifold has complexity zero. This question is open only in dimension 4.
\begin{description}
\item [simply connected] Every simply connected manifold of dimension $\neq 4$ has complexity zero.
\end{description}
On the other hand, we have the following.
\begin{description}
\item [arbitrary fundamental group] Every finitely presented group is the fundamental group of a closed 4-manifold with complexity zero.
\end{description}
Complexity detects aspherical manifolds, in some sense.
\begin{description}
\item [aspherical manifolds] If $M$ is closed aspherical, then $c(M)>0$.
\end{description}
This shows in particular that complexity behaves quite differently from Gromov norm. For instance, complexity detects non-positive curvature, while Gromov norm detects negative curvature: the $n$-torus $T^n$ has $c(T)>0$ and $||T||=0$. 

We also note that complexity is \emph{not} a homotopy invariant, since it distinguishes some homotopically equivalent lens spaces: we have $c(L_{7,1}) = 4$ and $c(L_{7,2})=2$, see \cite{Mat}. We do not know if it distinguishes different PL manifolds sharing the same topological structure.

The results just stated are proved in Section \ref{homotopy:section}.

\subsection*{Riemannian geometry}
We compare the complexity of a smooth manifold $M$ with other invariants coming from riemannian geometry. A relation between the volume of a riemannian manifold and its complexity can be given by bounding both the sectional curvature and the injectivity radius. The second inequality in the following result is due to Gromov \cite{Gro}.

\begin{description}
\item [volume] Let $M^n$ be a riemannian manifold with everywhere bounded sectional curvature $|K(M)|\leqslant 1$. Then
$$c(M)\leqslant t(M) \leqslant {\rm const}_n \frac {{\rm Vol}(M)}{{\rm inj}_* (M)^n}.$$
\end{description}
Here ${\rm inj}(M)$ is the injectivity radius, ${\rm inj}_*(M) = \min\{{\rm inj}(M),1\}$, and ${\rm const}_n$ is a constant depending on $n$. The same formula holds for Gromov norm $||M||$: in that case however the factor ${\rm inj}_*(M)^{-n}$ can be removed when $\pi_1(M)$ is residually finite \cite{Gro}. It is not possible to remove this factor here, since there are infinitely many hyperbolic 3-manifolds with bounded volume, while only finitely many can have bounded complexity. This holds in fact in all dimensions.

\begin{description}
\item [finiteness] There are finitely many closed hyperbolic $n$-manifolds of bounded complexity, for every $n$.
\end{description}

We do not know if the finiteness property can be extended to manifolds of non-negative curvature, or more generally to aspherical manifolds. As far as we know, it might also hold for elliptic manifolds. The results on Gromov norm allow to prove also the following.

\begin{description}
\item[cusped hyperbolic manifolds] Let $M$ be a compact manifold whose interior admits a complete hyperbolic metric of finite volume. Then $c(M)>0$.
\end{description}
This result is sharp since the Gieseking 3-manifold has complexity 1, see \cite{CaHiWe}.
Complexity is also related to other geometric invariants. A nice chain of inequalities, taken from \cite{Ko, PaPe}, holds for every closed orientable manifold $M$:
$$\frac{n^{n/2}}{n!} ||M||\leqslant 2^nn^{n/2}T(M)\leqslant \lambda(M)^n \leqslant h(M)^n \leqslant (n-1){\rm MinVol}(M).$$
From left to right, we find Gromov norm $||M||$, the spherical volume $T(M)$ defined by Besson, Courtois, and Gallot in \cite{BCG}, the volume entropy $\lambda (M)$, the topological entropy $h(M)$, and the minimum volume ${\rm MinVol}(M)$ defined by Gromov in \cite{Gro}. Another interesting invariant is the systolic constant $\sigma(M)$, defined by Gromov in \cite{Gro2}. We have  the following.

\begin{description}
\item[geometric invariants] Let $M$ be a closed orientable manifold with virtually torsion-free infinite fundamental group. If $c(M)=0$ then 
$$T(M)=\lambda(M)=\sigma(M)=0.$$
\end{description}
We do not know if the same hypothesis implies also $h(M)=0$. It does not imply ${\rm MinVol}(M)=0$ (for instance, if $M = (T^2\times S^2)\#\matCP^2$ we have $c(M)=0$ and ${\rm MinVol}(M)\geqslant {\rm const}_n |\chi(M)|>0$). 

Finally, we quote a result of Alexander and Bishop \cite{AleBi, AleBi2} relating the complexity and the width of a riemannian manifold with boundary. 
\begin{description}
\item[thin manifolds] There are some constants $a_2<a_3<\ldots $ such that if a riemannian manifold $M^n$ with boundary has (curvature-normalized) inradius less than $a_n$, then $c(M^n)=0$.
\end{description}

The results just stated are proved in Section \ref{riemannian:section}.

\subsection*{Low dimensions}
The complexity of manifolds of dimension 1 and 2 is easily calculated.
Concerning $1$-manifolds, we have $c(S^1)=c(D^1)=1$. Turning to dimension $2$, the complexity of a (compact) surface $\Sigma$ turns out to be as follows: 
\begin{itemize}
\item $c(\Sigma) = \max\{2-2\chi(\Sigma),0\}$ if $\Sigma$ is closed,
\item $c(\Sigma) = \max\{-2\chi(\Sigma),0\}$ if  $\Sigma$ has boundary.
\end{itemize}
The compact surfaces having complexity zero are $S^2, \matRP^2$, the annulus, and the M\"obius strip. The torus and the pair-of-pants have complexity $2$. 

The complexity of $3$-manifolds has been widely studied. Manifolds of low complexity have been listed via computer by various authors \cite{Bu, CaHiWe, FriMaPe3, survey, MaPe, Mat11, Mat:book}: the closed orientable irreducible ones are collected in Table \ref{census:table} according to their geometry.
The closed irreducible manifolds having complexity zero are $S^3$, $\matRP^3$, and $L_{3,1}$.

\begin{table}
  \begin{center}
    \begin{tabular}{rcccccccccccc}
      $c$ & $0$ & $1$ & $2$ & $3$ & $4$ & $5$ & $6$ & $7$ & $8$ & $9$ & $10$ & $11$ \\
      \hline
      lens spaces &
      $3$ & $2$ & $3$ & $6$ & $10$ & $20$ & $36$ & $72$ & $136$ & $272$ & $528$ & $1056$ \\

      other elliptic &
      . & . & $1$ & $1$ & $4$ & $11$ & $25$ & $45$ & $78$ & $142$ & $270$ & $526$ \\

      flat &
      . & . & . & . & . & . & $6$ & .    & .    & .      & .	& . \\
      
      Nil &
      . & . & . & . & . & . & $7$ & $10$ & $14$ & $15$   & $15$ & $15$ \\

      ${\rm SL}_2\matR$ &
      . & . & . & . & . & . & .   & $39$ & $162$ & $513$ & $1416$ & $3696$ \\
      
      Sol &
      . & . & . & . & . & . & .   & $5$  & $9$   & $23$  & $39$ & $83$ \\

      $\matH^2\times\matR$ &
      . & . & . & . & . & . & .   & .    & $2$   & .     & $8$ & $4$ \\
      
      hyperbolic &
      . & . & . & . & . & . & .   & .    & .     & $4$   & $25$ & $120$ \\
      
      not geometric &
      . & . & . & . & . & . & .   & $4$  & $35$  & $185$ & $777$ & $2921$ \\
      
      total  &
      $\bf{3}$ & $\bf{2}$ & $\bf{4}$ & $\bf{7}$ & $\bf{14}$ & $\bf{31}$ & $\bf{74}$ & $\bf{175}$ & $\bf{436}$ & $\bf{1154}$ & $\bf{3078}$ & $\bf{8421}$ \\

    \end{tabular}
  \end{center}
\caption{The number of irreducible orientable $3$-manifolds of complexity $c\leqslant 11$ in each geometry. The non-geometric manifolds decompose into geometric pieces according to their JSJ decomposition along tori (they are all graph manifolds when $c\leqslant 10$).}
\label{census:table}
\end{table}

We now devote our attention to dimension $4$. We start by studying the set of 4-manifolds of complexity zero. We describe here some interesting classes of such manifolds. These classes seem however far to exhaust the set of all 4-manifolds with complexity zero.

The various results stated above show that the set of all 4-manifolds of complexity zero contains all products $N\times N'$ with non-empty boundary or $N\in\{S^2,S^3\}$, and is closed under connected sums, finite coverings, addition of handles of index $\neq 3$, and drilling (or surgery) along simple closed curves. All the examples presented here are of this kind. We concentrate on closed manifolds for simplicity.

\begin{description}
\item[no 3-handles] Every closed 4-manifold that has a handle decomposition without 3-handles has complexity zero.
\end{description}

Every such manifold is necessarily simply connected. However, for many simply connected manifolds a decomposition without 3-handles does not seem to be known. Among these, we find the exotic K3 surfaces constructed by Fintushel and Stern in \cite{FiSt}. In fact, these manifolds are constructed by attaching handles of index $\neq 3$ to a product $M^3\times S^1$. Therefore we have the following.

\begin{description}
\item[exotic K3] The (infinitely many) exotic K3 surfaces $X_K$ constructed via Fintushel and Stern's \emph{knot construction} \cite{FiSt} from a knot $K\subset S^3$ have complexity zero.
\end{description}

We now introduce two different classes of closed 4-manifolds with arbitrary (finitely presented) fundamental group. Let a \emph{2-handlebody} be a 4-manifold which has a decomposition with 0-, 1-, and 2-handles.

\begin{description}
\item[doubles of 2-handlebodies] The double of any orientable 2-handlebody has complexity zero.
\end{description}
These manifolds have complexity zero because they are obtained by surgerying $(S^1\times S^3)\#\ldots \#(S^1\times S^3)$ along some curves. Every finitely presented group is the fundamental group of a 2-handlebody, which is in turn isomorphic to the fundamental group of its double. It is not true that \emph{any} double has complexity zero, because a double can be aspherical (for instance, a product of surfaces).

Another class was constructed by Gompf in \cite{Go}, in order to show that symplectic 4-manifolds may have arbitrary fundamental group. As above, these manifolds are constructed by attaching handles of index $\neq 3$ to a product $M^3\times S^1$, so we have the following.

\begin{description}
\item[symplectic manifolds] The closed symplectic manifolds with arbitrary fundamental group constructed by Gompf in \cite{Go} have complexity zero.
\end{description}

The results just stated are proved in Section \ref{four:manifolds:section}.

\section{Piecewise-linear topology} \label{PL:section}
We collect here the informations on piecewise-linear topology that
we will need. The basic definitions and tools are listed in Sections \ref{basic:definitions:subsection} and \ref{basic:tools:subsection}. More material can be found in~\cite{PL}. 
The notion of intrinsic stratification is taken from \cite{Ak, Arm, Mc, Sto} and described in Section \ref{intrinsic:subsection}. Stein factorization (which we take from \cite{CoTh}) is introduced in Section \ref{Stein:subsection}. Finally, in Section \ref{nerve:subsection} we define the \emph{nerve} of a pair $(X,Y)$ of polyhedra: this definition is original and might be of independent interest. 

The material contained in Sections \ref{Stein:subsection} and \ref{nerve:subsection} is used in Section \ref{nerve:section} to define the nerve of a pair $(M,P)$ when $P$ is a simple spine of $M$.

\subsection{Basic definitions} \label{basic:definitions:subsection}
\subsubsection{Simplicial complexes}
A (finite and abstract) \emph{simplicial complex} $K$ is a set of nonempty subsets of a given finite set $V(K)$ (the \emph{vertices} of $K$), such that $\{v\}\in K$ for all $v\in V(K)$ and if $\sigma \in K$ and $\tau\subset\sigma$ then $\tau\in K$. An element of $K$ is a \emph{face}. A \emph{subcomplex} is a subset of $K$ which is a complex. If $K$ and $L$ are simplicial complexes, a \emph{simplicial map} $f:K\to L$ is a function $f:V(K)\to V(L)$ such that if $\sigma\in K$ then $f(\sigma)\in L$.

\subsubsection{Triangulations}
A (finite) simplicial complex $K$ induces a compact topological space $|K|$, defined by taking a standard euclidean simplex for each element of $K$ and identifying them according to the face relations. A \emph{triangulation} of a compact topological space $X$ is a simplicial complex $K$ and a homeomorphism $f:|K|\to X$. Another triangulation $(L,g)$ of $X$ is a \emph{subdivision} of $(K,f)$ if the image of every simplex of $L$ is contained as a straight simplex in some simplex of $K$. Two triangulations of $X$ are \emph{related} if they have a common subdivision.

We will use the letter $T$ to indicate a triangulation, \emph{i.e.}~a pair $(K,f)$.

\subsubsection{Polyhedra}
A \emph{compact polyhedron} is a compact topological space $X$ equipped with a maximal family of related triangulations. A \emph{subpolyhedron} $X'\subset X$ is a subset which is the image of a subcomplex of some triangulation of $X$. If $X$ is a polyhedron containing compact polyhedra $X_1,\ldots, X_k$, a triangulation $K$ of $(X,X_1,\ldots,X_k)$ is a triangulation of $X$ where each $X_i$ is represented by some subcomplex; such a triangulation can be found by taking a common subdivision of the triangulations realizing $X_i$ as a subcomplex.

The standard $n$-simplex $\Delta^n$ is a polyhedron. We define the $n$-disc $D^n$ and $(n-1)$-sphere $S^{n-1}$ respectively as $\Delta^n$ and $\partial\Delta^n$. 

\subsubsection{Manifolds and maps}
A simplicial map $f:K\to L$ induces a continuous map $f:|K|\to |L|$. A map between polyhedra is \emph{piecewise-linear} (shortly, PL) if it is induced by a simplicial map on some triangulations. A polyhedron is a \emph{PL-manifold} (with boundary) if it is locally PL-homeomorphic to some point in $S^n$ ($D^n$). Every manifold and map mentioned in this paper is tacitly assumed to be PL. 

\subsection{Basic tools} \label{basic:tools:subsection}

\subsubsection{Derived complexes} \label{derived:subsubsection}
A simplicial complex $K$ defines a partially ordered set (briefly, a \emph{poset}) $i(K) = (K,\subseteq )$, the set of faces with their face relations. Conversely, a poset $(A,\leqslant)$ defines a simplicial complex $\eta(A,\leqslant)$, whose vertices are the elements of $A$, and whose faces are all finite subsets $\{a_0,\ldots, a_i\}$ such that $a_0<\ldots <a_i$. The simplicial complex $\eta(A,\leqslant)$ is the \emph{nerve} of $(A,\leqslant)$.

The simplicial complex $K' = \eta\circ i(K)$ is the \emph{derived} simplicial complex of $K$. Vertices of $K'$ correspond to faces of $K$. A simplicial map $f: K \to L$ induces an order-preserving map $i(K)\to i(L)$ and hence a \emph{derived} simplicial map $f':K'\to L'$. 

A triangulation $T = (K,f)$ of a space $X$ determines a \emph{barycentric subdivision} $T'=(K',f')$ of $X$, obtained by composing $f$ with the homeomorphism $|K'|\to |K|$ which sends every vertex of $K'$ to the barycenter of the corresponding face of $K$ (and is extended linearly on the rest of $|K'|$)

\subsubsection{Join, cone, and suspension}
The \emph{join} $K*L$ of two simplicial complexes $K$ and $L$ (with disjoint vertices) is the complex with vertices $V(K*L)=V(K)\cup V(L)$ and with faces $K\cup L \cup\{\sigma\cup\tau |\sigma\in K, \tau\in L\}$. The polyhedron $|K*L|$ depends only on $|K|$ and $|L|$ (up to homeomorphism) and can thus be denoted by $|K|*|L|$.

The \emph{cone} and \emph{suspension} of a polyhedron $P$ are respectively $C(P) = P* D^0$ and $\Sigma(P) = P*S^0$. We have $\Sigma^k(P) \equiv P*S^{k-1}$.

\subsubsection{Link, star, and regular neighborhood}
Let $K$ be a simplicial complex and $L\subset K$ a subcomplex. The \emph{star} $\st(L,K)$ of $L$ in $K$ is the minimal subcomplex of $K$ containing all faces that intersect some face of $L$. The \emph{link} $\lk(L,K)$ is the subcomplex of $\st(L,K)$ consisting of all faces not intersecting any face of $L$. 

When $Y\subset X$ are polyhedra and $T$ is a triangulation of $(X,Y)$, we indicate by $\lk(Y,T)$ and $\st(Y,T)$ the corresponding subpolyhedra of $X$. When $Y=\{y\}$ is a point, these polyhedra depend (up to homeomorphism) only on $y$ and not on $T$. 

In general, if $T$ is sufficiently subdivided, the star $\st(Y,T)$ does not depend on $T$ up to an isotopy in $X$ keeping $Y$ fixed: for instance, this holds after two barycentric subdivisions. In that case, the polyhedron $\st(Y,T)$ is the \emph{regular neighbhorhood} of $Y$ in $X$, which we denote by $R(Y)$.

When $X$ is a manifold, the regular neighborhood $R(Y)$ is a manifold with boundary.

\subsubsection{Collapse}
Let $K$ be a simplicial complex. Let $\sigma\in K$ be a face which is
properly contained in a unique face $\eta$. The subcomplex
$L=K\setminus\{\sigma,\eta\}$ is obtained from $L$ by an
\emph{elementary collapse}. 

Let $Y\subset X$ be any polyhedra. The polyhedron $Y$ is obtained from
$X$ via a \emph{elementary collapse} if it is so on some
triangulation. More generally, a \emph{collapse} of $X$ onto a
subpolyhedron $Z$ is a combination of
finitely many simplicial collapses.

\subsection{Intrinsic strata} \label{intrinsic:subsection}
We recall the notions of intrinsic dimension and strata of polyhedra,
see \cite{Ak, Arm, Mc, Sto}.

Let $Y\subset X$ be any polyhedra and $x\in X$ a point. The
\emph{intrinsic dimension} $d(x;X,Y)$ of the pair $(X,Y)$ at $x$ is
the maximum number $t$ such that 
\begin{enumerate}
\item there is a triangulation of $(X,Y)$ 
with $x$ contained in the interior of a $t$-simplex. 
\end{enumerate}
If $x\in Y$, this
is equivalent to each of the following conditions:

\begin{enumerate}
\addtocounter{enumi}{1}
\item the link of $x$ in $(X,Y)$ is the $t$-th suspension $\Sigma^t(W,Z)$ of some pair $(W,Z)$;
\item the star of $x$ in $(X,Y)$ is homeomorphic to $C(W,Z)\times D^t$
  with $x$ sent to $v\times c$, where $v$ is the vertex of the cone
  $C(W,Z)$ and $c\in\interior{D^t}$.
\end{enumerate}

The absolute notion of intrinsic dimension of a point $x$ in a
polyhedron $Y$ is defined as $d(x;Y)=d(x;Y,Y)$. If $x\not\in Y$ we have
$d(x;X,Y)=d(x;X)$. If $x\in Y$ we have $d(x;X,Y)\leqslant\{d(x,X),
d(x,Y)\}$.

A subpolyhedron $Y\subset X$ in a manifold $X$ is \emph{locally
  unknotted} at $x$ if $d(x;X,Y) = d(x;Y)$. When $Y$ is a manifold,
  this is equivalent with the standard notion of local flatness. 
The subpolyhedron $Y\subset X$ is locally unknotted if it is so at every $x\in Y$.

The intrinsic dimension can be easily calculated using the following
nice result of Armstrong and Morton \cite{Arm, Mor}:
\label{Check Morton}
\begin{prop}[Armstrong-Morton] \label{suspension:prop}
If the link of $x$ in $(X,Y)$ is the $t$-th suspension of some pair $(W,Z)$, and $(W,Z)$ is not itself a suspension, then $t=d(x;X,Y)$ and $(W,Z)$ is uniquely determined by $x$. 
\end{prop}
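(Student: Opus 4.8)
The plan is to isolate the one non-formal ingredient — that iterated suspension is injective on compact PL pairs — and deduce everything from it. Put $L=\lk(x;X,Y)$. As recorded in items (1)--(3) above, $d(x;X,Y)$ is a PL-homeomorphism invariant of $L$, and it equals the largest $s$ for which $L$ is PL homeomorphic (as a pair) to an $s$-fold suspension $\Sigma^s(\cdot)$. The key lemma I would establish is the following \emph{cancellation} property: if $\Sigma^a(A,A')\cong\Sigma^b(B,B')$ as PL pairs with $a\le b$, then $(A,A')\cong\Sigma^{b-a}(B,B')$. Granting this, the proposition follows at once. The hypothesis gives $L\cong\Sigma^t(W,Z)$ with $(W,Z)$ not a suspension; if $L$ were also an $s$-fold suspension for some $s>t$, then fully desuspending that expression would give $L\cong\Sigma^{t''}(W_0,Z_0)$ with $t''\ge s>t$ and $(W_0,Z_0)$ not a suspension, whence cancellation forces $(W,Z)\cong\Sigma^{t''-t}(W_0,Z_0)$ with $t''-t>0$, contradicting that $(W,Z)$ is not a suspension. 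So $t$ is the maximal suspension exponent, i.e.\ $t=d(x;X,Y)$; and any other non-suspension core of $L$ occurs with this same exponent $t$, so cancellation (with $a=b=t$) identifies it with $(W,Z)$.

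It then remains to prove cancellation, and by iterating one layer at a time it suffices to show $\Sigma(A,A')\cong\Sigma(B,B')\Rightarrow(A,A')\cong(B,B')$. The idea is to recover $(A,A')$ intrinsically from $\Sigma(A,A')=(A,A')*S^0$. The local computation I would use is: for a point $p$ on the equatorial copy of $A$ one has $\lk(p;\Sigma(A,A'))=\Sigma(\lk(p;A,A'))$, and for $p$ in the interior of a join segment the link is again a suspension of $\lk(\cdot;A,A')$ by the join-with-$S^0$ formula; so in either case $d(p;\Sigma(A,A'))\ge 1$. By contrast, each of the two cone points $v_\pm$ has $\lk(v_\pm;\Sigma(A,A'))=(A,A')$, so $d(v_\pm;\Sigma(A,A'))$ equals the maximal suspension exponent of $(A,A')$. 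Hence, when $(A,A')$ is \emph{not} a suspension, $v_+$ and $v_-$ are precisely the points of intrinsic dimension $0$ of $\Sigma(A,A')$; a PL homeomorphism $\Sigma(A,A')\to\Sigma(B,B')$ carries this two-point set to the corresponding locus of $\Sigma(B,B')$, which must likewise be the two cone points there (an $\ge 2$-fold suspension has empty intrinsic-dimension-zero locus, by the same link computation), and restricting to links yields $(A,A')\cong(B,B')$. To drop the non-suspension hypothesis I would induct on $\dim A$: writing $(A,A')=\Sigma(A_1,A_1')$ makes $\Sigma(A,A')$ a double suspension, and although the one-step desuspension is then not unique on the nose, it is unique up to PL homeomorphism by the inductive hypothesis applied to $(A_1,A_1')$, which has strictly smaller dimension.

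The main obstacle is exactly this last step: once the link is a multiple suspension the cone points are no longer singled out by having intrinsic dimension $0$, and one must analyse the intrinsic stratification of a join $A*S^{k-1}$ directly in order to locate its suspension structure — this is the substance of the work of Morton and Armstrong (see \cite{Arm, Mor}, with \cite{Mc, Sto} for the intrinsic-stratification framework). By comparison the reduction to cancellation, the elementary link computations, and the final bookkeeping are all routine once that point is settled.
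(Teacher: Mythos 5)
This proposition is cited by the paper from Armstrong \cite{Arm} and Morton \cite{Mor} without proof, so there is no internal argument to compare against; I assess your sketch on its own terms. Your reduction to a suspension--cancellation lemma is sound, the deduction of the proposition from cancellation is clean, and the base case of cancellation (where $(A,A')$ is not a suspension, so $v_\pm$ are exactly the intrinsic-dimension-zero locus of $\Sigma(A,A')$ and a PL homeomorphism of suspensions must match the two-point loci and hence the links) is correct, including the link computations in the join.

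The inductive step, however, does not close as written. Writing $(A,A')=\Sigma(A_1,A_1')$, what you need is precisely $\Sigma^2(A_1,A_1')\cong\Sigma(B,B')\Rightarrow\Sigma(A_1,A_1')\cong(B,B')$, and this is one-step cancellation for a pair of the \emph{same} dimension as $(A,A')$. The inductive hypothesis applied to the lower-dimensional $(A_1,A_1')$ only yields $\Sigma(A_1,A_1')\cong\Sigma(C,C')\Rightarrow(A_1,A_1')\cong(C,C')$, a statement about $(A_1,A_1')$ rather than about $(A,A')=\Sigma(A_1,A_1')$; the desired conclusion is not an instance of it, and an induction on dimension does not transfer one-step cancellation from $\dim A - 1$ up to $\dim A$. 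You do flag this honestly in your closing paragraph, and you are right that the genuine content is a direct analysis of the intrinsic stratification of $A*S^{k-1}$, which is what Morton and Armstrong actually carry out. So the proposal is a correct reduction of the proposition to the cited cancellation theorem plus a correct base case, but the sentence asserting that the general case follows ``by the inductive hypothesis applied to $(A_1,A_1')$'' should be removed: it is not a proof of that step and does not become one by iterating.
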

This easily implies the following.
\begin{ex} \label{unknotted:ex}
If $Y\subset X$ is locally unknotted, then $\lk(x,Y)\subset \lk(x,X)$ is locally unknotted for every $x\in Y$.
\end{ex}

The intrinsic dimension induces an intrinsic stratification of any
pair $(X,Y)$. The points of intrinisic dimension $k$ in $Y$ form the
\emph{$k$-stratum} of $Y$. The $k$-stratum is an (open) $k$-dimensional manifold made of finitely many connected components, called \emph{$k$-components} (or simply \emph{components}). Points in a $k$-component are all homogeneous, \emph{i.e.}~there is an ambient isotopy of $Y$ sending a point to any other. In particular, they have the same link.

The union of all points of intrinsic dimension $\leqslant k$ is the \emph{$k$-skeleton}: it is a $k$-dimensional polyhedron.  

\subsection{Stein factorization} \label{Stein:subsection}
A \emph{Stein factorization} of a (piecewise-linear) map $f:X\to Y$
between (compact) polyhedra is a decomposition $f=g\circ h$ into two
maps
$$
\begin{CD} 
X @>h>> Z @>g>> Y
\end{CD}
$$
such that $h$ has connected fibers and $g$ is finite-to-one. Every $f$
has a unique Stein factorization: the map $h$ is the quotient onto the
space $Z$ of connected components of the fibers of $f$, see Fig~\ref{Stein:fig}. We learned about this notion from \cite{CoTh}.

\begin{figure}
 \begin{center}
  \includegraphics[width = 9cm]{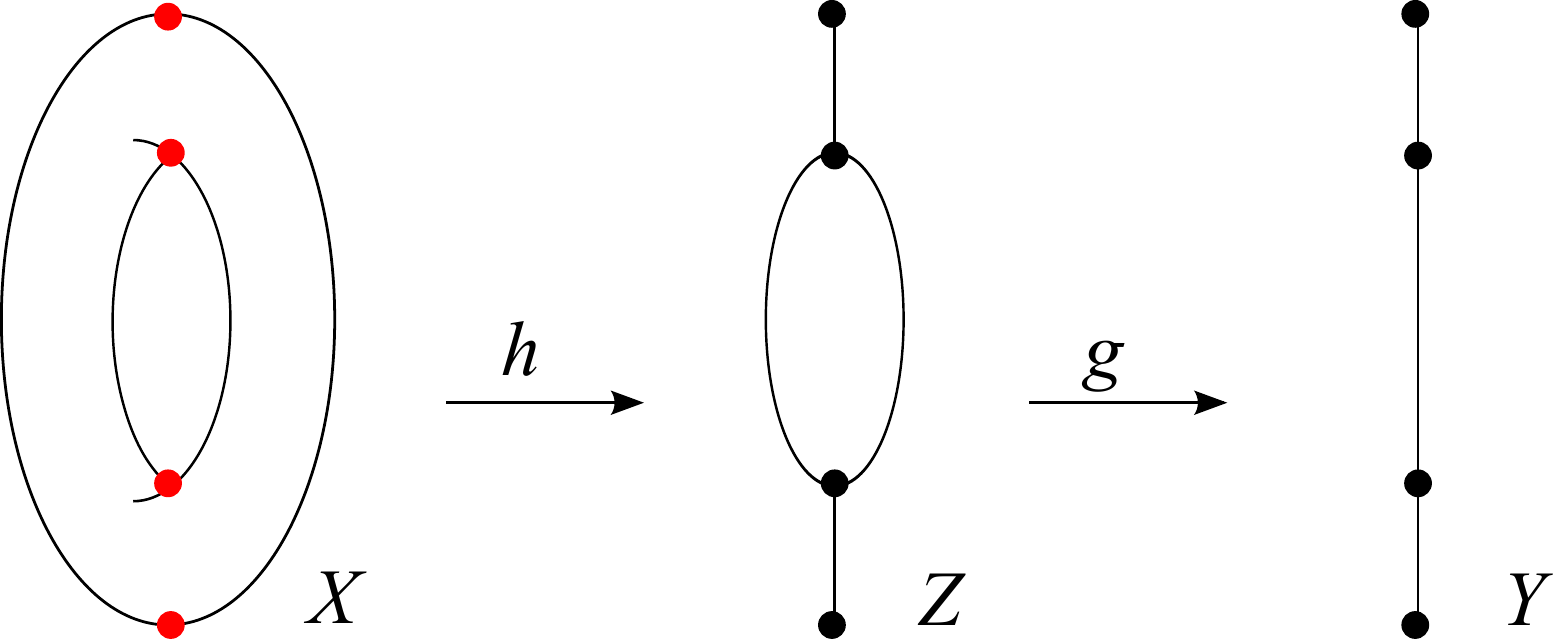}
 \end{center}
 \caption{The Stein factorization of a map.}
 \label{Stein:fig}
\end{figure}

We define the Stein factorization in the category of simplicial
complexes. Let $f:K\to L$ be a simplicial map. Let $f':K'\to L'$ be
its derived map. We define an intermediate simplicial complex $H$ as
follows. Consider the map $f':|K'|\to |L'|$. The vertices of $H$ are
the connected components of $(f')^{-1}(v)$ when $v$ varies among the
vertices of $L'$. The map $f':V(K')\to V(L')$ naturally splits along
two maps
$$
\begin{CD} 
V(K') @>h>> V(H) @>g>> V(L').
\end{CD}
$$
We now define a simplex in $H$ to be the image of any simplex in $K'$
along $h$. The resulting maps
$$
\begin{CD} 
K' @>h>> H @>g>> L'
\end{CD}
$$
are simplicial and $f'=g\circ f$. Since we used the derived map $f'$,
the map $h:|K'|\to |H|$ has indeed connected fibers everywhere (not
only at the vertices of $H$). The map $g:|H|\to|L'|$ is finite-to-one:
this is equivalent to the condition that $\dim g(\sigma) =\dim \sigma$
for every simplex $\sigma$ of $H$.

\subsection{Nerve} \label{nerve:subsection}
The nerve of a polyhedron is a simplicial complex which encodes the
incidences between its components, see Section
\ref{intrinsic:subsection}. 
We define it for pairs $(X,Y)$.

Let $Y\subset X$ be any polyhedra. The components of $(X,Y)$ form a
partially ordered set $(\calC,\leqslant)$: we set $C\leqslant C'$ if
$C\subset \overline{C'}$. If $C<C'$ then $\dim C < \dim C'$. The
\emph{pre-nerve} of $(X,Y)$ is the nerve $\calN_0 =
\eta(\calC,\leqslant)$ of this partially ordered set, see Section
\ref{derived:subsubsection} above.

Let $T$ be a triangulation of $(X,Y)$. If $T$ is sufficiently
subdivided, by sending every vertex of $T$ to the component to which
it belongs we get a surjective simplicial map $\varphi_0:T\to
\calN_0$, called the \emph{pre-nerve map}. It induces a surjective
continuous map $\varphi_0:X\to |\calN_0|$.

The pre-nerve map does not necessarily have connected fibers, so we
prefer to consider its Stein factorization, see Section
\ref{Stein:subsection}. The \emph{nerve} of $(X,Y)$ is the complex
$\calN$ obtained via the Stein factorization

$$
\begin{CD} 
T' @>\varphi>> \calN @>g>> \calN_0'
\end{CD}
$$
of the pre-nerve map $\varphi_0' = g\circ \varphi$. The map
$\varphi:T'\to \calN$ is the \emph{nerve map}. More generally, a
\emph{nerve map} is a map $\varphi:X\to |\calN|$ induced by some
(sufficiently subdivided) triangulation of $(X,Y)$.

\begin{ex} \label{nerve:ex}
The pre-nerve of $(X,Y) = (S^1,\{pt\})$ is a segment, while the nerve is a circle.
\end{ex}

\section{Simple polyhedra} \label{simple:section}
The definition of simple polyhedra in arbitrary dimensions is due to Matveev \cite{Mat:special}. We use it in Section \ref{complexity:subsection} to define the complexity of a manifold. This definition extends Matveev's complexity of 3-manifolds \cite{Mat}.

\subsection{The local model} \label{local:subsection}
\begin{figure} 
 \begin{center}
 \input{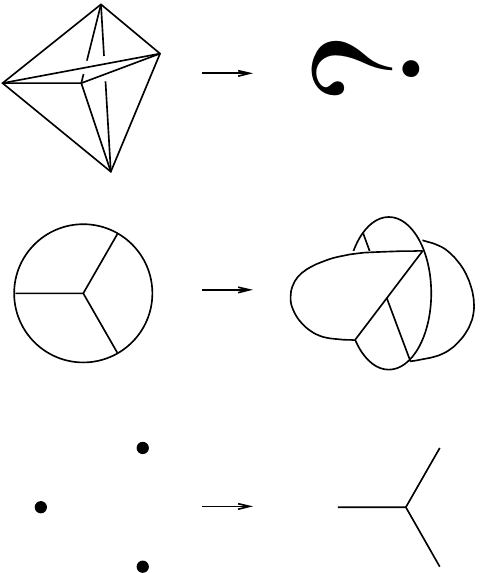tex_t}
  \end{center} 
 \caption{The $(n-1)$-skeleton of the $(n+1)$-simplex and the cone
 $\Pi^n$ over it. The three-dimensional $\Pi^3$ is not drawn.}
 \label{model:fig}
\end{figure}

Let $\Delta=\Delta^{n+1}$ be the $(n+1)$-simplex. Let $\Pi^n$ be the cone over the $(n-1)$-skeleton of $\Delta$. The base of the cone is its  \emph{boundary} $\partial \Pi^n$, while $\interior{\Pi^n} = \Pi^n\setminus\partial\Pi^n$ is its \emph{interior}. Some examples are shown in Fig. \ref{model:fig}.

\begin{figure}
 \begin{center}
  \includegraphics[width = 7cm]{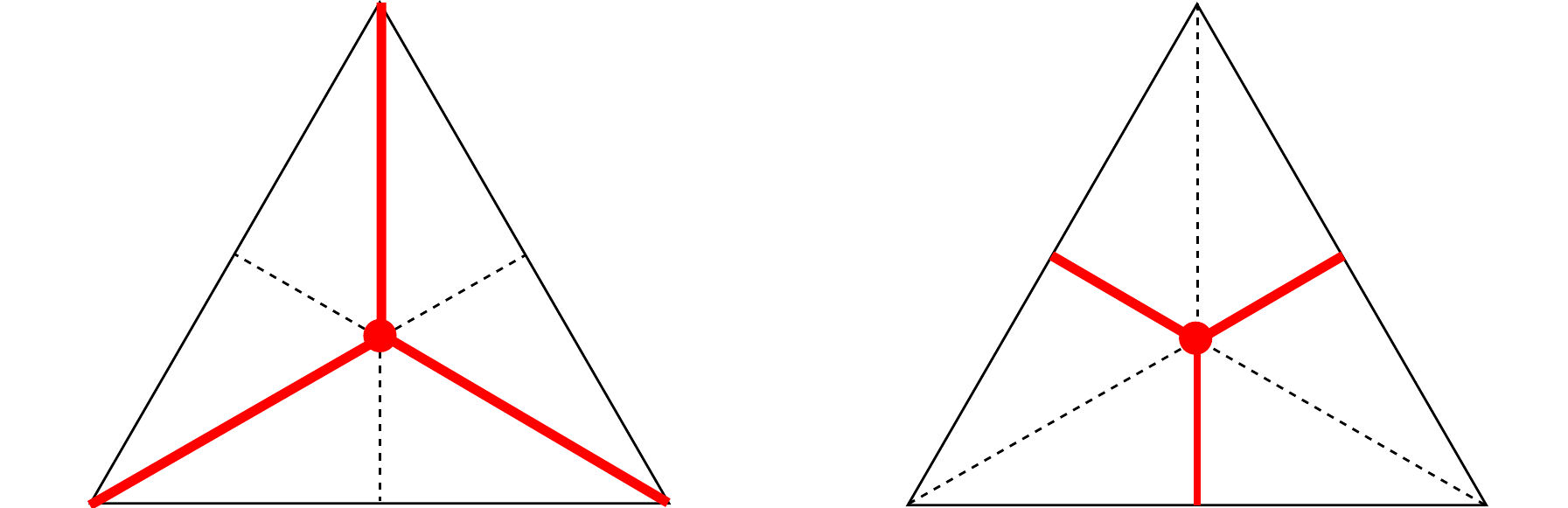}
 \end{center}
 \caption{The standard and dual representation of $\Pi^n$ inside $\Delta$. They are both subcomplexes of $\Delta'$. Here, $n=1$.}
 \label{representations:fig}
\end{figure}

\begin{figure}
 \begin{center}
  \includegraphics[width = 4cm]{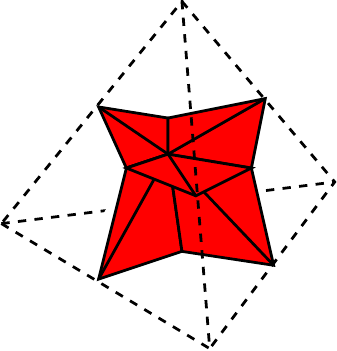}
 \end{center}
 \caption{The dual representation of $\Pi^2$ inside the tetrahedron $\Delta^3$.}
 \label{dualspine:fig}
\end{figure}

There are two representations of $\Pi^n$ inside $\Delta$, shown in
Fig~\ref{representations:fig}: the \emph{standard} and \emph{dual}
representation. They both describe $\Pi^n$ as a subcomplex of the
barycentric subdivision $\Delta'$. See also Fig.~\ref{dualspine:fig}. Both representations induce the same pair $(D^{n+1}, \Pi^n)$ up to homeomorphism. The dual description is investigated below in Section \ref{models:subsection}.

We define $\Pi^n_k$ as $\Pi^n_k=\Pi^{n-k}\times D^k$. The pair
$(D^{n+1}, \Pi^n_k) = D^k\times (D^{n-k+1}, \Pi^{n-k})$ is
well-defined up to homeomorphism. The \emph{boundary} $\partial\Pi^n_k
= \Pi^n_k\cap S^{n}$ is homeomorphic to the $k$-th suspension
$\Sigma^k(\partial\Pi^{n-k})$. Following Matveev, a point $x$ in
a polyhedron $P$ is \emph{of type} $k$ if its link is homeomorphic to
$\partial\Pi^n_k$ (and hence its star is homeomorphic to $\Pi^n_k$)
\footnote{Actually, our $\Pi^n_k$ corresponds to Matveev's
  $\Pi^n_{n-k}$: 
we prefer to define the type of a point coherently with 
Armstrong's general notion of intrinsic dimension.}. See Fig.~\ref{models:fig}.

\begin{figure}
 \begin{center}
  \includegraphics[width = 11cm]{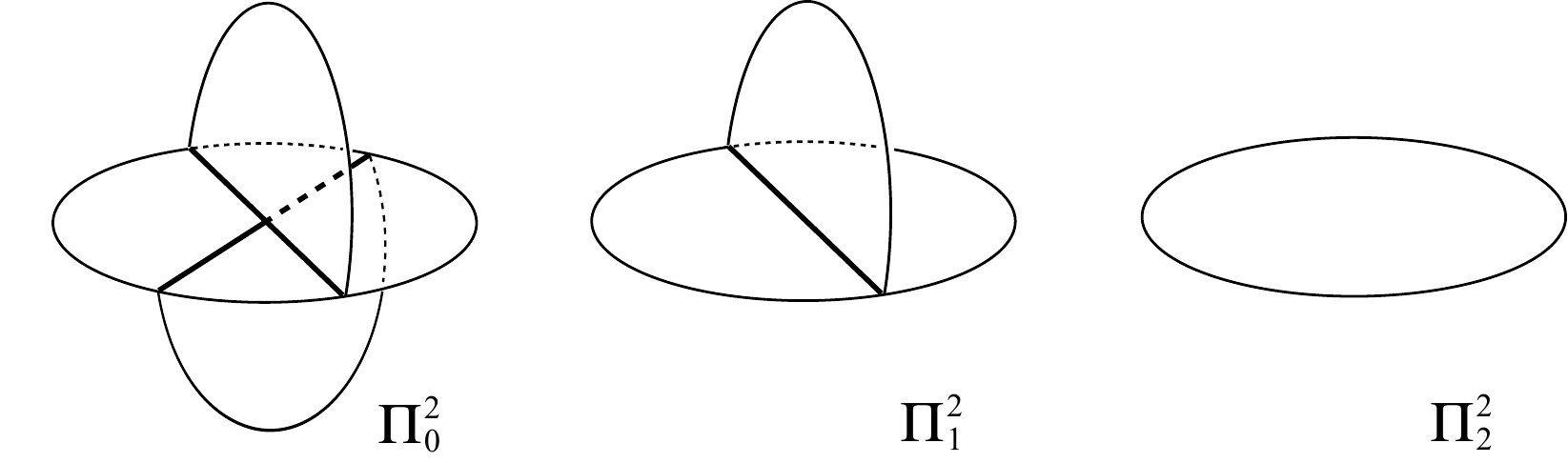}
 \end{center}
 \caption{The local models of a simple polyhedron of dimension 2.}
 \label{models:fig}
\end{figure}

The polyhedron $\Pi^n$ has a natural triangulation induced by that of $\Delta$. 

\begin{prop}
A point $x\in\interior{\Pi^n}$ has intrinsic dimension $k$ if and only
if it is of type $k$. 
\end{prop}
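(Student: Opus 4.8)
The plan is to verify the claim directly from the two characterizations of intrinsic dimension available to us: the suspension characterization (condition (2) in Section~\ref{intrinsic:subsection}, made precise by Proposition~\ref{suspension:prop}) and the definition of ``type $k$'' in terms of the local model $\Pi^n_k$. The key geometric input is the identity, stated just above, that the link of a point of type $k$ is $\partial\Pi^n_k \cong \Sigma^k(\partial\Pi^{n-k})$, together with the fact that $\partial\Pi^{n-k}$ is \emph{not} itself a suspension. Granting the latter, Proposition~\ref{suspension:prop} forces the suspension parameter $k$ to coincide with $d(x;\Pi^n)=d(x;\Pi^n,\Pi^n)$, which is exactly the intrinsic dimension. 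So the real content is: (a) a point $x\in\interior{\Pi^n}$ of type $k$ has link $\Sigma^k(\partial\Pi^{n-k})$; (b) $\partial\Pi^{n-k}$ is not a suspension; and (c) conversely, every $x\in\interior{\Pi^n}$ is of \emph{some} type, so that the two stratifications partition $\interior{\Pi^n}$ the same way and the equivalence follows.

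First I would set up (c). By the natural triangulation of $\Pi^n$ coming from $\Delta=\Delta^{n+1}$, every point lies in the interior of a unique open simplex, so it suffices to understand the link of a point in the interior of a $j$-simplex of (the dual representation of) $\Pi^n$. A barycentric-subdivision bookkeeping — the dual representation realizes $\Pi^n$ as the subcomplex of $\Delta'$ spanned by barycenters of faces of $\Delta$ of dimension $\le n$, with the "cone point" being the barycenter of $\Delta$ itself — shows that the link of such an interior point splits as a join of a piece coming from the "small" faces (a copy of some $\partial\Pi^{m}$, the cone structure contributing the cone point) and a simplex-boundary piece coming from the "large" faces, which is a sphere $S^{k-1}$; thus the link is $S^{k-1}*\partial\Pi^{n-k}=\Sigma^k(\partial\Pi^{n-k})$ for the appropriate $k$ depending on which stratum $x$ lies in. This simultaneously proves (a) and shows that $\interior{\Pi^n}$ is the disjoint union of its type-$k$ loci over $k=0,\dots,n-1$ (the cone point itself being type $0$, or type $n-1$ at the other extreme — I'd fix the indexing conventions here to match the footnote about Armstrong's normalization).

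For (b), the statement that $\partial\Pi^{m}$ is not a suspension, I would argue by looking at the local structure: $\partial\Pi^m$ is the $(m-1)$-skeleton of $\partial\Delta^{m+1}$, i.e.\ the $(m-1)$-skeleton of the boundary of an $(m+1)$-simplex. If it were a suspension $\Sigma(W)$, then two of its points (the suspension points) would have link equal to all of $\partial\Pi^m$ itself, hence would have intrinsic dimension one less than generic points; but in the $(m-1)$-skeleton of a simplex all vertices are equivalent and their links are $(m-2)$-skeletons of smaller simplices, which are honestly $(m-1)$-dimensional polyhedra that are not spheres when $m\ge 2$ — a contradiction with the suspension points needing link $\Sigma(W)$ with $W$ of one lower dimension but the whole space $\Sigma(W)$. (In the degenerate cases $m=0,1$ one checks by hand: $\partial\Pi^0$ is two points, $\partial\Pi^1$ is three points, neither of which is $S^0*\text{(point)}$ in the relevant sense, or one adjusts the statement to allow these as base cases where type and intrinsic dimension trivially agree.) Once (b) is in hand, Proposition~\ref{suspension:prop} applies to the link computed in (a): the representation $\lk(x;\Pi^n)=\Sigma^k(\partial\Pi^{n-k})$ with non-suspension base pins down $d(x;\Pi^n)=k$, and conversely knowing $d(x;\Pi^n)=k$ together with the stratification from (c) identifies the type as $k$.

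I expect the main obstacle to be (b) — proving rigorously that the skeleta $\partial\Pi^m$ are not suspensions — since it is exactly the kind of "obvious" local-structure fact that requires care about small-dimensional degeneracies and about what counts as a suspension of a \emph{pair} versus of a space. Everything else is a matter of carefully unwinding the dual representation of $\Pi^n$ inside $\Delta'$ and tracking links of simplices, which is routine combinatorics of barycentric subdivisions once the picture in Figures~\ref{representations:fig} and~\ref{dualspine:fig} is made precise.
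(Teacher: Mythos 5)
Your overall route is the same as the paper's: the proof is a direct application of the Armstrong--Morton criterion (Proposition~\ref{suspension:prop}) once one knows that $\lk(x;\Pi^n)\cong\partial\Pi^n_k\cong\Sigma^k(\partial\Pi^{n-k})$ for a point $x$ of type $k$, together with the fact that $\partial\Pi^{n-k}$ is not itself a suspension. The paper states exactly this, in one sentence, treating the non-suspension fact as evident. Your items (a) and (c) are implicit in the paper's setup (the identification $\partial\Pi^n_k\cong\Sigma^k(\partial\Pi^{n-k})$ appears just above the proposition, and the stratification of $\interior{\Pi^n}$ by type comes from the cone and product structure), so expanding them is fine.

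The one place you try to go beyond the paper --- filling in a proof that $\partial\Pi^{m}$ is not a suspension --- contains a genuine error. You write that if $\partial\Pi^m=\Sigma(W)$, then the two suspension points ``would have link equal to all of $\partial\Pi^m$ itself.'' That is not right: the link of a suspension point of $\Sigma(W)$ is $W$, a polyhedron of dimension $\dim\partial\Pi^m-1$, not $\Sigma(W)$. (A point in an $(m-1)$-complex cannot have an $(m-1)$-dimensional link.) The rest of that paragraph inherits the confusion: the links of the vertices of $\partial\Pi^m$ are copies of $\partial\Pi^{m-1}$, which are $(m-2)$-dimensional, not ``honestly $(m-1)$-dimensional.'' A correct argument runs by induction: assume $\partial\Pi^{m-1}$ is not a suspension (base case $m=1$: three points are not a suspension). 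By Armstrong--Morton at most two points of a suspension $\Sigma(W)$ can have a non-suspension link (the two suspension points, with link $W$); but $\partial\Pi^m$ has $m+2\geqslant 3$ vertices, each with link $\partial\Pi^{m-1}$, a non-suspension. So $\partial\Pi^m$ is not a suspension. You correctly flag (b) as the delicate step, and the paper itself leaves it unproved; but your proposed proof of it, as written, would not go through.
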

\begin{proof}
Since $\partial\Pi^{n-k}$ is not a suspension, a point of type $k$ has intrinsic dimension $k$ by Proposition \ref{suspension:prop}. 
\end{proof}

The polyhedron $\Pi^n$ may be constructed recursively. In the
following, we see both $\Pi^{n-1}$ and $S^{n-1}$ inside $D^n$. See Fig. \ref{link:fig}.

\begin{figure}
 \begin{center}
  \includegraphics[width = 6cm]{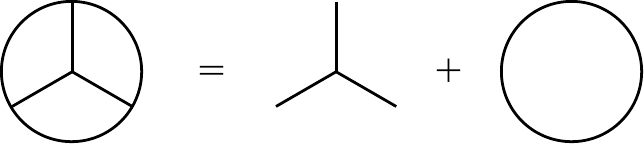}
 \end{center}
 \caption{We have $\partial\Pi^n \cong \Pi^{n-1}\cup S^{n-1}$. Here $n=2$.}
 \label{link:fig}
\end{figure}

\begin{prop}\label{recursive:prop}
We have $\partial \Pi^n \cong \Pi^{n-1}\cup  S^{n-1}$.
\end{prop}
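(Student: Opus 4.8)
The plan is to read the decomposition directly off the simplicial structure of $\partial\Pi^n$. By definition $\partial\Pi^n$ is the $(n-1)$-skeleton of the simplex $\Delta=\Delta^{n+1}$; write $v_0,\dots,v_{n+1}$ for its vertices, so that the faces of $\partial\Pi^n$ are exactly the subsets of $\{v_0,\dots,v_{n+1}\}$ of cardinality at most $n$. First I would single out the vertex $v_{n+1}$ and split $\partial\Pi^n$ into two subcomplexes: the full subcomplex $L_1$ spanned by $v_0,\dots,v_n$ (the faces avoiding $v_{n+1}$) and the closed star $L_2=\overline{\st}(v_{n+1},\partial\Pi^n)$ (the union of the closed faces through $v_{n+1}$). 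Every face either avoids $v_{n+1}$, and so lies in $L_1$, or contains $v_{n+1}$, and so lies in $L_2$; hence $\partial\Pi^n=L_1\cup L_2$.

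Next I would identify the two pieces. The subcomplex $L_1$ consists of all subsets of $\{v_0,\dots,v_n\}$ of size $\leqslant n$, i.e.\ it is the $(n-1)$-skeleton of the $n$-simplex $[v_0,\dots,v_n]$; since the $(n-1)$-skeleton of an $n$-simplex is its whole boundary, $L_1=\partial[v_0,\dots,v_n]\cong S^{n-1}$. For $L_2$: a face through $v_{n+1}$ has the form $\{v_{n+1}\}\cup\tau$ with $\tau\subseteq\{v_0,\dots,v_n\}$ and $|\tau|\leqslant n-1$, so $L_2$ is the join $v_{n+1}*S$, where $S$ is the $(n-2)$-skeleton of $[v_0,\dots,v_n]$. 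By definition $S=\partial\Pi^{n-1}$, hence $L_2=v_{n+1}*\partial\Pi^{n-1}\cong C(\partial\Pi^{n-1})=\Pi^{n-1}$.

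Then I would record the gluing and place the picture inside $D^n$. Intersecting, $L_1\cap L_2$ is the set of faces in $\{v_0,\dots,v_n\}$ of size $\leqslant n-1$, i.e.\ the $(n-2)$-skeleton $\partial\Pi^{n-1}$ of $[v_0,\dots,v_n]$; this is a subcomplex of $L_1=\partial[v_0,\dots,v_n]=S^{n-1}$ and is precisely the base $\partial\Pi^{n-1}$ of the cone $L_2=\Pi^{n-1}$. Thus $\partial\Pi^n$ is $\Pi^{n-1}$ glued to $S^{n-1}$ along $\partial\Pi^{n-1}$. To see this concretely, realize $D^n=[v_0,\dots,v_n]$, so that $S^{n-1}=\partial D^n$ and $\partial\Pi^{n-1}\subset\partial D^n$ is the $(n-2)$-skeleton; coning $\partial\Pi^{n-1}$ from an interior point of $D^n$ (say its barycenter) produces a subpolyhedron of $D^n$ PL-homeomorphic to $\Pi^{n-1}$, and its union with $\partial D^n=S^{n-1}$ is then PL-homeomorphic to $\partial\Pi^n$, which is the assertion.

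I do not expect a serious obstacle: the core argument is pure bookkeeping with skeleta of a simplex, and the small-dimensional checks ($\partial\Pi^2\cong K_4$ obtained from a triangle by coning its vertices, etc.) confirm it. The only point requiring a little care is the last one — passing from the abstract gluing $S^{n-1}\cup_{\partial\Pi^{n-1}}\Pi^{n-1}$ to a genuine subpolyhedron of $D^n$ — where one invokes the standard fact that, $D^n$ being convex, the join of a subpolyhedron of $\partial D^n$ with an interior point realizes the abstract cone, independently of the chosen point up to an ambient PL-homeomorphism of $D^n$ fixing $\partial D^n$.
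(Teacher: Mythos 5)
Your proof is correct and takes essentially the same approach as the paper: both single out a vertex $v$ with opposite facet $f\cong\Delta^n$ and decompose the $(n-1)$-skeleton of $\Delta^{n+1}$ into $\partial f\cong S^{n-1}$ (faces avoiding $v$) and the cone from $v$ over the $(n-2)$-skeleton of $f$ (faces through $v$), which is $\Pi^{n-1}$. Your write-up just spells out the intersection and the realization inside $D^n$ in more detail than the paper's one-line argument.
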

\begin{proof}
Take a vertex $v$ and the opposite face $f$ in $\Delta^{n+1}$.
The $(n-1)$-skeleton of $\Delta^{n+1}$ is the union of $\partial f$ and a cone over the $(n-2)$-skeleton of $f$ with base $v$.
\end{proof}

\begin{cor}\label{recursive:cor}
We have $\Pi^n\cong \big(\Pi^{n-1}\times [0,1]\big) \cup
\big(D^n\times \{0\}\big)$.
\end{cor}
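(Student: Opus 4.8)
The plan is to recognise the right-hand side
$$P:=\big(\Pi^{n-1}\times[0,1]\big)\cup\big(D^n\times\{0\}\big)$$
as an explicit cone over $\partial\Pi^n$, and then to conclude using the very definition $\Pi^n=C(\partial\Pi^n)$ together with Proposition~\ref{recursive:prop}. So I would first fix a convenient model for the standard inclusion $\Pi^{n-1}\subset D^n$: let $D^n$ be a round ball in $\matR^n$ centred at the origin $0$, with $\partial D^n=S^{n-1}$, and let $\Pi^{n-1}$ be the cone from $0$ over $\partial\Pi^{n-1}\subset S^{n-1}$. Then $\Pi^{n-1}\cap S^{n-1}=\partial\Pi^{n-1}$ and $\Pi^{n-1}$ is star-shaped with respect to $0$; view $P$ inside $D^n\times[0,1]\subset\matR^{n+1}$ and set $p_0:=(0,0)$.

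The first step is to observe that $P$ is star-shaped with respect to $p_0$. Indeed, for $(x,0)\in D^n\times\{0\}$ the segment to $p_0$ lies in the convex slab $D^n\times\{0\}$, while for $(x,t)\in\Pi^{n-1}\times[0,1]$ one has $\lambda x\in\Pi^{n-1}$ for all $\lambda\in[0,1]$ because $\Pi^{n-1}$ is a cone from $0$, hence $(\lambda x,\lambda t)\in\Pi^{n-1}\times[0,1]$. A compact polyhedron star-shaped with respect to one of its points $p_0$ is PL-homeomorphic to $C(\lk(p_0,P))$ via its radial structure, so the problem reduces to identifying the link $\lk(p_0,P)$.

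The second step is the computation of this link. Since $P$ is the union of $D^n\times\{0\}$ and $\Pi^{n-1}\times[0,1]$ glued along $\Pi^{n-1}\times\{0\}$, the link splits accordingly, $\lk(p_0,P)=\lk(0,D^n)\cup\lk\big((0,0),\Pi^{n-1}\times[0,1]\big)$, glued along $\lk(0,\Pi^{n-1})$. Now $\lk(0,D^n)\cong S^{n-1}$; the link of the cone-point of $\Pi^{n-1}=C(\partial\Pi^{n-1})$ is its base, so $\lk(0,\Pi^{n-1})=\partial\Pi^{n-1}$, embedded in $\lk(0,D^n)$ exactly as $\partial\Pi^{n-1}$ lies in $S^{n-1}=\partial D^n$ (these are the radial directions at $0$); and by the join formula for links in products, $\lk\big((0,0),\Pi^{n-1}\times[0,1]\big)=\lk(0,\Pi^{n-1})*\lk(0,[0,1])=\partial\Pi^{n-1}*(\text{point})=C(\partial\Pi^{n-1})=\Pi^{n-1}$, with the gluing locus $\partial\Pi^{n-1}$ appearing as the base of this cone. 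Thus $\lk(p_0,P)$ is a copy of $S^{n-1}$ with a copy of $\Pi^{n-1}$ attached along $\partial\Pi^{n-1}\subset S^{n-1}$, which is precisely the description of $\partial\Pi^n$ given by Proposition~\ref{recursive:prop}. Hence $\lk(p_0,P)\cong\partial\Pi^n$ and $P\cong C(\partial\Pi^n)=\Pi^n$.

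The point that needs the most care is the compatibility of the two gluings in the second step: one must check that the copy of $\partial\Pi^{n-1}$ along which $\lk(0,D^n)$ and $\lk((0,0),\Pi^{n-1}\times[0,1])$ are identified is really the standard $\partial\Pi^{n-1}\subset S^{n-1}$ on one side and the standard base $\partial\Pi^{n-1}\subset\Pi^{n-1}$ on the other, so that Proposition~\ref{recursive:prop} applies verbatim and no unexpected identification creeps in; this is immediate from the chosen model of $\Pi^{n-1}\subset D^n$, but it is where an informal argument could go wrong. Alternatively one can bypass links entirely and check directly that $P$ is the cone from $p_0$ over its radial frontier $\big(S^{n-1}\times\{0\}\big)\cup\big(\Pi^{n-1}\times\{1\}\big)\cup\big(\partial\Pi^{n-1}\times[0,1]\big)$, which is again $\cong\partial\Pi^n$ by Proposition~\ref{recursive:prop}; or one may split $\Pi^n=C(\partial\Pi^n)=C(\Pi^{n-1})\cup C(S^{n-1})$ using Proposition~\ref{recursive:prop} and apply the homeomorphism $C(CX)\cong CX\times[0,1]$ to $X=\partial\Pi^{n-1}$, at the cost of tracking the relevant subpolyhedra through it.
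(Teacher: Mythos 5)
The paper offers no proof of this corollary at all: it simply declares it a consequence of Proposition~\ref{recursive:prop} and points to Figure~\ref{star:fig}. Your argument is therefore necessarily more detailed than the source, and its overall strategy (reduce to $\Pi^n=C(\partial\Pi^n)$ and identify the right-hand side as an explicit cone over $\partial\Pi^n$ via Proposition~\ref{recursive:prop}) is the natural one and is correct.

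One step, however, is stated in a form that is false in general. You assert that a compact polyhedron $P$ star-shaped with respect to a point $p_0\in P$ is PL-homeomorphic to $C(\lk(p_0,P))$. A counterexample: let $P=\{|x|+|y|\leqslant 1\}\cup\big([1,2]\times\{0\}\big)\subset\matR^2$ with $p_0$ the origin. This $P$ is star-shaped with respect to $p_0$, yet $\lk(p_0,P)\cong S^1$ while $P$ is a disc with a whisker and is not $D^2=C(S^1)$. The problem is that the radial retraction need not be continuous, and rays from $p_0$ can have a non-closed set of frontier points. So star-shapedness alone does not yield a cone structure; one must additionally check that the radial map $x\mapsto\sup\{\lambda:\,p_0+\lambda(x-p_0)\in P\}$ is continuous and that the radial frontier is a closed subpolyhedron. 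You do flag exactly this as the delicate point and you supply the correct fix in your alternative: verify directly that for $P=(\Pi^{n-1}\times[0,1])\cup(D^n\times\{0\})$ each ray from $p_0$ meets $P$ in a segment, that the endpoint varies continuously, and that the resulting radial frontier is $\big(S^{n-1}\times\{0\}\big)\cup\big(\partial\Pi^{n-1}\times[0,1]\big)\cup\big(\Pi^{n-1}\times\{1\}\big)$, which is plainly homeomorphic to $S^{n-1}\cup_{\partial\Pi^{n-1}}\Pi^{n-1}\cong\partial\Pi^n$ by Proposition~\ref{recursive:prop}. That verification is what actually carries the proof, so I would promote it from an aside to the main line of the argument.

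Your last sketch is arguably the cleanest and is worth preferring: from $\partial\Pi^n\cong\Pi^{n-1}\cup_{\partial\Pi^{n-1}}S^{n-1}$ one gets
$$\Pi^n=C(\partial\Pi^n)\cong C(\Pi^{n-1})\cup_{C(\partial\Pi^{n-1})}C(S^{n-1})\cong C(\Pi^{n-1})\cup_{\Pi^{n-1}}D^n,$$
and then $C(\Pi^{n-1})=C\big(C(\partial\Pi^{n-1})\big)\cong C(\partial\Pi^{n-1})\times[0,1]=\Pi^{n-1}\times[0,1]$, with the base of the cone going to $\Pi^{n-1}\times\{0\}$; the required homeomorphism $C(CX)\cong CX\times[0,1]$ rel the base $CX$ is the elementary fact that a triangle is PL-homeomorphic to a square rel one side, applied fibrewise over $X$. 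This avoids any appeal to star-shapedness or links and transports the two gluing loci correctly.
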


\begin{figure}
 \begin{center}
  \includegraphics[width = 3.5cm]{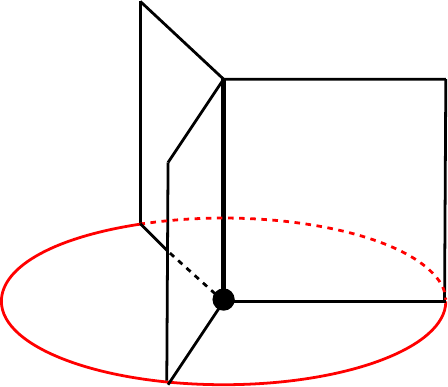}
 \end{center}
 \caption{We have $\Pi^n \cong\big(\Pi^{n-1}\times [0,1]\big) \cup \big(D^n\times \{0\}\big)$. Here $n=2$: the disc $D^2\times \{0\}$ is horizontal (in red) and $\Pi^1\times [0,1]$ is vertical (in black).}
 \label{star:fig}
\end{figure}

See Fig \ref{star:fig}. The following is an easy corollary of Proposition \ref{suspension:prop}.
\begin{ex} \label{suspension:ex}
If $X$ is a polyhedron such that $X\times [-1,1]^h\cong \Pi^n_k$ then
$X\cong \Pi^n_{k-h}$.
\end{ex}

\subsection{Simple polyhedron} \label{simple:subsection}
\begin{figure}
 \begin{center}
 \resizebox{12 cm}{!}{\input{simple.pdftex_t}}
  \end{center}  
 \caption{Simple polyhedra of dimension $n=1$ (a circle and a trivalent graph) and $n=2$ (a sphere and a torus with two discs attached).}
 \label{simple:fig}
 \end{figure}
\begin{defn} \label{simple:defn} A compact polyhedron $P^n$ is \emph{simple} if every point of $P$ is of some type $k$ (that is, its link is homeomorphic to $\partial\Pi^n_k$).
\end{defn}
See some examples in Fig.~\ref{simple:fig}. 
A point of type $0$ is called a \emph{vertex}.
In this paper, every simple polyhedron $P\subset \interior{M^n}$
contained in some manifold 
$M^n$ will be tacitly assumed to have codimension $1$ and to be locally
unknotted, see Section \ref{intrinsic:subsection}. This is
equivalent to require that $P$ is \emph{properly embedded} in
Matveev's sense \cite{Mat:special}: the equivalence is proved in 
Section \ref{unknotted:subsection}. Local unknottedness is actually automatic in dimension $n\leqslant 4$, see Remark \ref{automatic:rem} below.

\begin{ex} \label{simple:ex}
The polyhedron $\partial \Pi^n$ is simple with $n+2$ vertices.
\end{ex}
The exercise is also proved as Corollary \ref{simple:cor} below.

\subsection{Complexity} \label{complexity:subsection}

\begin{figure}
 \begin{center}
 \includegraphics[width = 12cm]{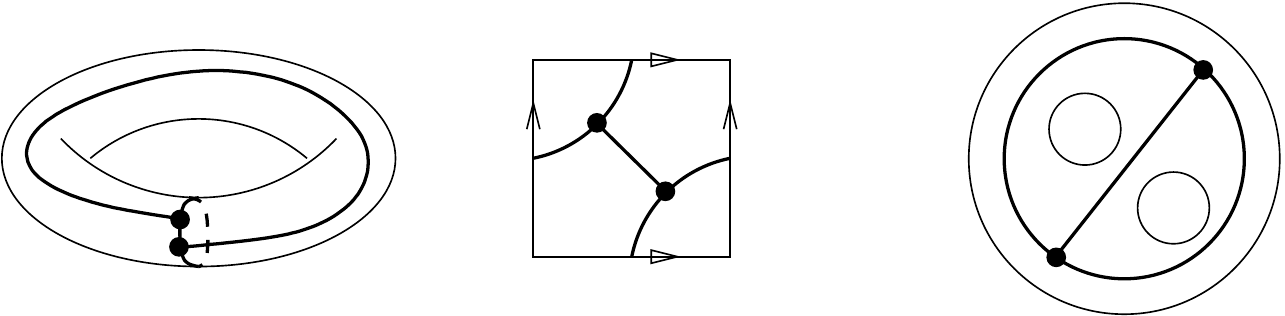}
  \end{center} 
   \caption{A spine of the torus (left and center) and of the
   pair-of-pants (right).}
 \label{spines:fig}
\end{figure}

A spine of a manifold is usually defined as a subpolyhedron onto which
the manifold collapses. This definition however applies only to
manifolds with boundary: in order to extend it to closed manifolds, we
allow the removal of an arbitrary number of open balls.
\begin{defn}
Let $M$ be a compact manifold. A subpolyhedron $P\subset \interior{M}$ is  a \emph{spine} of $M$ if there are some disjoint discs $D_1,\ldots, D_k\subset \interior M$, disjoint also from $P$, such that $M\setminus \interior{D_1\cup\ldots\cup D_k}$ collapses onto $P$.
\end{defn}

See some examples in Fig.~\ref{spines:fig}. We are now ready to define the complexity of a manifold.
\begin{defn}
The \emph{complexity} $c(M)$ of a compact manifold $M$ is the minimum number of vertices in a simple spine of $M$. 
\end{defn}

Every compact manifold admits a simple spine (see \cite{Mat:special} or Corollary \ref{exists:cor} below) and hence this quantity is indeed finite. A simple spine $P\subset M$ is \emph{minimal} if it has $c(M)$ vertices.

\subsection{Examples} \label{examples:subsection}
The equator $(n-1)$-sphere is a simple spine of $S^n$: the $n$-sphere
collapses to it after removing two small balls centered at the
poles. Analogously, a hyperplane is a simple spine of $\matRP^n$
without vertices ($\matRP^n$ collapses to it after removing one
ball). When $n\geqslant 2$ these spines have no vertices and 
therefore $c(S^n)=c(\matRP^n)=0$.  When $n=1$, the circle $S^1$ has a point as a simple spine, which is indeed a vertex, and hence $c(S^1)=1$.

Figure \ref{spines:fig} shows a spine with $2$ vertices of the $2$-torus $T$: hence $c(T)\leqslant 2$. It is easy to see that $T$ has no spine with lower number of vertices, and hence $c(T)=2$. A similar argument shows the following. 

\begin{ex} \label{surface:ex}
The complexity $c(\Sigma)$ of a closed surface $\Sigma$ is 
\begin{itemize}
\item $c(\Sigma) = \max\{2-2\chi(\Sigma),0\}$ if $\Sigma$ is closed,
\item $c(\Sigma) = \max\{-2\chi(\Sigma),0\}$ if  $\Sigma$ has boundary.
\end{itemize}
\end{ex}

The surfaces having complexity zero are $S^2$, $\matRP^2$, the annulus, and the M\"obius strip. They all have a circle as a spine without vertices.

Many examples in dimension 3 can be found in the literature \cite{Bu, MaPe, survey, Mat, Mat:book}, so we turn to higher-dimensional manifolds. A nice spine for $\matCP^n$ can be described by using a technique which was inspired to us by tropical geometry \cite{Mik}. Consider the projection
$$
\begin{array}{rlcl}
p: & \matCP^n & \longrightarrow & \Delta^n \\
 & \left[z_0,\ldots,z_{n}\right] & \longmapsto & \left(\frac{|z_0|}{|z_0|+\ldots +|z_{n}|},\ldots, \frac {|z_{n}|}{|z_0|+\ldots+|z_{n}|}\right).
 \end{array}
$$

Consider $\Pi^{n-1}$ dually embedded in $\Delta^n$.
The counterimage $p^{-1}(\Pi^{n-1})$ is a simple spine of $\matCP^n$ without vertices (its complement consists of $n+1$ open balls ``centered'' at the points $[0,\ldots,0,1,0,\ldots,0]$). Therefore $c(\matCP^n)=0$.

The spine of $\matCP^2$ fibers over $\Pi^1$. It consists of three solid tori attached to one 2-torus. We find such a spine also from a different construction. Let $M^4$ be a closed 4-manifold which decomposes with 0-, 2-, and 4-handles only. The attaching of the 2-handles is encoded by a framed link $L\subset S^3$. Let $P$ be the union of the boundaries of all the handles involved. It consists of a 3-sphere $S^3$ plus one solid torus attached to (a regular neighborhood of) each component of $L$. When $L$ is the 1-framed unknot we find $M^4=\matCP^2$ and we get the same spine as above. In general, we get a simple spine of $M$ without vertices (all points are of type $3$ or $2$). Therefore $c(M^4)=0$.

\section{Collars} \label{collars:section}
As proved by Matveev \cite{Mat:special}, a locally unknotted simple polyhedron $P\subset M$ has a kind of collar, similar to a collar of the boundary of a manifold. We introduce the collar by defining the \emph{cut map} in Section \ref{cut:subsection}. To do this, we first need to prove that Matveev's notion of local flatness (which is more useful in the context of simple spines) coincides with the general one introduced in Section \ref{intrinsic:subsection}.

\subsection{Matveev's definition} \label{unknotted:subsection}
Matveev introduced in \cite{Mat:special} a different definition of local unknottedness for simple polyhedra, which is more useful here. We show that it coincides with the general one introduced in Section \ref{intrinsic:subsection}. The proof is not strictly necessary (we could use Matveev's notion and forget about the general one), but we include it for completeness.

We defined the pair $(D^n,\Pi^{n-1}_k)$ in Section \ref{local:subsection}. The following definition is due to Matveev \cite{Mat:special}.

\begin{defn} A simple polyhedron $P^{n-1}\subset M^n$ in a manifold $M^n$ is \emph{properly embedded} if the link of every point in $(M,P)$ is homeomorphic to $(S^{n-1}, \partial\Pi^{n-1}_k)$ for some $k$. 
\end{defn}

\begin{prop} \label{equivalent:prop}
A simple polyhedron $P\subset \interior M$ of codimension 1 is locally unknotted if and only if it is properly embedded.
\end{prop}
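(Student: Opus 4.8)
The plan is to compare the two conditions \emph{pointwise}, with the Armstrong--Morton Proposition \ref{suspension:prop} as the main engine, together with one elementary remark used throughout: for $m\geqslant 1$ the polyhedron $\partial\Pi^m$ is \emph{not} homeomorphic to a suspension. This follows by induction on $m$, since $\partial\Pi^m$ (the $(m-1)$-skeleton of $\Delta^{m+1}$) has exactly $m+2\geqslant 3$ points of intrinsic dimension $0$ — the vertices of $\Delta^{m+1}$, each with link $\partial\Pi^{m-1}$, which is not a suspension by the inductive hypothesis, so Proposition \ref{suspension:prop} certifies their intrinsic dimension — whereas every suspension has at most two such points (the only candidates are its two poles). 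Since $\partial\Pi^{n-1}_k\cong\Sigma^k\partial\Pi^{n-1-k}$ and $(S^{n-1},\partial\Pi^{n-1}_k)\cong\Sigma^k(S^{n-1-k},\partial\Pi^{n-1-k})$, this remark makes Proposition \ref{suspension:prop} applicable to both, with reduced parts $\partial\Pi^{n-1-k}$ and $(S^{n-1-k},\partial\Pi^{n-1-k})$.

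The implication \emph{properly embedded $\Rightarrow$ locally unknotted} is then immediate: if $\lk(x;(M,P))\cong(S^{n-1},\partial\Pi^{n-1}_k)=\Sigma^k(S^{n-1-k},\partial\Pi^{n-1-k})$, then Proposition \ref{suspension:prop} gives $d(x;M,P)=k$; since $x$ is in particular a point of type $k$ of the simple polyhedron $P$, applying the same proposition to $P$ alone gives $d(x;P)=k$; hence $d(x;M,P)=d(x;P)$ at every $x$.

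For the converse I would induct on $n=\dim M$. Let $x\in P$ be of type $k$; then $d(x;P)=k$, so $d(x;M,P)=k$ by local unknottedness, and the ``star'' description of intrinsic dimension (condition (3) of Section \ref{intrinsic:subsection}) gives $\st(x;(M,P))\cong C(W,Z)\times D^k$ with $(W,Z)$ not a suspension, hence $\lk(x;(M,P))\cong\Sigma^k(W,Z)$. Peeling off the $D^k$-factor (cf.\ Exercise \ref{suspension:ex}) and comparing with $\st(x;M)\cong D^n$ forces $W\cong S^{n-1-k}$, since a cone is a ball only over a sphere. For $Z$: restricting to $P$ gives $\lk(x;P)\cong\Sigma^kZ$, while $x$ being of type $k$ gives $\lk(x;P)\cong\Sigma^k\partial\Pi^{n-1-k}$; because $d(x;P)=k$, Proposition \ref{suspension:prop} forbids $Z$ from being a suspension, so its uniqueness clause yields $Z\cong\partial\Pi^{n-1-k}$. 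By Exercise \ref{unknotted:ex}, $(W,Z)$ is again a locally unknotted, codimension-one simple polyhedron, now in a sphere of dimension $n-1-k<n$, so by induction it is \emph{properly embedded}. It remains to upgrade ``properly embedded'' to the \emph{standard} pair; taking $k$-fold suspensions then shows $\lk(x;(M,P))\cong(S^{n-1},\partial\Pi^{n-1}_k)$.

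This last step is the real obstacle: I must show that a codimension-one simple polyhedron $\partial\Pi^p\subset S^p$ all of whose point-links in the pair are standard is in fact the standard pair. My plan for it: by Alexander duality $S^p\setminus\partial\Pi^p$ has exactly $p+2$ components regardless of the embedding, since $\partial\Pi^p\simeq\bigvee^{p+1}S^{p-1}$. The hypothesis that all point-links are standard means that near $\partial\Pi^p$ the pair looks exactly like the standard cone models, so on one hand a regular neighbourhood $R$ of $\partial\Pi^p$ is PL-homeomorphic to the standard one (all strata of $\partial\Pi^p$ are balls, so no twisting occurs), and on the other hand the frontier of each complementary region is a bicollared PL $(p-1)$-sphere in $S^p$. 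By the PL Schoenflies theorem each such sphere bounds a PL $p$-ball, necessarily the closure of that region. Thus $(S^p,\partial\Pi^p)$ is obtained from the standard regular neighbourhood of $\partial\Pi^p$ by gluing a $p$-ball along each of its $p+2$ boundary spheres; since attaching a $p$-ball along a self-homeomorphism of $S^{p-1}$ is unique up to homeomorphism, the pair coincides with the standard one. (Alternatively one could simply quote Matveev's analysis of properly embedded simple polyhedra in \cite{Mat:special}.)
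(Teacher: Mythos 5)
Your proof has the same overall structure as the paper's: the easy direction is a direct application of Proposition \ref{suspension:prop}, and the hard direction inducts on $n$, reducing at a type-$k$ point to showing that a locally unknotted, properly embedded copy of $\partial\Pi^{n-1-k}$ in $S^{n-1-k}$ is the standard pair. Your preliminary observation that $\partial\Pi^m$ is not a suspension (counting $m+2\geqslant 3$ intrinsic-dimension-$0$ points) and the identifications of $W\cong S^{n-1-k}$ and $Z\cong\partial\Pi^{n-1-k}$ via Armstrong--Morton are correct and somewhat more explicit than the paper's.

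Where you diverge is in closing the final, ``real obstacle'' step. The paper settles it by citing \cite[Theorem 3]{Mat:special} for \emph{special} polyhedra, which supplies the extension of the homeomorphism $Y\cong\partial\Pi^{n-1}_k$ over a regular neighbourhood and from there to $S^{n-1}$. Your primary substitute is Alexander duality together with the PL Schoenflies theorem, and this has a genuine gap: the PL Schoenflies theorem for a locally flat PL sphere $S^{p-1}\subset S^{p}$ is open precisely when $p=4$, and that is exactly what you need when $n-1-k=4$ (for instance at a vertex of a simple spine of a $5$-manifold, and more generally at type-$(n-5)$ points for $n\geqslant 5$). Each complementary region of $\partial\Pi^4\subset S^4$ is only known to be a compact contractible PL $4$-manifold sitting in $S^4$ with boundary $S^3$; whether such a manifold must be a PL $4$-ball is precisely the open $4$-dimensional PL Schoenflies problem, to which the paper's Remark \ref{automatic:rem} alludes. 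So the Schoenflies-based route cannot stand in for the citation in all dimensions. Your parenthetical fallback --- simply quoting Matveev's analysis from \cite{Mat:special} --- is what the paper actually does, and the proof can be repaired by promoting that to the main argument; but the Alexander-duality-plus-Schoenflies route as written does not close the case $n-1-k=4$.
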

\begin{proof}
It is easy to see that a properly embedded $P^{n-1}\subset M^n$ is locally unknotted. We prove the converse by induction on $n$. The case $n=1$ is trivial, so we assume $n\geqslant 2$. Let $x$ be a point of $P$, of some type $k$. The link of $x$ in $(M,P)$ is homeomorphic to $(S^{n-1},Y^{n-2})$ with $Y^{n-2}\cong\partial \Pi^{n-1}_k$. We must show that the homeomorphism extend to pairs, \emph{i.e.}~that $(S^{n-1},Y^{n-2})\cong(S^{n-1}, \partial \Pi^{n-1}_k)$.

Since $P$ is locally unknotted, the pair $(S^{n-1}, Y^{n-2})$ is also locally unknotted by Exercise \ref{unknotted:ex}. The polyhedron $Y$ is simple by Exercise \ref{simple:ex}, and is hence properly embedded by our induction hypothesis. Since $Y\cong\partial \Pi^{n-1}_k$ is a \emph{special} polyhedron (\emph{i.e.}~a simple polyhedron whose $(n-2)$-components are discs), \cite[Theorem 3]{Mat:special} ensures that the homeomorphism $Y\cong \partial\Pi^{n-1}_k$ indeed extends to a 
regular neighborhood and hence to the whole of $S^{n-1}$, as required. 
\end{proof}

\begin{rem} \label{automatic:rem}
Local unknottedness is automatic in dimension $n\leqslant 4$: in these dimensions, every embedding of $\partial\Pi^{n-1}_k$ in $S^{n-1}$ is in fact easily seen to be standard. In dimension $5$, a nonstandard pair $(S^4,S^3)$, if it exists, could lead to nonstandard embeddings of $\Pi^3_k$ in $S^4$. \end{rem}

\subsection{Cut map}\label{cut:subsection}
\begin{figure}
 \begin{center}
 \includegraphics[width = 12.5cm]{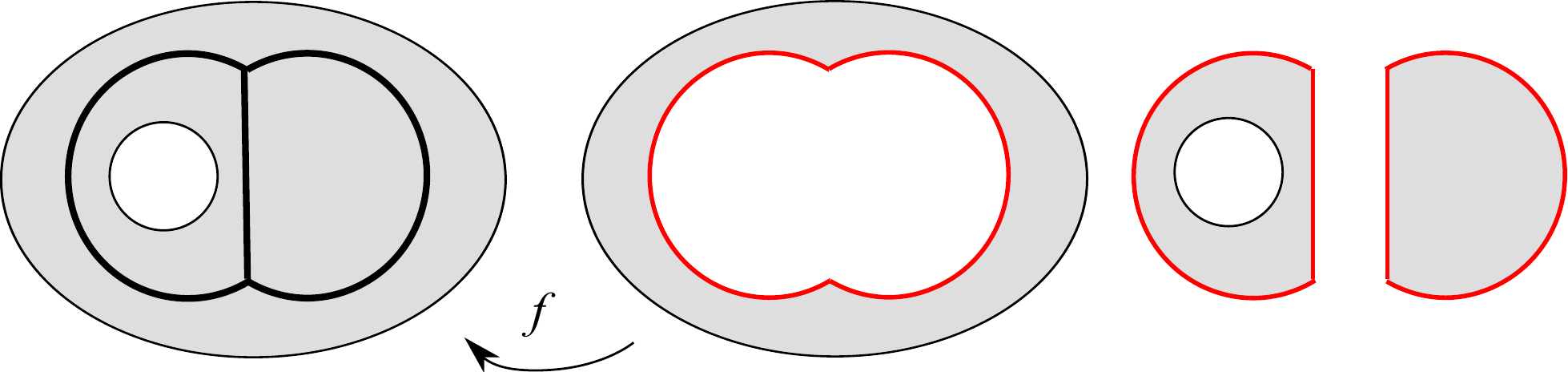}
 \end{center}
\caption{The cut map $f:M_P\to M$ cuts the manifold $M$ along the simple polyhedron $P$. The red boundary is $\partial_0 M_P$. }
\label{cut:fig}
\end{figure}

As noted by Matveev \cite{Mat:special}, the locally unknotted embedding of a simple polyhedron allows us to define a \emph{collar}, similar to the collar of a boundary in a manifold.

Let $P\subset \interior M$ be a simple polyhedron in a compact manifold. By cutting $M$ along $P$ as suggested in Fig.~\ref{cut:fig}, we get a manifold $M_P$ with boundary and a surjective map $f:M_P\to M$. 

The set $f^{-1}(P)\subset M_P$ consists of some components of $\partial M_P$, which we denote by $\partial_0 M_P$. The map $f$ is a local embedding. It is $(n-k+1)$-to-$1$ over a point of type $k$ in $(M,P)$. In particular, it restricts to a homeomorphism of $M_P\setminus\partial_0 M_P$ onto $M\setminus P$.

Regular neighborhoods $R(P)$ of $P$ in $M$ correspond via $f$ to collars of $\partial_0 M_P$. The function $f$, restricted to one such collar, gives a \emph{collar} $\partial R(P)\times [0,1]\to R(P)$ of $P$, as shown in \cite{Mat:special}. This discussion implies in particular the following.
\begin{prop} \label{cut:prop}
Let $P\subset \interior M$ be a simple polyhedron. It is a spine of $M$ if and only if 
$$M_P = N\times [0,1] \sqcup D_1\sqcup\ldots \sqcup D_k $$
where $D_1,\ldots,D_k$ are discs, $N$ is a possibly disconnected $(n-1)$-manifold, and $\partial_0 M_P = N\times 0$.
\end{prop}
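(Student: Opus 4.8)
## Proof proposal for Proposition \ref{cut:prop}

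The plan is to analyze the cut map $f:M_P\to M$ near $P$ using the collar structure already established. The starting point is the observation, recalled just before the statement, that $f$ restricts to a homeomorphism of $M_P\setminus\partial_0 M_P$ onto $M\setminus P$, and that a regular neighborhood $R(P)$ of $P$ in $M$ pulls back under $f$ to a collar $\partial_0 M_P\times[0,1]$ of $\partial_0 M_P$ in $M_P$. Writing $W = \overline{M\setminus R(P)}$, we have $M_P \cong W \sqcup (\partial_0 M_P \times [0,1])$ glued along $\partial_0 M_P\times\{1\}\to \partial W$, so in fact $M_P$ is homeomorphic to $W$ (absorb the collar), and $\partial_0 M_P$ corresponds to the copy of $\partial R(P)$ sitting inside $\partial W$. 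Thus the content of the proposition is: \emph{$P$ is a spine of $M$ iff $\overline{M\setminus R(P)}$ is a disjoint union of a product $N\times[0,1]$ (with $N\times\{0\}$ the frontier component) and finitely many discs.}

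For the ``only if'' direction, suppose $P$ is a spine. By definition there are disjoint discs $D_1,\ldots,D_k\subset\interior M$, disjoint from $P$, with $M' := M\setminus\interior(D_1\cup\cdots\cup D_k)$ collapsing onto $P$. A collapse of a manifold-with-boundary onto a codimension-$1$ properly embedded subpolyhedron is, via the collar, a collapse of $\overline{M'\setminus R(P)}$ onto $\partial R(P)$ rel nothing — more precisely, the collapse $M'\searrow P$ can be taken to restrict to a collapse of the manifold-with-boundary $\overline{M'\setminus R(P)}$ onto the component of its boundary facing $P$. A compact PL manifold with boundary that collapses onto one of its boundary components is a product over that component (this is the standard ``collapsible collar'' fact: if $X^n\searrow \partial_1 X$ and $\partial_1 X$ is a union of boundary components, then $X\cong \partial_1 X\times[0,1]$). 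Applying this component by component gives that each component of $\overline{M'\setminus R(P)}$ is a product $N_j\times[0,1]$ with $N_j\times\{0\}$ facing $P$; reassembling with the removed discs $D_1,\ldots,D_k$ yields exactly the asserted form of $M_P$. For the ``if'' direction, suppose $M_P = N\times[0,1]\sqcup D_1\sqcup\cdots\sqcup D_k$ with $\partial_0 M_P = N\times\{0\}$. Pushing the $D_i$ slightly into $\interior M$ via $f$ produces disjoint discs in $\interior M$ disjoint from $P$; and $N\times[0,1]$ manifestly collapses onto $N\times\{0\}=\partial_0 M_P$, which $f$ identifies with $\partial R(P)$, so $M\setminus\interior(D_1\cup\cdots\cup D_k)$ collapses onto $\partial R(P)$ and then across the collar onto $P$. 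Hence $P$ is a spine.

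The main obstacle is the clean statement and use of the ``collapsible collar'' lemma: that a compact PL manifold $X^n$ with boundary which PL-collapses onto a union $\partial_1 X$ of components of $\partial X$ must be PL-homeomorphic to $\partial_1 X\times[0,1]$ (so in particular $\partial_1 X$ is connected iff $X$ is, and the other boundary component is a copy of $\partial_1 X$). This is where one needs genuine PL topology rather than bookkeeping, and one must be careful that the collapse $M'\searrow P$ really can be arranged to behave well with respect to the decomposition $M' = R(P)\cup \overline{M'\setminus R(P)}$ — i.e., that collapsing ``towards $P$'' across the collar is harmless and the remaining collapse lives in the complementary manifold. Both of these are standard consequences of regular neighborhood theory and the fact (used implicitly throughout the paper, cf. Proposition \ref{equivalent:prop} and the discussion of the cut map) that $R(P)$ is a genuine regular neighborhood with the collar structure $\partial R(P)\times[0,1]$; I would cite \cite{PL} for the collapsible-collar fact and otherwise the argument is essentially the translation already sketched above.
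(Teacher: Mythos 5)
Your proof is correct and supplies the details that the paper leaves implicit: the paper states the proposition as an immediate consequence of the preceding cut-map discussion (``This discussion implies...''), and your argument — absorbing the collar $f^{-1}(R(P))\cong\partial_0 M_P\times[0,1]$ to reduce to a statement about $\overline{M\setminus R(P)}$, and then invoking that a compact PL manifold collapsing onto a union of boundary components is a product — is precisely what that discussion is meant to yield. Two small remarks: the discs appearing in the decomposition of $M_P$ are the removed $D_i$ \emph{together with} the adjacent product collars $S^{n-1}\times[0,1]$ (so they are discs, but not literally the same $D_i$), and the equation $\partial_0 M_P = N\times 0$ in the statement should be read as identifying which end of the collar is $\partial_0$ — the boundaries $\partial D_i$ of the disc components of $M_P$ also lie in $\partial_0 M_P$.
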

In other words, a simple polyhedron $P\subset\interior M$ is a spine if and only if $M_P$ consists of a collar and some discs.

\section{Triangulations} \label{triangulations:section}
We describe here a construction which builds a simple polyhedron $P\subset M$ from a triangulation of $M$ and a partition of its vertices. From this we will deduce that $c(M)\leqslant t(M)$ for any compact $M$.

\subsection{Dual models} \label{models:subsection}
We generalize the dual description of $\Pi^n$ inside the simplex $\Delta=\Delta^{n+1}$ to the polyhedra $\Pi^n_k$. Let $\calP=\{V_0,\ldots, V_k\}$ be a partition of the set $V$ of vertices of $\Delta$. Every $V_i$ spans a face $f_i$ of $\Delta$. 

\begin{defn} \label{dual:defn}
The polyhedron \emph{dual} to $(\Delta,\calP)$ is
$$ Z = \bigcup_{i=0}^k \lk(f_i,\Delta'). $$
\end{defn}
When $\calP=\{V\}$ we have $Z=\emptyset$. For the other cases, we have the following.
\begin{prop} \label{Z:prop}
We have $Z\cong \Pi^n_{n+1-k}$.
\end{prop}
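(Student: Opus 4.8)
The plan is to prove $Z \cong \Pi^n_{n+1-k}$ by induction on the dimension $n$, or rather on the structure of the partition $\calP = \{V_0, \ldots, V_k\}$, exploiting the product decomposition already built into the definition of $\Pi^n_k$. Recall $\Pi^n_k = \Pi^{n-k} \times D^k$, so the target $\Pi^n_{n+1-k}$ equals $\Pi^{k-1} \times D^{n+1-k}$; note $|V| = n+2$ and $\sum_i |V_i| = n+2$, so $Z$ is built from the links of $k+1$ faces $f_0, \ldots, f_k$ spanning a partition of the vertex set of $\Delta^{n+1}$.

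First I would record the two base/degenerate cases: when $k=0$ (so $\calP = \{V\}$) we get $Z = \emptyset = \Pi^n_{n+1}$ — wait, actually the proposition as stated excludes $\calP = \{V\}$, and the smallest relevant case is each $V_i$ a single vertex is impossible unless $k = n+1$; the genuinely basic case is $k=1$, where $Z = \lk(f_0, \Delta') \cup \lk(f_1, \Delta')$ with $f_0, f_1$ complementary faces. Here the claim is $Z \cong \Pi^n_n = \Pi^0 \times D^n = S^{-1}* \cdots$, hmm — more carefully, $\Pi^n_n$ should be $D^n$ with something; I would check that $\lk(f_0,\Delta') \cup \lk(f_1,\Delta')$ is exactly the dual hyperplane-type object, reducing to the standard fact that the dual of a simplex split into two faces is a ball. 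For the inductive step, I would use the recursion $\Pi^{n-1} \cong (\Pi^{n-2}\times[0,1]) \cup (D^{n-1}\times\{0\})$ from Corollary \ref{recursive:cor}, combined with the observation that $\lk(f_i, \Delta')$ decomposes compatibly when one vertex is removed from some $V_j$.

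The cleanest route is probably to identify $Z$ directly as the dual complex of $\Delta$ relative to the partition and match it against the dual representation of $\Pi^n_k$ described in Section \ref{local:subsection}. Concretely: the dual representation of $\Pi^{n-k}$ inside $\Delta^{n-k+1}$ is $\bigcup_{v} \lk(v, (\Delta^{n-k+1})')$ over vertices $v$; crossing with $D^k$ corresponds to ``fattening'' $k$ of the vertices into faces $f_i$, and the link of a face $f_i$ in $\Delta'$ is precisely $\lk(\text{any vertex of } f_i, \Delta')$ restricted appropriately — this is where I would invoke that $\lk(f_i, \Delta') = \lk(v_i, (\text{star of } f_i)') \cong \partial(f_i)' * \lk(f_i\text{-as-vertex})$, i.e. a join decomposition that peels off a $D^{|V_i|-1}$ factor from each block. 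Summing over $i$ and using that joins distribute over these unions should assemble $\prod_i D^{|V_i|-1} \times \Pi^{k-1}$-type pieces into exactly $\Pi^{k-1} \times D^{n+1-k} = \Pi^n_{n+1-k}$, after checking $\sum_i(|V_i|-1) = n+2 - (k+1) = n+1-k$.

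The main obstacle I anticipate is keeping the homeomorphism honest at the level of pairs/stratifications rather than just of underlying spaces, and in particular verifying that the union $\bigcup_i \lk(f_i, \Delta')$ glues along the expected lower-dimensional strata — the links $\lk(f_i, \Delta')$ and $\lk(f_j, \Delta')$ intersect in $\lk(f_i * f_j, \Delta')$, and one must confirm these intersections are exactly the ``corner'' pieces of the product $\Pi^{k-1} \times D^{n+1-k}$ and nothing more. I would handle this by setting up an explicit PL homeomorphism on the barycentric subdivision (the vertices of $\Delta'$ are the nonempty subsets of $V$; send $\sigma \subseteq V$ to the point of $\Pi^{k-1}\times D^{n+1-k}$ recording, for each block, whether $\sigma$ meets it), and checking it carries faces to faces. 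Once the combinatorial bijection is in place the rest is bookkeeping; the author's proof of Corollary \ref{recursive:cor} suggests this dual-complex computation is exactly the intended argument.
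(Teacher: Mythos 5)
The core difficulty is that the explicit map you propose at the end --- ``send $\sigma\subseteq V$ to the point recording, for each block, whether $\sigma$ meets it'' --- records only $k+1$ bits and is therefore far too coarse to be a homeomorphism onto $\Pi^{k-1}\times D^{n+1-k}$. What you have written down is precisely the derived map $\phi'\colon (\Delta^{n+1})'\to(\Delta^k)'$ of the simplicial quotient $\phi\colon\Delta^{n+1}\to\Delta^k$ sending each $V_i$ to a single vertex $w_i$; this map is a many-to-one projection with positive-dimensional fibers, not a homeomorphism, and $Z$ is exactly the preimage $(\phi')^{-1}(Z_*)$ of the dual $Z_*\cong\Pi^{k-1}$ of the discrete partition of $\Delta^k$. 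So the combinatorial data you suggest encoding gets you to the same intermediate object the paper uses, but it cannot by itself finish the argument.

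The missing idea is how to get from the projection back to a product. The paper does not try to write a global homeomorphism at all: it observes that $Z$ is a cone from the barycenter $C$ of $\Delta$ (every $\lk(f_i,\Delta')$ contains $C$, since $f_i\neq\Delta$), so $Z$ is homeomorphic to the star of $C$ in $Z$; and near $C$ the map $\phi'$ is locally isomorphic to the trivial projection $D^k\times D^{n-k+1}\to D^k$, so $\st(C,Z)\cong \Pi^{k-1}\times D^{n-k+1}\cong\Pi^n_{n+1-k}$. This local analysis at the cone point entirely avoids the ``corner''/gluing bookkeeping you anticipate as the main obstacle, and is the step your proposal is missing. Your join-decomposition line of attack could perhaps be made to work, but you would need a genuinely finer map than $\phi'$ (one that also records, within each block $V_i$, which vertices of $\sigma$ are present, to account for the $D^{|V_i|-1}$ factors), and you would then face exactly the stratification-matching difficulties you correctly flag; the paper's cone-plus-local-model argument sidesteps both problems.

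Two smaller points: the paper contains a typo ($Z_*\cong\Pi^k$ should read $\Pi^{k-1}$, consistent with the conclusion $\Pi^{k-1}\times D^{n-k+1}$); and mid-proposal you refer to the dual representation of $\Pi^{n-k}$ inside $\Delta^{n-k+1}$, which is the decomposition relevant to $\Pi^n_k$ rather than to the actual target $\Pi^n_{n+1-k}=\Pi^{k-1}\times D^{n+1-k}$ --- you recover the correct count $\sum_i(|V_i|-1)=n+1-k$ at the end, but the object you should be quotienting to is $\Pi^{k-1}\subset\Delta^k$.
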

\begin{proof}
If each $f_i$ consists of one vertex we get the dual description of $\Pi^n$ and we are done.
Otherwise, let $\Delta^k$ be a $k$-dimensional simplex, with vertices $w_0,\ldots, w_k$. By sending $V_i$ to $w_i$ we get a simplicial map $\phi:\Delta^{n+1}\to\Delta^k$. This induces another simplicial map $\phi':(\Delta^{n+1})'\to(\Delta^k)'$ between the derived complexes. Let $Z_*$ be the polyhedron dual to $(\Delta^k, \{w_0,\ldots,w_k\})$. We have $f^{-1}(Z_*)=Z$ and $Z_*\cong \Pi^k$.

On a small neighborhood of the center $C$ of $(\Delta^{n+1})'$ the map $f'$ is isomorphic to the projection $D^k\times D^{n-k+1}\to D^k$. Therefore the star of $C$ in $Z$ is homeomorphic to $\Pi^{k-1}\times D^{n-k+1}\cong\Pi^n_{n-k+1}$. Such a star is homeomorphic to $Z$ and we are done.
\end{proof}

See a couple of examples in Fig.~\ref{dual_models:fig}.

\begin{figure}
 \begin{center}
 \includegraphics[width = 10cm]{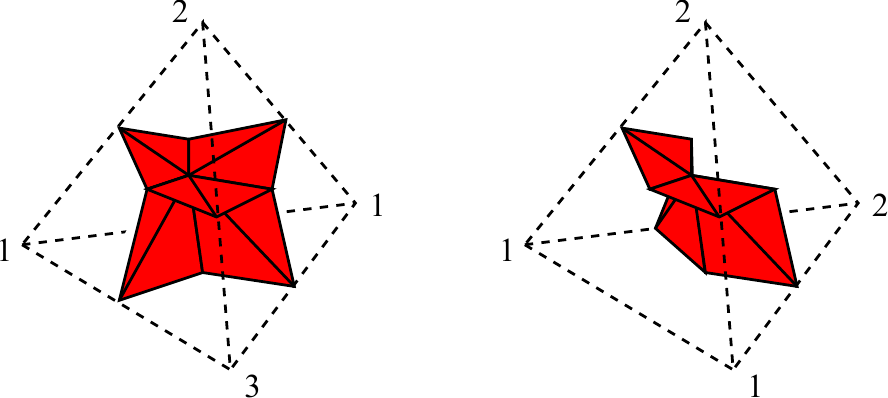}
 \end{center}
\caption{Models dual to some partitions of the vertices. Here we find $\Pi_1^2$ and $\Pi_2^2$.}
\label{dual_models:fig}
\end{figure}

\subsection{Simple polyhedra dual to triangulations} \label{dual:subsection}

\begin{figure}
 \begin{center}
 \includegraphics[width = 12.5cm]{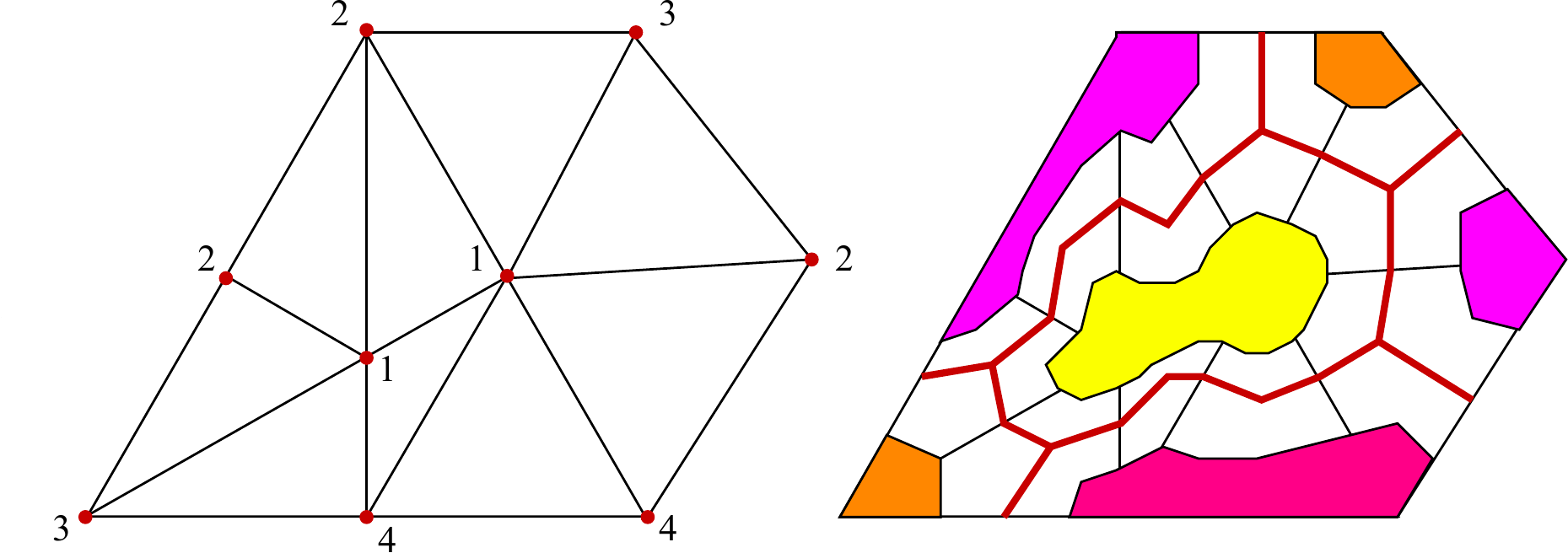}
 \end{center}
\caption{A triangulation $T$ and a partition of the vertices induce a simple polyhedron (in red) realized as a subcomplex of $T'$ and some submanifolds $M_V$ (the coloured regions) realized as subcomplexes of $T''$.}
\label{partition:fig}
\end{figure}

Let $M$ be a compact manifold and $T$ a triangulation of $M$. Let $\calP$ be a partition of the vertices of $T$. In each simplex $\sigma$ of $T$ we have an induced partition of its vertices and hence a dual polyhedron $P_\sigma\subset\sigma$. 

\begin{defn} \label{dual:global:defn}
The polyhedron \emph{dual} to $(T,\calP)$ is 
$$P = \bigcup_{\sigma \in T} P_\sigma$$
\end{defn}

The dual polyhedron is a subcomplex of the barycentric subdivision $T'$. We say that $\calP$ \emph{respects $\partial M$} if for every connected component $X$ of $\partial M$ all the vertices in $X$ belong to the same set of $\calP$.

For every set $V\in \calP$ of the partition, we define the submanifold $M_V\subset M$ as the regular neighborhood in $T''$ of the union of all simplexes in $T$ whose vertices lie in $V$. See Fig.~\ref{partition:fig}. We have the following.

\begin{prop} \label{dual:simple:prop}
Let $\calP$ be a partition that respects $\partial M$ and $P$ be the polyhedron dual to $(T,\calP)$. Then $P$ is simple and the following holds.
\begin{enumerate}
\item The vertices of $P$ are the barycenters of the simplexes in $T$ whose vertices lie in $n+1$ distinct sets of $\calP$.
\item The regular neighborhood of $P$ in $T''$ is $M\setminus\interior {\cup_{V\in\calP} M_V}.$
\end{enumerate}
\end{prop}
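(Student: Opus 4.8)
The plan is to analyze the dual polyhedron $P$ simplex-by-simplex, using the already-established local description of $\Pi^n_k$ from Section \ref{models:subsection}. The key point is that for each simplex $\sigma\in T$ of dimension $m$, the induced partition $\calP|_\sigma$ on its $m+1$ vertices groups them into, say, $j+1$ nonempty blocks (where $0\leqslant j\leqslant m$), and Proposition \ref{Z:prop} tells us that the local piece $P_\sigma\subset\sigma$ is homeomorphic to $\Pi^m_{m+1-(j+1)}=\Pi^m_{m-j}$, with the cone vertex at the barycenter of $\sigma$. First I would check that these local pieces glue consistently along faces: if $\tau<\sigma$, then the partition on $\tau$ is the restriction of the one on $\sigma$, and the dual pieces are compatible because $\lk(f_i,\Delta')$ for a face $f_i$ behaves well under passing to subsimplices; this is essentially built into Definition \ref{dual:global:defn} since everything is a subcomplex of $T'$.

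Next, to verify simplicity, I would examine the link of an arbitrary point $x\in P$. If $x$ lies in the interior of $\sigma$ (the unique smallest simplex of $T$ containing $x$), then near $x$ the polyhedron $P$ coincides with $P_\sigma$ thickened by the directions transverse to $\sigma$ inside the star of $\sigma$ in $M$. Concretely, the star of $\sigma$ in $M$ looks like $\sigma\times C(\lk(\sigma,T))$ — or more precisely $\interior\sigma \times (\text{cone on link})$ near an interior point — and $P$ restricted there is $P_\sigma$ crossed with a further dual construction coming from the link. Since $M$ is a manifold, $\lk(\sigma,T)$ is a sphere (when $\sigma$ is interior) or a disc (when $\sigma$ meets $\partial M$, but here the hypothesis that $\calP$ respects $\partial M$ prevents boundary simplices from having their vertices in many blocks). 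Assembling: the link of $x$ in $P$ is a suspension of $\partial\Pi^{m-j}$, i.e.\ of the form $\partial\Pi^n_k$ for the appropriate $k$, using Exercise \ref{suspension:ex} and Proposition \ref{recursive:prop} to identify the relevant joins. Hence $x$ is of some type $k$ and $P$ is simple.

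For item (1): a vertex of $P$ (type $0$) is precisely a point whose local model is $\Pi^n_0=\Pi^n$, with no transverse $D^k$ factor. Running through the above, the barycenter of $\sigma$ (dimension $m$) is a vertex of $P$ exactly when $P_\sigma\cong\Pi^m$ (so the partition on $\sigma$ has $m+1$ blocks, i.e.\ all vertices of $\sigma$ lie in distinct blocks of $\calP$) \emph{and} there is no extra suspension coming from the link (so $\sigma$ is top-dimensional, $m=n$). Combining, the vertices of $P$ are the barycenters of the $n$-simplexes of $T$ whose $n+1$ vertices lie in $n+1$ distinct sets of $\calP$, which is the claim.

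For item (2): I would observe that $M$ is partitioned (as a subcomplex of $T''$) into the regular neighborhoods $M_V$ for $V\in\calP$ together with the regular neighborhood of $P$, and that these meet only along their boundaries. This follows from the standard "dual cell / barycentric" combinatorics: in $T''$, every simplex of $T''$ has its vertices (which are barycenters of faces of $T$) distributed among the $M_V$'s and the $R(P)$-region according to which blocks of $\calP$ the corresponding faces of $T$ "see", and one checks each simplex of $T''$ lies in exactly one of these pieces or is split cleanly along $P$. Thus $R(P) = M\setminus\interior{\bigcup_{V\in\calP} M_V}$. The main obstacle I anticipate is the bookkeeping in the simplicity verification — correctly identifying the link of a point of $P$ as the right $\partial\Pi^n_k$ when the point lies on a low-dimensional simplex $\sigma$ whose star in $M$ is complicated; this requires carefully combining the "horizontal" dual structure $P_\sigma$ inside $\sigma$ with the "vertical" structure coming from iterating the dual construction over $\lk(\sigma,T)$, and invoking Corollary \ref{recursive:cor} and Exercise \ref{suspension:ex} to recognize the resulting polyhedron. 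The boundary case is handled cleanly precisely because $\calP$ respects $\partial M$, so no genuinely new phenomena occur there.
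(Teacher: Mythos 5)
Your high-level strategy is the natural one, and it fills in what the paper dismisses as ``straightforward'': reduce simplicity to a local computation at the barycenter $\widehat\sigma$ of each $\sigma\in T$, using Proposition \ref{Z:prop} to identify $P_\sigma$, and then read off the type. However, the step you yourself flag as ``the main obstacle'' is exactly where the argument is left open, and your description of what should happen there is slightly misleading. The claim that near $x\in\interior\sigma$ the polyhedron $P$ is ``$P_\sigma$ crossed with a further dual construction coming from the link'' is not quite right. What is true, and what needs to be argued, is that
$$\lk\big(\widehat\sigma,(M,P)\big)\ \cong\ \Big((\partial\sigma)'*\lk(\sigma,T)',\ \big(P_\sigma\cap\partial\sigma\big)*\lk(\sigma,T)'\Big),$$
i.e.\ the transverse factor is the \emph{entire} link sphere $\lk(\sigma,T)\cong S^{n-m-1}$, with no dual construction inside it. The reason is that $\widehat\sigma\in P$ forces $\calP|_\sigma$ to have at least two blocks, so for every $\rho\in\lk(\sigma,T)$ the simplex $\sigma*\rho$ automatically has at least two blocks, and hence every chain of the form $(\text{chain in }P_\sigma\cap\partial\sigma)\subsetneq\sigma*\rho_0\subsetneq\cdots$ lies in $P$; the induced partition on the vertices of $\lk(\sigma,T)$ is invisible at this local scale. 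From the join formula one then gets $\partial P_\sigma*S^{n-m-1}\cong\Sigma^{n-m}\big(\partial\Pi^{m-1}_{m-j}\big)\cong\partial\Pi^{n-1}_{n-j}$ where $j+1$ is the number of blocks of $\calP|_\sigma$, so $\widehat\sigma$ has type $n-j$; this also gives item~(1) cleanly, since type $0$ means $j=n$, i.e.\ $\sigma$ is an $n$-simplex with all vertices in distinct blocks. If you do not prove the join formula (for instance by the chain-level description: a chain $\tau_0\subsetneq\cdots\subsetneq\tau_b$ of $T'$ lies in $P$ iff $\tau_0$ has vertices in $\geq 2$ blocks), the argument has a genuine gap.

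Two smaller points. First, there are consistent off-by-one index errors: for $\sigma=\Delta^m$ with $j+1$ blocks, Proposition \ref{Z:prop} gives $P_\sigma\cong\Pi^{m-1}_{m-j}$, not $\Pi^m_{m-j}$; accordingly a vertex of a simple polyhedron of dimension $n-1$ is a point whose link is $\partial\Pi^{n-1}$, not $\partial\Pi^n$; and the suspension you want is of $\partial\Pi^{j-1}$, not of $\partial\Pi^{m-j}$. Second, in item~(2), the vertices of $T''$ are barycenters of simplexes of $T'$ (i.e.\ chains in $T$), not barycenters of simplexes of $T$; the correct observation is that the vertices of $T'$ are partitioned into the full subcomplexes $P$ and $K_V:=\bigcup\{\sigma : V(\sigma)\subseteq V\}$ (for $V\in\calP$), and for any chain $\tau_0\subsetneq\cdots\subsetneq\tau_b$ the simplexes $\tau_\ell$ with a single block all lie in the same $K_V$ and form an initial segment of the chain. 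This nesting is what makes the stars $M_V=\st(K_V,T'')$ and $\st(P,T'')$ decompose $M$ with disjoint interiors; invoking generic ``dual cell combinatorics'' glosses over it.
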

\begin{proof}
The proof is straightforward. Note that by our assumption on $\calP$ every boundary component lies in some $M_V$ and hence $P$ does not intersect $\partial M$.
\end{proof}
Let $t(M)$ be the minimum number of simplexes arising in a triangulation of $M$.

\begin{cor} \label{exists:cor}
We have $c(M)\leqslant t(M)$ for every compact $M$.
\end{cor}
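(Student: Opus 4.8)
The goal is to deduce $c(M)\leqslant t(M)$ from Proposition \ref{dual:simple:prop}. The idea is to take a minimal triangulation $T$ of $M$ — one with exactly $t(M)$ simplexes — and produce a simple spine of $M$ with at most $t(M)$ vertices by choosing a suitable partition $\calP$ of the vertices of $T$. First I would take $\calP$ to be the \emph{finest} partition that respects $\partial M$: namely, declare two vertices of $T$ equivalent if they lie in the same connected component of $\partial M$, and let every interior vertex of $T$ be a singleton class (together with any boundary component it might belong to). This $\calP$ respects $\partial M$ by construction, so Proposition \ref{dual:simple:prop} applies and the dual polyhedron $P$ is a simple polyhedron.

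The two things to check are that $P$ is a \emph{spine} (not merely a simple subpolyhedron) and that it has at most $t(M)$ vertices. For the spine property, part (2) of Proposition \ref{dual:simple:prop} says the regular neighborhood of $P$ in $T''$ is $M\setminus\interior{\cup_{V\in\calP}M_V}$; since each $M_V$ is a regular neighborhood of a union of simplexes, it collapses onto that union, and in particular each $M_V$ is either a collar of a boundary component of $M$ (when $V$ is a boundary class) or a ball (when $V$ is a single interior vertex, so $M_V=R(v)$ is a cone, hence a disc). Thus removing the interiors of the ball-type $M_V$'s and pushing the collar-type $M_V$'s into a collar of $\partial M$ exhibits $M$ minus finitely many open discs as collapsing onto $P$; by Proposition \ref{cut:prop} this means $P$ is a spine. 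For the vertex count, part (1) says the vertices of $P$ are the barycenters of simplexes of $T$ whose vertices lie in $n+1$ \emph{distinct} classes of $\calP$; each such simplex is an $n$-simplex (it needs $n+1$ vertices in distinct classes, and an $n$-simplex has exactly $n+1$ vertices), and distinct $n$-simplexes give distinct barycenters, so the number of vertices of $P$ is at most the number of $n$-simplexes of $T$, which is at most $t(M)$. Hence $c(M)\leqslant t(M)$.

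**Main obstacle.** The routine points are the bookkeeping above; the one place needing a little care is confirming that $P$ is genuinely a spine, i.e. that after deleting the open balls the rest of $M$ really collapses onto $P$ rather than merely deformation retracting. This is where part (2) of Proposition \ref{dual:simple:prop} together with the standard fact that a regular neighborhood of a polyhedron collapses onto it does the work: $M\setminus\interior{\cup_V M_V}$ is a regular neighborhood of $P$ in $T''$, hence collapses onto $P$, and the removed pieces $M_V$ are either collars (absorbable into a boundary collar) or discs. So one should phrase the corollary's proof as: invoke Proposition \ref{dual:simple:prop} with the finest boundary-respecting partition, observe each $M_V$ is a collar or a disc, and conclude via Proposition \ref{cut:prop}. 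A remark worth adding is that this also re-proves the existence of a simple spine (Corollary \ref{exists:cor}'s first assertion), since every compact PL manifold admits a triangulation.
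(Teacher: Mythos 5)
Your proof is correct and follows essentially the same route as the paper: you take a minimal triangulation, choose the finest boundary-respecting partition (interior vertices as singletons, each boundary component as one class), and apply Proposition \ref{dual:simple:prop} to get a simple spine whose vertex count is bounded by the number of $n$-simplexes. The only cosmetic point is that the definition of spine is what is directly applicable once you know $M\setminus\interior{\cup_V M_V}$ collapses onto $P$ and the removed $M_V$'s are discs plus a boundary collar; Proposition \ref{cut:prop} (the converse characterization) need not be invoked.
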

\begin{proof}
Let $T$ be a triangulation with $t=t(M)$ simplexes. Let $\calP$ be the following partition: two vertices $v,v'$ belong to the same set if and only if $v=v'$ or $v$, $v'$ belong to the same boundary component of $M$. The polyhedron dual to $(T,\calP)$ is a spine of $M$ by Proposition \ref{dual:simple:prop}-(2), since $M_V$ consists of discs (stars of the inner vertices) and a collar of the boundary. It has at most $t$ vertices by Proposition \ref{dual:simple:prop}-(1).
\end{proof}

\begin{rem} \label{singular:rem}
The term ``triangulation'' is sometimes used for short in dimensions 2 and 3 to indicate a \emph{singular triangulation}, \emph{i.e.}~the realization of a manifold $M^n$ as the union of some $n$-simplexes whose faces are identified in pairs via some simplicial maps. This is not the case here: in this paper we employ the word ``triangulation'' only in its original PL meaning.
\end{rem}

\subsection{Simple subpolyhedra of $\partial\Pi^n$.}
Proposition \ref{dual:simple:prop} yields the following.
\begin{cor} \label{simple:cor}
The polyhedron $\partial\Pi^n_k$ is simple.
\end{cor}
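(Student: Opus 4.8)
The plan is to realize $\partial\Pi^n_k$ as a polyhedron dual to a triangulation and then quote Proposition \ref{dual:simple:prop}. Concretely, take $M=S^n=\partial\Delta^{n+1}$ with its standard triangulation $T$, whose $n+2$ vertices are the vertices of $\Delta^{n+1}$, and let $\calP$ be the partition of these vertices into one block of cardinality $k+1$ — spanning a $k$-dimensional face $f_0$ of $\Delta^{n+1}$ — together with $n+1-k$ singleton blocks (for $k\geqslant n+1$ the polyhedron is empty and trivially simple, so assume $k\leqslant n$). Since $S^n$ is closed, $\calP$ vacuously respects $\partial M=\emptyset$, so Proposition \ref{dual:simple:prop} guarantees that the polyhedron $P$ dual to $(T,\calP)$ is simple. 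Everything then reduces to identifying $P$ with $\partial\Pi^n_k$.

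For that identification, first regard $\calP$ as a partition of the vertex set of $\Delta^{n+1}$ itself. It has $n+2-k$ blocks, so Proposition \ref{Z:prop} gives that the polyhedron $Z$ dual to $(\Delta^{n+1},\calP)$ is homeomorphic to $\Pi^n_k$; in fact $(\Delta^{n+1},Z)$ is exactly the dual representation of the pair $(D^{n+1},\Pi^n_k)$ from Section \ref{local:subsection}, so by the very definition $\partial\Pi^n_k=\Pi^n_k\cap S^n=Z\cap\partial\Delta^{n+1}$. On the other hand, the dual polyhedron inside each simplex $\sigma$ depends only on $\sigma$ and the induced partition of its vertices, and these pieces glue compatibly: for a face $\tau<\sigma$ one has $P_\sigma\cap|\tau|=P_\tau$ (this is precisely what makes $\bigcup_\sigma P_\sigma$ in Definitions \ref{dual:defn} and \ref{dual:global:defn} well-defined). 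Taking $\tau$ to range over the facets of $\Delta^{n+1}$, the polyhedron dual to the subcomplex $(\partial\Delta^{n+1},\calP)$ is therefore precisely $Z\cap\partial\Delta^{n+1}$, hence $P\cong\partial\Pi^n_k$, and by the first paragraph $\partial\Pi^n_k$ is simple. For $k=0$ this recovers the claim of Example \ref{simple:ex}.

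The one point that needs care — and the only real content beyond bookkeeping — is the boundary compatibility: one must know that the dual representation furnished by Proposition \ref{Z:prop} meets $\partial\Delta^{n+1}=S^n$ exactly in a copy of $\partial\Pi^n_k$, i.e. that its homeomorphism with $\Pi^n_k$ can be chosen to respect the splitting of $D^{n+1}$ into its interior and $S^n$. I would establish this by rerunning the quotient-map argument in the proof of Proposition \ref{Z:prop} on the boundary: the simplicial projection $q\colon\Delta^{n+1}\to\Delta^{n+1-k}$ collapsing $f_0$ to a vertex restricts on $\partial\Delta^{n+1}$ to a map $S^n\to\Delta^{n+1-k}$ which, over the interior of $\Delta^{n+1-k}$, is a trivial $S^{k-1}$-bundle (reflecting $S^n=\partial(D^k\times D^{n-k+1})=(S^{k-1}\times D^{n-k+1})\cup(D^k\times S^{n-k})$, the collapsed factor being the $D^k$), while over $\partial\Delta^{n+1-k}$ the fibre is the full $D^k$; chasing $Z\cap S^n=(q')^{-1}(\Pi^{n-k})\cap S^n$ through this yields $(S^{k-1}\times\Pi^{n-k})\cup(D^k\times\partial\Pi^{n-k})$, which is exactly $\Pi^n_k\cap S^n=\partial\Pi^n_k$. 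Alternatively, one can bypass Proposition \ref{Z:prop} altogether and prove the corollary by induction on $k$: using $\partial\Pi^n_k\cong\Sigma(\partial\Pi^{n-1}_{k-1})$ together with the elementary facts that $\lk(x,\Sigma Y)=\Sigma\,\lk(x,Y)$ for $x\in Y$, that $\lk(\text{suspension point},\Sigma Y)=Y$, and that $\Sigma(\partial\Pi^m_j)\cong\partial\Pi^{m+1}_{j+1}$, one sees immediately that every point of $\partial\Pi^n_k$ is of some type as soon as $\partial\Pi^{n-1}_{k-1}$ is simple; the base case $k=0$ — that the $(n-1)$-skeleton $\partial\Pi^n$ of $\Delta^{n+1}$ is simple — is a direct link computation at an interior point of a $j$-face, which comes out to $\Sigma^j(\partial\Pi^{n-j-1})=\partial\Pi^{n-1}_j$.
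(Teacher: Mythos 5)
Your main argument is precisely the paper's: represent $\Pi^n_k$ as the dual of a vertex partition $\calP$ of $\Delta^{n+1}$, observe that the dual of $(\partial\Delta^{n+1},\calP)$ is simple by Proposition \ref{dual:simple:prop}, and identify this with $\partial\Pi^n_k$. The paper states this in two lines and leaves the identification implicit; you correctly spell out the boundary-compatibility step (that the dual polyhedron in $\partial\Delta^{n+1}$ really is $Z\cap S^n=\partial\Pi^n_k$) via the quotient-map picture from Proposition \ref{Z:prop}. The alternative you sketch at the end — induction on $k$ using $\partial\Pi^n_k\cong\Sigma(\partial\Pi^{n-1}_{k-1})$ and the behaviour of links under suspension, with the $(n-1)$-skeleton of $\Delta^{n+1}$ as base case — is a genuinely different, more elementary route that avoids the dual-model machinery entirely; it is also correct, and has the mild advantage of not invoking Proposition \ref{dual:simple:prop} at all.
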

\begin{proof}
Represent $\Pi^n_k$ as the dual of some partition $(\Delta,\calP)$. The boundary is the simple polyhedron dual to the same partition $(\partial\Delta,\calP)$.
\end{proof}

Proposition \ref{Z:prop} shows that $\partial \Pi^n_k$ is homeomorphic to some simple subpolyhedron of $\partial \Pi^n$. Conversely, we have the following result (which will be used in Section \ref{alternative:section}).
\begin{prop} \label{subpolyhedra:prop}
See $\partial \Pi^n$ dually contained in $\partial\Delta$.
If $n\geqslant 2$, every simple subpolyhedron of $\partial \Pi^n$ is dual to some partition $\calP$ of the vertices of $\Delta$ and is hence homeomorphic to $\partial \Pi^n_k$ for some $k$.
\end{prop}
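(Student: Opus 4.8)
The plan is to analyze the combinatorial structure of a simple subpolyhedron $Q\subset\partial\Pi^n$ and show it must arise as the dual of a partition. Recall that $\partial\Pi^n$, dually embedded in $\partial\Delta = \partial\Delta^{n+1}$, is the union $\bigcup_{v}\lk(v,\Delta')$ over vertices $v$ of $\Delta$. First I would set up notation: for each subset $S\subsetneq V(\Delta)$ with $|S|\geqslant 1$, the face $f_S$ spanned by $S$ has a dual cell $\lk(f_S,\Delta')$ in $\partial\Delta$, and $\partial\Pi^n$ is the union of these dual cells over all $S$ with $|S|\geqslant 2$ (equivalently, it is the codimension-$\geqslant 1$ part of the dual skeleton missing only the top-dimensional dual cells, which are the open stars of the vertices). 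The key observation is that a subpolyhedron $Q$ of the subcomplex $\partial\Pi^n$ of $\Delta'$ which is simple of codimension $1$ in some ambient sense must be a union of such dual cells, and closedness under the face relation of the dual poset forces $Q$ to be exactly the union of the $\lk(f_i,\partial\Delta')$ for the blocks $f_i$ of a partition.

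The main steps would be: (1) Show $Q$ is a subcomplex of $\Delta'$ that is a union of (closed) dual cells $\overline{\lk(f_S,\Delta')}$. This follows because $Q$ is simple and each point of $Q$ has a link of the form $\partial\Pi_k^m$; the stratification of $\partial\Pi^n$ by intrinsic dimension has strata precisely the open dual cells, and simplicity forces $Q$ to be a union of strata (a simple polyhedron is a union of strata of any simple polyhedron containing it, since the local models are determined by intrinsic dimension by the Proposition following Figure~\ref{model:fig}). (2) Show that if $Q$ contains the open dual cell of $f_S$ then it contains the open dual cell of $f_{S'}$ for every $S\subseteq S'\subsetneq V$ — i.e.\ $Q$ is ``upward closed'' in the poset of proper subsets ordered by inclusion. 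This is where simplicity is essential: a point $x$ in the open dual cell of $f_S$ has link in $\partial\Pi^n$ equal to $\partial\Pi^{n-|S|+1}$ essentially (suspended appropriately), and for $Q$ to be simple at $x$ its link must be some $\partial\Pi_j^{?}$, which being simple of the right dimension must again be (by induction on $n$, invoking Proposition~\ref{subpolyhedra:prop} one dimension down) dual to a partition; chasing through, the only way the local picture of $Q$ near $x$ is a valid simple local model is if $Q$ contains the adjacent higher-dimensional dual cells in a way compatible with a partition. (3) Define the partition: declare two vertices $v,v'$ of $\Delta$ equivalent iff the dual cell of the edge $\{v,v'\}$ is \emph{not} in $Q$; verify this is an equivalence relation using the upward-closedness from step (2), and check that $Q$ then equals the dual of this partition $\calP$, so $Q\cong\partial\Pi^n_k$ with $k+1$ the number of blocks, by Proposition~\ref{Z:prop} applied on $\partial\Delta$ (or rather its boundary version, Corollary~\ref{simple:cor}).

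The hard part will be step (2): proving that simplicity of $Q$ forces the upward-closedness in the dual poset. One must rule out ``partial'' configurations — e.g.\ in low dimensions, a subpolyhedron of $\partial\Pi^n$ that contains some dual cells but skips an intermediate one, creating a free face or a non-simple local model. The cleanest approach is probably induction on $n$: restrict to the link of a vertex of $\Delta'$ (a center of a dual cell), where $\partial\Pi^n$ restricts to a copy of $\partial\Pi^{m}$ for smaller $m$ (or a suspension thereof), and $Q$ restricts to a simple subpolyhedron there, to which the inductive hypothesis applies; then reassemble. The base case $n=2$ should be checkable by hand: $\partial\Pi^2$ is the dual $1$-skeleton configuration inside $\partial\Delta^3$ (a tetrahedron), namely the $1$-skeleton of the dual structure, and one lists directly which subcomplexes are simple $1$-complexes (unions of circles/theta-like graphs with only trivalent vertices) and checks each comes from a partition of the $4$ vertices. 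I would also need to be careful that ``simple subpolyhedron of $\partial\Pi^n$'' is interpreted with the correct codimension/local-flatness conventions from Section~\ref{simple:subsection}, so that the local models $\partial\Pi^m_j$ that appear really are forced.
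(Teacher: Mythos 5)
Your overall strategy—show $Q$ is a union of closed dual cells, then recover the partition from which edges' dual cells are present—is in the same spirit as the paper's, but you localize differently. The paper restricts $X$ to the boundary $\partial f_i$ of each facet $f_i$ of $\Delta$; each $\partial f_i\cap\partial\Pi^n$ is a globally embedded copy of $\partial\Pi^{n-1}$, the inductive hypothesis gives a partition $\calP_i$ of the vertices of $f_i$, and these glue because any three vertices of $\Delta$ lie in a common facet (this is where $n\geqslant 3$ enters). You instead propose to induct through the links of points at the centers of dual cells, extracting a local partition of $S$ at each point of the open dual cell of $f_S$. Both are legitimate ways to de-globalize the problem, but the facet slice is a single global copy of $\partial\Pi^{n-1}$ that meets every stratum, while the links you use carry purely local data, so the reassembly step is heavier in your version.

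One claim in your sketch would not survive being written out as stated. You define $v\sim v'$ by whether the dual cell of $\{v,v'\}$ lies in $Q$, and say you would verify transitivity ``using the upward-closedness from step (2).'' But upward-closedness—if the open dual cell of $f_S$ lies in $Q$ then so does that of $f_{S'}$ for every $S\subseteq S'$—is automatic for \emph{any} union of closed $(n-1)$-strata; it is really just a restatement of your step (1) and does not use simplicity at all. In particular it says nothing useful about transitivity: knowing the dual cells of $\{v,v'\}$ and $\{v',v''\}$ are absent from $Q$ gives no upward-closedness constraint on the dual cell of $\{v,v''\}$. The actual input you need is the stronger local statement buried inside your step (2): if the dual cell of $\{v,v''\}$ \emph{were} in $Q$, then so is the dual cell of $f_S$ with $S=\{v,v',v''\}$ (a codimension-two stratum of $\partial\Pi^n$, contained in $Q$ precisely when some edge of $S$ has its dual cell in $Q$); applying the inductive hypothesis to the link of a point there forces $Q$ to be locally dual to an honest partition of $S$, whose transitivity contradicts the assumption. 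So your plan can be made to work, but you should separate the (trivial) upward-closedness from the real content—local conformity of $Q$ to a partition at each stratum—and have the transitivity argument invoke the latter explicitly, at a codimension-two stratum.
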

\begin{proof}
We prove this result by induction on $n$. The case $n=2$ is easy, so we turn to the case $n\geqslant 3$. Let $X$ be a simple subpolyhedron of $\partial\Pi^n$. Let $f_0,\ldots,f_{n+1}$ be the facets of $\Delta$. A simple subpolyhedron of a simple polyhedron of the same dimension $n-1$ is necessarily the closure of the union of some $(n-1)$-components. Therefore each $f_i\cap X$ is a cone over $\partial f_i\cap X$, which is a simple subpolyhedron of $\partial f_i\cap\partial \Pi^n\cong\partial\Pi^{n-1}$. By induction, $\partial f_i\cap X$ is dual to some partition $\calP_i$ of the vertices of $f_i$.

Now define a partition $\calP$ of the vertices of $\Delta$ as follows: two vertices belong to the same set if they do in some $\calP_i$. Note that if they do belong to the same set in some $\calP_i$ then they do so in any $\calP_j$, and that the transitivity property of this equivalence relation holds because $n\geqslant 3$. The polyhedron $X$ is then dual to $(\Delta,\calP)$.
\end{proof}

\section{Drilling} \label{drilling:section}
Generic soap bubbles in $\matR^3$ form a simple polyhedron. Moreover, if a new bubble appears generically somewhere, the polyhedron remains simple. This fact can be generalized to any dimension, as follows.

Let $Q\subset M$ be a simple polyhedron in a manifold. Let $K\subset \interior M$ be any compact subpolyhedron. The operation of \emph{drilling $Q$ along $K$} consists of removing from $Q$ a small regular neighborhood of $K$ and adding its boundary as in Figg.~\ref{bubbling_1:fig} and~\ref{bubbling:fig}.
More precisely, let $T$ be a triangulation of $(M,K,Q)$. Let $R=R(K,T'')$ be the regular neighborhood of $K$ in the twice subdivided $T''$. The result of this operation is the polyhedron
$$P=(Q\setminus R)\cup\partial R.$$

\begin{figure}
 \begin{center}
 \includegraphics[width = 12cm]{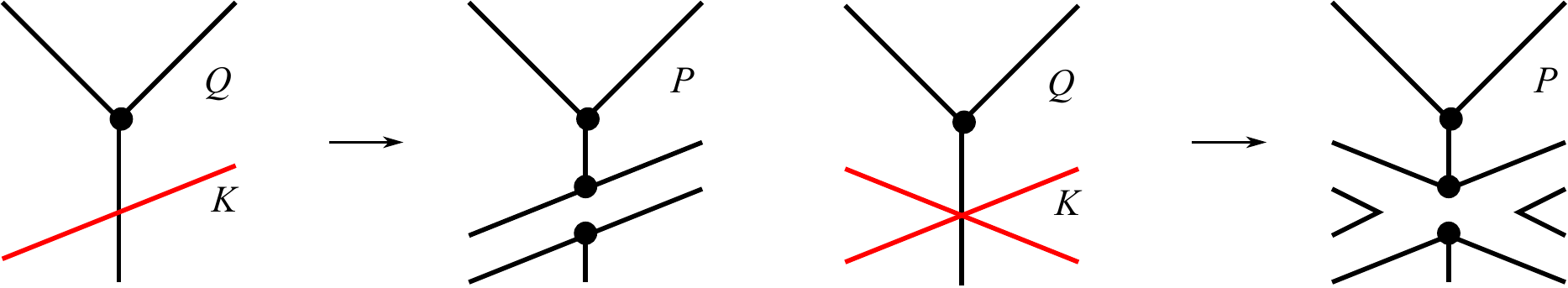}
 \end{center}
\caption{The simple polyhedron $P$ is obtained from $Q$ by drilling along $K$.}
\label{bubbling_1:fig}
\end{figure}

\begin{figure}
 \begin{center}
 \resizebox{11cm}{!}{\input{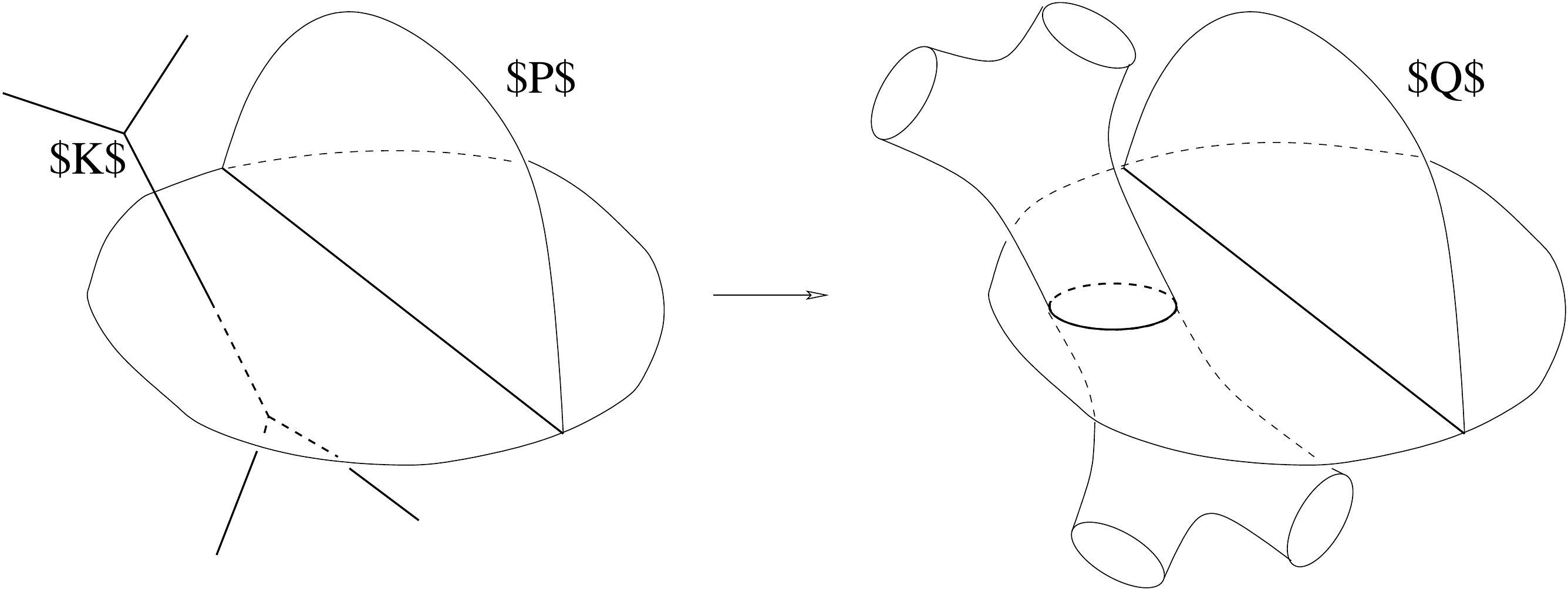tex_t}}
 \end{center}
\caption{The simple polyhedron $P$ is obtained from $Q$ by drilling along $K$.}
\label{bubbling:fig}
\end{figure}

\begin{lemma}\label{bubble:lemma}
The polyhedron $P$ is simple. If $K$ does not intersect the $1$-skeleton of $Q$, then $P$ has the same vertices as $Q$.
\end{lemma}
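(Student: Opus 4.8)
The plan is to verify the local model of $P$ at every point, using the explicit recursive structure of $\Pi^n_k$ established in Section \ref{local:subsection}. Since simplicity is a local condition, it suffices to examine the link of an arbitrary point $x\in P$ and check that it is homeomorphic to some $\partial\Pi^n_k$. There are three kinds of points to consider: points of $P$ lying in $Q\setminus R$, points lying in the interior (relative to $P$) of the newly added sphere-like piece $\partial R$, and points lying on the ``seam'' $P\cap\partial R$ where the old and new parts meet.

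First I would handle the easy cases. A point $x\in Q\setminus R$ away from $\partial R$ has the same link in $P$ as in $Q$, which is $\partial\Pi^n_k$ for some $k$ because $Q$ is simple; nothing changes. A point $x$ in the interior of $\partial R$ (thought of as a subpolyhedron of $P$) has a neighborhood in $P$ that looks like a neighborhood in the hypersurface $\partial R$, which is the boundary of a regular neighborhood of $K$ and hence a locally flat hypersurface in $M$; such a point has link $S^{n-2}\cong\partial\Pi^{n-1}_{n-1}$, i.e. it is of type $n-1$. The subtle case, which I expect to be the main obstacle, is a point $x$ on the seam $P\cap\partial R$, where a face of the old polyhedron $Q$ runs into the new wall $\partial R$. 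Near such a point $M$ locally splits, via the regular-neighborhood structure, as $D^j\times C(\lk)$ where the cone factor records how $K$ (hence $R$) sits inside $M$; and $Q$ meanwhile has its own local product structure. I would set up coordinates so that the local picture becomes $\Pi^{n-j}\times D^j$ intersected with a half-space-type decomposition coming from $\partial R$, and then identify the resulting polyhedron by matching it against Corollary \ref{recursive:cor}, which says exactly that $\Pi^m\cong(\Pi^{m-1}\times[0,1])\cup(D^m\times\{0\})$ — this is the combinatorial identity that makes ``cutting along a wall and gluing in the boundary'' preserve simplicity, since the piece $D^m\times\{0\}$ is precisely the portion of $\partial R$ that gets glued on. Working out that $x$ is of type $k'$ for the appropriate $k'$ (generically one more than the type of the corresponding stratum of $Q$) reduces to Example \ref{suspension:ex} together with the recursive description, once the product structure is correctly extracted; the technical care is in choosing the triangulation $T$ fine enough (two barycentric subdivisions, as in the definition of $R$) so that $R$ is a genuine regular neighborhood and these product decompositions are available.

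For the vertex count, the key observation is that a vertex of $P$ — a point of type $0$ — must lie where the stratification is ``maximally singular''. The added piece $\partial R$ is a manifold (type $n-1$ points), so no new vertices are created there. A new vertex could only be born on the seam $P\cap\partial R$, and by the local analysis above a seam point lying over a stratum of $Q$ of intrinsic dimension $d$ has type $d+1$ in $P$ (the wall $\partial R$ bumps the intrinsic dimension up by one). Hence a seam point is a vertex of $P$ only if it lies over a point of $Q$ of intrinsic dimension $-1$, which is impossible, or — more carefully — the seam meets the $1$-skeleton of $Q$. So under the hypothesis that $K$ avoids the $1$-skeleton of $Q$, every vertex of $P$ is an old vertex of $Q$ lying in $Q\setminus R$, and conversely each such old vertex survives untouched; therefore $P$ and $Q$ have exactly the same vertices. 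I would phrase this last step as: away from a neighborhood of the $1$-skeleton of $Q$, the drilling operation only ever replaces strata of dimension $\geqslant 2$ in $Q$ by strata of dimension $\geqslant 3$ in $P$ near the seam, so it cannot produce or destroy any point of type $0$.
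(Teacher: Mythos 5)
Your overall strategy — reduce to a local check, treat the three cases (away from $\partial R$, interior of $\partial R$, the seam $Q\cap\partial R$), and invoke Corollary~\ref{recursive:cor} and Example~\ref{suspension:ex} at the seam — is the same as the paper's. The first two cases and the choice of tools are fine.

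But the key arithmetic at the seam is reversed, and this is a genuine error. You assert that ``the wall $\partial R$ bumps the intrinsic dimension up by one,'' so that a seam point over a type-$k$ point of $Q$ is of type $k+1$ in $P$. The correct statement is that the type \emph{drops} by one: a point $x\in Q\cap\partial R$ of type $k$ in $Q$ becomes of type $k-1$ in $P$. Adding a transverse wall makes the local picture \emph{more} singular, not less. Concretely, the local model at the seam is
$\bigl(\Pi^{n-1}_{k-1}\times[0,1]\bigr)\cup\bigl(D^n\times\{0\}\bigr)\cong D^{k-1}\times\Pi^{n-k+1}\cong\Pi^n_{k-1}$,
using exactly Corollary~\ref{recursive:cor} and Example~\ref{suspension:ex} as you planned; the exponent is $k-1$, not $k+1$. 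A one-dimensional sanity check makes the sign unmistakable: if $Q$ is a circle in a surface and $K$ is a point of $Q$, then the seam consists of two T-junctions, which are vertices of $P$ — a type-$1$ point of $Q$ became a type-$0$ point of $P$.

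This sign error is not cosmetic: it breaks the second half of the lemma. With your formula $k\mapsto k+1$, a new vertex would require a seam point over a type-$(-1)$ stratum, i.e., the drilling would \emph{never} create vertices regardless of where $K$ sits, and the hypothesis that $K$ avoids the $1$-skeleton of $Q$ would be vacuous. You seem to sense the inconsistency and patch it with ``or — more carefully — the seam meets the $1$-skeleton of $Q$,'' but that clause does not follow from your formula; it follows from the corrected one. With $k\mapsto k-1$, new vertices arise precisely where $\partial R$ crosses the $1$-stratum of $Q$ (type $1\mapsto$ type $0$), and excluding the $1$-skeleton is exactly the hypothesis that kills them. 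You should redo the local computation, taking care with the direction of the suspension/cone bookkeeping, and then the vertex count falls out correctly.
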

\begin{proof}
We have $P = (Q\setminus R)\cup\partial R$. We have to check that every point $x\in P$ is of some type $k$. If $x\in Q\setminus \partial R$ we are done because $Q$ is simple. Suppose $x\in \partial R$. Let $k\geqslant 1$ be the type of $x$ in $(M,Q)$. We show that $x$ is of type $k-1$ in $(M,P)$: in particular, $P$ is simple.

\begin{figure}
 \begin{center}
 \includegraphics[width = 9cm]{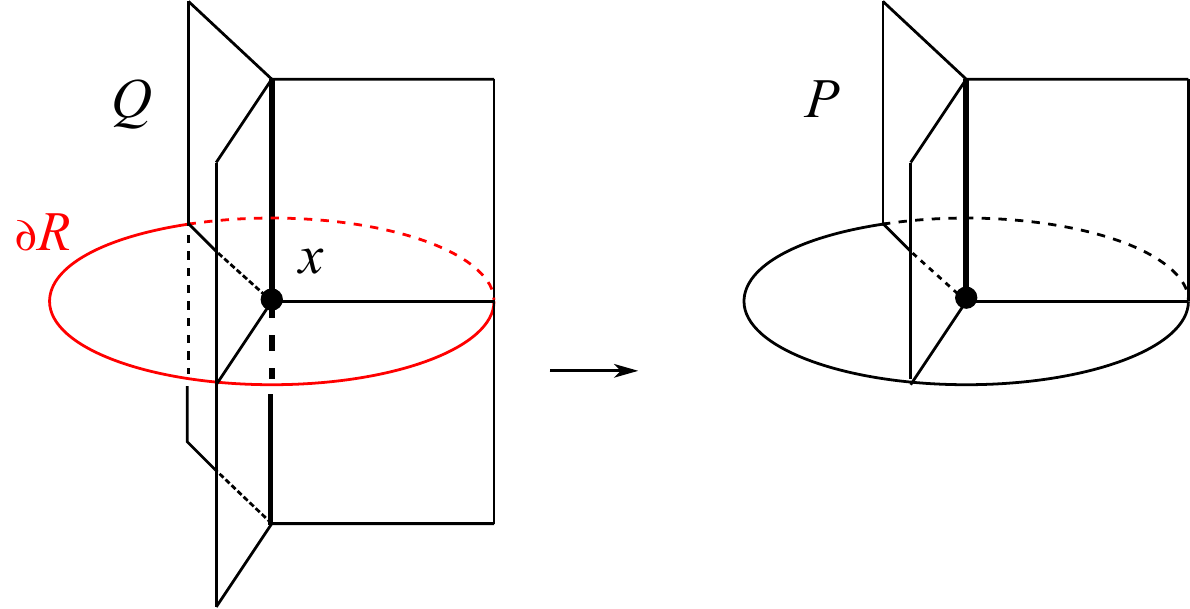}
 \end{center}
\caption{At $x$ the polyhedra $Q$ and $\partial R$ intersect transversely, so $\partial R$ cuts $Q$ into two halves. Then $P$ is obtained by discarding the half of $Q$ lying inside $R$ and adding $\partial R$.}
\label{bubbling_proof:fig}
\end{figure}

If $k=n$, \emph{i.e.}~$x\not\in Q$, then $x$ is of type $(n-1)$ in $P$ because $\partial R$ is a $(n-1)$-manifold. If $k<n$, the polyhedra $\partial R$ and $Q$ intersect transversely at $x$ (in the sense of Armstrong, see \cite{Arm}). See an example in Fig.~\ref{bubbling_proof:fig}: locally, $\partial R$ is a horizontal disc $D^{n-1}\times 0$ and $Q$ is a vertical product $Y\times [-1,1]$. Exercise \ref{suspension:ex} implies that $Y\cong\Pi^{n-1}_{k-1}$.

The star of $x$ in $P$ is thus homeomorphic to 
$$\Big(\Pi^{n-1}_{k-1}\times [0,1]\Big) \cup \Big(D^n\times 0\Big) \cong D^{k-1}\times\bigg(\Big(\Pi^{n-k}\times [0,1]\Big)\cup \Big(D^{n-k+1}\times 0\Big)\bigg).$$
By Corollary \ref{recursive:cor}, this is homeomoprhic to $D^{k-1}\times \Pi^{n-k+1} \cong \Pi^n_{k-1}$. Therefore $x$ is of type $k-1$ in $P$.

Finally note that if $K$ does not intersect the $1$-skeleton of $Q$ neither does $R$. Therefore every point in $Q\cap\partial R$ is of type $k>1$ in $Q$, and hence of type $k-1>0$ in $P$: no new vertices are added to $Q$.  
\end{proof}

\section{Alternative definitions} \label{alternative:section}
Matveev's original definition of complexity $c(M^3)$ for a $3$-manifold $M^3$ was slightly different from ours. We prove here that the two definitions coincide. 

A couple of natural variations might be done in our definition of complexity. The definition of ``simple polyhedron'' can be weakened by allowing the presence of low-dimensional material. This choice is natural, since it allows to consider a point as a spine of $D^n$ or $S^n$. Matveev called such polyhedra \emph{almost simple}. On the other hand, the definition of ``spine'' can be strengthened, by allowing the removal of one ball only when strictly necessary, \emph{i.e.}~when the manifold is closed. We call this more restricted notion a \emph{strict spine}.

We therefore get $2\times 2=4$ possible definitions of the complexity of a manifold. Luckily, it turns out that three of them coincide. These include ours and Matveev's definition (in dimension 3).

\subsection{Almost simple polyhedra}
Matveev employed in dimension 3 a more relaxed notion of polyhedron, called \emph{almost simple} \cite{Mat}. We propose the following generalization to all dimensions.

\begin{defn} \label{almost:simple:defn} Let $M^n$ be a compact manifold.
A compact subpolyhedron $P\subset \interior M$ is \emph{almost simple} if the link of every point in $(M,P)$ is homeomorphic to $(S^{n-1}, L)$ for some subpolyhedron $L\subset \partial\Pi^{n-1}\subset S^{n-1}$. 
\end{defn}

\begin{figure}
 \begin{center}
 \includegraphics[width = 10cm]{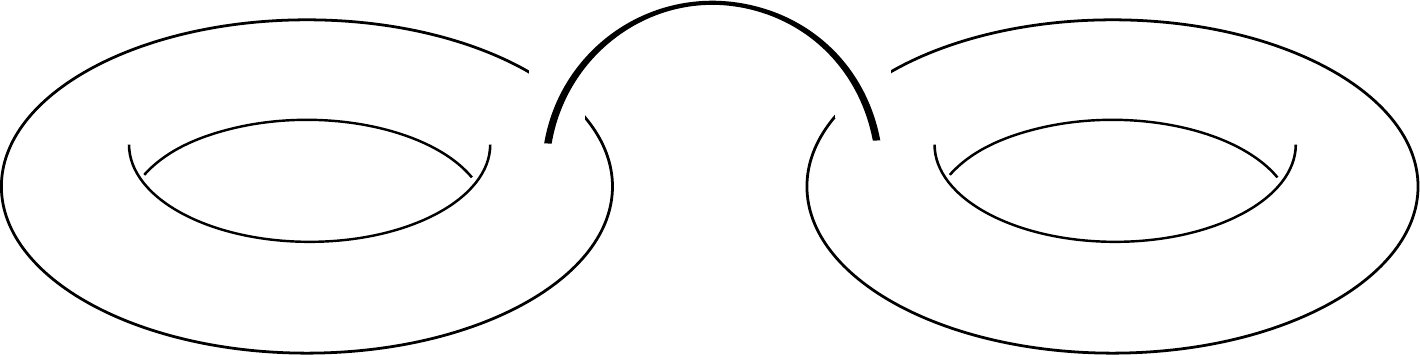}
 \end{center}
\caption{An almost simple spine of the boundary-connected sum $(T^2\times D^1)\# (T^2\times D^1)$. It consists of two tori joined by an arc. It has no vertices.}
\label{almost_simple:fig}
\end{figure}

See an example in Fig~\ref{almost_simple:fig}. A \emph{vertex} of $P$ is a point whose link is homeomorphic to $(S^{n-1},\partial\Pi^{n-1})$. We now define $c^{\rm alm}(M)$ as the minimum number of vertices of an almost simple spine of $M$. 

\begin{example} A point $\{pt\}\subset S^n$ is an almost simple spine of the $n$-sphere. Note that a point is \emph{not} a vertex when $n\geqslant 2$, and hence $c^{\rm alm}(S^n)=0$ for all $n\geqslant 2$. A hyperplane $H\subset \matCP^n$ is an almost simple spine of complex projective space without vertices, and hence $c^{\rm alm}(\matCP^n)=0$. This spine is not simple (it has codimension 2): note that the construction of a simple spine of $\matCP^n$ without vertices is less immediate, see Section \ref{examples:subsection}.
\end{example}

We show below that $c=c^{\rm alm}$: to prove this, we need a couple of preliminary lemmas, which show how to construct a simple spine from an almost simple one without increasing the number of vertices. This is done first by collapsing (Lemma \ref{collapse:lemma}) and then by drilling along a triangulation of the low-dimensional part $K$ of the spine (Lemma \ref{low:attach:lemma}).

\begin{figure}
 \begin{center}
 \includegraphics[width = 12.5cm]{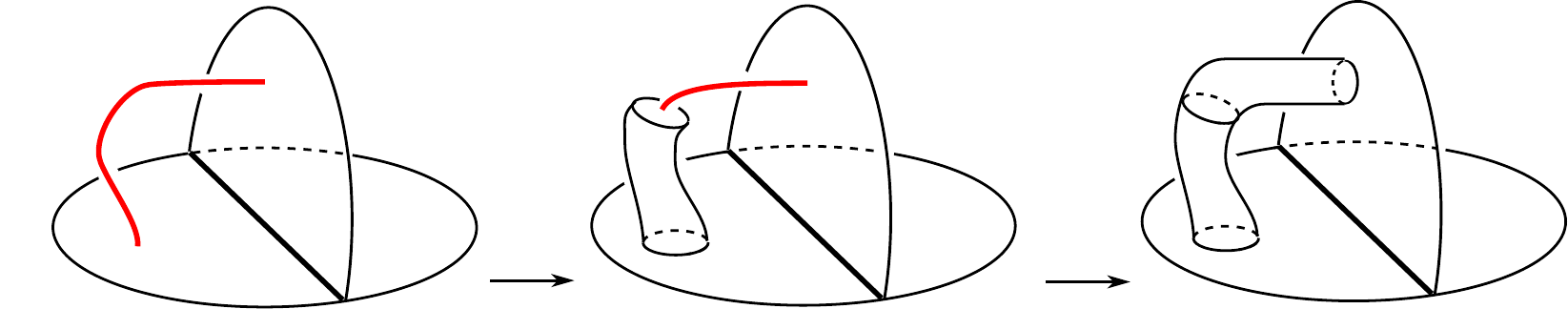}
 \end{center}
\caption{Let $Q\cup K$ be a spine of $M$, such that $Q$ is simple and $\dim K<\dim Q$. We obtain a simple spine of $M$ by subdividing $K$ into cells and then drilling them inductively. Each drilling produces a new ball in the complement. Here, $K$ is an arc made of two 1-cells.}
\label{low_dim:fig}
\end{figure}

\begin{lemma} \label{collapse:lemma}
Every almost simple polyhedron $P\subset\interior {M^n}$ collapses onto $Q\cup K$ where $Q$ is simple and $\dim K<n-1$. Every vertex of $Q$ is also a vertex of $P$.
\end{lemma}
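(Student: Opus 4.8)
The plan is to analyze the local structure of an almost simple polyhedron $P$ using the intrinsic stratification of the pair $(M,P)$, and to separate $P$ into a ``top-dimensional'' part and a genuinely low-dimensional part. By Definition \ref{almost:simple:defn}, at every point $x\in P$ the link in $(M,P)$ is a pair $(S^{n-1},L)$ with $L$ a subpolyhedron of $\partial\Pi^{n-1}$. Since $\partial\Pi^{n-1}$ is itself simple of dimension $n-2$ (Example \ref{simple:ex}), and a simple subpolyhedron of the same dimension is a union of closures of $(n-2)$-components (as used in the proof of Proposition \ref{subpolyhedra:prop}), each $L$ is either all of $\partial\Pi^{n-1}$, a union of some of its top-dimensional cells, or something of strictly smaller dimension. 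In the first two cases $x$ will turn out to be a point ``of simple type'' once we collapse away the extraneous lower strata; in the last case $x$ should be pushed into $K$.

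First I would stratify $P$ by intrinsic dimension (Section \ref{intrinsic:subsection}) and let $K_0\subset P$ be the union of all components of $P$ of dimension $<n-1$ together with those $(n-1)$-strata whose closure fails to be locally like $\Pi^n_k$. The idea is then to collapse $P$ onto a subpolyhedron $Q\cup K$ where $Q$ is the closure of the ``good'' $(n-1)$-dimensional part and $K$ carries everything of dimension $<n-1$. Concretely, one triangulates $(M,P)$ finely and runs elementary collapses: any free face of $P$ sitting inside the low-dimensional material or inside a non-manifold-with-collapsible-attachment portion of the $(n-1)$-skeleton gives a collapse, and one shows these collapses can be organized so that the result $Q\cup K$ has $Q$ simple and $\dim K<n-1$. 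The collapse $M\setminus(\text{balls})\searrow P\searrow Q\cup K$ composes, so $Q\cup K$ is again a spine.

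The delicate point — and the main obstacle — is verifying that after all collapses the remaining $(n-1)$-dimensional piece $Q$ is genuinely simple, i.e.\ that every point of $Q$ has link exactly $\partial\Pi^n_k$ for some $k$, rather than merely a subpolyhedron of it. The issue is that an almost simple polyhedron can have $(n-1)$-strata that are ``open'' (with free boundary facets not attached to anything), and these must be entirely collapsed; but the collapse might also eat into points one wants to keep, or might fail to terminate cleanly at a simple polyhedron. I would handle this by an induction on $n$ (using Proposition \ref{subpolyhedra:prop} on the link, which is a simple subpolyhedron of $\partial\Pi^{n-1}$, hence of the form $\partial\Pi^{n-1}_j$) combined with a local analysis: at a point whose link already is $\partial\Pi^{n-1}_j$ after discarding lower strata, Corollary \ref{recursive:cor} and Exercise \ref{suspension:ex} identify the star as $\Pi^n_k$. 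Finally, the vertex count: a vertex of $Q$ is a point of type $0$ in $(M,Q)$, whose link is the full $\partial\Pi^{n-1}$; such a point cannot have been created by a collapse (collapses only remove faces and only along free faces, never raising the ``complexity'' of a link), so its link in $(M,P)$ already contained $\partial\Pi^{n-1}$, forcing it to be equal to $\partial\Pi^{n-1}$ by maximality, i.e.\ the point was already a vertex of $P$. This gives ``every vertex of $Q$ is a vertex of $P$'', completing the lemma.
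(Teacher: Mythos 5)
Your overall strategy matches the paper's: triangulate, collapse $P$ maximally, split the result into a top-dimensional piece $Q$ and a lower-dimensional piece $K$, argue by induction on $n$ using Proposition~\ref{subpolyhedra:prop}, and obtain the vertex statement by a containment argument on links. The vertex argument at the end of your proposal is essentially the paper's, and is correct.

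However there is a real gap in the main step, and your case analysis of the link is not right. You invoke Proposition~\ref{subpolyhedra:prop} ``on the link, which is a simple subpolyhedron of $\partial\Pi^{n-1}$,'' but nothing you have said shows that the link of a point of $Q$ is a \emph{simple} subpolyhedron of $\partial\Pi^{n-1}$ --- this is precisely what has to be proved. Worse, your preliminary enumeration (``each $L$ is either all of $\partial\Pi^{n-1}$, a union of some of its top-dimensional cells, or something of strictly smaller dimension'') is incomplete: the link $L$ at a point of an almost simple polyhedron is an arbitrary subpolyhedron of $\partial\Pi^{n-1}$, which can mix an $(n-2)$-dimensional part with lower-dimensional strata; the characterization ``union of closures of $(n-2)$-components'' in the proof of Proposition~\ref{subpolyhedra:prop} applies only to \emph{simple} subpolyhedra. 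The argument that closes this gap in the paper is short but essential and is missing from your write-up: after the maximal collapse, the link of $x$ in $Q\cup K$ is again an almost simple subpolyhedron of a sphere of lower dimension, and it \emph{cannot} be collapsed onto a proper subpolyhedron (a free face in the link would give a free face in $Q\cup K$, contradicting maximality); applying the inductive hypothesis to this link then forces its top-dimensional part $Q'$ to be simple already, and only then does Proposition~\ref{subpolyhedra:prop} identify $Q'$ as some $\partial\Pi^{n-1}_k$. Without that ``non-collapsibility propagates to links'' observation your induction does not close, and the appeal to Corollary~\ref{recursive:cor} and Exercise~\ref{suspension:ex} is only relevant once the link is already known to have the right model.
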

\begin{proof}
We prove this by induction on $n=\dim M$. If $n=2$ it is easy, so we turn to the case $n\geqslant 3$. Take a triangulation of $(M,P)$ and collapse $P$ as more as possible. The resulting polyhedron is some $Q\cup K\subset P$, where $Q$ (resp.~$K$) is the closure of the set of all points whose link has dimension $n-2$ (resp.~$<n-2$). 

We now prove that $Q$ is simple. Since $P$ is almost simple, the link of $x$ in $(M,Q,K)$ is homeomorphic to $(S^{n-1},Q',K')$ for some $Q'\cup K'\subset \partial\Pi^n\subset S^{n-1}$. The polyhedron $Q'\cup K'$ cannot be collapsed onto a proper subpolyhedron, because $(Q,K)$ cannot. By our induction hypothesis $Q'$ is hence simple. Proposition \ref{subpolyhedra:prop} implies that $Q'\cong\partial\Pi^{n-2}_k$ and hence $x$ is of type $k$ in $Q$.
\end{proof}

\begin{lemma}\label{low:attach:lemma}
If $M^n$ has a spine $Q\cup K\subset M^n$ such that $Q$ is simple with $t$ vertices and $K$ has dimension $<n-1$, then $c(M^n)\leqslant t$.
\end{lemma}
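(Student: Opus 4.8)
The plan is to start from the spine $Q\cup K$ and eliminate the low-dimensional part $K$ by drilling, invoking Lemma \ref{bubble:lemma} to keep the polyhedron simple without creating new vertices, and invoking Proposition \ref{cut:prop} to check that the result is still a spine. First I would fix a triangulation $T$ of $(M,Q,K)$ for which $K$ is a subcomplex, and subdivide it so that $K$ is triangulated by cells $\sigma_1,\ldots,\sigma_m$ ordered so that faces come before cofaces; since $\dim K<n-1$, each closed cell $\overline{\sigma_j}$ is disjoint from enough of $Q$ that we may assume (after subdivision) it avoids the $1$-skeleton of the simple part. Then I drill $Q\cup K$ along the cells one at a time: set $P_0=Q\cup K$ and let $P_j$ be obtained from $P_{j-1}$ by drilling along $\sigma_j$ (removing a small regular neighborhood $R_j$ of $\sigma_j$ and adding $\partial R_j$). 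After all $m$ drillings the low-dimensional material is gone and we are left with a simple polyhedron $P=P_m$, which I claim is a spine of $M$ with at most $t$ vertices.

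The simplicity and the vertex count are the direct content of Lemma \ref{bubble:lemma}: each drilling step produces a simple polyhedron, and since $K$ (hence each $R_j$) misses the $1$-skeleton of the simple part, no new vertices are introduced; the vertices of $P$ are among the vertices of $Q$, so $P$ has at most $t$ vertices. Here one must be slightly careful that at the $j$-th step the ``simple part'' whose $1$-skeleton we avoid is not just the original $Q$ but $Q$ together with the boundary spheres $\partial R_1,\ldots,\partial R_{j-1}$ already added — but those new pieces are locally flat $(n-1)$-manifolds, so their $1$-skeleton is empty and there is nothing extra to avoid. Inductively, then, $P$ is simple with $\leqslant t$ vertices.

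It remains to see that $P$ is a spine of $M$. By Proposition \ref{cut:prop} it suffices to show that $M_P$ is a disjoint union of a collar $N\times[0,1]$ and some discs. I would argue that drilling along a cell $\sigma_j$ has a controlled effect on the cut manifold: since $Q\cup K$ is already a spine, $M_{Q\cup K}$ is a collar plus discs; removing a regular neighborhood $R_j$ of the cell $\sigma_j$ from the spine and adding $\partial R_j$ amounts, on the level of the cut manifold, to splitting off one more ball $R_j\cong D^n$ from the collar-and-discs (the boundary sphere $\partial R_j$ becoming part of $\partial_0$). Concretely, $M\setminus\interior{R_j}$ still collapses onto $(Q\cup K)\setminus R_j$ after the ball $R_j$ has been removed, so inductively $M$ minus a suitable collection of disjoint balls collapses onto $P$; this is exactly the definition of a spine. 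I expect this last bookkeeping — verifying that each drilling step cleanly carves off a single new ball and leaves the rest of the collar-plus-discs structure intact, so that the complement of $P$ really is a collar together with finitely many open balls — to be the main obstacle, and it is precisely the situation illustrated in Figure \ref{low_dim:fig}. Once that is in place, Proposition \ref{cut:prop} gives that $P$ is a spine, and since it has at most $t$ vertices we conclude $c(M^n)\leqslant t$.
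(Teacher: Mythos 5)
Your overall strategy — drill along the cells of $K$ one at a time, invoke Lemma \ref{bubble:lemma} to control vertices, and verify that the result is still a spine — matches the paper's. But there are two genuine gaps. First, you apply Lemma \ref{bubble:lemma} starting from $P_0=Q\cup K$, which is not simple (it still carries the low-dimensional material $K$), and neither are the intermediate $P_j$ until all of $K$ has been consumed; the lemma is stated only for a \emph{simple} polyhedron drilled along an external subpolyhedron. The paper keeps the simple part and the leftover cells separate: it drills the simple polyhedron $Q_i$ along a single cell to produce a new simple polyhedron $Q_{i+1}$ (so the hypothesis of Lemma \ref{bubble:lemma} is met at every stage), while tracking the surviving pieces $\sigma_j^{i+1}=\overline{\sigma_j^i\setminus Q_{i+1}}$ of the remaining cells as a separate decoration, and only at the end is $Q_k$ on its own the desired simple spine.

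Second, and more seriously, you justify the vertex-count clause of Lemma \ref{bubble:lemma} by saying that ``after subdivision'' each cell avoids the $1$-skeleton of the simple part. Subdivision refines the triangulation but moves nothing: a cell of $K$ may honestly pass through the $1$-skeleton of $Q$ — its boundary is attached somewhere on $Q$, and nothing forces that attaching to miss the vertices and edges of $Q$. What is needed is an actual PL isotopy putting the cells into general position with respect to the $1$-skeleton, which is possible by the dimension count $\dim\sigma_j+1\leqslant (n-2)+1<n$, and the paper achieves it by lifting the cells into the cut manifold $M_{Q_i}$ through the collar of $Q_i$, perturbing there, and projecting back (noting that this preserves the spine property). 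Without this perturbation step, each drilling may well create new vertices, and the bound $c(M^n)\leqslant t$ does not follow. The part you flagged as the main obstacle — that $P$ remains a spine — is actually the easy observation in the paper's proof: each $\sigma_j$ is a cell, so its regular neighborhood is a ball and drilling just adds one more ball to the complement.
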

\begin{proof}
Take a triangulation $T$ of $(M,Q,K)$. Let $\sigma_1,\ldots,\sigma_k$ be the simplexes of $K$ that are not contained in $Q$, and not contained in any higher-dimensional simplex of $K$. We have $Q\cup K = Q\cup\sigma_1\cup\ldots\cup \sigma_k$.

Each $\sigma_i$ is a cell (\emph{i.e.}~a disc) of dimension $<n-1$. We want to drill inductively along each $\sigma_i$. Since each $\sigma_i$ is a cell, each drilling produces a new open ball in the complement, so the final simple polyhedron is a spine. In order to not create new vertices, at each step we put the cells in general position. See Fig.~\ref{low_dim:fig}.

More precisely, we construct for each $i=0,\ldots,k$ a simple polyhedron $Q_i$ with $t$ vertices and some cells $\sigma_1^i,\ldots,\sigma_{k-i}^i$ of dimension $<n-1$ intersecting themselves and $Q_i$ only at their boundaries, such that $Q_i\cup\sigma_1^i\cup\ldots\cup\sigma_{k-i}^i$ is a spine of $M^n$. For $i=0$, take $Q_0=Q$ and $\sigma_j^0 = \sigma_j$. 

Let now $Q_i$ and $\sigma_1^i,\ldots,\sigma_{k-i}^i$ be defined for some $i<k$. Since $\dim \sigma_j^i<n-1$, we can perturb the cells so that they do not intersect the $1$-skeleton of $Q$. To do this, we use a collar of $P$ (see Section \ref{cut:subsection}): we lift the cells to $M_{Q_i}$, perturb them slightly, and project them back. The perturbed polyhedron is still a spine.

Let $Q_{i+1}$ be obtained from $Q_i$ by drilling along $\sigma_{k-i}^i$. The polyhedron $Q_{i+1}$ is simple with $t$ vertices by Lemma \ref{bubble:lemma}. To drill we use a triangulation $T$ of the whole data $(M,Q_i,\sigma_1^i,\ldots,\sigma_{k-i}^i)$: this ensures that $\sigma_j^{i+1} = \overline{\sigma_j^i\setminus Q_{i+1}}$ is still a cell.

Finally, $Q_k$ is a simple spine of $M$ with $t$ vertices.
\end{proof}

\begin{teo} \label{almost:simple:teo}
We have $c^{\rm alm}(M)=c(M)$ for every compact manifold $M$.
\end{teo}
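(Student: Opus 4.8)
The plan is to prove the two inequalities $c^{\rm alm}(M)\leqslant c(M)$ and $c(M)\leqslant c^{\rm alm}(M)$ separately; the first is essentially trivial and the second is where all the work, already packaged into Lemmas \ref{collapse:lemma} and \ref{low:attach:lemma}, gets used.

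For the inequality $c^{\rm alm}(M)\leqslant c(M)$: every simple polyhedron is almost simple (the link of a point of type $k$ is $\partial\Pi^n_k$, which sits inside $\partial\Pi^{n-1}\subset S^{n-1}$ by Proposition \ref{subpolyhedra:prop}, so Definition \ref{almost:simple:defn} is satisfied), and a vertex of a simple polyhedron in the sense of Definition \ref{simple:defn} is exactly a point whose link is $\partial\Pi^{n-1}=\partial\Pi^n_0$, which is the definition of a vertex of an almost simple polyhedron. Hence a minimal simple spine of $M$ is in particular an almost simple spine with the same number $c(M)$ of vertices, giving $c^{\rm alm}(M)\leqslant c(M)$.

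For the reverse inequality $c(M)\leqslant c^{\rm alm}(M)$: start with a minimal almost simple spine $P\subset\interior M$, so $P$ has $c^{\rm alm}(M)$ vertices and $M\setminus\interior{D_1\cup\cdots\cup D_k}$ collapses onto $P$ for some disjoint discs $D_i$. Apply Lemma \ref{collapse:lemma} to collapse $P$ further onto $Q\cup K$ with $Q$ simple, $\dim K<n-1$, and every vertex of $Q$ a vertex of $P$; in particular $Q$ has at most $c^{\rm alm}(M)$ vertices. Since collapsing is a composition of elementary collapses, $M\setminus\interior{D_1\cup\cdots\cup D_k}$ still collapses onto $Q\cup K$, so $Q\cup K$ is a spine of $M$ of the form required by Lemma \ref{low:attach:lemma}. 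That lemma then yields $c(M)\leqslant \#\{\text{vertices of }Q\}\leqslant c^{\rm alm}(M)$. Combining the two inequalities gives $c(M)=c^{\rm alm}(M)$.

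The only subtle point — and the part I would state carefully rather than wave at — is the bookkeeping that $Q\cup K$ is genuinely a spine of $M$ in the sense of Definition \ref{almost:simple:defn}'s ambient setup: one must check that collapsing $P$ down to $Q\cup K$ can be done by elementary collapses inside the triangulated manifold-minus-discs, so that the composite collapse $M\setminus\interior{\cup D_i}\searrow P\searrow Q\cup K$ is valid. This is routine PL topology (a collapse of a subpolyhedron extends to a collapse of the ambient collapsing pair), but it is the load-bearing compatibility statement linking the two lemmas, so I would spell it out in one sentence. The genuinely hard content has already been discharged in Lemmas \ref{bubble:lemma}, \ref{collapse:lemma}, \ref{low:attach:lemma} and Proposition \ref{subpolyhedra:prop}, so the proof of the theorem itself is short.
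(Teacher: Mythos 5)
Your proof is correct and follows exactly the paper's route: both inequalities, with the nontrivial one obtained by first applying Lemma~\ref{collapse:lemma} and then Lemma~\ref{low:attach:lemma}. The extra care you take to justify that a simple spine is almost simple (via local unknottedness and Proposition~\ref{subpolyhedra:prop}) and that the composed collapse $M\setminus\interior{\cup D_i}\searrow P\searrow Q\cup K$ is legitimate is sound and fills in details the paper leaves tacit, but it is not a different approach.
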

\begin{proof}
A simple spine is almost simple, hence $c^{\rm alm}(M)\leqslant c(M)$. Conversely, an almost simple spine $P$ of $M$ with $c^{\rm alm}(M)$ vertices collapses by Lemma \ref{collapse:lemma} onto a $Q\cup K$ such that $Q$ has at most $c^{\rm alm}(M)$ vertices, and hence $c(M)\leqslant c^{\rm alm}(M)$ by Lemma \ref{low:attach:lemma}.
\end{proof}

\subsection{Strict spines} \label{strict:subsection}
Matveev's notion of ``spine'' was more rigid than ours. 
\begin{defn}
Let a \emph{strict spine} $P\subset \interior M$ of a compact manifold $M$ with boundary be a compact $P$ onto which $M$ collapses. A \emph{strict spine} of a closed $M$ is defined as a strict spine of $M\setminus \interior D$ for some disc $D$.
\end{defn}
In contrast with spines, the complement of a strict spine contains one ball only if strictly necessary. Let $c^{\rm alm}_{\rm str}(M)$ be the minimum number of vertices of an almost simple strict spine. Matveev's original definition of the complexity of a 3-manifold $M^3$ is precisely $c^{\rm alm}_{\rm str}(M^3)$. We can finally show that his definition coincides with ours. The following result actually holds in all dimensions.

\begin{teo}
We have $c^{\rm alm}_{\rm str}(M)=c(M)$ for every compact manifold $M$.
\end{teo}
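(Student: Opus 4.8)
The inequality $c(M) \leqslant c^{\rm alm}_{\rm str}(M)$ is immediate: a strict spine is in particular a spine (the complement is a single ball, or no ball when $\partial M \neq \emptyset$), so every almost simple strict spine is an almost simple spine, and we already know $c = c^{\rm alm}$ by Theorem \ref{almost:simple:teo}. The real content is the reverse inequality $c^{\rm alm}_{\rm str}(M) \leqslant c(M)$: starting from a minimal simple spine $P$ of $M$, whose complement $M_P$ is a collar $N \times [0,1]$ together with some discs $D_1, \ldots, D_k$ (Proposition \ref{cut:prop}), we must produce an almost simple \emph{strict} spine with no more vertices, i.e.\ one whose complement has exactly one ball (if $M$ is closed) or none (if $\partial M \neq \emptyset$).

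The plan is to ``absorb'' the superfluous balls into the spine by connecting them to the rest of $M$ through low-dimensional arcs and then collapsing, rather than by adding vertices. Concretely: if $M$ has boundary, I want to eliminate \emph{all} the balls $D_i$; if $M$ is closed, I want to reduce their number to one. For each ball $D_i$ that must be removed, choose an embedded arc $\alpha_i$ in $M$ running from a point of $\partial D_i$ either to $\partial M$ (in the bounded case) or to another ball $D_j$ (to merge two balls into one, in the closed case), with the arcs disjoint from each other and meeting $P$ only transversally in their interiors — and after a small perturbation using the collar of $P$ (Section \ref{cut:subsection}), missing the $1$-skeleton of $P$ entirely. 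Replacing $P$ by $P \cup \alpha_i$ (more precisely, drilling $P$ along $\alpha_i$, or equivalently enlarging the removed-ball region) merges the ball $D_i$ with the target, so the number of complementary balls drops by one. Since $\alpha_i$ is $1$-dimensional and $1 < n-1$ when $n \geqslant 3$, adding it to a simple polyhedron yields an \emph{almost} simple polyhedron (the new points along $\alpha_i$ have links of dimension $\leqslant n-2$ but not all of type $k$), and crucially Lemma \ref{bubble:lemma} (applied to drilling, since $\alpha_i$ avoids the $1$-skeleton) guarantees no new vertices appear. Iterating over all the offending balls yields an almost simple strict spine $P'$ with $c(M)$ vertices, whence $c^{\rm alm}_{\rm str}(M) \leqslant c(M)$.

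The low-dimensional cases must be checked by hand: for $n = 1$ every statement is trivial, and for $n = 2$ one checks directly against Example \ref{surface:ex} that allowing extra discs does not help (a circle already realizes the complexity-zero surfaces as a strict spine, and for higher-genus surfaces the standard spine is strict). There is also a small subtlety when $n = 2$, since then $\dim \alpha_i = 1 = n-1$ is not strictly below the top dimension, so the ``no new vertices'' argument via Lemma \ref{bubble:lemma} does not directly apply; but in dimension $2$ the claim can be verified by the explicit classification instead.

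The main obstacle is the bookkeeping of the arcs and the verification that the final polyhedron really is an almost simple \emph{strict} spine — that is, that after all the drillings the complement $M_{P'}$ consists of exactly one ball (closed case) or is empty/collapsible (bounded case), so that $M \setminus \interior{D}$ genuinely collapses onto $P'$. Each individual drilling replaces a neighborhood of an arc by its boundary sphere, and one must argue that the union of a ball, an arc's neighborhood, and the target region is again a ball (a standard regular-neighborhood / handle-cancellation fact: a $1$-handle connecting a $0$-handle to a $0$-handle cancels), so the complement stays a disjoint union of balls throughout and its count decreases by exactly one at each step. Once the complement is a single ball $D$ (or empty), $M \setminus \interior D$ collapses onto $P'$ by Proposition \ref{cut:prop} applied to the almost simple polyhedron — or more carefully, by first collapsing $M \setminus \interior D$ onto the union of $P'$ with the arcs and then noting the arcs collapse into $P'$ — so $P'$ is the required almost simple strict spine. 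Assembling these pieces gives $c^{\rm alm}_{\rm str}(M) \leqslant c(M)$, and combined with the trivial reverse inequality and Theorem \ref{almost:simple:teo} we conclude $c^{\rm alm}_{\rm str}(M) = c(M)$.
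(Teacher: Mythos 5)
Your overall plan is right — absorb the redundant balls by connecting them to the rest of the manifold — but the mechanism you chose is backward, and it actually makes things worse. Drilling $P$ along an arc $\alpha$ joining $D_i$ to $D_j$ (or to $\partial M$) replaces $P$ by $P'=(P\setminus R)\cup \partial R$ with $R=R(\alpha)$; the added sphere $\partial R$ now lies \emph{inside} the new spine and separates $\interior R$ from everything else. Since $\alpha$ has its endpoints on $\partial D_i$ and $\partial D_j$, the ball $R$ is a $1$-handle joining the two old balls, so in the complement $M\setminus P'$ you get three balls $D_i\setminus R$, $\interior R$, $D_j\setminus R$ where you had two. That drilling \emph{creates} an extra complementary ball (rather than merging any) is exactly the point used in the proof of Lemma \ref{low:attach:lemma}: ``each drilling produces a new open ball in the complement.'' So the claim ``the number of complementary balls drops by one at each step'' is false and the chain of reasoning breaks there. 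Lemma \ref{bubble:lemma} controls the vertices of a drilling, but it says nothing to rescue the ball count.

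The fix is to \emph{delete} material from the spine rather than add it, and this is what the paper does. If $B$ is a redundant ball in $M\setminus P$, connectedness of $M$ gives an $(n-1)$-component $C$ of $P$ adjacent to $B$ on one side and to a different component of $M\setminus P$ on the other. Removing a small open $(n-1)$-disc $U\subset C$ yields $Q=P\setminus U\subset P$: the new boundary sphere $\partial U\cong S^{n-2}$ has an allowable almost-simple link (a hemisphere $D^{n-2}\subset S^{n-2}\subset\partial\Pi^{n-1}$), no strata of dimension $\leqslant 1$ are touched so no vertices are created or destroyed, and the hole genuinely merges $B$ with the adjacent region, decreasing the number of complementary balls by one. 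Iterating gives the desired almost simple strict spine with $c^{\rm alm}(M)$ vertices. (This local surgery also works uniformly in dimension $n=2$, so the separate case-by-case check you flagged there is unnecessary.) If you want to keep the arc picture, the correct operation is to delete $P\cap R(\alpha)$ from $P$ — not to drill — and in fact choosing $\alpha$ to meet $P$ transversely in a single point reduces that to the disc-removal just described.
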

\begin{proof}
Since a strict spine is a spine, we have $c^{\rm alm}_{\rm str}(M)\geqslant c^{\rm alm}(M)$. We now show the converse. Let $P$ be a minimal almost simple spine, \emph{i.e.}~with $c^{\rm alm}(M)$ vertices. We now construct a strict one without increasing the number of vertices.

If $P$ is not strict, the complement $M\setminus P$ has some redundant balls: for each such $B$, there must be a $(n-1)$-component of $P$ which is adjacent on $B$ to one side and to another component of $M\setminus P$ on the other (because $M$ is connected). By removing a small open $(n-1)$-ball from this $(n-1)$-component we get an almost simple spine with one ball less in its complement. After finitely many such removals we get an almost simple strict spine $Q\subset P$ with the same vertices as $P$. Therefore $c^{\rm alm}_{\rm str}(M)\leqslant c^{\rm alm}(M)$.
\end{proof}

Finally, note that we have $c_{\rm str}(M)> c(M)$ in some cases. That is, a manifold $M$ may not have a simple strict spine with $c(M)$ vertices: for instance, $S^2$ and $D^2$ do not have a simple strict spine at all, and hence $c_{\rm str}(S^2)=+\infty$; we also have $c_{\rm str}(S^3)=1$ (the abalone is a strict spine with one vertex) and $c_{\rm str}(S^n)=0$ for every $n\geqslant 4$ (a generalized Bing's house without vertices in dimension $n\geqslant 4$ is constructed in \cite{Mat:special}). A reasonable complexity should be zero on spheres, so we do not investigate $c_{\rm str}$ here.

\section{Drilling spheres, handles, surgery, and connected sums} \label{handles:section}
We can finally employ the techniques introduced in the previous sections to study how complexity changes under the most common topological operations.

\subsection{Drilling} \label{drilling:subsection}
We consider first the effect of drilling along curves.
Let $M$ be a compact manifold and $\gamma\subset M$ a properly embedded $1$-manifold. The \emph{drilled} manifold $M_\gamma$ is $M_\gamma = \overline{M\setminus R(\gamma)}$ for some regular neighborhood $R(\gamma)$.  We have the following.
\begin{teo} \label{drilling:teo}
Let $M^n$ be a compact manifold of dimension $n\geqslant 4$, and $\gamma \subset M^n$ a properly embedded $1$-manifold. We have $c(M^n_\gamma)\leqslant c(M^n)$.
\end{teo}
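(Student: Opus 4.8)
The plan is to start from a minimal simple spine $P\subset\interior{M^n}$ with $c(M^n)$ vertices, and to modify it into a simple spine of the drilled manifold $M^n_\gamma$ without creating new vertices. The basic idea is that a curve $\gamma$ can, after an isotopy, be pushed so that it misses $P$ entirely except for passing through the collar region, and then drilling along $\gamma$ in the sense of Section \ref{drilling:section} turns $P$ into a simple polyhedron of the same dimension which is a spine of the complement. First I would recall from Proposition \ref{cut:prop} that $M_P = N\times[0,1]\sqcup D_1\sqcup\dots\sqcup D_k$, so $\interior{M}\setminus P$ is a disjoint union of an open collar $N\times[0,1)$ and some open balls. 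Using a general position argument I can isotope $\gamma$ so that it avoids all the balls $D_i$ and lies in a standard position relative to $P$: the generic intersection of a $1$-manifold with the codimension-$1$ polyhedron $P$ meets only the top $(n-1)$-stratum, transversally, in finitely many points.

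The key step is then to drill. Let $Q$ be obtained from $P$ by drilling along $\gamma$ as in Lemma \ref{bubble:lemma}: $Q = (P\setminus R)\cup\partial R$, where $R = R(\gamma)$ is the regular neighborhood we are removing. Since $\gamma$ is a $1$-manifold meeting $P$ only in the $(n-1)$-stratum, it does not intersect the $1$-skeleton of $P$ (here is where $n\geqslant 4$ enters: the $1$-skeleton of $P$ has dimension $\leqslant 1$ and $\gamma$ has dimension $1$, so generically they are disjoint in $M^n$ as long as $1+1<n$, i.e. $n\geqslant 3$; but to make the collapse/spine argument work cleanly and to control the complement we need the slightly stronger $n\geqslant 4$). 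By Lemma \ref{bubble:lemma}, $Q$ is simple and has exactly the same vertices as $P$, so $Q$ has $c(M^n)$ vertices. The remaining work is to check that $Q$, viewed inside $M^n_\gamma = \overline{M^n\setminus R(\gamma)}$, is genuinely a spine of $M^n_\gamma$: I must exhibit discs in the complement so that the complement of $Q$ in $M^n_\gamma$ collapses onto $Q$. For this I would use Proposition \ref{cut:prop} again, cutting $M^n_\gamma$ along $Q$. Cutting $M^n$ along $P$ gave a collar plus balls; removing a tubular neighborhood of $\gamma$ and then adding $\partial R$ to the spine corresponds, on the cut manifold, to replacing the part of the collar and balls that met $R(\gamma)$ by the region between $\partial R$ and the rest — a computation that should show $M^n_\gamma$ cut along $Q$ is again a collar of $\partial_0$ together with a disjoint union of balls, one for each old ball plus one new ball coming from each component of $R(\gamma)\setminus P$ that is now a solid-torus-like piece sitting in the complement.

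The main obstacle I anticipate is precisely this last verification: controlling the topology of the complement $M^n_\gamma\setminus Q$ and showing it is a union of a collar and balls. When $\gamma$ passes through the collar region $N\times[0,1]$, the drilled neighborhood $R(\gamma)$ carves a ``tunnel'' through the collar, and when $\gamma$ crosses $P$ transversally it connects two complementary regions, so the bookkeeping of which complementary components merge or split, and whether each resulting piece is a ball, needs care. The clean way to handle it is to work entirely in the cut manifold $M_P$ via the cut map $f$ of Section \ref{cut:subsection}: lift $\gamma$ to an arc (or arcs) in $M_P$, note that $M_P$ is a collar plus balls, and observe that drilling a properly embedded arc out of (a collar $\times[0,1]$) $\sqcup$ (balls) and regluing the relevant boundary sphere pieces yields again a collar plus balls — this is an elementary handle-cancellation / isotopy statement in the manifold-with-boundary $M_P$, which is where the hypothesis $n\geqslant 4$ buys enough room for general position of the $1$-dimensional $\gamma$ against the $1$-skeleton. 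Once that is in place, Proposition \ref{cut:prop} immediately gives that $Q$ is a spine of $M^n_\gamma$, and since it has $c(M^n)$ vertices we conclude $c(M^n_\gamma)\leqslant c(M^n)$. A subtlety worth double-checking is the case where $\gamma$ has closed components (so $M$ closed is allowed): then $\gamma$ misses $P$ entirely after isotopy into one complementary ball, $R(\gamma)$ is a solid torus $D^{n-1}\times S^1$ inside a ball, and drilling it out of that ball and adding $\partial R = S^{n-2}\times S^1$ to $P$ must be shown to still produce a spine — here $\partial R$ bounds on one side the solid torus (now removed) and on the other side $\text{(ball)}\setminus\text{(solid torus)}$, which for $n\geqslant 4$ is again built from a collar and a ball, so everything goes through.
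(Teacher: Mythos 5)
Your plan and the paper's proof diverge at the very first step, and the divergence is fatal. The paper isotopes $\gamma$ so that it lies \emph{inside} $P$ (in the strata of dimension $\geqslant 2$, which is where $\dim P=n-1\geqslant 3$ is used), and only then drills. You instead isotope $\gamma$ into general position \emph{transverse} to $P$, meeting it in finitely many points, or into a complementary ball entirely. That is a genuinely different construction, and the resulting $Q=(P\setminus R)\cup\partial R$ is not in general a spine of $M_\gamma$.

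The problem surfaces exactly where you flag it, but your resolution is incorrect. Take a closed component $\gamma$ isotoped into a complementary open ball $D_1$, disjoint from $P$. After drilling, the piece of $(M_\gamma)_Q$ corresponding to $D_1$ is (closure of) $D_1\setminus\overline{R(\gamma)}$, which is an $n$-ball with an open solid torus $S^1\times \interior{D^{n-1}}$ removed. This space is homotopy equivalent to the linking sphere $S^{n-2}$ (for $n\geqslant 4$ this is $S^2$ or higher), so it is neither a ball nor a collar $N\times[0,1]$, nor a disjoint union of such; its two boundary components $S^{n-1}$ and $S^1\times S^{n-2}$ are not even homeomorphic. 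The assertion that ``$\text{(ball)}\setminus\text{(solid torus)}$ is again built from a collar and a ball'' is simply false, and Proposition \ref{cut:prop} then fails for your $Q$. The same obstruction appears whenever an arc of $\gamma$ dips into a complementary region and back out through the same $(n-1)$-component of $P$: removing a boundary-parallel half-tube from a ball introduces a nontrivial $(n-2)$-cycle.

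What makes the paper's proof work is precisely avoiding this: by pushing $\gamma$ entirely into $P$ (away from the $1$-skeleton), the drilling removes from each complementary ball or collar only a thin wedge-shaped neighborhood of a circle or arc lying \emph{on its boundary} $\partial_0 M_P$. Cutting out such a half-tube sitting on the boundary of a ball leaves a ball (it is a boundary collapse, not a link-complement), and similarly for the collar piece; so $(M_\gamma)_Q$ remains a collar plus balls and Proposition \ref{cut:prop} applies. Your plan as written cannot be repaired by more careful bookkeeping in $M_P$: the step must be replaced by the isotopy of $\gamma$ into $P$. (For the case $\gamma\cap\partial M\neq\emptyset$, the paper additionally decomposes $\gamma$ into a part inside $P$ and vertical arcs to $\partial M$, and uses the equivalence $c=c^{\rm alm}$ from Theorem \ref{almost:simple:teo} to remove small balls around the transverse punctures; your proposal does not address this, but it is a secondary issue compared to the main gap.)
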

\begin{proof}
Let $P$ be a minimal spine of $M$. If $\gamma$ lies in $\interior M$, we can isotope it inside $P$ and disjoint from the $1$-skeleton (because $\dim P \geqslant 3$). By drilling $P$ along $\gamma$ we thus get a spine $Q$ for $M_\gamma$. Lemma \ref{bubble:lemma} implies that $Q$ has no more vertices than $P$, and hence $c(M_\gamma)\leqslant c(M)$.

If $\gamma$ intersects $\partial M$, we can isotope it so that $\gamma=\gamma'\cup \lambda$ where $\gamma'$ lies in $P$ as before and $\lambda$ consists of arcs connecting $P$ to $\partial M$, and each arc is a fiber of a collar of $P$, see Fig. \ref{arco:fig}.

\begin{figure}
 \begin{center}
  \includegraphics[width = 4cm]{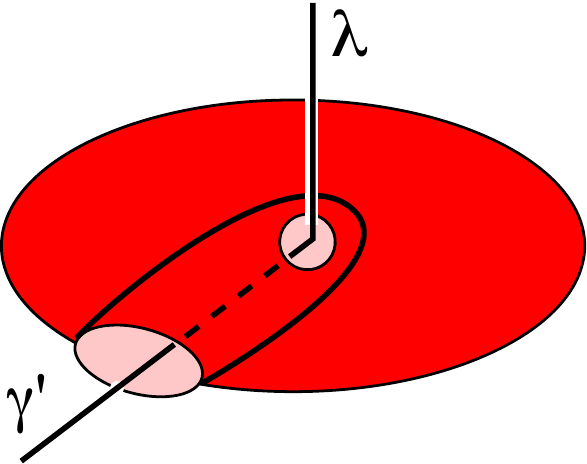}
 \end{center}
\caption{Put $\gamma$ as $\gamma'\cup\lambda$ with $\gamma'\subset P$ and $\lambda$ vertical. Then drill along $\gamma'$ and make a hole around $\lambda$.}
\label{arco:fig}
\end{figure}

A spine $P'$ of $\overline{M\setminus R(\gamma')}$ is constructed from $P$ by drilling along $\gamma'$. The polyhedron $P'$ intersects $\lambda$ transversely in some points. An almost simple spine of $M_\gamma$ is $P'$ with some small open balls removed around these points. By Theorem \ref{almost:simple:teo} we get $c(M^n_\gamma)\leqslant c(M^n)$.
\end{proof}
The condition $n\geqslant 4$ is necessary: in dimension 3 there is no general estimate relating $c(M_\gamma)$ and $c(M)$. If $M$ is closed and $c(M)>0$, it is always possible to decrease the complexity by some appropriate drilling. 
\begin{teo} \label{drilling:strict:teo}
Let $M^n$ be a closed manifold with $c(M^n)>0$ and $n\geqslant 2$. There is a simple closed curve $\gamma\subset M^n$ such that $c(M^n_\gamma)<c(M^n)$.
\end{teo}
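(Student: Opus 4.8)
The plan is to treat $n=2$ directly from the complexity formula for surfaces, and $n\geqslant 3$ by localizing at a vertex of a minimal spine and drilling along a carefully chosen curve.

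\emph{Case $n=2$.} By Example~\ref{surface:ex}, $c(M)>0$ forces $\chi(M)\leqslant 0$, so $M$ is neither $S^2$ nor $\matRP^2$; hence $M$ contains a two-sided non-separating simple closed curve $\gamma$. Then $M_\gamma$ has non-empty boundary and $\chi(M_\gamma)=\chi(M)$ (we delete an open annulus), so again by Example~\ref{surface:ex}, and since $\chi(M)\leqslant 0$,
$$c(M_\gamma)=\max\{-2\chi(M),0\}\;<\;2-2\chi(M)=c(M).$$

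\emph{Case $n\geqslant 3$.} Fix a minimal simple spine $P\subset\interior M$; since $c(M)>0$ it has at least one vertex $v$, whose star in $(M,P)$ is a copy of $(D^n,\Pi^{n-1})$, with the recursive structure described in Corollary~\ref{recursive:cor} and Proposition~\ref{recursive:prop}. One should be warned that the most naive attempt — drilling along a circle running through $v$ — cannot work: such a drilling destroys the vertex $v$ but, when the boundary of the regular neighbourhood is glued back, recreates a vertex at each of the $n+1$ vertices of the link $\partial\Pi^{n-1}$ of $v$ (by Corollary~\ref{recursive:cor} the local model there becomes a copy of $\Pi^{n-1}$ again), so for $n\geqslant 3$ the number of vertices actually goes \emph{up}. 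Hence $\gamma$ must be chosen so that $M_\gamma$ carries a spine which is \emph{not} the naive drilling of $P$. The idea I would pursue is to take $\gamma$ dual to a suitable $(n-1)$-cell of $P$ incident to $v$ — for instance the cell $D^{n-1}\times\{0\}$ appearing in the splitting $\Pi^{n-1}\cong(\Pi^{n-2}\times[0,1])\cup(D^{n-1}\times\{0\})$ — so that after removing a regular neighbourhood of $\gamma$ this cell becomes compressible; compressing it across the new boundary and collapsing should trigger the disappearance of $v$, while Lemma~\ref{bubble:lemma} guarantees that no vertex is created elsewhere. This would produce a (possibly almost simple, hence by Theorem~\ref{almost:simple:teo} also simple) spine of $M_\gamma$ with at most $c(M)-1$ vertices. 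When $n\geqslant 4$ one also has the a priori bound $c(M_\gamma)\leqslant c(M)$ of Theorem~\ref{drilling:teo}, so it suffices to rule out equality for \emph{some} $\gamma$; writing $M$ as a Dehn filling $M=M_\gamma\cup h^2\cup h^n$ and feeding this into the handle inequalities reduces the problem, once more, to the local bookkeeping at $v$.

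\emph{Main obstacle.} The delicate point is precisely the strictness: exhibiting a curve $\gamma$ for which the vertex count genuinely drops, rather than merely a curve for which $c(M_\gamma)\leqslant c(M)$. The whole argument must extract something from the hypothesis $c(M)>0$ — i.e.\ from the presence of the vertex $v$ — namely that the \emph{drilled} manifold $M_\gamma$ admits a simplification unavailable to the closed manifold $M$. Pinning down the correct dual curve and checking that the resulting compression–collapse eliminates $v$ without introducing new vertices (an argument in the spirit of Lemma~\ref{bubble:lemma}, using the recursive description of $\Pi^{n-1}$) is where the real work lies; the case $n=3$, where one cannot fall back on Theorem~\ref{drilling:teo}, is the most demanding.
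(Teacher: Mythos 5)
Your case $n=2$ is correct and complete, though it sidesteps the general mechanism by appealing directly to the surface formula; the paper's argument treats all $n\geqslant 2$ uniformly. For $n\geqslant 3$ there is a genuine gap, which you yourself flag: you never actually produce the curve $\gamma$, and your proposed route (choose $\gamma$ dual to an $(n-1)$-cell at $v$, drill, then ``compress across the new boundary and collapse'') is not pinned down and would be delicate to make rigorous, precisely because drilling as in Lemma~\ref{bubble:lemma} never kills an existing vertex on its own.

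The idea you are missing is to reverse the order of operations and lean on \emph{minimality} instead of on a local analysis of the drilled model. The paper does not pick $\gamma$ in advance. Start from a minimal spine $P$, take a vertex $v$ and an $(n-1)$-component $C$ of $P$ incident to $v$, remove a small open $(n-1)$-ball from the interior of $C$, and collapse as much as possible. The free $(n-1)$-ball boundary propagates through $C$ and past $v$, so the result is an almost simple $Q\subset P$ with strictly fewer vertices (pass to a simple spine via Theorem~\ref{almost:simple:teo} if desired). Now observe: $C$ is adjacent to one or two of the complementary $n$-balls of $M\setminus P$. If it is adjacent to two distinct balls, the puncture merges them into one ball, so $Q$ is still a spine of $M$ --- contradicting minimality of $P$. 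Hence $C$ must have the \emph{same} ball $B$ on both sides. In that case puncturing $C$ turns $B$ into a complementary region that is not a ball, so $Q$ is not a spine of $M$; but any simple closed curve $\gamma$ crossing $P$ transversely exactly once, through the hole in $C$, has the property that $M_\gamma$ collapses onto $Q$. Thus $c(M_\gamma)<c(M)$. The minimality contradiction is what supplies the strictness for free, without any case-by-case tracking of vertex creation at the boundary of $R(\gamma)$ --- exactly the ``real work'' you correctly identified as the obstacle, but whose resolution you did not find. Note also that this argument does not invoke Theorem~\ref{drilling:teo} at all, so the $n=3$ case is no harder than $n\geqslant 4$.

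Two smaller points. Your warning about the naive ``drill through $v$'' approach is essentially right: such a drilling trades the vertex $v$ for the $n+1$ vertices of $\partial\Pi^{n-1}$ appearing on $\partial R(\gamma)$ (net gain $n$), so it increases the count. And your remark that the intermediate spine is merely almost simple, handled by Theorem~\ref{almost:simple:teo}, agrees with the paper.
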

\begin{proof}
Let $P$ be a minimal spine of $M$. Since $c(M)>0$, it has at least one vertex $v$. Let $C$ be a $(n-1)$-component of $P$ incident to $v$. By removing a small open $(n-1)$-ball from $C$ and then collapsing as more as possible we get an almost simple polyhedron $Q\subset P\subset M$ with strictly less vertices than $P$, since $v$ has been ``killed'' during the collapse.

The component $C$ is adjacent to one or two distinct components of $M\setminus P$. Each such component is an $n$-ball. If it is adjacent to two distinct balls, these glue to form a single ball in $M\setminus Q$, and hence $Q$ is a spine of $M$: a contradiction, since it has less vertices than $P$.  
Therefore $C$ is adjacent to a single ball, and $Q$ is a spine of $M_\gamma$ where $\gamma\subset M$ is a closed curve intersecting $P$ transversely in one point of $\sigma$. See an example in Fig.~\ref{buca:fig}.

\begin{figure}
 \begin{center}
   \resizebox{12.5 cm}{!}{\input{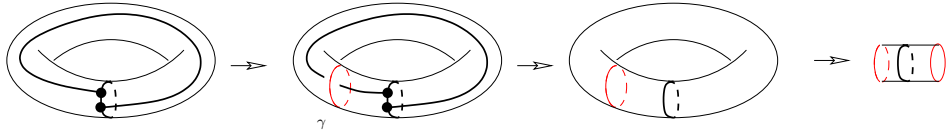}}
 \end{center}
\caption{By making a hole on a $(n-1)$-component $C$ of $P$ we get a spine of $M\gamma$ with $\gamma$ intersecting $P$ transversely in one point. After collapsing, we kill all the vertices adjacent to $C$.}
\label{buca:fig}
\end{figure}

\end{proof}
In other words, every closed manifold of positive complexity is obtained by filling a manifold of strictly smaller complexity. 

\begin{rem} In contrast with the previous result, Theorem \ref{drilling:strict:teo} also holds in dimension 3. For instance, a lens space (whic may have arbitrarily high complexity) is obtained by filling a solid torus, which has complexity zero. The Matveev-Fomenko-Weeks smallest closed hyperbolic 3-manifold has complexity 9 \cite{Mat, MaPe} and can be obtained by filling the figure-eight knot sibling, which has complexity 2 \cite{CaHiWe}. Note that the hyperbolic volume (and hence Gromov norm) satisfies the \emph{opposite} inequality ${\rm Vol} (M_\gamma^3) > {\rm Vol} (M^3)$ for any $\gamma$.
\end{rem}

\subsection{Handles}
We have the following.
\begin{teo} \label{handles:teo} 
Let $N^n$ be obtained from $M^n$ by adding a handle of index $i$.
\begin{itemize}
\item If $i=n$ and $n\geqslant 3$, we have $c(N^n)=c(M^n)$;
\item If $i=n-1$ and $n\geqslant 4$, we have $c(N^n)\geqslant c(M^n)$;
\item if $i<n-1$, we have $c(N^n)\leqslant c(M^n)$.
\end{itemize}
\end{teo}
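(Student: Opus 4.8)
The plan is to handle the three cases by three essentially independent arguments, reducing as much as possible to facts already established.

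\emph{The case $i=n$.} Adding an $n$-handle to $M^n$ amounts to gluing a ball $D^n$ along an entire $(n-1)$-sphere boundary component $\Sigma\subset\partial M$; equivalently, $M$ is $N$ with an open $n$-ball removed. For $c(N)\leqslant c(M)$ I would take a minimal simple spine $P\subset\interior M$, with $M$ minus its complementary balls $B_1,\ldots,B_k$ collapsing onto $P$, and enlarge the system by a small concentric ball $B_0\subset\interior{D^n}$: the spherical collar $D^n\setminus\interior{B_0}$ collapses onto $\Sigma\subset\partial M$, so $N\setminus\interior{(B_0\cup B_1\cup\cdots\cup B_k)}$ still collapses onto $P$, and hence $c(N)\leqslant c(M)$. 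For the reverse inequality I would take a minimal simple spine $Q$ of $N$, pick a point $x$ in an $(n-1)$-component of $Q$, and drill $Q$ along $\{x\}$, exactly as in the proof of Theorem \ref{drilling:strict:teo} (but without collapsing afterwards). Since $n\geqslant 3$ the point $x$ avoids the $1$-skeleton of $Q$, so by Lemma \ref{bubble:lemma} the drilled polyhedron $P$ is simple with the same $c(N)$ vertices; moreover $N\setminus\interior{(B_1\cup\cdots\cup B_k\cup R(x))}$ collapses onto $P$, so $P$ is a spine of $N\setminus\interior{R(x)}$, which is $N$ with an open ball removed, hence homeomorphic to $M$. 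Thus $c(M)\leqslant c(N)$, and the two inequalities give equality.

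\emph{The case $i<n-1$.} Here the idea is to enlarge a minimal spine of $M$ by the (collapsed) core of the handle and recognise the result as an \emph{almost} simple spine of $N$, then invoke $c=c^{\rm alm}$. Let $P\subset\interior M$ be a minimal simple spine, with $\widehat M:=M\setminus\interior{(\text{complementary balls})}$ collapsing onto $P$, and write $N=M\cup_\varphi H$ with $H=D^i\times D^{n-i}$ attached along $\varphi(S^{i-1}\times D^{n-i})\subset\partial M$. Collapsing $H$ onto the union of its core $D^i\times 0$ and its attaching region shows that $N$ minus the same balls collapses onto $\widehat M\cup(D^i\times 0)$, and then, following the collapse $\widehat M\searrow P$ and using general position, onto $P\cup D$, where $D$ is an $i$-disc attached to $P$ along its boundary sphere, which we may place inside a single $(n-1)$-component of $P$. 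Since $i\leqslant n-2$, the disc $D$ is low-dimensional material relative to $P$: the link of every point of $D$ in $(N,P\cup D)$ is homeomorphic to a pair $(S^{n-1},L)$ with $L\subset\partial\Pi^{n-1}$, so $P\cup D$ is an almost simple spine of $N$ with no vertices other than those of $P$, i.e.\ with $c(M)$ vertices. Hence $c^{\rm alm}(N)\leqslant c(M)$, and Theorem \ref{almost:simple:teo} gives $c(N)=c^{\rm alm}(N)\leqslant c(M)$. I expect the careful verification of the collapse $N\setminus\interior{(\text{balls})}\searrow P\cup D$, and of the local models along $D$, to be the only genuine work in this case.

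\emph{The case $i=n-1$ (with $n\geqslant 4$).} This reduces directly to the drilling theorem. The handle $H=D^{n-1}\times D^1$ is a regular neighbourhood in $N$ of its cocore $\alpha=\{0\}\times D^1$, an arc properly embedded in $N$ (its endpoints lie on the belt region $D^{n-1}\times S^0\subset\partial N$). Hence $M=\overline{N\setminus H}=N_\alpha$ is obtained from $N$ by drilling along a properly embedded $1$-manifold, and since $n\geqslant 4$ Theorem \ref{drilling:teo} yields $c(M)=c(N_\alpha)\leqslant c(N)$, as required.
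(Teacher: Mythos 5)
Your arguments for the cases $i=n$ and $i=n-1$ are correct and match the paper exactly (the paper for $i=n$ just observes directly that a spine of $M$ is a spine of $N$, and for the converse drills around a point off the $1$-skeleton; for $i=n-1$ it drills along the cocore arc and cites Theorem~\ref{drilling:teo}, precisely as you do).

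For $i<n-1$ the overall idea --- slide the core disc $D^i$ down a collar and attach it to $P$, then observe that $D^i$ is low-dimensional material that cannot create vertices --- is also the paper's idea, but your specific route has a gap. You claim that the attaching sphere $\partial D^i=S^{i-1}$ may be placed inside a single $(n-1)$-component of $P$, and you then verify almost-simplicity by a local analysis that presupposes this. However, $\partial D^i$ has dimension $i-1$ and the singular set of $P$ (its $(n-2)$-skeleton) has codimension $1$ inside $P$, so in general position inside $P$ the expected dimension of their intersection is $(i-1)+(n-2)-(n-1)=i-2$, which is $\geqslant 0$ for all $i\geqslant 2$. Thus for $2\leqslant i<n-1$ one cannot in general isotope $\partial D^i$ off the lower strata of $P$, and at the crossing points the link of $P\cup D^i$ in $M$ is not the model you describe; whether it still embeds in $\partial\Pi^{n-1}$ is exactly the unverified content. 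The paper sidesteps this entirely: it does not claim $P\cup D^i$ is almost simple, but instead invokes Lemma~\ref{low:attach:lemma}, which applies to any spine of the form (simple polyhedron)$\cup$(polyhedron of dimension $<n-1$), regardless of how the low-dimensional part sits on the simple one. The proof of that lemma subdivides $K=D^i$ into cells and drills them one by one, and the only general-position requirement is that each cell avoid the $1$-skeleton of the current simple polyhedron --- a codimension-$(n-1)$ avoidance in the $n$-dimensional ambient $M$, which is always possible since the cells have dimension $<n-1$. So the fix is simply to replace your almost-simplicity argument by a direct application of Lemma~\ref{low:attach:lemma} once you know $P\cup D^i$ is a spine of $N$ with $P$ simple of $c(M)$ vertices and $\dim D^i<n-1$.
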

\begin{proof}
Suppose $i=n$. A spine $P$ of $M$ is also a spine of $N$: hence $c(N)\leqslant c(M)$. Conversely, we can easily construct a spine of $M$ from a spine $P$ of $N$ with the same number of vertices by drilling around a point of $P$ not contained in the $1$-skeleton (which exists since $n\geqslant 3$), see Lemma \ref{bubble:lemma}. 

Suppose $i=n-1$. The inverse operation of attaching a $(n-1)$-handle is drilling along the cocore arc of the handle: the inequality follows from Theorem \ref{drilling:teo}. 

We are left with the case $i<n-1$. Let $P$ be a minimal spine of $M$, that is a spine with $c(M)$ vertices. By using a collar for $P$ (see Section \ref{cut:subsection}), we attach the core disc $D^i$ of the handle directly to $P$ and get a spine $P\cup D^i$ of $N$. Then $c(N)\leqslant c(M)$ by Lemma \ref{low:attach:lemma}.
\end{proof}

\subsection{Drilling along spheres}
In Section \ref{drilling:subsection} we showed the effect of drilling along a curve. Drilling along higher-dimensional spheres gives the opposite inequality. As above we set $M_S = \overline{M\setminus R(S)}$. In the following, the sphere $S\subset M$ is not necessarily locally flat.

\begin{cor} \label{spheres:cor}
Let $M$ be a manifold and $S\subset\interior M$ a $k$-sphere with $k\geqslant 2$. We have $c(M_S)\geqslant c(M)$.
\end{cor}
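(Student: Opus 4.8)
The plan is to exhibit $M$ as the result of attaching to $M_S$ a handle of index $n-k$ followed by a handle of index $n$, and then to read off the inequality from Theorem \ref{handles:teo}.

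Write $R=R(S)$ for the regular neighbourhood used to form $M_S=\overline{M\setminus R}$, so that $M=M_S\cup_{\partial R}R$, glued along $\partial R\subset\partial M_S$. Note first that $n\geq k+1\geq 3$, since otherwise $S\cong S^k$ would be a union of components of $M$ and $M_S$ would be empty. The key step is to produce a handle decomposition of $R$, viewed as a compact $n$-manifold built from $\emptyset$, having exactly one $0$-handle and one $k$-handle (and, crucially, no $1$-handle). Since $R$ is a regular neighbourhood of $S\cong S^k$, it collapses onto $S$; writing $S^k=D^k_-\cup_{S^{k-1}}D^k_+$ as the union of its two hemispheres, a regular neighbourhood of $D^k_-$ is a ball $D^n$ (it is a regular neighbourhood of a collapsible polyhedron), and the remainder of $R$ is a regular neighbourhood of the ball $D^k_+$ rel $S^{k-1}$, which is again a ball glued along a regular neighbourhood of $S^{k-1}$ in $S^{n-1}=\partial D^n$. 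This displays $R$ as a $0$-handle together with a single $k$-handle; since $k\geq 2$, no $1$-handle occurs. (When $S$ is locally flat, $R$ is simply a $D^{n-k}$-bundle over $S^k$ and all of this is transparent.)

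Next I would dualize: reading $R$ as a cobordism from $\partial R$ to $\emptyset$ turns this handle decomposition upside down, so that $M=M_S\cup_{\partial R}R$ is obtained from $M_S$ by first attaching a handle of index $n-k$ (dual to the $k$-handle) and then a handle of index $n$ (dual to the $0$-handle). Since $k\geq 2$ we have $n-k\leq n-2<n-1$, so Theorem \ref{handles:teo} gives $c(M_S\cup h^{n-k})\leq c(M_S)$; attaching the final index-$n$ handle then leaves complexity unchanged. Hence $c(M)\leq c(M_S)$, which is the claim.

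The one delicate point — and the main obstacle — is the construction of the above handle decomposition of $R$ when $S$ is not locally flat: one must check that the $k$-handle piece is attached along a genuine (though possibly knotted) copy of $S^{k-1}\times D^{n-k}$ in $S^{n-1}$, rather than along the regular neighbourhood of a non-locally-flat $(k-1)$-sphere, which need not be a product. In codimension $n-k\geq 3$ this is automatic by Zeeman's unknotting theorem; in codimensions $1$ and $2$ one chooses the hemisphere splitting of $S$ so that the equator $S^{k-1}$ avoids the (lower-dimensional) non-locally-flat locus of $S$ in $M$, and then argues directly using that a regular neighbourhood of a PL ball is a PL ball. Everything else is just bookkeeping of handle indices fed into Theorem \ref{handles:teo}.
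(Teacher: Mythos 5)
Your overall strategy matches the paper's exactly: exhibit $M$ as obtained from $M_S$ by attaching an $(n-k)$-handle and then an $n$-handle, and feed this into Theorem \ref{handles:teo}. The bookkeeping ($n-k\leqslant n-2<n-1$ since $k\geqslant 2$, and $n\geqslant k+1\geqslant 3$ so the index-$n$ case applies) is correct. The difference is in how you produce the handle decomposition of $R=R(S)$: you build $R$ from $\emptyset$ as a $0$-handle plus a $k$-handle and then dualize, while the paper directly exhibits the $(n-k)$-handle as the block of the block bundle $R(S)\to S$ over a single $k$-simplex $\sigma$, and the $n$-handle as the complementary piece $\overline{R(S)\setminus H}$, which is a ball because it collapses onto the disc $\overline{S\setminus\sigma}$. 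These are really the same picture with $D^k_+$ playing the role of $\sigma$ and $D^k_-$ playing the role of $\overline{S\setminus\sigma}$.

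Where your write-up goes astray is in the proposed fix for non-locally-flat $S$. Arranging only that the equator $S^{k-1}$ avoids the non-locally-flat locus is not enough: the trouble is that $R_+=R(D^k_+,\,S^{k-1})$ must carry a product structure $D^k\times D^{n-k}$ meeting $\partial R_-$ in $S^{k-1}\times D^{n-k}$, and for that you need the closed disc $D^k_+$ (not merely its boundary) to be locally flat, so that $R_+$ is a block bundle over $D^k_+$, hence trivial because $D^k_+$ is a disc. If the singular set sits inside both hemispheres, neither regular neighbourhood is a product and the argument stalls. The right move is the one the paper implicitly makes: the non-locally-flat locus has dimension $\leqslant k-1$, so one can choose $D^k_+$ to be a small locally flat $k$-disc (a $k$-simplex of a fine triangulation) whose closure misses the singular set entirely, and push \emph{all} the non-flatness into the complementary piece $D^k_-$. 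That complementary piece only needs its regular neighbourhood to be a ball — which follows from collapsibility of $D^k_-$ alone and needs no local flatness — because when dualized it becomes the $n$-handle, attached along its entire boundary. With this adjustment your proof and the paper's coincide; as you wrote it, the codimension-$1$ and $2$ clause doesn't actually deliver the product structure you need.
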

\begin{proof}
The manifold $M$ is obtained from $M_S$ by adding a $(n-k)$-handle and a $n$-handle: the result then follows from Theorem \ref{handles:teo}. To prove the first assertion, represent $R(S)$ as a block bundle \cite{block} over $S$ (block bundles play the r\^ole of normal bundles in the PL category). A $n$-block over a $k$-simplex $\sigma$ of $S$ is a $(n-k)$-handle $H$. The complement $\overline{R(S)\setminus H}$ collapses onto the disc $\overline{S\setminus\sigma}$, and hence to a point: it is therefore a disc, \emph{i.e.}~a $n$-handle. 
\end{proof}

\subsection{Surgery}
Let $M^n$ be a manifold of dimension $n\geqslant 2$. A \emph{surgery} along a simple closed curve $\gamma\subset M^n$ whose regular neighborhood is homeomorphic to $D^{n-1}\times S^1$ consists of substituting this regular neighborhood with $S^{n-2}\times D^2$ via some gluing map on the boundaries, both homeomorphic to $S^{n-2}\times S^1$.

\begin{cor} \label{surgery:cor}
Let $n\geqslant 4$ and $N^n$ be obtained from a closed $M^n$ via surgery along some closed curve. We have $c(N^n)\leqslant c(M^n)$. If $c(M^n)>0$, there is a closed curve yielding $c(N^n)< c(M^n)$.  
\end{cor}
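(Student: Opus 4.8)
The plan is to realise a surgery along a curve as a drilling followed by the re-gluing of a product neighbourhood of an $(n-2)$-sphere, and then to quote the estimates for drilling (Theorems~\ref{drilling:teo} and~\ref{drilling:strict:teo}) together with the handle estimates (Theorem~\ref{handles:teo}). Concretely, if $\gamma\subset M$ has regular neighbourhood $R(\gamma)\cong D^{n-1}\times S^1$ and $N$ is obtained by surgery, then $N=M_\gamma\cup_{S^{n-2}\times S^1}(S^{n-2}\times D^2)$, where the whole boundary $\partial(S^{n-2}\times D^2)=S^{n-2}\times S^1$ is glued to the boundary component $\partial R(\gamma)$ of $M_\gamma$. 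This is exactly the situation analysed in the proof of Corollary~\ref{spheres:cor}: gluing a product neighbourhood $S^k\times D^{n-k}$ of a $k$-sphere to a manifold along its entire boundary amounts to attaching one handle of index $n-k$ and then one handle of index $n$. Taking $k=n-2$, it follows that $N$ is obtained from $M_\gamma$ by attaching a $2$-handle and then an $n$-handle.

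With this in place the first assertion is immediate. Since $n\geqslant 4$ we have $2<n-1$, so by Theorem~\ref{handles:teo} the $2$-handle does not increase complexity and the $n$-handle leaves it unchanged; hence $c(N)\leqslant c(M_\gamma)$. By Theorem~\ref{drilling:teo} (applicable because $n\geqslant 4$) we have $c(M_\gamma)\leqslant c(M)$, and therefore $c(N)\leqslant c(M)$. For the second assertion, assume $c(M)>0$. Theorem~\ref{drilling:strict:teo} produces a simple closed curve $\gamma\subset M$ with $c(M_\gamma)<c(M)$; performing a surgery along this $\gamma$ and applying the first assertion yields $c(N)\leqslant c(M_\gamma)<c(M)$, as desired.

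The one point that needs care, and which I expect to be the main obstacle, is checking that the curve $\gamma$ furnished by Theorem~\ref{drilling:strict:teo} actually admits a surgery, i.e.\ that $R(\gamma)\cong D^{n-1}\times S^1$ rather than a twisted $D^{n-1}$-bundle over $S^1$. In that construction $\gamma$ meets the minimal spine $P$ transversally in a single point of an $(n-1)$-component $C$ that is adjacent to just one complementary ball $B$, and away from that point $\gamma$ runs inside $B$; thus a regular neighbourhood of $\gamma$ is $B$ with a single $1$-handle attached, the handle being untwisted because $C$ has a product collar and $B$ is a ball, so $R(\gamma)\cong D^{n-1}\times S^1$. (When $M$ is orientable this is automatic; otherwise one simply makes the displayed argument explicit, producing $\gamma$ with trivial normal bundle.) Everything else is a direct assembly of Theorems~\ref{drilling:teo}, \ref{drilling:strict:teo}, and~\ref{handles:teo}.
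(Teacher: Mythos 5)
Your proof takes essentially the same route as the paper's: realize surgery as drilling along $\gamma$ followed by attaching a $2$-handle and an $n$-handle, then quote Theorems~\ref{drilling:teo}, \ref{drilling:strict:teo}, and~\ref{handles:teo}. Your use of the block-bundle argument from Corollary~\ref{spheres:cor} to justify the handle count is exactly the intended reading of the paper's one-line proof.

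The orientability concern you raise in the last paragraph is genuine and is in fact glossed over by the paper: Theorem~\ref{drilling:strict:teo} only hands you a curve with $c(M_\gamma)<c(M)$, and surgery as defined requires $R(\gamma)\cong D^{n-1}\times S^1$. However, your proposed resolution is not quite right. You assert that ``$C$ has a product collar,'' but an $(n-1)$-component $C$ of a simple spine can \emph{a priori} be one-sided in $M$ when $M$ is non-orientable; in that case the $1$-handle running through $C$ would be twisted and your argument would not go through as stated. (Also, a regular neighbourhood of $\gamma$ is not literally $B$ with a $1$-handle attached; that description is rather the complement $M\setminus Q$ in the proof of Theorem~\ref{drilling:strict:teo}.) When $M$ is orientable the issue disappears, as you note, so the claimed strict inequality is unproblematic there; in the non-orientable case a further argument (or a restriction of the statement) would be needed -- a point which the paper itself leaves implicit.
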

\begin{proof}
A surgery consists of drilling along the curve, and then adding a $2$-handle and a $n$-handle. Therefore the result follows from Theorems \ref{drilling:teo}, \ref{drilling:strict:teo}, and \ref{handles:teo}.
\end{proof}

\begin{cor}
Every closed manifold $M^n$ of dimension $n\geqslant 4$ can be transformed into a manifold with complexity zero after at most $c(M)$ surgeries along simple closed curves.
\end{cor}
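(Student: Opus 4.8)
The plan is to argue by induction on the integer $c(M)$, using Corollary \ref{surgery:cor} to drop the complexity by at least one unit at each step. The statement is really just the iteration of that corollary, so the proof will be short.

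If $c(M)=0$ there is nothing to do: zero surgeries suffice. Suppose then that $c(M)=m>0$. Since $M$ is closed of dimension $n\geqslant 4$ with $c(M)>0$, the second part of Corollary \ref{surgery:cor} provides a simple closed curve $\gamma\subset M$ (which necessarily has a product regular neighbourhood $D^{n-1}\times S^1$, as otherwise surgery along it would not be defined) such that the manifold $N$ obtained from $M$ by surgery along $\gamma$ satisfies $c(N)<c(M)=m$. Because $c$ takes values in $\matN$, a strict inequality forces $c(N)\leqslant m-1$.

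Now $N$ is again a closed manifold of dimension $n\geqslant 4$, so the inductive hypothesis applies to it: $N$ can be transformed into a manifold of complexity zero by at most $c(N)\leqslant m-1$ surgeries along simple closed curves. Prepending the first surgery, we see that $M$ is transformed into a complexity-zero manifold by at most $1+(m-1)=m=c(M)$ surgeries, which closes the induction.

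There is no real obstacle here; the only points worth recording are that $c$ is integer-valued (so that each surgery step genuinely lowers the complexity by at least one), and that surgery along a curve with product neighbourhood turns a closed $n$-manifold into a closed $n$-manifold, so the hypotheses of Corollary \ref{surgery:cor} persist and the construction can be iterated exactly $c(M)$ times.
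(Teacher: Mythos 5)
Your proof is correct and is exactly the intended argument: the corollary in the paper is stated with no proof because it is an immediate iteration of the second half of Corollary \ref{surgery:cor}, and your induction on the integer $c(M)$ spells that out cleanly. The remarks you add (that $c$ is $\mathbb{N}$-valued so each step drops complexity by at least one, and that surgery keeps $M$ closed of dimension $n\geqslant 4$ so the hypotheses persist) are precisely the bookkeeping one needs to see the induction close.
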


When $M$ is simply connected, a surgery is just a connected sum with $S^2\times S^{n-2}$. In dimension $n\neq 4$ there is no simply connected manifold $M$ of positive complexity, see Theorem \ref{simply:connected:teo}. We do not know if there is one such manifold in dimension 4. If so, the following holds.

\begin{cor} \label{sum:cor}
If $M^4$ is closed simply connected and $c(M^4)>0$, then
$$c\big(M^4\# (S^2\times S^{2})\big)<c(M^4).$$
\end{cor}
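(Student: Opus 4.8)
The plan is to deduce this immediately from Corollary \ref{surgery:cor}, using the remark preceding the statement that in a simply connected $4$-manifold a surgery along a simple closed curve realizes a connected sum with $S^2\times S^2$.

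Since $M^4$ is closed with $c(M^4)>0$ and $n=4\geqslant 4$, Corollary \ref{surgery:cor} (more precisely, the curve produced in its proof, via Theorem \ref{drilling:strict:teo}) gives a simple closed curve $\gamma\subset M^4$ along which a surgery yields a manifold $N$ with $c(N)<c(M^4)$. As $M^4$ is orientable, $\gamma$ has trivial normal bundle $R(\gamma)\cong D^3\times S^1$, so the surgery is defined as in Section \ref{handles:section}: it amounts to drilling along $\gamma$ — which drops the complexity strictly by Theorem \ref{drilling:strict:teo}, because $c(M^4)>0$ — followed by attaching a $2$-handle and a $4$-handle, which do not increase it by Theorem \ref{handles:teo} (indices $2<n-1$ and $n$). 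Hence $c(N)\leqslant c(M^4_\gamma)<c(M^4)$ \emph{for any choice of framing of the surgery}, since none of these three steps depends on that framing.

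It remains to identify $N$. Because $\pi_1(M^4)=1$, the curve $\gamma$ is null-homotopic, and surgery along a null-homotopic curve with trivial normal bundle in a simply connected $4$-manifold produces the connected sum of $M^4$ with an $S^2$-bundle over $S^2$; the two available framings give respectively $S^2\times S^2$ and the nontrivial such bundle. Choosing the framing corresponding to $S^2\times S^2$ — legitimate by the previous paragraph — we obtain $N=M^4\#(S^2\times S^2)$ with $c(N)<c(M^4)$, which proves the claim. The only ingredient not formally contained in the earlier results is this last topological identification of the surgered manifold: it is exactly the content of the remark before the statement and relies on $M^4$ being simply connected so that $\gamma$ bounds a disc. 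I expect that this is the only (mild) subtlety — pinning down the framing so as to land on $S^2\times S^2$ rather than on its twisted analogue — whereas the decrease of complexity itself is a direct consequence of Corollary \ref{surgery:cor} and requires no new spine manipulation.
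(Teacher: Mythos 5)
Your proof is correct and follows essentially the same route the paper intends: the corollary is a direct consequence of Corollary \ref{surgery:cor} together with the remark that surgery along a null-homotopic curve in a simply connected $4$-manifold yields a connected sum with an $S^2$-bundle over $S^2$. Your additional observation — that the complexity bounds coming from Theorems \ref{drilling:strict:teo} and \ref{handles:teo} are insensitive to the choice of framing, so one may select the framing that produces $M^4\#(S^2\times S^2)$ rather than the connected sum with the twisted $S^2$-bundle over $S^2$ — cleanly fills in a detail the paper glosses over.
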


\subsection{Connected sums}
Complexity is subadditive with respect to connected sums.

\begin{teo} \label{sum:teo}
Let $M^n\# N^n$ be obtained from $M^n$ and $N^n$ via (boundary) connected sum. If $n\geqslant 3$ we have 
$$c(M^n\#N^n)\leqslant c(M^n)+c(N^n).$$
\end{teo}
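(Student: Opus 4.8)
The plan is to reduce connected sum to a sequence of operations already controlled by earlier results, namely drilling along curves and adding handles, both governed by Theorem \ref{handles:teo} and Theorem \ref{drilling:teo}. Recall that a connected sum $M^n \# N^n$ (in the closed case) is obtained by removing an open $n$-ball from each of $M$ and $N$ and gluing along the resulting $S^{n-1}$ boundaries; equivalently, one can build $M \# N$ from the disjoint union $M \sqcup N$ by attaching a single $1$-handle connecting the two pieces. For the boundary connected sum, one attaches a $1$-handle joining a boundary component of $M$ to a boundary component of $N$. In both cases the key fact is that the relevant $1$-handle has index $i=1 < n-1$ (here using $n \geqslant 3$).

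First I would handle the disjoint union: if $P$ is a minimal simple spine of $M$ and $P'$ a minimal simple spine of $N$, then $P \sqcup P'$ is a simple spine of $M \sqcup N$ with $c(M)+c(N)$ vertices. (One must check the definition of spine accommodates disconnected ambient manifolds — it does, since the collapse and the removed balls are simply taken componentwise.) Next I would observe that $M \# N$ is obtained from $M \sqcup N$ by adding a $1$-handle. Since $1 < n-1$ when $n \geqslant 3$, the third bullet of Theorem \ref{handles:teo} gives $c(M \# N) \leqslant c(M \sqcup N)$. It remains only to identify $c(M \sqcup N)$ with $c(M)+c(N)$; the inequality $c(M \sqcup N) \leqslant c(M)+c(N)$ is immediate from the disjoint spine, and that is the direction we need. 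Combining, $c(M \# N) \leqslant c(M)+c(N)$.

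For the boundary connected sum the argument is identical: $M \natural N$ is built from $M \sqcup N$ by attaching a $1$-handle along a pair of discs, one in each boundary component, so again Theorem \ref{handles:teo} with $i=1<n-1$ applies. One subtlety worth a sentence: in Theorem \ref{handles:teo}, "adding a handle of index $i$" for $i=1$ covers exactly the move of attaching $D^1 \times D^{n-1}$ along $S^0 \times D^{n-1} \subset \partial$, which is precisely the (boundary) connected sum operation when the two $D^0$'s lie in different components — so no extra work is needed to match the hypotheses. The proof via Lemma \ref{low:attach:lemma} embedded in Theorem \ref{handles:teo} already allows the core disc of the $1$-handle to be attached directly to the spine $P \sqcup P'$ using a collar, producing a spine of $M \# N$ with the same number of vertices, after which the low-dimensional cell is drilled away without creating vertices.

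I expect the main (and only real) obstacle to be purely bookkeeping: verifying that the definitions of "spine" and "simple polyhedron" behave correctly for a disconnected ambient manifold, and confirming that the $1$-handle attachment realizing the connected sum genuinely falls under the scope of Theorem \ref{handles:teo} (in particular that the two feet of the handle may land in different connected components, which is what distinguishes connected sum from self-gluing). Neither of these presents a genuine difficulty, so the whole proof should be two or three lines once the identification "$\#$ = disjoint union plus a $1$-handle" is made explicit.
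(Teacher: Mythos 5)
Your overall strategy is the same as the paper's: reduce connected sum to a sequence of handle moves controlled by Theorem \ref{handles:teo}, starting from the disjoint spine $P \sqcup P'$ on $M \sqcup N$. However, your decomposition of the closed connected sum is incomplete in a way that breaks the citation. You write that ``$M \# N$ is obtained from $M \sqcup N$ by attaching a single $1$-handle connecting the two pieces,'' but if $M$ and $N$ are closed then $M \sqcup N$ has empty boundary, and Theorem \ref{handles:teo} attaches $D^1 \times D^{n-1}$ along $S^0 \times D^{n-1} \subset \partial(M\sqcup N)$ -- there is nowhere to attach it. The surgery-on-$S^0$ picture you have in mind is not the same thing as handle attachment in the sense of that theorem, and the collar argument in its proof genuinely uses the boundary.

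The fix is exactly what the paper does: first remove an $n$-handle (an open ball) from each of $M$ and $N$, then attach the $1$-handle joining the two resulting $S^{n-1}$ boundary components, then cap off the remaining $S^{n-1}$ boundary with an $n$-handle. Each of the extra steps falls under the $i=n$ clause of Theorem \ref{handles:teo}, which gives an equality $c(N^n)=c(M^n)$ when $n\geqslant 3$, so they do not change the bound -- but they must be present for the handle citations to apply. Note that this means the hypothesis $n\geqslant 3$ is used twice: once for $1 < n-1$ and once for the $i=n$ equality. Your treatment of the boundary connected sum is correct as written, since there $M \sqcup N$ already has boundary and $M \natural N$ really is a single $1$-handle attachment; that part agrees exactly with the paper.
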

\begin{proof}
Making a connected sum corresponds to removing two $n$-handles from the (disconnected) manifold $M_1\sqcup M_2$, adding one $1$-handle and one $n$-handle. Noone of these operations can increase the complexity when $n\geqslant 3$. Similarly, a $\partial$-connected sum is the addition of one $1$-handle.
\end{proof}
Complexity is actually additive on connected sums in dimension $n=3$, see \cite{Mat}. Actually, we do not know any example of non-additivity in higher dimension. If there were a closed simply connected 4-manifold with $c(M)>0$, then Corollary \ref{sum:cor} would yield a non-additive connected sum.

\section{Coverings and products} \label{coverings:section}
We study how complexity changes under finite coverings and products. 
\subsection{Coverings}
We have the following.

\begin{teo} \label{covering:teo}
Let $p:\widetilde M\to M$ be a $d$-sheeted covering. We have $c(\widetilde M)\leqslant d\cdot c(M)$.
\end{teo}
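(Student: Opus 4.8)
The plan is to pull back a minimal simple spine of $M$ under the covering map. Let $P\subset\interior M$ be a minimal simple spine of $M$, so $P$ has $c(M)$ vertices. Since $P$ is locally unknotted of codimension $1$ and the covering $p$ is a local PL-homeomorphism, the preimage $\widetilde P = p^{-1}(P)$ is again a simple polyhedron in $\interior{\widetilde M}$: local links are preserved by a local homeomorphism, so a point of type $k$ in $(M,P)$ has exactly $d$ preimages, each of type $k$ in $(\widetilde M,\widetilde P)$. In particular $\widetilde P$ has exactly $d\cdot c(M)$ vertices.

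The key point is then to check that $\widetilde P$ is a spine of $\widetilde M$. By Proposition \ref{cut:prop}, $P$ being a spine means $M_P = N\times[0,1]\sqcup D_1\sqcup\ldots\sqcup D_k$ with $\partial_0 M_P = N\times 0$. Cutting commutes with the covering: the cut manifold $\widetilde M_{\widetilde P}$ is naturally a $d$-sheeted covering of $M_P$, restricting to a covering of each component. A connected $d$-sheeted cover of a product $N_i\times[0,1]$ is again of the form $N_i'\times[0,1]$ (with $N_i'\to N_i$ a covering, and the boundary piece $N_i'\times 0$ mapping to $N_i\times 0$), while a connected $d$-sheeted cover of a disc $D_j$ is a disc. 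Hence $\widetilde M_{\widetilde P}$ is again a disjoint union of a collar over an $(n-1)$-manifold and some discs, with $\partial_0\widetilde M_{\widetilde P}$ the collar base; by Proposition \ref{cut:prop} again, $\widetilde P$ is a spine of $\widetilde M$. Therefore $c(\widetilde M)\leqslant d\cdot c(M)$.

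The main obstacle, and the only slightly delicate point, is the behaviour of the cut operation under the covering: one must verify that cutting $\widetilde M$ along $p^{-1}(P)$ yields precisely the pulled-back covering of $M_P$ (so that the collar structure is transported correctly and the boundary components $\partial_0$ correspond). This follows from the fact that $\widetilde P\subset\widetilde M$ is itself locally unknotted (Exercise \ref{unknotted:ex}, since local unknottedness is a local condition on links and $p$ is a local homeomorphism) and from the uniqueness of the cut map $f:\widetilde M_{\widetilde P}\to\widetilde M$ as the local embedding that is $(n-k+1)$-to-$1$ over type-$k$ points; the composition $\widetilde M_{\widetilde P}\to\widetilde M\to M$ has the defining property of $M_P\to M$ pulled back, hence factors through $M_P$. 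Alternatively one can argue combinatorially with a triangulation $T$ of $(M,P)$ lifted to $\widetilde M$, using the dual-spine description of Section \ref{triangulations:section}. Either way, no number-of-vertices estimate beyond the obvious multiplicativity is needed, and there is no claim of sharpness — indeed, as noted after the statement, the inequality is typically far from an equality.
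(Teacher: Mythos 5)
Your proof is correct and follows exactly the paper's approach: pull back a minimal simple spine $P$ under the covering, observe that $p^{-1}(P)$ is simple with $d\cdot c(M)$ vertices since $p$ is a local homeomorphism, and verify that $p^{-1}(P)$ is again a spine. The paper states this in a single line and leaves the verification implicit; the details you supply about the cut operation commuting with coverings (or, more simply, that elementary collapses and the discs $D_i$ lift along the covering) are the routine justification the paper elects not to spell out.
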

\begin{proof}
Let $P$ be a minimal spine of $M$. Then $p^{-1}(P)$ is a simple spine of $\widetilde M$ with $d\cdot c(M)$ vertices.
\end{proof}

In contrast with Gromov norm, we often get a strict inequality $c(\widetilde M)<dc(M)$. For instace, lens spaces may have arbitrarily high complexity while the complexity of their universal covering $S^3$ is zero. The following consequence is worth mentioning.

\begin{cor} \label{zero:covering:cor}
If $M$ has complexity zero, every finite covering of $M$ has complexity zero.
\end{cor}

\subsection{Products}
We do not know whether there is some general inequality which relates the complexity $c(M\times N)$ of a product with the complexities $c(M)$ and $c(N)$ of the factors. However, we have the following.

\begin{teo} \label{product:teo}
Let $M^m$, $N^n$ be compact manifolds with $m,n\geqslant 1$. If $M$ has boundary then 
$$c(M^m\times N^n)=0.$$
\end{teo}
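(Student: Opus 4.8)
The plan is to produce a vertex-free spine of $M\times N$, treating separately the cases $\partial N=\emptyset$ and $\partial N\neq\emptyset$; the key point is that multiplying by a positive-dimensional manifold factor turns every link into a suspension, and a suspension is never the link $\partial\Pi$ of a vertex. First I would use $\partial M\neq\emptyset$ to collapse $M$ \emph{directly} (no removed balls) onto a spine $P\subset\interior M$ with $\dim P\leqslant m-1$, and --- by a general-position argument, or by Matveev \cite{Mat:special} --- arrange $P$ to be almost simple, so that the link of each $x\in M$ in $(M,P)$ is a pair $(S^{m-1},L_x)$ with $L_x\subseteq\partial\Pi^{m-1}$.

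If $N$ is closed, I claim $P\times N$ is an almost simple spine of $M\times N$ with no vertices. It is a spine, because $M\searrow P$ multiplies with $N$ to give $M\times N\searrow P\times N$ and $P\times N\subset\interior M\times N=\interior{M\times N}$. Near a point $(x,y)$ the pair $(M\times N,P\times N)$ is the product of $(M,P)$ near $x$ with the ball $R(y)\cong D^n$, so using $C(A)\times C(B)\cong C(A*B)$ the link of $(x,y)$ should work out to be $(S^{m-1}*S^{n-1},\,L_x*S^{n-1})=(S^{m+n-1},\,\Sigma^n(L_x))$. Since $L_x\subseteq\partial\Pi^{m-1}$ and $\Sigma^n(\partial\Pi^{m-1})\cong\partial\Pi^{m+n-1}_n$ is a subpolyhedron of $\partial\Pi^{m+n-1}$, this link embeds in $\partial\Pi^{m+n-1}$, so $P\times N$ is almost simple; and because $\Sigma^n(L_x)$ is a suspension (as $n\geqslant 1$) while $\partial\Pi^{m+n-1}$ is not, $(x,y)$ is not a vertex. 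Hence $c^{\rm alm}(M\times N)=0$, and $c(M\times N)=0$ by Theorem \ref{almost:simple:teo}.

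If instead $\partial N\neq\emptyset$, I would argue more cheaply: $N$ also collapses directly onto a spine $Q\subset\interior N$ with $\dim Q\leqslant n-1$, so $M\times N\searrow P\times Q$, a subpolyhedron of $\interior{M\times N}$ of dimension $\leqslant (m-1)+(n-1)=m+n-2<(m+n)-1$. Then Lemma \ref{low:attach:lemma}, applied with empty simple part, gives $c(M\times N)=0$.

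The step I expect to be the main obstacle is the closed case. One has to know that a manifold with boundary collapses directly onto an \emph{almost} simple --- not necessarily simple --- spine (a ball collapses onto a point, never onto a genuine simple spine, which is precisely why almost simple spines and Theorem \ref{almost:simple:teo} are needed here), and one must identify the link of a point of $P\times N$ exactly as the $n$-fold suspension of the corresponding link in $(M,P)$, checking that such a suspension still embeds into $\partial\Pi^{m+n-1}$ yet, being a suspension, can never coincide with it.
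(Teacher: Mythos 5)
Your proof is correct and follows the same basic route as the paper: cross a strict spine of $M$ with $N$ and observe that multiplying by the positive-dimensional factor suspends every link, so no vertex can appear. The paper's version is terser: it takes $P$ to be a strict \emph{simple} spine of $M$ and asserts that $P\times N$ is simple without vertices, since a type-$k$ point $x$ gives $(x,y)$ of type $k+n>0$. Your two refinements are worth something, though. First, working with an \emph{almost} simple strict spine and Theorem \ref{almost:simple:teo} avoids relying on the existence of a simple strict spine, which can fail in low dimension (the paper itself notes in Section \ref{strict:subsection} that $D^2$ has none), whereas a point always works. Second, and more substantively, your case split on $\partial N$ patches a real omission in the paper's argument: the type computation $(x,y)\mapsto k+n$ is only valid for $y\in\interior N$, while for $y\in\partial N$ the link of $(x,y)$ in $P\times N$ is the cone $\partial\Pi^{m-1}_k\ast D^{n-1}\cong\Pi^{m+n-2}_{k+n-1}$, which is not of the form $\partial\Pi^{m+n-1}_{k'}$ --- and in any case $P\times N$ then fails to lie in $\interior{M\times N}$, so it is not even a candidate spine in the paper's sense. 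Your alternative finish for that case, collapsing $M\times N$ onto the low-dimensional $P\times Q$ and invoking Lemma \ref{low:attach:lemma} with the simple part empty, is the right fix. The one place where you wave a hand is the existence of an almost simple strict spine of a bounded $M^m$: Matveev's construction in \cite{Mat:special} covers $m\geqslant 3$, and $m=1,2$ are easy to do by hand (a point or a trivalent graph or circle), so the claim is true, but it deserves a line rather than a ``general position'' gesture.
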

\begin{proof}
Let $P$ be any strict simple spine of $M$ (no balls in the complement, \emph{i.e.}~$M$ collapses onto $P$, see Section \ref{strict:subsection}). Then $M\times N$ collapses onto $P\times N$. Moreover, $P\times N$ is simple without vertices: if $x\in P$ is of type $k$, a point $(x,y)\in P\times N$ is of type $k+n>0$. Therefore $c(M\times N)=0$.
\end{proof}

\begin{cor} \label{sphere:product:cor}
We have $c(S^m\times N)=0$ for every $m\geqslant 2$ and every manifold $N$.
\end{cor}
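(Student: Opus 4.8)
The plan is to reduce to the closed case via Theorem \ref{product:teo} and then to build a vertex-free simple spine of $S^m\times N$ directly.

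If $\partial N\neq\emptyset$, then, products being symmetric, $c(S^m\times N)=c(N\times S^m)=0$ by Theorem \ref{product:teo}; so I may assume $N$ closed, of dimension $n$. Fix a simple spine $P_N\subset N$. Since $N$ is closed, Proposition \ref{cut:prop} forces $N\setminus P_N$ to be a disjoint union of open $n$-balls. I would first consider $Q:=S^m\times P_N\subset S^m\times N$. Because $m\geqslant 2$, the polyhedron $Q$ is simple with \emph{no} vertices: a point $y\in P_N$ of type $k$ gives a point $(s,y)\in Q$ with star $\Pi^{n-1}_k\times D^m\cong\Pi^{m+n-1}_{m+k}$, hence of type $m+k\geqslant m>0$. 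The complement of $Q$ in $S^m\times N$ is the disjoint union, over the balls $B$ of $N\setminus P_N$, of the pieces $S^m\times B\cong S^m\times\interior{D^n}$, which are not balls; so $Q$ is not yet a spine. To cut each such piece into balls one uses an equatorial hypersurface: the product of an equatorial $(m-1)$-sphere of $S^m$ with $B$ separates $S^m\times B$ into two open $(m+n)$-balls. Performing this for every $B$, the candidate spine is
$$
P:=(S^m\times P_N)\cup(S^{m-1}\times N),
$$
and its complement $(S^m\setminus S^{m-1})\times(N\setminus P_N)$ is patently a disjoint union of open $(m+n)$-balls.

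The hard part is the simplicity of $P$. Along the seam $S^{m-1}\times P_N$, where the two codimension-one sheets $S^m\times P_N$ and $S^{m-1}\times N$ cross transversally (most visibly over the top stratum of $P_N$), the naive union has a non-simple, four-sheeted local model. I would repair this as in Lemma \ref{bubble:lemma}: first drill $Q=S^m\times P_N$ along the seam — this introduces no new vertices, since for $m\geqslant 2$ the $1$-skeleton of $Q$ is empty — and then glue the equatorial slabs $S^{m-1}\times\overline{B}$ onto the resulting boundary tube. A local check, using $\Pi^{a}\times D^{b}\cong\Pi^{a+b}_{b}$ together with Corollary \ref{recursive:cor}, shows that every point of the resulting polyhedron $P'$ is of some type, and that this type is always positive; this is exactly where $m\geqslant 2$ enters essentially (for $m=1$ the statement is false, e.g. $c(S^1\times S^1)=2$). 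Since $P'$ is simple, has no vertices, and its complement is a disjoint union of open balls, it is a spine of $S^m\times N$, so $c(S^m\times N)=0$.
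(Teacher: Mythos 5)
Your reduction to the case where $N$ is closed (via $\partial N\neq\emptyset$ and Theorem \ref{product:teo}) matches the paper and is fine. The direct construction for closed $N$, however, contains a genuine gap.

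The problem is the step ``drill $Q=S^m\times P_N$ along the seam $S^{m-1}\times P_N$, then glue the equatorial slabs $S^{m-1}\times \overline B$ onto the resulting boundary tube.'' Lemma \ref{bubble:lemma} only certifies simplicity of the \emph{drilled} polyhedron $(Q\setminus R)\cup\partial R$; it says nothing about what happens after you attach additional codimension-one sheets along $\partial R$. Attaching a slab reintroduces transverse crossings (this time between the slab and $\partial R$), so the ``local check'' you invoke is precisely the nontrivial point, and it is not carried out. Worse, even before the gluing the drilled polyhedron is not a spine: after drilling along $K=S^{m-1}\times P_N$, the complement contains all of $\interior{R(K)}$, which deformation retracts to $S^{m-1}\times P_N$ and is certainly not a disjoint union of open balls. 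So the slabs are not a small repair of a nearly-finished spine; they are carrying the whole burden, and without a worked-out local model near the new crossings one cannot conclude that $P'$ is simple, vertex-free, or even a spine. (As a sanity check, already for $S^2\times S^2$ your candidate seam is the Clifford torus and the four-sheeted crossing there is not of type $\partial\Pi^3_k$ for any $k$.)

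The paper avoids the direct construction entirely. Let $N'=N\setminus \interior D$ be $N$ with an open ball removed. Then $N'$ has boundary, so $c(S^m\times N')=0$ by Theorem \ref{product:teo}. On the other hand $S^m\times N'$ is exactly $(S^m\times N)_S$, the manifold obtained from $S^m\times N$ by drilling along the $m$-sphere $S=S^m\times\{pt\}$. Since $m\geqslant 2$, Corollary \ref{spheres:cor} gives $c\big((S^m\times N)_S\big)\geqslant c(S^m\times N)$, hence $c(S^m\times N)\leqslant c(S^m\times N')=0$. If you want to salvage a constructive argument, you would have to actually exhibit the local model of $P'$ near the seam and verify it is some $\Pi^{m+n-1}_k$ with $k>0$; as written, that verification is missing, and the route through Corollary \ref{spheres:cor} is both shorter and already justified.
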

\begin{proof}
If $N$ has boundary we are done by Theorem \ref{product:teo}. Otherwise, let $N'$ be $N$ with a ball removed. We have $c(S^m\times N')=0$, and $S^m\times N'$ is obtained from $S^m\times N$ by drilling along a $m$-sphere: Corollary \ref{spheres:cor} gives $c(S^m\times N)=0$.
\end{proof}

\section{Normal hypersurfaces} \label{normal:section}
When $n=3$, Matveev proved \cite{Mat} that complexity is non-increasing when cutting a $3$-manifold along an incompressible surface. This was done by putting the surface in normal position with respect to the handle decomposition induced by a minimal spine, and by showing that a simple spine can be ``cut'' along a normal surface. 

To extend this result, we define here normal discs in simplexes $\Delta$ of any dimension. These can be used to define normal hypersurfaces with respect to triangulations or simple spines in any dimension: we do this in the simple case where every simplex contains at most one normal disc. In the dual setting of simple spines, this means that the normal surface is actually a subpolyhedron of the spine. 

\subsection{Normal discs} \label{normal:subsection}
We extend the usual definition of normal discs in a tetrahedron, used in 3-dimensional topology, to all dimensions. Let $\Delta = \Delta^{n+1}$ be the $(n+1)$-simplex.

\begin{defn} A \emph{normal disc} in $\Delta$ is a subpolyhedron dual to some partition $\calP = \{V_1, V_2\}$ of the vertices of $\Delta$ into two non-empty subsets, see Definition \ref{dual:defn}.
\end{defn}

By Proposition \ref{Z:prop} a normal disc is homeomorphic to $\Pi^n_n\cong D^n$ and is hence indeed a disc. It is a subcomplex of the subdivided $\Delta'$. The \emph{type} of a normal disc is the unordered pair $(\# V_1,\# V_2)$. 

\begin{rem}
There are $\left(\begin{array}{c} n+2 \\ \# V_1 \end{array}\right)$ distinct normal discs of type $(\# V_1,\# V_2)$, except when $\# V_1 = \# V_2$: in this case there are half of them.
\end{rem}

\begin{figure}
 \begin{center}
  \includegraphics[width = 10cm]{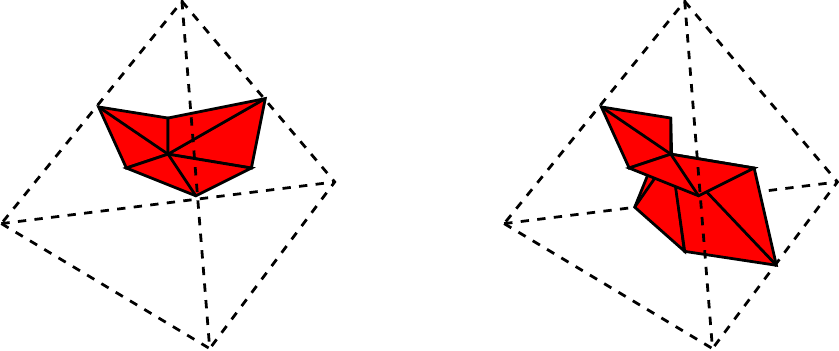}
 \end{center}
\caption{In dimension 3 the normal discs are the usual normal triangles and squares. Each is a subcomplex of $\Pi^2$, dually embedded in the tetrahedron $\Delta$.}
\label{normal:fig}
\end{figure}

\begin{rem}
The 3-simplex contains the usual 7 normal discs: $4$ normal triangles of type $(3,1)$ and $3$ squares of type $(2,2)$, see Fig.~\ref{normal:fig}. The 4-simplex contains 15 normal discs: $5$ normal tetrahedra of type $(4,1)$ and $10$ normal dipyramids (two tetrahedra attached along one face) of type $(3,2)$.
\end{rem}

\subsection{Homology}
The homology group $H_n(P^n;\matZ_{2\matZ})$ of a simple polyhedron $P^n$ can be naturally interpreted as the set of all closed $n$-submanifolds of $P$. We prove this fact first locally and then globally.

Represent $\Pi^n\subset\Delta$ dually to $\Delta=\Delta^{n+1}$. Every normal disc $D$ is contained in $\Pi^n$.
Its boundary $\partial D = D\cap\partial \Pi^n$ is a $(n-1)$-sphere. Consider the maps
$$
\begin{CD} 
\left\{ \begin{array}{c} {\rm Normal} \\ {\rm discs\ in\ } \Delta \end{array} \right\} @>\partial>>
\left\{ \begin{array}{c} {\rm Closed\ submanifolds\ in} \\ \partial \Pi^n {\rm \ of\ dimension\ } n-1
\end{array} \right\} @> [.]>>
H_{n-1}(\partial\Pi^n;\matZ/_{2\matZ}).
\end{CD}
$$
Of course $[.]$ sends a manifold $\Sigma$ to its class $[\Sigma]$.
\begin{prop} \label{homology:prop}
If $n\geqslant 2$, both maps are bijections.
\end{prop}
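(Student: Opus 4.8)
The plan is to analyze both maps separately, exploiting the very simple combinatorics of $\partial\Pi^n$ dually embedded in $\partial\Delta^{n+1}$. Recall that $\partial\Pi^n$ has exactly $n+2$ top-dimensional $(n-1)$-components, one inside each facet $f_0,\dots,f_{n+1}$ of $\Delta$: indeed $f_i\cap\Pi^n$ is a cone, so $f_i\cap\partial\Pi^n$ is a single $(n-1)$-cell $C_i$ (a disc). The key structural observation is that a normal disc $D$ dual to a partition $\{V_1,V_2\}$ meets a facet $f_i$ exactly when $f_i$ is spanned by vertices of both blocks, i.e.~$D\cap f_i\neq\emptyset$ iff $i\notin V_1$ and $i\notin V_2$ is false — more precisely $\partial D$ runs through $C_i$ iff the facet $f_i$ (opposite the vertex $i$) contains vertices from both $V_1$ and $V_2$. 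So $\partial D$ is (up to isotopy) the union of those $C_i$ with $i$ in the larger block when $|V_1|\neq|V_2|$, and the statement reduces to linear algebra over $\matZ_2$ on the $(n+2)$-dimensional space of $(n-1)$-chains supported on the $C_i$.

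For the composite $[\,\cdot\,]\circ\partial$: I would first identify $H_{n-1}(\partial\Pi^n;\matZ_2)$. Since $\partial\Pi^n\cong\Pi^{n-1}\cup S^{n-1}$ by Proposition \ref{recursive:prop}, and more usefully since $\Pi^n$ itself is a cone (hence contractible) with $\partial\Pi^n$ as its boundary, one gets $H_{n-1}(\partial\Pi^n;\matZ_2)\cong H_n(\Pi^n,\partial\Pi^n;\matZ_2)$; an explicit Mayer–Vietoris or a direct CW count on the dual cell structure of $(\Delta',\Pi^n)$ shows this group is $\matZ_2^{\,?}$ with a basis indexed so that the classes $[\partial D]$ of normal discs are precisely the nonzero elements. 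Concretely, a closed $(n-1)$-submanifold $\Sigma\subset\partial\Pi^n$ is a union of some subset $S\subseteq\{0,\dots,n+1\}$ of the cells $C_i$, and the cycle condition at each $(n-2)$-component forces $S$ to be ``balanced'' in a way that leaves exactly the $2^{n+1}-1$ nontrivial partitions; this matches the number of normal discs, and the correspondence $D\mapsto[\partial D]$ is seen to be a bijection by checking it is injective (two distinct partitions give distinct $S$, using $n\geq2$ so that the facets separate the blocks) and counting.

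For the first map $\partial$ alone (from normal discs to closed $(n-1)$-submanifolds of $\partial\Pi^n$): surjectivity is the heart of the matter. Given a closed $(n-1)$-submanifold $\Sigma\subset\partial\Pi^n$, since $\partial\Pi^n$ is itself a simple polyhedron of dimension $n-1$, $\Sigma$ must be the closure of a union of $(n-1)$-components $\bigcup_{i\in S}C_i$ (a submanifold of a simple polyhedron of equal dimension cannot partially occupy a top cell). The condition that this be a genuine closed manifold — no boundary along any $(n-2)$-stratum — translates, via the local model at each $(n-2)$-component of $\partial\Pi^n$ (where three cells $C_i,C_j,C_k$ meet in the $\Pi^1\times D^{n-2}$ pattern), into the requirement that an even number of the three indices lie in $S$ at each such stratum. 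Tracing which triples $\{i,j,k\}$ occur as $(n-2)$-strata of the dual $\partial\Pi^n$, this system of parity constraints has solution set exactly $\{S(\calP):\calP=\{V_1,V_2\}\}\cup\{\emptyset,\text{all}\}$, and one reads off that every nonempty proper such $\Sigma$ equals $\partial D$ for a unique normal disc $D$. Injectivity of $\partial$ is then immediate since $D\mapsto S$ is injective for $n\geq2$.

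I expect the main obstacle to be surjectivity of $\partial$, i.e.~showing that the only closed $(n-1)$-submanifolds of $\partial\Pi^n$ are the empty one, the ``full'' one (which bounds, being all of $\partial\Pi^n$ — wait, that is not a submanifold, so really only the empty one and the $2^{n+1}-1$ boundaries of normal discs), and the boundaries of normal discs: this requires a careful bookkeeping of the $(n-2)$-strata of the dual $\partial\Pi^n$ and the parity conditions they impose, which is where the hypothesis $n\geq2$ enters (for $n=1$, $\partial\Pi^1$ is three points and the statement fails). Everything else is either a routine chain-complex computation or follows from Proposition \ref{Z:prop} and the local models in Section \ref{local:subsection}.
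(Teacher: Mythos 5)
Your structural premise is wrong, and it undermines the whole argument. You assert that $\partial\Pi^n$ has exactly $n+2$ top-dimensional $(n-1)$-components, one per facet $f_i$, on the grounds that ``$f_i\cap\partial\Pi^n$ is a single $(n-1)$-cell $C_i$ (a disc).'' This is false: in the dual picture $f_i\cap\partial\Pi^n$ is the polyhedron in $f_i\cong\Delta^n$ dual to the singleton partition of its $n+1$ vertices, hence $\Pi^{n-1}$ by Proposition~\ref{Z:prop} --- a cone, but not a disc (already for $n=2$ it is a tripod $\Pi^1$, not a segment). The correct count of $(n-1)$-components is $\binom{n+2}{2}$: each complementary region of $\partial\Pi^n\subset S^n$ surrounds one vertex $v_i$ of $\Delta$, and there is one $(n-1)$-component $C_{\{i,j\}}$ for every unordered pair $\{i,j\}$, separating the region around $v_i$ from that around $v_j$ (for $n=2$, $\partial\Pi^2\subset S^2$ is $K_4$ with $\binom{4}{2}=6$ open edges, not $4$). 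With the wrong indexing your ``key structural observation'' also collapses: $\partial D$ for the normal disc dual to $\{V_1,V_2\}$ passes through $C_{\{i,j\}}$ iff $i$ and $j$ lie in different blocks, not through some facet-indexed $C_i$ depending on whether $f_i$ meets both blocks; your condition is satisfied by every $i$ unless the partition is $\{\{i\},V\setminus\{i\}\}$, so the map from partitions to subsets of $\{C_0,\dots,C_{n+1}\}$ is nowhere near injective, and the parity bookkeeping on an $(n+2)$-dimensional $\matZ_2$-space cannot possibly single out $2^{n+1}-1$ distinct boundaries.

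That said, the overall strategy --- determine which unions of top cells satisfy the parity condition at each $(n-2)$-stratum --- is a genuinely different route from the paper's and does go through once the combinatorics is corrected. Index the cells by edges of $K_{n+2}$; at the $(n-2)$-stratum labelled by a triple $\{i,j,k\}$ the three incident cells are $C_{\{i,j\}},C_{\{j,k\}},C_{\{i,k\}}$, and the cycle condition says an even number of them lie in $S$. This is exactly the cut-space condition in $K_{n+2}$; the cut space has $\matZ_2$-dimension $n+1$, and its $2^{n+1}-1$ nonzero elements correspond bijectively to partitions $\{V_1,V_2\}$, with $C_{\{i,j\}}\in S$ iff $i,j$ are separated, which is precisely $\partial D$. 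The paper avoids the cell count entirely: since $\dim\partial\Pi^n=n-1$, a top-dimensional $\matZ_2$-cycle \emph{is} a closed $(n-1)$-submanifold $Y$, and since $H_{n-1}(S^n;\matZ_2)=0$ for $n\geqslant 2$ the complement $S^n\setminus Y$ two-colours with $Y$ as the interface; reading off the colours of the $n+2$ vertex regions gives the partition directly. The paper's argument is shorter and explains the hypothesis $n\geqslant 2$ via the ambient sphere; a corrected version of yours is more elementary and intrinsic to $\partial\Pi^n$, but what you actually wrote does not establish the proposition.
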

\begin{proof}
Since $\dim\partial\Pi^n=n-1$, a cycle $\alpha\in H_{n-1}(\partial\Pi^n;\matZ/_{2\matZ})$ is represented by a unique subpolyhedron $Y\subset \partial\Pi^n\subset S^n$. We must prove that $Y=\partial D$ for a normal disc $D$, induced by some partition of $V$ into two sets.
 
The polyhedron $Y$ is the closure of the union of some $(n-1)$-components of $\partial\Pi^n$. Since $H_{n-1}(S^n;\matZ/_{2\matZ})\cong H^1(S^n;\matZ_{2\matZ})=\{0\}$ for $n\geqslant 2$, the connected components of $S^n\setminus Y$ may in fact be partitioned into two sets in a unique way, such that every $(n-1)$-component of $Y$ is adjacent to components belonging to distinct sets. The vertices of $\Delta$ are thus partitioned in two sets, and $Y$ is the normal disc dual to the partition, as required.
\end{proof}

\begin{cor} Let $P^n$ be a simple polyhedron. Closed $n$-submanifolds of $P$ are in natural bijection with $H_n(P^n,\matZ/_{2\matZ})$. \end{cor}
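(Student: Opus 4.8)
The plan is to produce two mutually inverse maps. In one direction, a closed $n$-submanifold $S\subseteq P$ carries a $\matZ_2$-fundamental class which we push forward along $S\hookrightarrow P$; concretely, triangulating the pair $(P,S)$, the manifold $S$ is the support of the mod-$2$ chain given by the sum of its top simplices, and that chain is a cycle. In the other direction we use that $\dim P=n$, so there are no $(n+1)$-chains and hence $H_n(P;\matZ_2)=Z_n(P;\matZ_2)$: every class is represented by a \emph{unique} $n$-cycle $z$, that is, by a subcomplex $S\subseteq P$ of some triangulation in which every $(n-1)$-simplex is a face of an even number of $n$-simplices of $S$ (take $S$ to be the support of $z$). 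So the whole corollary reduces to two things: (a) such an $S$ is automatically a closed $n$-submanifold of $P$, and (b) these two assignments are inverse to one another.

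To prove (a) I would argue locally. First record the elementary fact that the link of a mod-$2$ cycle is a mod-$2$ cycle: if $x$ lies in the interior of a simplex $\tau$ of the triangulation, then $\lk(x,S)$ is a subcomplex of $\lk(x,P)$, and counting the $n$-simplices of $S$ containing an $(n-1)$-simplex of the form $\tau*\rho$ shows that $\lk(x,S)$ is a top-dimensional mod-$2$ cycle in $\lk(x,P)$. Now if $x$ has type $k$ then $\lk(x,P)\cong\partial\Pi^n_k\cong\Sigma^k(\partial\Pi^{n-k})$. A top-dimensional mod-$2$ cycle of an iterated suspension $\Sigma^k Z$ is exactly the $k$-fold suspension of a top-dimensional mod-$2$ cycle of $Z$ (check this on the top-dimensional components, which biject with those of $Z$, noting the cycle condition is unchanged under joining with $S^0$). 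Hence $\lk(x,S)=\Sigma^k A$ for a top-dimensional mod-$2$ cycle $A$ of $\partial\Pi^{n-k}$. When $n-k\geqslant 2$, Proposition \ref{homology:prop} says that $A$ is the boundary of a normal disc, hence an $(n-k-1)$-sphere, or empty; when $n-k=1$ the model $\partial\Pi^{1}$ is three points and $A$ is visibly a pair of points (an $S^0$) or empty; when $n-k\leqslant 0$ the point is already of manifold type. In every case $\lk(x,S)$ is an $(n-1)$-sphere or empty, so a neighbourhood of $x$ in $S$ is a cone over $\lk(x,S)$, i.e.\ an $n$-disc or empty. Thus $S$ is a closed $n$-manifold at every point and, being closed in $P$, a closed $n$-submanifold.

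For (b): going submanifold $\to$ class $\to$ cycle $\to$ support returns $S$ on the nose, since in a triangulation of $(P,S)$ the fundamental cycle of $S$ has support exactly $S$; in particular the map from closed $n$-submanifolds to $H_n(P;\matZ_2)$ is injective, because two such submanifolds with the same class are the same $n$-cycle (there being no $(n+1)$-chains to quotient by) and therefore have the same support. Going class $\to$ cycle $\to$ support $\to$ class is the identity by construction, and the whole correspondence is canonical, independent of the triangulation used, since homology is. I expect the main obstacle to be precisely the local step in the second paragraph: identifying the top-dimensional mod-$2$ cycles of the link models $\partial\Pi^n_k$ with spheres and the empty set, which is where Proposition \ref{homology:prop} does the real work and where the suspension bookkeeping (and the small cases $n-k\leqslant 1$) must be handled with care.
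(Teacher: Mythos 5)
Your proof takes the same route as the paper: identify a class in $H_n(P;\matZ/_{2\matZ})$ with the unique top-dimensional cycle representing it (no $(n+1)$-chains), take its support $S$, and verify $S$ is a manifold by a local analysis of $\lk(x,S)$ inside $\lk(x,P)\cong\partial\Pi^n_k$, invoking Proposition~\ref{homology:prop} for the link model. The paper's version is terse and applies Proposition~\ref{homology:prop} directly to ``the link of every point,'' which taken literally only covers $k=0$ (type-$0$ points, where $\lk(x,P)\cong\partial\Pi^n$). You fill this gap by writing $\partial\Pi^n_k=\Sigma^k\partial\Pi^{n-k}$ and reducing via the suspension; this is a genuine clarification of the paper's argument rather than a different one.

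One small caveat in your write-up: the blanket claim that ``a top-dimensional mod-$2$ cycle of $\Sigma^k Z$ is exactly the $k$-fold suspension of a top-dimensional mod-$2$ cycle of $Z$, noting the cycle condition is unchanged'' is not literally true when $\dim Z=0$. For $Z=\partial\Pi^1$ (three points) the $0$-cycle condition in $Z$ is vacuous, yet a top cycle in $\Sigma^k Z$ must use an \emph{even} number of the three sheets (the cycle condition at the suspension poles imposes a parity constraint with no counterpart in $Z$). You do get the right answer because you then treat $n-k=1$ ``visibly'' as a separate case, so the proof is not actually broken; but it would be cleaner to restrict the general suspension claim to $\dim Z\geqslant 1$ (equivalently $n-k\geqslant 2$, which is exactly where Proposition~\ref{homology:prop} applies) and observe that the extra parity condition at the poles is what forces $A$ to be an even subset when $n-k=1$.
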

\begin{proof}
Take a triangulation of $P$. Every cycle is represented by a subcomplex $S\subset P$. It intersects the link of every point $x$ in $P$ into a cycle: by Proposition \ref{homology:prop} this is a sphere and the star of $S$ in $x$ is a normal disc. Therefore $S$ is submanifold. Different submanifolds yield different subcomplexes and hence different cycles.
\end{proof}

\subsection{Cutting along normal surfaces}
Let $S\subset M$ be a closed submanifold in $M$ of codimension 1. As above, we set $M_S = \overline{M\setminus R(S)}$. A key property of 3-dimensional complexity, proved by Matveev in \cite{Mat}, is that $c(M_S)\leqslant c(M)$ whenever $S$ is an incompressible surface in a 3-manifold $M$. We prove here a kind of generalization of this fact.

The powerful notion of incompressible surface unfortunately does not extend easily to higher dimensions. On the other hand, in dimension 3, every homology class in $H_2(M,\matZ)$ is represented by an incompressible surface. We propose the following generalization of Matveev's result.

\begin{teo} Let $M^n$ be a compact manifold. Every element in $H_{n-1}(M^n;\matZ/_{2\matZ})$ is represented by a submanifold $S$ such that $c(M^n_S)\leqslant c(M^n)$.
\end{teo}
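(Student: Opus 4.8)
The strategy is to take a minimal simple spine $P$ of $M$ and show that any mod-$2$ homology class in $H_{n-1}(M;\matZ/_{2\matZ})$ can be represented by a hypersurface $S$ that is actually a subpolyhedron of $P$ (after possibly perturbing and re-normalizing), so that $P$ itself serves — after a drilling operation — as a spine of $M_S$ with no new vertices. First I would fix a minimal spine $P\subset\interior M$ and recall that, by the cut map of Section \ref{cut:subsection}, $M$ collapses (after removing balls) onto $P$; in particular $M$ deformation retracts onto $P$ minus those balls, so $H_{n-1}(M;\matZ/_{2\matZ})$ is computed from $P$ together with its complementary balls. The key point is that a class in $H_{n-1}(M;\matZ/_{2\matZ})$ restricts to a class carried by $P$: using the handle decomposition dual to $P$ (the $n$-balls of $M\setminus P$ being the $n$-handles, and $P$ carrying the lower handles), a mod-$2$ $(n-1)$-cycle can be pushed off the $n$-handles and made normal with respect to the cell structure of $P$. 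Concretely, in each local model $\Pi^n_k$ the cycle meets the link $\partial\Pi^n_k$ in a mod-$2$ $(n-2)$-cycle, which by Proposition \ref{homology:prop} is the boundary of a unique normal disc; patching these together realizes the class by a closed submanifold $S$ contained in $P$, exactly as in the Corollary following Proposition \ref{homology:prop} (applied fiberwise in the $D^k$ directions of the type-$k$ strata).

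Once $S\subset P$ is such a normal hypersurface, the manifold $M_S=\overline{M\setminus R(S)}$ is obtained from $M$ by cutting along $S$, and I claim $P$ induces a spine of $M_S$ of no greater complexity. The idea is to \emph{drill} $P$ along $S$ in the sense of Section \ref{drilling:section}: form $P' = (P\setminus R(S))\cup \partial R(S)$ where $R(S)$ is a regular neighborhood of $S$ in $M$. Since $S$ lies in $P$ and has codimension $1$ in $P$, drilling along $S$ removes a bicollar of $S$ inside $P$ and glues in the two boundary copies together with the lateral $S^{n-k-1}$-bundle pieces coming from $\partial R(S)$; one checks, exactly as in Lemma \ref{bubble:lemma}, that the resulting $P'$ is still simple. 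The crucial bookkeeping is on vertices: a vertex of $P$ is a point of type $0$, i.e. an isolated point of the stratification, and a normal hypersurface $S\subset P$ passes through a neighborhood of such a vertex as a normal disc in the local model $\partial\Pi^{n}$; cutting the local model $\Pi^n$ along a normal disc yields two pieces each homeomorphic to a half-model with \emph{no} vertex of type $0$. Hence drilling along $S$ destroys rather than creates vertices, so $P'$ has at most $c(M)$ vertices.

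Finally I must check that $P'$ is genuinely a spine of $M_S$, not just a simple polyhedron inside it. This follows from Proposition \ref{cut:prop}: cutting $M$ along the simple spine $P'$ yields $M_{P'}$, and I need it to be a collar plus discs. But cutting $M$ along $P'$ is the same as first cutting $M$ along $P$ (getting a collar of $P$ plus the complementary balls, since $P$ was a spine) and then tracking the effect of having moved the bicollar of $S$ to the boundary — the net effect is to split the collar $N\times[0,1]$ of $P$ along $S\times[0,1]$ and absorb the neighborhood $R(S)$, and one verifies the pieces are still a (possibly disconnected) collar together with balls: the balls created by the drilling in Lemma \ref{bubble:lemma} are precisely the components of $R(S)$, which are $I$-bundles over components of $S$, hence collars, not extra balls in a bad position. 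So $P'$ is a spine of $M_S$ and $c(M_S)\leqslant |V(P')|\leqslant c(M)$, as desired.

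I expect the main obstacle to be the \emph{normalization step}: showing that every mod-$2$ class in $H_{n-1}(M;\matZ/_{2\matZ})$ — not merely every class in the image of $H_{n-1}(P)$ — can be represented by a submanifold lying inside a \emph{minimal} spine $P$, with the additional property that it meets each local model in at most one normal disc so that the corollary to Proposition \ref{homology:prop} applies verbatim. If some simplex of the dual handle structure is forced to contain several parallel normal discs, one either has to take several disjoint parallel copies of the spine's local sheets (which is fine, since parallel normal discs in $\partial\Pi^n$ are nested and contribute trivially or cancel mod $2$) or to work with a subdivided spine; in either case the vertex count is unchanged, but the argument that $P'$ remains a spine requires a little care. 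The handle-addition and drilling inequalities of Sections \ref{handles:section}--\ref{drilling:section} handle the rest routinely.
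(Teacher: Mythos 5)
Your overall plan is the right one and matches the paper's: take a minimal spine $Q$, observe that $i_*\colon H_{n-1}(Q;\matZ/_{2\matZ})\to H_{n-1}(M;\matZ/_{2\matZ})$ is surjective (since $M\setminus Q$ consists of balls and a collar), use the corollary to Proposition~\ref{homology:prop} to realize the class as a closed $(n-1)$-submanifold $S$ which is a union of closures of $(n-1)$-components of $Q$, and then drill $Q$ along $S$ to produce a spine of $M_S$. The worry you flag at the end (that not every class lifts to $H_{n-1}(Q)$, or that several parallel normal discs might occur) is not actually an issue: surjectivity of $i_*$ is immediate, and since $S$ has the same dimension as $Q$ it is literally a union of top strata, meeting each vertex star in at most one normal disc.

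The genuine gap is in your vertex count. You assert that ``drilling along $S$ destroys rather than creates vertices,'' arguing that cutting the local model $\Pi^{n-1}$ along a normal disc yields pieces with no type-$0$ point. But drilling is not cutting: $P'=(Q\setminus R)\cup\partial R$ also contains the hypersurface $\partial R$, and by the proof of Lemma~\ref{bubble:lemma} each transverse intersection of $\partial R$ with the $1$-skeleton of $Q$ \emph{does} produce a new vertex of $P'$. Since $S$ contains vertices of $Q$, such intersections occur, so new vertices are created. The actual argument must therefore be a matching count, which the paper carries out as follows: a new vertex arises for each pair $(v,e)$ with $v$ a vertex of $Q$ inside $S$ and $e$ a $1$-component exiting $v$ not contained in $S$; since $e$ is dual to a facet of $\Delta^n$, the existence of such an $e$ forces the normal disc at $v$ to be of type $(n,1)$, and a type-$(n,1)$ disc admits exactly one such outgoing edge. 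Hence each vertex $v$ of $Q$ that is destroyed produces at most one new vertex, and the total count does not increase. Without this case analysis on the type of the normal disc, your claim that $P'$ has at most $c(M)$ vertices is unsupported, and indeed the naive statement ``no new vertices are created'' is false in the typical type-$(n,1)$ case.
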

\begin{proof}
Let $Q$ be a minimal spine of $M$. The map 
$$i_*:H_{n-1}(Q;\matZ/_{2\matZ})\to H_{n-1}(M;\matZ/_{2\matZ})$$ 
is surjective, because $M\setminus Q$ consists of balls and a collar of $\partial M$. Proposition \ref{homology:prop} then implies that every cycle $\alpha$ is represented by a closed submanifold $S\subset Q$.

Let $P$ be obtained from $Q$ by drilling along $S$, see Section \ref{drilling:section}.
The polyhedron $P$ is a simple spine of $M_S$: however, $S$ intersects the $1$-skeleton of $Q$, so we cannot use Lemma \ref{bubble:lemma} to conclude that $c(M_S)\leqslant c(M)$.

\begin{figure}
 \begin{center}
 \includegraphics[width = 12.5cm]{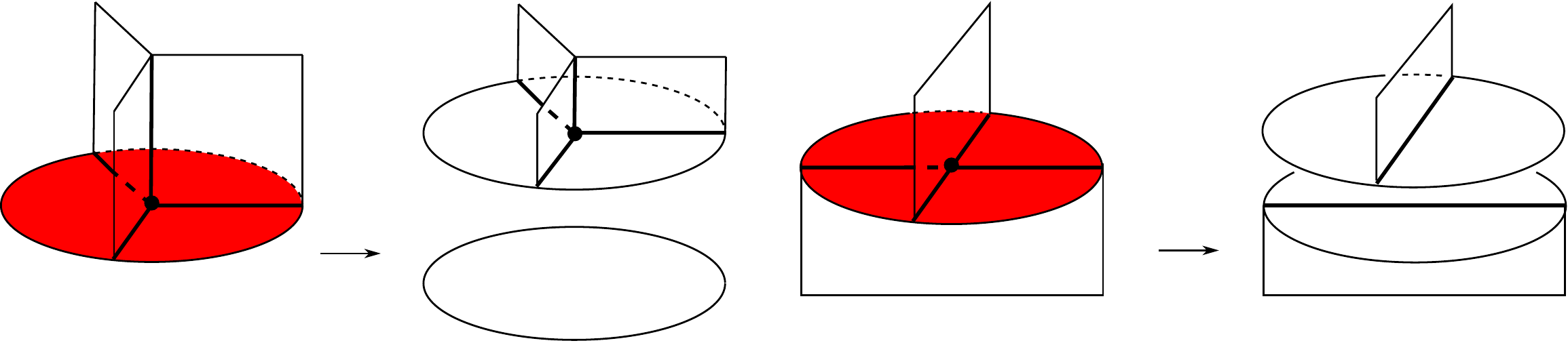}
 \end{center}
\caption{When the normal disc $D$ is of type $(n,1)$, there is an adjacent $1$-component not lying in $D$. Therefore the drilling deletes a vertex and produces a new one (left). If $D$ is of some other type, it contains all the adjacent $1$-components: therefore a vertex is deleted and noone is created (right). Here, $n=2$.}
\label{hypersurface:fig}
\end{figure}

The manifold $S$ is the union of the colsure of some $(n-1)$-components of $Q$. Let $R$ be a small regular neighborhood of $S$. The proof of Lemma \ref{bubble:lemma} shows that the new vertices of $P$ lie in the transverse intersection of the hypersurface $\partial R$ with the $1$-skeleton of $Q$. There is precisely one such intersection for every pair $(v,e)$ such that $v$ is a vertex of $Q$ contained in $S$ and $e$ is an oriented edge (\emph{i.e.}~1-component) exiting from $v$ and not contained in $S$. Here $\partial R$ intersects $e$ transversely near $v$.

Let $(v,e)$ be one such pair. The star of $v$ in $(M,Q)$ is homeomorphic to $(\Delta^n,\Pi^{n-1})$ and it intersects $S$ into a normal disc $D\subset\Pi^{n-1}$, determined by some partition $\calP = \{V_1,V_2\}$ of the vertices of $\Delta^n$. The edge $e$ is dual to a facet of $\Delta^n$. Since $e\not\subset D$, all the $n$ vertices lying in this facet belong to the same set of the partition. Therefore the normal disc is of type $(n,1)$ as in Fig~\ref{hypersurface:fig}-left. As we see in Fig.~\ref{hypersurface:fig}-left, one new vertex is created but $v$ is destroyed: the total number of vertices in the spine does not increase.
\end{proof}

\section{Nerve} \label{nerve:section}
We study here the nerve $\calN$ of a pair $(M,P)$ consisting of a closed manifold $M$ and a simple spine $P\subset \interior M$, see Section \ref{nerve:subsection}. We prove that the nerve map $\varphi:M\to |\calN|$ induces an isomorphism on fundamental groups when $\pi_1(M)$ has no torsion. Therefore $\calN$ carries many topological informations on $M$.

We have $\dim |\calN|\leqslant \dim M$, and $\dim |\calN| <n$ precisely if $P$ has no vertices. 
Actually, the $n$-dimensional part of $|\calN|$ has a kind of singular triangulation (see Remark \ref{singular:rem}), with one singular $n$-simplex ``dual'' to each vertex of $P$. 

The facts listed in Subsection \ref{basic:nerve:subsection} will be used to prove most of the results stated in Sections \ref{homotopy:section}, \ref{norm:section}, and \ref{riemannian:section}.
The informations collected in Sections \ref{fundamental:subsection} and \ref{singular:subsection} are only needed to prove Theorem \ref{norm:teo}.

\subsection{Basic properties} \label{basic:nerve:subsection}
We will need the following lemma.
\begin{lemma} \label{component:lemma}
Let $P$ be a spine of a compact $M$. Let $C$ be a component of $(M,P)$. The image of $i_*:\pi_1(C)\to\pi_1(M)$ is either finite or has a finite-index subgroup contained in the image of $i_*:\pi_1(N)\to\pi_1(M)$ for some boundary component $N\subset \partial M$.
\end{lemma}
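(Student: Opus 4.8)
The plan is to dispose first of the top-dimensional components of $(M,P)$ --- the connected components of $M\setminus P$, which make up the $n$-dimensional stratum --- and then to reduce an arbitrary component to one of these. Suppose then that $C$ is a component of $M\setminus P$. Since $P$ is a spine, there are pairwise disjoint discs $D_1,\dots,D_k\subset\interior{M}$, disjoint from $P$, such that $M':=M\setminus\interior{D_1\cup\dots\cup D_k}$ collapses onto $P$; hence $M'$ is a regular neighbourhood of $P$, so $\partial M'=\partial M\sqcup\partial D_1\sqcup\dots\sqcup\partial D_k$ and, by the collar of $P$ given by the cut map (Section~\ref{cut:subsection}), $M'\setminus P\cong\partial M'\times[0,1)$. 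Re-gluing the discs,
$$M\setminus P\;=\;(M'\setminus P)\cup D_1\cup\dots\cup D_k\;\cong\;\bigl(\partial M\times[0,1)\bigr)\sqcup B_1\sqcup\dots\sqcup B_k,$$
with each $B_i=D_i\cup(\partial D_i\times[0,1))$ an open $n$-ball. Thus $C$ is either some $B_i$, and then $i_*\pi_1(C)$ is trivial, or $C=N\times[0,1)$ for a boundary component $N$ of $M$, and then sliding along the collar shows $i_C$ homotopic in $M$ to $N=N\times\{0\}\hookrightarrow M$, so $i_*\pi_1(C)$ equals the image of $\pi_1(N)$; in both cases the lemma holds.

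Now let $C$ be a component of dimension $k<n$, so $C\subset P$. Every point of $C$ has star in $M$ homeomorphic to $C(W,Z)\times D^k$, with $(W,Z)$ its link in $(M,P)$ --- the same for all of $C$ by homogeneity (Section~\ref{intrinsic:subsection}) --- and $C$ appearing as $\{v\}\times D^k$. The components of $W\setminus Z$ (nonempty, as $Z$ has positive codimension in $W\cong S^{\,n-k-1}$) are the local branches of $M\setminus P$ along $C$, and as the point moves over $C$ they fit together into a finite covering $q\colon\widehat C\to C$; concretely $\widehat C=\rho^{-1}(C)$ for the finite-to-one retraction $\rho\colon\partial R(P)\to P$ carried by a regular neighbourhood (Section~\ref{cut:subsection}). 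Choose a connected component $\widehat C_0$ of $\widehat C$; pushing it off $P$ through the collar embeds $\widehat C_0$ in $M\setminus P$, inside a single component $C'$ of $M\setminus P$, so that $\widehat C_0\hookrightarrow M$ is homotopic to $q$ followed by $i_C\colon C\hookrightarrow M$. Passing to a cover is essential here: if $C$ is not two-sided one cannot push all of $C$ coherently into one branch. Writing $A=i_*\pi_1(\widehat C_0)$, $B=i_*\pi_1(C)$, $D=i_*\pi_1(C')$ in $\pi_1(M)$, the covering $q$ gives $A\le B$ of finite index, and the homotopy gives $A\le D$. By the case already done, either $D$ is finite --- then so is $A$, hence $B$ --- or $D$ has a finite-index subgroup $D_0$ inside the image of $\pi_1(N)$ for some boundary component $N$ --- then $A\cap D_0$ has finite index in $A$, hence in $B$, and lies in that image. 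So $B$ satisfies the stated alternative. (When $M$ is closed there are no boundary components, and the conclusion is simply that $i_*\pi_1(C)$ is finite.)

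The genuinely delicate step is the reduction: constructing the finite cover $\widehat C_0\to C$ that lands in one complementary region and checking that it agrees up to homotopy with $q$ composed with $i_C$, handling one-sidedness correctly --- the collar of $P$ supplied by the cut map is exactly what makes this work. The group-theoretic bookkeeping, and the standard fact that a compact manifold collapsing onto a subpolyhedron is a regular neighbourhood of it, are routine.
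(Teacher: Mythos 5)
Your argument is correct and rests on the same structural fact the paper uses: the cut map $f\colon M_P\to M$ of Section~\ref{cut:subsection}, together with Proposition~\ref{cut:prop} identifying $M_P$ as a collar of $\partial_0 M_P$ plus discs. Where you differ is in organization. You first classify the top-dimensional components of $(M,P)$ (balls and boundary collars), and then, for a stratum $C\subset P$, explicitly build the finite cover $\widehat C\to C$ from the sheets of $\partial R(P)$ over $C$, push a component $\widehat C_0$ off $P$ through the collar into a single region $C'$ of $M\setminus P$, and feed the resulting subgroup chain $A\le B$, $A\le D$ back into the top-dimensional case. The paper does all of this in one stroke: it takes a connected component $\widetilde C$ of $f^{-1}(C)$ inside a component $W$ of $M_P$ and writes down the commutative square
$$\begin{CD}\pi_1(\widetilde C)@>i_*>>\pi_1(M_P)\\ @VV{f_*}V @VV{f_*}V\\ \pi_1(C)@>i_*>>\pi_1(M)\end{CD}$$
whence $i_*(\pi_1(C))$ has the finite-index subgroup $(f_*\circ i_*)(\pi_1(\widetilde C))$, trivial or contained in the image of some $\pi_1(N)$ depending on whether $W$ is a disc or a collar. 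This sidesteps the explicit ``push off'' and the two-case reduction, since $\widetilde C$ already lives in $W$ and the inclusion $\widetilde C\hookrightarrow W\hookrightarrow M_P$ supplies both the factorization and the homotopy you establish by hand. Your remark about one-sidedness is apt, and the finite cover you construct is exactly $f^{-1}(C)$ under the identification $\partial R(P)\cong\partial_0 M_P$, so in substance the two proofs agree; yours simply makes the geometric content of the diagram more visible at the cost of some extra bookkeeping.
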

\begin{proof}
Consider the cut map  $f:M_P\to M$, see Section \ref{cut:subsection}. Let $\widetilde C$ be a connected component of $f^{-1}(C)$. The restriction of $f$ to $\widetilde C$ is a finite covering $f:\widetilde C\to C$. We have the following commutative diagram (with appropriate basepoints):
$$
\begin{CD} 
\pi_1(\widetilde C) @>i_*>> \pi_1(M_P)\\ 
@VV{f_*}V @VV{f_*}V\\ 
\pi_1(C) @>i_*>> \pi_1(M) 
\end{CD}
$$
By Proposition \ref{cut:prop}, every component of $M_P$ is either a disc or a product which maps to a collar of a component $N$ of $\partial M$. Therefore $(f_*\circ i_*)(\pi_1(\widetilde C))$ is either trivial or contained in $i_*(\pi_1(N))$. Since $f:\widetilde C\to C$ is a covering, the subgroup $f_*(\pi_1(\widetilde C))$ has finite index in $\pi_1(C)$: therefore $i_* (\pi_1(C))$ is a finite extension of 
$(i_*\circ f_*)(\pi_1(C)) = (f_*\circ i_*)(\pi_1(\widetilde C))$. 
\end{proof}

Following Gromov \cite{Gro}, a set $X\subset M^n$ in a manifold $M^n$ is \emph{amenable} if for every path-connected component $X'$ of $X$ the image of the map $i_*:\pi_1(X')\to\pi_1(M)$ is an amenable group (see \cite{Gro} for a definition). Simple examples of amenable groups are finite and abelian groups. Subgroups and finite extensions of amenable groups are amenable. We have the following.

\begin{prop} \label{fiber:prop}
Let $P$ be a spine of a compact $M$. Let $\varphi:M\to |\calN|$ be a nerve map of $(M,P)$.
\begin{enumerate}
\item If $M$ is closed or with amenable boundary, the fiber $\varphi^{-1}(x)$ of every point $x\in|\calN|$ is amenable.
\item If $M$ is closed and $\pi_1(M)$ is torsion-free, the map $\varphi_*:\pi_1(M)\to\pi_1(|\calN|)$ is an isomorphism.
\end{enumerate}
\end{prop}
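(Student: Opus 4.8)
The plan is to analyze the nerve map $\varphi:M\to|\calN|$ through its two successive factorizations: first the pre-nerve map $\varphi_0:M\to|\calN_0|$ and then the Stein factorization $\varphi_0'=g\circ\varphi$. The key structural observation is that the fibers of $\varphi$ over a point $x$ are, up to homotopy, controlled by the components $C$ of the stratification $(M,P)$: a small neighborhood of a fiber retracts onto (a union of pieces indexed by) the components meeting that fiber, so its fundamental group maps into $\pi_1(M)$ through the maps $i_*:\pi_1(C)\to\pi_1(M)$. This is exactly the situation governed by Lemma~\ref{component:lemma}.

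For part (1), I would argue as follows. Fix $x\in|\calN|$ and consider $\varphi^{-1}(x)$. Because $\varphi$ has connected fibers (this is built into the Stein factorization, as noted in Section~\ref{Stein:subsection}), $\varphi^{-1}(x)$ is connected, and a regular neighborhood of it in $M$ deformation retracts onto a subcomplex that is assembled from the closures of finitely many strata-components $C_1,\dots,C_r$ of $(M,P)$ all adjacent to this fiber. Each $i_*(\pi_1(C_j))$ is amenable: by Lemma~\ref{component:lemma} it is either finite, or virtually contained in $i_*(\pi_1(N))$ for a boundary component $N$; in the closed case the first alternative always holds, and in the amenable-boundary case the second gives an amenable group too (subgroups and finite extensions of amenable groups are amenable, as recalled just before Proposition~\ref{fiber:prop}). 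The image $i_*(\pi_1(\varphi^{-1}(x)))$ is then generated by these amenable subgroups glued along the incidence pattern; since a group generated by amenable subgroups need not be amenable in general, the real content here is that the fiber's $\pi_1$ surjects onto the \emph{relevant} subgroup via the nerve construction, and I expect one must use that the pieces $C_j$ sharing the fiber all map, compatibly, into a single amenable group — concretely, the maximal-dimensional component adjacent to the fiber dominates the others (every lower-dimensional $C$ incident to $x$ has $\overline C$ contained in the closure of a top-dimensional one), so $i_*(\pi_1(\varphi^{-1}(x)))$ is actually contained in a single $i_*(\pi_1(C_{\max}))$, which is amenable. This domination argument is the main obstacle and must be set up carefully using the poset structure of $(\calC,\leqslant)$ and the definition of $\calN_0$.

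For part (2), assume $M$ closed with $\pi_1(M)$ torsion-free. Surjectivity of $\varphi_*$ is easy: $\varphi$ is surjective and, since $P$ has codimension $1$ and $M\setminus P$ is a union of open balls together with collar pieces, any loop in $M$ can be pushed off the top-dimensional part and homotoped into $\varphi^{-1}(|\calN^{(n-1)}|)$, but more simply one uses that a nerve map is a quotient-type map whose point-preimages are connected, so $\pi_1$ surjects. For injectivity I would invoke part (1): the fibers are amenable, but by Lemma~\ref{component:lemma} in the closed case the images $i_*(\pi_1(C))$ are all \emph{finite}; since $\pi_1(M)$ is torsion-free, these images are trivial, so every fiber of $\varphi$ is $\pi_1$-trivial in $M$. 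A map with connected, $\pi_1$-trivial fibers between reasonable spaces (here, a PL map from a closed manifold to a finite complex) induces an isomorphism on $\pi_1$ by a van Kampen / covering-space argument: one checks that the universal cover $\widetilde M\to M$ descends to a map $\widetilde M\to\widetilde{|\calN|}$ and that the deck groups match because no nontrivial element of $\pi_1(M)$ can be killed (a fiber carrying it would have to carry a nontrivial element of a trivial group).

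The step I expect to be genuinely delicate is showing that the relevant $\pi_1$-image attached to a fiber is captured by a \emph{single} component rather than a group generated by several — i.e., justifying the ``domination'' of incident strata by a top-dimensional one, and checking that the nerve's simplicial structure records incidences finely enough that no extra $\pi_1$ is introduced when passing from the pre-nerve $\calN_0$ to $\calN$ via Stein factorization. Everything else (amenability bookkeeping, surjectivity, the torsion-free upgrade to injectivity) is then routine given Lemma~\ref{component:lemma} and Proposition~\ref{cut:prop}.
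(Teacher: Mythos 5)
Your overall plan matches the paper's in spirit: both reduce everything to Lemma~\ref{component:lemma} and then use amenability (resp.\ finiteness plus torsion-freeness) of the images $i_*\pi_1(C)$. The surjectivity and injectivity arguments for part (2) are essentially the ones in the paper.

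However, there is a genuine gap in the step you yourself flag as delicate. You assume the fiber $\varphi^{-1}(x)$ may ``touch'' several components $C_1,\dots,C_r$ of $(M,P)$ and then try to repair the resulting difficulty by a ``domination'' argument: that every lower-dimensional $C$ incident to the fiber has $\overline C\subset\overline{C_{\max}}$, hence $i_*\pi_1(\varphi^{-1}(x))\subset i_*\pi_1(C_{\max})$. This inference does not go through as stated. The strata are disjoint, so $\overline C\subset\overline{C_{\max}}$ does not make loops in $C$ (or in $\varphi^{-1}(x)$) lie in, or push into, the open stratum $C_{\max}$; there is also no reason a priori that a single top-dimensional component dominates all the others adjacent to the fiber. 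The paper sidesteps all of this by a sharper structural fact you did not isolate: because the pre-nerve map $\varphi_0$ is simplicial on a sufficiently subdivided (derived) triangulation $T'$, each of its fibers is contained in a \emph{single} component of $(M,P)$ (the interior of a simplex of $T'$ lies in the maximal stratum among its vertices' strata, and the relevant preimage slices sit in such interiors). Since the nerve map fibers refine the pre-nerve fibers, $\varphi^{-1}(x)\subset C$ for one $C$, so $i_*\pi_1(\varphi^{-1}(x))\subset i_*\pi_1(C)$ directly, and your multi-component bookkeeping is unnecessary. Once this is in place, your amenability and torsion-free reductions are exactly the paper's; the ``group generated by several amenable subgroups'' worry never arises.
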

\begin{proof}
The fiber $\varphi^{-1}(x)$ of a point is contained in some component of $(M,P)$, since each fiber of the pre-nerve map does, see Section \ref{nerve:subsection}. Since subgroups and finite extensions of amenable groups are amenable, point (1) follows from Lemma \ref{component:lemma}.

We turn to (2). The map $\varphi:\calM\to |\calN|$ is surjective and the fiber $\varphi^{-1}(x)$ over a point of $|\calN|$ is path-connected: these two facts imply that $\varphi_*$ is surjective. On the other hand, Lemma \ref{component:lemma} implies that $i_*(\varphi^{-1}(x))<\pi_1(M)$ is a finite group. If $\pi_1(M)$ has no torsion, this finite group is trivial: this easily implies that $\varphi_*$ is injective.
\end{proof}

\begin{figure}
 \begin{center}
 \includegraphics[width = 8 cm]{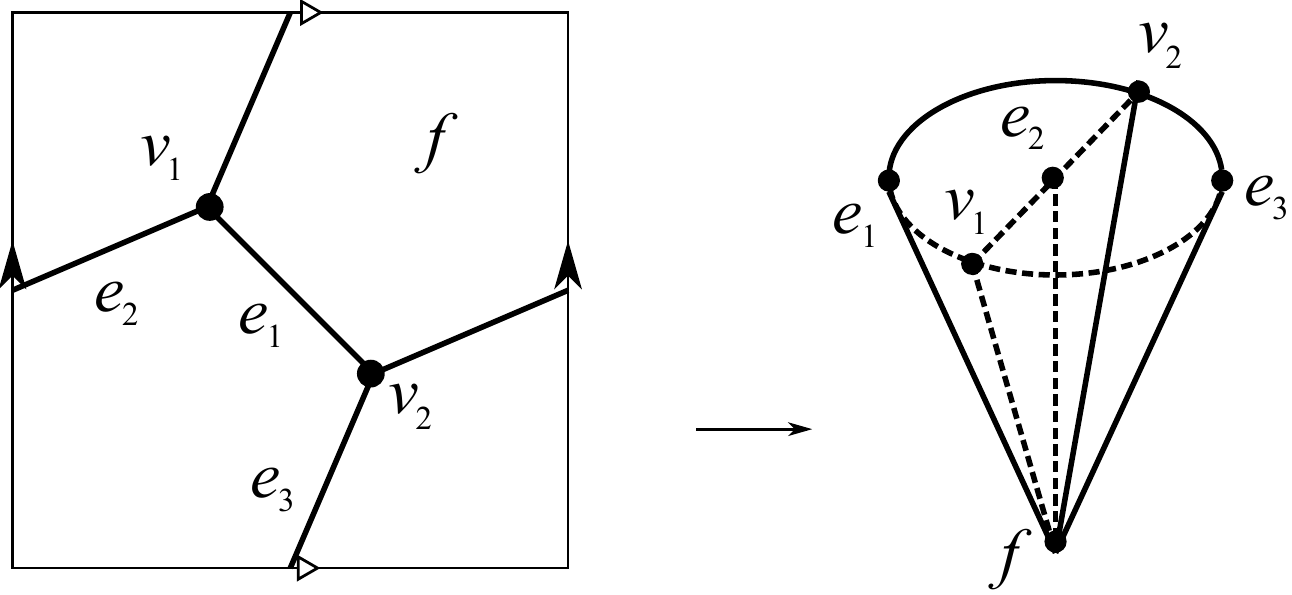}
 \end{center}
\caption{The pre-nerve of a spine $P$ of the torus $T$ with two vertices is homeomorphic to the cone over $P$, and is hence simply connected: the vertex of the cone is the $2$-component $f=T\setminus P$, which is incident to all the other components.}
\label{nerve0:fig}
\end{figure}

\begin{rem} We note that Proposition \ref{fiber:prop}-(2) does not hold in general for the pre-nerve map $\varphi_0:M\to |\calN_0|$: we really need Stein factorization here. See Exercise \ref{nerve:ex} and Fig.~\ref{nerve0:fig}.
\end{rem}

\begin{rem}
Some hypothesis on $\pi_1(M)$ is indeed necessary in Proposition \ref{fiber:prop}-(2). For instance, a hyperplane $H\subset\matRP^n$ is a spine of $\matRP^n$, but the nerve of $(\matRP^n,H)$ is a segment: hence $\varphi_*:\matZ/_{2\matZ}\to\{e\}$ is not an isomorphism.
\end{rem}

\begin{rem}
Let $P$ be a spine of a closed manifold $M$ dual to some triangulation $T$ as in Section \ref{dual:subsection} (with the discrete partition $v\sim v' \Leftrightarrow v=v'$) . The pre-nerve map $\varphi:T'\to\calN_0$ is an isomorphism. Therefore $\calN\cong\calN_0'\cong T''$ and the nerve map $\varphi:M\to |\calN|$ is a homeomorphism.
\end{rem}

Concerning the dimension of the nerve, we have the following.
\begin{prop} \label{dimension:nerve:prop}
Let $\calN$ be the nerve of any pair $(M,P)$.
We have $\dim\calN \leqslant \dim M$. We have $\dim\calN=\dim M$ if and only if $P$ has vertices.
\end{prop}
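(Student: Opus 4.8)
The plan is to analyze the nerve map $\varphi: M \to |\calN|$ dimension by dimension, using its construction via Stein factorization of the pre-nerve map. First I would bound $\dim \calN$ from above. The pre-nerve $\calN_0 = \eta(\calC, \leqslant)$ is the order complex of the poset of components of $(M,P)$; since a strict inequality $C < C'$ forces $\dim C < \dim C'$, a chain $C_0 < C_1 < \cdots < C_i$ in $\calC$ has length at most $\dim M$ (the top component, an open subset of $\interior M$, has dimension $n$, and dimensions strictly increase along the chain, so $i \leqslant n$). Hence $\dim \calN_0 \leqslant \dim M$. The Stein factorization $T' \xrightarrow{\varphi} \calN \xrightarrow{g} \calN_0'$ produces $\calN$ as the image of simplices of $T'$ under $h$, with $g$ finite-to-one, i.e.\ $\dim g(\sigma) = \dim \sigma$ for every simplex $\sigma$ of $\calN$; since $\calN_0'$ and $\calN_0$ have the same dimension, this gives $\dim \calN \leqslant \dim \calN_0 \leqslant \dim M = n$.

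Next, the easy direction of the equivalence: if $P$ has no vertices, every point of $M$ is of type $k$ with $k \geqslant 1$ in $(M,P)$, so the component of $(M,P)$ through a point of $P$ has dimension $\geqslant 1$ and more importantly the $n$-components are exactly the components of $M \setminus P$ (the open balls and the interior of the collar). A maximal chain of components ending at such an $n$-component picks up, right below it, an $(n-1)$-component of $P$; but then every chain through the top has length $\leqslant n-1$ as a chain in $\calC$ restricted to components of dimension $\leqslant n-1$, \emph{unless} there is a $0$-component, i.e.\ a vertex. More precisely, a chain of length $n$ in $\calC$ would require components of each dimension $0,1,\dots,n$, hence a $0$-component — a vertex. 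Without vertices, $\dim \calN_0 \leqslant n-1$, hence $\dim \calN \leqslant n-1 < n$.

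For the converse — if $P$ has a vertex $v$, then $\dim \calN = n$ — I would argue that near $v$ the nerve map looks like a standard model and genuinely realizes an $n$-simplex. The link of $v$ in $(M,P)$ is $(S^{n-1}, \partial\Pi^{n-1})$, and $\partial\Pi^{n-1}$ has exactly $n+1$ top-dimensional components meeting $v$, dividing a neighborhood of $v$ into $n+1$ local pieces of $M \setminus P$, plus lower strata indexed so that the incidence poset of the local components at $v$ is precisely the face poset of an $n$-simplex $\Delta^n$ (the $n$-simplex "dual" to $v$ mentioned in the introduction to this section). By the construction in Section \ref{dual:subsection} (Proposition \ref{dual:simple:prop} and the subsequent remark that a dually-constructed spine has $\calN \cong T''$), this local model contributes a full barycentrically-subdivided $n$-simplex to $\calN$, so $\dim \calN \geqslant n$. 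Combined with the upper bound, $\dim \calN = n$. I expect the main obstacle to be making the last step rigorous: one must check that the local contribution of a vertex is not "collapsed" by the Stein factorization or absorbed into lower-dimensional simplices from neighboring strata — i.e.\ that the $n$-simplex dual to $v$ survives as an honest $n$-simplex in $\calN$. This should follow because the fibers of $\varphi$ over the interior of that $n$-simplex are single points of $M \setminus P$ (so $\varphi$ is injective there and $g$ is a local isomorphism on that simplex), but verifying that the simplicial structure of $\calN$ really contains this $n$-cell, rather than a quotient of it, is where the care is needed.
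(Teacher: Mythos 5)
Your proposal follows the same basic outline as the paper's proof (bound the length of chains in the poset of components; use the Stein factorization to relate $\dim\calN$ to $\dim\calN_0$; exhibit a length-$n$ chain through a vertex $v$), and your argument is essentially correct. The only place you spend noticeably more effort than necessary is the final step, where you worry that the Stein factorization might collapse the $n$-simplex dual to $v$ and then argue around this by examining fibers of $\varphi$. This worry evaporates once you observe that $g\colon\calN\to\calN_0'$ is not only dimension-preserving (finite-to-one) but also \emph{surjective}, since the composition $\varphi_0' = g\circ\varphi$ is surjective. Hence every $k$-simplex of $\calN_0'$ has a $k$-simplex preimage in $\calN$, and in fact $\dim\calN = \dim\calN_0$ exactly (not merely $\leqslant$). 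With this equality in hand, the chain $\{v\} = C_0 < C_1 < \ldots < C_n$ through a vertex immediately gives $\dim\calN_0 = n$ and therefore $\dim\calN = n$, with no need for the local analysis of the dual $n$-simplex or the fiber argument. Your heuristic closing sentence (``This should follow because\ldots'') is pointing in the right direction, but the cleaner route is to note the equality of dimensions up front, which is what the paper does.
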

\begin{proof}
Set $n=\dim M$.
We have $\dim\calN = \dim\calN_0$. A $k$-simplex in $\calN_0$ is determined by some components $C_0< \ldots < C_k$. In particular, we have $0\leqslant  \dim C_0<\ldots <\dim C_k\leqslant n$. This implies that $\dim\calN_0\leqslant n$. If $P$ has no vertices, then $\dim C_0 \geqslant 1$ and hence $\dim\calN_0<n$. If $P$ has a vertex $v$, there is a chain $\{v\} = C_0<\ldots < C_n$ and hence $\dim\calN = n$.
\end{proof}

\subsection{Fundamental class} \label{fundamental:subsection}
Let $P\subset \interior M$ be any simple polyhedron in a compact manifold (possibly with boundary). Let  $\varphi:M\to |\calN|$ be a nerve map of $(M,P)$. We show that $\calN$ looks like a pseudomanifold. In particular, we can define the notion of fundamental class. Let us start with the following.
\begin{prop} \label{0:or:2:prop}
Every $(n-1)$-simplex in $\calN$ is adjacent to either zero or two $n$-simplexes.
\end{prop}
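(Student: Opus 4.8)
The plan is to work locally near a point of $P$ and use the explicit description of the nerve as the Stein factorization of the pre-nerve map. First I would recall that every $(n-1)$-simplex $\tau$ of $\calN$ corresponds, under the nerve map $\varphi\colon T'\to\calN$, to a chain $C_0<C_1<\dots<C_{n-1}$ of components of $(M,P)$ of strictly increasing intrinsic dimensions $0\leqslant d_0<d_1<\dots<d_{n-1}\leqslant n$, together with the Stein-factorization bookkeeping. Because the dimensions are $n$ distinct integers in $\{0,\dots,n\}$, exactly one integer $j$ in $\{0,\dots,n\}$ is missing; an $n$-simplex containing $\tau$ corresponds to inserting a component $C$ of intrinsic dimension $j$ into the chain, adjacent to $C_{j-1}$ (if present) and $C_j$ (if present) in the poset of components. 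So the count of $n$-simplexes adjacent to $\tau$ equals the number of ways to do this insertion.

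Next I would localize: pick a point $x$ in the smallest component $C_0$ of the chain whose link has not already degenerated, and examine the star of $x$ in $(M,P)$, which is one of the standard local models $\Pi^{n-1}_k$ (or a disc, or an $n$-ball, depending on which $C_i$ one looks at). The key combinatorial fact is that in each of these local models, the missing-dimension stratum $C$ that one must insert appears in exactly two ways around $\tau$. Concretely, the faces of the model polyhedron $\partial\Pi^{n-1}$ dually embedded in $\partial\Delta^{n+1}$ give the incidences: a chain of $n$ nested faces of the simplex (realizing $C_0<\dots<C_{n-1}$ via Proposition~\ref{Z:prop}-type duality) can be completed to a maximal chain of $n+1$ nested faces in exactly two ways, since at the one ``gap'' in the flag there are precisely two faces of the intermediate dimension sitting between the two given ones (this is the standard fact that the order complex of the Boolean lattice is a sphere, i.e.\ the barycentric subdivision of $\partial\Delta$ is a combinatorial manifold). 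I would spell this out separately for the case where the gap is at the bottom ($j=0$, i.e.\ $\tau$ does not already see a vertex): then inserting a vertex corresponds to the two endpoints of the $1$-component $C_1$ whose closure contains no other vertex of the relevant local picture, or to the two $1$-components emanating appropriately — again exactly two. Similarly at the top ($j=n$): the simplex $\tau$ already records a full flag up through an $(n-1)$-component, and there are exactly two $n$-components (local ``sides'' $C$) adjacent to it when the $(n-1)$-component is two-sided, which it is locally; globally the two $n$-strata may coincide but as incidences in $\calN$ (built from $T'$ before any such identification, then Stein-factored) they still count with multiplicity two — this is where I must be careful that the Stein factorization $T'\to\calN\to\calN_0'$ does not collapse these two into one.

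The main obstacle, I expect, is precisely this Stein-factorization subtlety in the boundary/degenerate cases: I must verify that when two of the inserted components $C$ happen to be the \emph{same} component of $(M,P)$ (which can occur globally — e.g.\ a one-sided-looking situation, or an $(n-1)$-component adjacent to the same $n$-ball on both sides), the two resulting $n$-simplexes of $\calN$ are nonetheless distinct, because the nerve is constructed from the twice-subdivided triangulation $T'$ whose simplices have connected preimages, so the two ``sides'' live over genuinely different simplices of $T'$ and are kept apart by $\varphi$ having connected fibers. I would handle this by arguing directly on the local model: the star $\st(x,P)\cong\Pi^{n-1}_k$ has a canonical local triangulation (as a subcomplex of $\Delta'$), the pre-nerve restricted there is injective enough that the local nerve is just the local barycentric picture, and in that local picture the two completions of the flag are manifestly distinct simplices; the global $\calN$ restricted to $R(x)$ is a copy of this local nerve, so local distinctness suffices. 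I'd then conclude: zero $n$-simplexes if the relevant stratum does not appear in the closure pattern at all (a boundary phenomenon — $\tau$ lies, so to speak, on $\partial|\calN|$, arising from a collar of $\partial M$), and exactly two otherwise.
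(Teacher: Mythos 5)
The paper's proof is a three–line topological argument: since $\varphi$ has connected fibers (Stein factorization) and $M$ is an $n$-manifold, the fiber $\varphi^{-1}(\sigma_*)$ over the barycenter of an $(n-1)$-simplex $\sigma$ is a connected $1$-manifold or a point, hence a point, a closed arc, or a circle; the $n$-simplexes of $\calN$ adjacent to $\sigma$ correspond to the ``ends'' of this fiber, so their number is $2$ (point or arc) or $0$ (circle). Your approach is genuinely different: you try to count completions of a flag of components using the combinatorics of the barycentric subdivision of $\partial\Delta$, localizing near a vertex of $P$. In effect you are re-deriving, inside this proof, the content of Section \ref{singular:subsection} (the local-model singular simplexes $\alpha_v$ and their injectivity, Proposition \ref{injective:prop}), whereas the paper proves Proposition \ref{0:or:2:prop} first with the fiber argument and only afterwards develops the singular-triangulation picture. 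Your route is workable in principle, but it is substantially longer and trades a uniform topological argument for a case analysis.

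There are, however, two genuine gaps. First, your identification of an $(n-1)$-simplex of $\calN$ with a chain $C_0<\dots<C_{n-1}$ of components is not accurate as stated: $g$ maps $\calN$ to the \emph{derived} complex $\calN_0'$, so a simplex of $\calN$ encodes a flag of chains of components (a chain of simplexes of $\calN_0$), not a single chain. You flag ``Stein-factorization bookkeeping'' but never resolve it, and the insertion count you want to perform lives in $\calN_0'$, not $\calN_0$, where the combinatorics is more delicate. Second, and more importantly, your description of the ``zero'' case as a boundary phenomenon (``$\tau$ lies on $\partial|\calN|$, arising from a collar of $\partial M$'') misses the case that actually occurs in the paper's proof: $\varphi^{-1}(\sigma_*)$ may be a \emph{circle}, in which case $\sigma$ has no adjacent $n$-simplex even when $M$ is closed and $\sigma$ lies well in the interior. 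This is precisely the reason $\calN$ is only a pseudomanifold and not a manifold, and a purely combinatorial ``count the completions of the flag'' argument does not by itself see why the count could drop to zero away from $\partial M$; you would need to explain that the two local completions glue up into a single cyclic orbit in $T'$, which is exactly what the paper's circle-fiber dichotomy captures cleanly. Until those two points are addressed, the proposal does not yet establish the proposition.
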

\begin{proof}
Let $\sigma$ be a $(n-1)$-simplex in $\calN$. The counterimage $\varphi^{-1}(\sigma_*)$ of its barycenter $\sigma_*$ is either a point or a connected 1-manifold in $\interior M$. If it is a point or a segment then $\sigma$ is adjacent to two $n$-simplexes. If it is a circle then $\sigma$ is not adjacent to any $n$-simplex.
\end{proof}

The nerve $\calN$ looks like a pseudomanifold. Note however that the adjacencies of the $n$-simplexes along the $(n-1)$-simplexes do not necessarily form a connected graph: for instance, $|\calN|$ might consist of a bouquet of two $n$-manifolds. 

A \emph{fundamental class} for $\calN$ is an element of $H_n(|\calN|,\matZ)$ which may be written (in simplicial homology) as a sum of \emph{all} $n$-simplexes of $\calN$, each with an appropriate sign $\pm 1$. We indicate such a class as $[\calN]$. A fundamental class induces an orientation on each $n$-simplex of $\calN$.

\begin{prop} \label{fundamental:prop}
If $M$ is oriented then $[\calN] = \varphi_*([M,\partial M])$ is a fundamental class.
\end{prop}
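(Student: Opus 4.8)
The plan is to compute the image $\varphi_*([M,\partial M])$ in simplicial homology of $\calN$ and check that every $n$-simplex of $\calN$ appears with coefficient $\pm1$. The key point is that the $n$-simplexes of $\calN$ are in bijection with the vertices of $P$: by Proposition \ref{dimension:nerve:prop} (and the discussion preceding it), each vertex $v$ of $P$ has a star in $(M,P)$ homeomorphic to $(\Delta^n,\Pi^{n-1})$, and the nerve map $\varphi$ restricted to this star realizes a chain $\{v\}=C_0<\dots<C_n$ of components, hence exactly one $n$-simplex $\tau_v$ of $\calN$; conversely every $n$-simplex arises this way. So I must show that the degree of $\varphi$ over (the interior of) each $\tau_v$ is $\pm1$.

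First I would set things up on a sufficiently subdivided triangulation $T'$ of $(M,P)$ so that $\varphi:T'\to\calN$ is the simplicial nerve map (Stein-factored pre-nerve map). The fundamental cycle $[M,\partial M]$ is represented by the signed sum of the top-dimensional simplexes of $T'$; pushing this forward, an $n$-simplex $\tau$ of $\calN$ gets the coefficient $\sum_{\sigma}\pm1$ over those $n$-simplexes $\sigma$ of $T'$ with $\varphi(\sigma)=\tau$. By Proposition \ref{0:or:2:prop} its codimension-one faces are each adjacent to $0$ or $2$ top simplexes, so the pushforward is automatically a cycle; what remains is to pin down the coefficient. I would localize near the vertex $v=v_\tau$: the counterimage $\varphi^{-1}(\interior\tau)$ sits inside the open star of $v$, i.e. inside a copy of $\interior{\Delta^n}$ minus its dual $\Pi^{n-1}$. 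The nerve map on this piece is, up to the identifications made by Stein factorization, the standard map $\Delta^n\to\Delta^n$ collapsing the dual spine — and on the open top cells of the barycentric subdivision it is a homeomorphism onto $\interior\tau$ followed by nothing further (connected fibers are already singletons there). Hence the coefficient is a single $\pm1$, the local degree, which is well-defined because $M$ is oriented.

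Concretely: choose a point $x$ in the interior of $\tau$; its preimage $\varphi^{-1}(x)$ lies in one component $C_0=\{v\}$'s star, is a single point $y$ (connected fiber of dimension $0$ since $x$ is generic in the top stratum and the chain $C_0<\dots<C_n$ forces the fiber to be $0$-dimensional), and $\varphi$ is a local homeomorphism near $y$ taking the local orientation of $M$ at $y$ to an orientation of $\tau$. So $\varphi_*([M,\partial M])$ assigns to $\tau$ the coefficient $\pm1$, with the sign recording whether $\varphi$ preserves or reverses orientation there; these signs constitute the induced orientation on the $n$-simplexes. Since this holds for every $\tau$, and since $\partial\varphi_*([M,\partial M])=\varphi_*(\partial[M,\partial M])$ is supported on $\varphi(\partial M)$, which has dimension $<n$ and contains no $n$-simplex boundary contributions obstructing the cycle condition, the class $[\calN]:=\varphi_*([M,\partial M])$ is a sum of all $n$-simplexes each with sign $\pm1$, i.e. a fundamental class.

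The main obstacle I anticipate is the bookkeeping around Stein factorization: one must be sure that over the top stratum the Stein map $\varphi:T'\to\calN$ has not further identified interiors of distinct $n$-simplexes of $\calN'_0$, and that each vertex of $P$ contributes exactly one $n$-simplex of $\calN$ (rather than several, or sharing one with another vertex). This is where Proposition \ref{dimension:nerve:prop} and its proof — the bijection between maximal chains of components through a vertex and $n$-simplexes — does the heavy lifting, together with the observation that two distinct vertices $v,v'$ of $P$ have disjoint open stars and hence give maximal chains $C_0<\dots<C_n$ differing in $C_0$. Once that bijection is nailed down, the degree computation is a standard local-orientation argument and the cycle property is Proposition \ref{0:or:2:prop}.
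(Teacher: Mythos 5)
Your core computation is the paper's argument: the nerve map $\varphi:T'\to\calN$ is surjective with connected fibers, so the preimage of (the interior of) each $n$-simplex of $\calN$ is the interior of exactly one $n$-simplex of $T'$, and pushing the signed sum representing $[M,\partial M]$ forward therefore puts coefficient $\pm1$ on every $n$-simplex of $\calN$. That step is correct and is exactly how the paper proceeds.

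However, the claim you flag as ``the key point'' --- that the $n$-simplexes of $\calN$ are in bijection with the vertices of $P$, with each vertex $v$ contributing exactly one $n$-simplex $\tau_v$ via ``a chain $\{v\}=C_0<\dots<C_n$'' --- is false, and you should drop it. For a given vertex $v$ there are many maximal chains $\{v\}=C_0<\dots<C_n$ (one for each complete flag of components incident to $v$), and these give many distinct $n$-simplexes of the pre-nerve $\calN_0$; after barycentric subdivision and Stein factorization, the paper shows in Section \ref{singular:subsection} that the $n$-dimensional part of $|\calN|$ is the \emph{double barycentric subdivision} of a singular triangulation with one singular $n$-simplex per vertex of $P$, so each vertex contributes far more than one $n$-simplex of $\calN$. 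The statement in the direction you actually use is the opposite and correct one: every $n$-simplex $\tau$ of $\calN$ lies over a unique vertex of $P$ (the unique $0$-component $C_0$ in the chain labelling $\tau$), which is why your localization and local-degree argument go through. With the erroneous bijection removed, your proof coincides with the paper's.
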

\begin{proof}
Let $T$ be the sufficiently subdivided triangulation of $(M,P)$ defining the nerve map $\varphi: T'\to\calN$. Since $\varphi$ is surjetive with connected fibers,
every $n$-simplex $\sigma$ in $\calN$ is the image of precisely one $n$-simplex $\eta$ of $T'$. 
The class $[M,\partial M]$ is fundamental class, \emph{i.e.}~it is represented by a sum of all the simplexes of $M$ with appropriate signs, and hence (using simplicial homology) also $\varphi_*([M,\partial M])$ is a sum of all the simplexes of $\calN$ with appropriate signs. 
\end{proof}

\subsection{Singular simplexes} \label{singular:subsection}
Let $P\subset \interior M$ be any simple polyhedron in a manifold and $\varphi:M \to |\calN|$ a nerve map. We have just seen that $\calN$ looks like a pseudomanifold. We now prove that the $n$-dimensional part of $\calN$ is the twice barycentric subdivision of a singular triangulation (see Remark \ref{singular:rem}), with one singular simplex corresponding ``dually'' to each vertex of $P$.

Let $v$ be a vertex of $P$. A \emph{local component} at $v$ is a component of a fixed open star of $v$ in $(M,P)$. The open star is homeomorphic to $\interior{\Delta,\Pi^{n-1}}$. In the dual representation, the polyhedron $\Pi^{n-1}$ is a subcomplex of $\Delta'$ and the components of $\interior{\Delta,\Pi^{n-1}}$ are naturally identified with the vertices of $\Delta'$. 

Every local component is contained in a unique component of $(M,P)$, so we get a simplicial map $\beta_v:\Delta' \to \calN_0$. The map $\beta_v$ is topologically a singular simplex in $|\calN_0|$. It sends the barycenter of $\Delta$ to the component $\{v\}$. The image of $\beta_v$ is precisely the star of $\{v\}$ in $\calN_0$.

We want a singular simplex in $|\calN|$. We therefore try to lift $\beta_v$ to $\calN$ along the projection $g:\calN\to\calN_0'$. The following result says that there is a natural way do to this.

\begin{prop}
There is a natural simplicial map $\alpha_v:\Delta'' \to \calN$ such that $g\circ\alpha_v = \beta_v'$.
\end{prop}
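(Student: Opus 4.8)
The plan is to construct the lift $\alpha_v$ directly at the level of vertices and then check it is simplicial and compatible with $g$. Recall that by definition of the nerve via Stein factorization, $\calN$ is obtained from the twice-derived triangulation $T'$ of $(M,P)$ by a Stein factorization of the pre-nerve map $\varphi_0':T''\to\calN_0'$, so that $\varphi:T''\to\calN$ has connected fibers and $g:\calN\to\calN_0'$ is finite-to-one; concretely, a vertex of $\calN$ is a connected component of $(\varphi_0')^{-1}(w)$ as $w$ ranges over vertices of $\calN_0'$. A vertex of $\calN_0'$ is a face $F$ of $\calN_0$, that is, a chain $C_0<\dots<C_k$ of components of $(M,P)$. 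So the data of a vertex of $\calN$ is: such a chain $F$, together with a choice of connected component of the subset of $M$ that maps (under $\varphi_0$) to the open star of $F$ in $\calN_0$.

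The key step is to produce, for each vertex of $\Delta''$, a canonical such component using the \emph{local} picture at $v$. First I would recall from the previous proposition that $\beta_v:\Delta'\to\calN_0$ is the simplicial map sending each vertex of $\Delta'$ (= component of the open star $\interior{(\Delta,\Pi^{n-1})}$) to the component of $(M,P)$ containing it. Passing to derived complexes, $\beta_v':\Delta''\to\calN_0'$ sends a vertex of $\Delta''$ — i.e.\ a face $G$ of $\Delta'$, equivalently a chain of local components $L_0<\dots<L_k$ in the open star of $v$ — to the chain $\beta_v(L_0)\le\dots\le\beta_v(L_k)$ in $\calN_0$ (after discarding repetitions; one checks $\beta_v$ is strictly monotone on this chain, essentially because distinct local components along a flag have distinct intrinsic dimensions, so no collapsing occurs). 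Now the open star of $v$ is a connected neighborhood of $v$ in $M$, and its intersection with $\varphi_0^{-1}(\text{open star of } F)$ for $F=\beta_v'(G)$ contains the local flag $G$ and is connected near $v$: this singles out a preferred component of $(\varphi_0')^{-1}(F)$, hence a preferred vertex $\alpha_v(G)$ of $\calN$ lying over $\beta_v'(G)=w$. That is exactly the requirement $g(\alpha_v(G))=\beta_v'(G)$ at the level of vertices.

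Having defined $\alpha_v$ on vertices of $\Delta''$, I would check it is simplicial: if $G_0\subset\dots\subset G_j$ is a face of $\Delta''$, all the $G_i$ lie in a single simplex of $\Delta'$, so the corresponding local components are all incident to one another in the open star of $v$; their images $\alpha_v(G_i)$ therefore all lie in the image under $\varphi$ of one simplex of $T''$ contained in the open star, hence span a simplex of $\calN$ by the definition of the simplices of $\calN$ (images of simplices of $T''$ under $\varphi$). Finally, $g\circ\alpha_v=\beta_v'$ holds on vertices by construction and hence on all of $\Delta''$ since both maps are simplicial. Naturality (with respect to the identifications of open stars and the face structure) is then immediate from the fact that every choice made was the canonical "component containing the flag near $v$''.

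The main obstacle I anticipate is the well-definedness and connectedness claim in the middle step: verifying that $\varphi_0^{-1}(\text{open star of }\beta_v'(G))$ really does have a \emph{unique} component meeting a small enough neighborhood of $v$ and containing the flag $G$, so that the lift is forced. This is where one must use that $T$ is sufficiently subdivided and that, in the dual local model $\interior{(\Delta,\Pi^{n-1})}$, the union of open stars of the vertices of a fixed face of $\Delta'$ is connected — a purely combinatorial fact about the barycentric subdivision of a simplex. Once that local connectedness is in hand, everything else is bookkeeping with derived complexes and the definition of Stein factorization.
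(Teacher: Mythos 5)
Your construction is essentially the same as the paper's: the paper factors $\varphi_0|_{\st(v,T)}$ as $\beta_v\circ\varphi_v$, asserts that $\varphi_v$ (equivalently $\varphi_v'$, since barycentric subdivision does not change the underlying continuous map) has connected fibers, and then invokes the universal property of the Stein factorization to obtain the unique $\alpha_v$ making the square commute; you unpack that universal property by explicitly naming, for each vertex $G$ of $\Delta''$, the preferred connected component over $\beta_v'(G)$ that the local flag at $v$ singles out. The connectedness claim you flag as ``the main obstacle'' is precisely the same assertion the paper leaves unproved (``The map $\varphi_v$ has connected fibers''), so you have correctly identified where the real content lies, and your sketch of why it holds (local components in the dual model $\interior{(\Delta,\Pi^{n-1})}$ are connected, and the star of a face of $\Delta'$ is connected) is the right ingredient. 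One notational slip throughout: the nerve map in the paper is $\varphi:T'\to\calN$, not $\varphi:T''\to\calN$; the simplexes of $\calN$ are images under $\varphi$ of simplexes of $T'$, so each of your ``$T''$'' should be ``$T'$''. This does not affect the substance of the argument.
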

\begin{proof}
We can formalize this as follows. The restriction of $\varphi_0$ to the (closed) star of $v$ in $T$ splits naturally into two simplicial maps
$$
\begin{CD} 
\st(v,T) @> \varphi_v>> \Delta' @> \beta_v>> \calN_0.
\end{CD}
$$
The map $\varphi_v$ sends every vertex to the local component to which it belongs. The map $\varphi_v$ has connected fibers. Therefore there is a unique simplicial map $\alpha_v:\Delta''\to\calN$ which makes the following diagram commute.
$$
\xymatrix{ 
T' \ar[r]^{\varphi} & \calN \ar[r]^g & \calN_0' \\
\st(v,T)' \ar @{^{(}->} [u]^i \ar[r]_{\varphi'_v} & \Delta'' \ar @{.>}[u]^{\alpha_v} \ar[ur]_{\beta'_v}
}
$$
\end{proof}
The map $\alpha_v$ is actually well-defined only up to a permutation of the vertices of $\Delta$, induced from the chosen identification of the open star of $v$ with $\interior {\Delta,\Pi^{n-1}}$.

The following propositions show that the singular simplexes $\alpha_v$ at the varying of $v$ among the vertices of $P$ form a singular triangulation of the $n$-dimensional part of $|\calN|$ whose double barycentric subdivision yields the original triangulation of $\calN$.

\begin{prop} 
If $v\neq v'$ then $\alpha_v(\interior \Delta) \cap \alpha_{v'}(\interior\Delta)=\emptyset$.
\end{prop}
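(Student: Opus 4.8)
The plan is to push the whole statement down to the pre-nerve $\calN_0$, where it becomes an elementary fact about open stars of vertices of a simplicial complex.

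\emph{Reduction via $g$.} By construction $g\circ\alpha_v=\beta_v'$ and $g\circ\alpha_{v'}=\beta_{v'}'$, where $\beta_v'\colon\Delta''\to\calN_0'$ is the derived map of $\beta_v$ and hence induces on geometric realizations the same continuous map as $\beta_v\colon|\Delta'|\to|\calN_0|$. Since $\interior{\Delta}$ is the same subset of $|\Delta|$ whether $\Delta$ is subdivided once or twice, we get $g\bigl(\alpha_v(\interior{\Delta})\bigr)=\beta_v(\interior{\Delta})$ and likewise for $v'$. So if $\alpha_v(\interior{\Delta})$ and $\alpha_{v'}(\interior{\Delta})$ shared a point $p$, then $g(p)$ would lie in $\beta_v(\interior{\Delta})\cap\beta_{v'}(\interior{\Delta})$; it therefore suffices to show that these two sets are disjoint.

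\emph{Localizing $\beta_v(\interior{\Delta})$.} I would next check that $\beta_v(\interior{\Delta})$ is contained in the open star of the vertex $\{v\}$ of $\calN_0$ (the $0$-component of $(M,P)$ consisting of the single point $v$). Indeed, $\interior{\Delta}$ is exactly the open star of the barycenter $b_\Delta$ in $\Delta'$; the map $\beta_v$ is simplicial; and $\beta_v(b_\Delta)=\{v\}$. A simplicial map sends the open star of a vertex into the open star of its image, because it carries the (unique) open simplex $\tau^\circ$ containing a point $x$ onto the relative interior of the simplex $\beta_v(\tau)$, which still has $\{v\}=\beta_v(b_\Delta)$ among its vertices (even if $\beta_v$ is not injective on the vertices of $\tau$). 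The same applies to $v'$ and $\{v'\}$.

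\emph{Disjointness of the two open stars.} Finally I would show that the open stars of $\{v\}$ and $\{v'\}$ in $\calN_0$ are disjoint whenever $v\neq v'$. Distinct vertices of $P$ are distinct $0$-dimensional components of $(M,P)$, hence distinct vertices of $\calN_0=\eta(\calC,\leqslant)$. A simplex of $\calN_0$ is a chain $C_0<\cdots<C_k$ of components, and $C<C'$ forces $\dim C<\dim C'$; thus a single simplex of $\calN_0$ has at most one $0$-dimensional component among its vertices, so $\{v\}$ and $\{v'\}$ span no simplex of $\calN_0$. Two vertices of a simplicial complex that span no edge have disjoint open stars, since the open simplex carrying a given point is unique. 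Hence $\beta_v(\interior{\Delta})\cap\beta_{v'}(\interior{\Delta})=\emptyset$, which by the first step yields $\alpha_v(\interior{\Delta})\cap\alpha_{v'}(\interior{\Delta})=\emptyset$. The only real content is the reduction to $\calN_0$ through $g$; once there, the observation that distinct vertices of $P$ give distinct $0$-components that cannot lie in a common simplex finishes everything, so I do not expect a genuine obstacle here.
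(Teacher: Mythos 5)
Your plan---push everything down to the pre-nerve through $g$, localize $\beta_v(\interior\Delta)$ inside the open star of $\{v\}$, and observe that distinct $0$-components span no simplex of $\calN_0$---is exactly the paper's argument, just spelled out in more detail.

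However, your first step contains a false claim: the derived map $\beta_v'\colon\Delta''\to\calN_0'$ does \emph{not} in general induce the same continuous map on realizations as $\beta_v\colon\Delta'\to\calN_0$. If $f\colon K\to L$ is simplicial but not injective on the vertices of some simplex $\sigma$, then $|f|$ sends the barycenter $b_\sigma$ to a weighted average of vertices of $f(\sigma)$ with multiplicities, whereas $|f'|$ sends $b_\sigma$ to the genuine barycenter $b_{f(\sigma)}$; these differ (e.g.\ a $2$-simplex mapping onto a $1$-simplex by identifying two vertices), so $|f|\neq|f'|$. Consequently the asserted equality $g(\alpha_v(\interior\Delta))=\beta_v(\interior\Delta)$ need not hold. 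What \emph{is} true, and is all your argument needs, is weaker: $\beta_v(x)$ and $\beta_v'(x)$ always have the same carrier in $\calN_0$, namely $\beta_v(\sigma)$ where $\sigma$ is the $\Delta'$-carrier of $x$. Since the open star of $\{v\}$ in $\calN_0$ is a union of open $\calN_0$-simplexes, this carrier statement shows that $g(\alpha_v(\interior\Delta))=\beta_v'(\interior\Delta)$ is contained in that open star if and only if $\beta_v(\interior\Delta)$ is, and your localization and disjointness steps then finish the proof unchanged. So the proof needs this one correction---replace ``same continuous map'' by ``same $\calN_0$-carrier''---but is otherwise sound and follows the paper's route.
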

\begin{proof}
It suffices to prove the assertion for $\beta_v$ and $\beta_{v'}$. The image of $\interior\Delta$ along $\beta_v$ is the open star of $\{v\}$ in $\calN_0$. The open stars of $\{v\}$ and $\{v'\}$ are disjoint (because $v$ and $v'$ are not connected by an edge).
\end{proof}

\begin{prop} \label{injective:prop}
The map $\alpha_v$ is injective in $\interior {\Delta}$ for every $v$.
\end{prop}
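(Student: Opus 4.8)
The plan is to reduce the statement to two elementary facts about the map $\beta_v$ alone. Recall that $\alpha_v$ is the unique simplicial map with $\alpha_v\circ\varphi'_v=\varphi\circ i$ and $g\circ\alpha_v=\beta'_v$, where $\varphi'_v:\st(v,T)'\to\Delta''$ is surjective with connected fibers and $i:\st(v,T)'\hookrightarrow T'$ is the inclusion; recall also that $\varphi:T'\to\calN$ is the quotient map of the Stein factorization of the pre-nerve map $\varphi'_0:T'\to\calN'_0$, so that $\varphi(a)=\varphi(b)$ holds exactly when $\varphi'_0(a)=\varphi'_0(b)$ and $a,b$ lie in the same connected component of $(\varphi'_0)^{-1}(\varphi'_0(a))$. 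The two facts I will isolate are: (i) $\beta_v$ is \emph{non-degenerate} on the open star of the barycenter $b_\Delta$ in $\Delta'$, which as a subset of $|\Delta'|=|\Delta|$ equals $\interior\Delta$; and (ii) the $\beta_v$-preimage of any point of $|\calN_0|$ lying in the open star of the vertex $\{v\}$ is contained in $\interior\Delta$.

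First I would carry out the reduction. Assume $\alpha_v(x)=\alpha_v(y)$ with $x,y\in\interior\Delta$. By surjectivity of $\varphi'_v$ pick $p,q\in\st(v,T)'$ with $\varphi'_v(p)=x$ and $\varphi'_v(q)=y$; then $\varphi(i(p))=\alpha_v(x)=\alpha_v(y)=\varphi(i(q))$, so by the description of the Stein factorization $\varphi'_0(i(p))=\varphi'_0(i(q))=:w$ and $i(p),i(q)$ lie in the same connected component of $\Phi:=(\varphi'_0)^{-1}(w)$. From $\varphi'_0\circ i=\beta'_v\circ\varphi'_v$, and since $\beta'_v$ and $\beta_v$ induce the same map on geometric realizations, we get $w=\beta_v(x)=\beta_v(y)$; as $x$ lies in the open star of $b_\Delta$ and $\beta_v(b_\Delta)=\{v\}$, the point $w$ lies in the open star of the vertex $\{v\}$ of $\calN_0$. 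Now $\{v\}$ has exactly one $\beta_v$-preimage among the vertices of $\Delta'$, namely $b_\Delta$ (by the dual description of $\interior{\Delta,\Pi^{n-1}}$ the only face $F\subseteq\Delta$ whose dual local component has intrinsic dimension $0$ is $F=\Delta$), and exactly one $\varphi_0$-preimage among the vertices of $T$, namely $v$; by the standard description of preimages of open stars under simplicial maps this gives $\beta_v^{-1}(\text{open star of }\{v\})=\interior\Delta$ and $\varphi_0^{-1}(\text{open star of }\{v\})=\st(v,T)\setminus\lk(v,T)$. Hence $\beta_v^{-1}(w)\subseteq\interior\Delta$ and $\Phi\subseteq\st(v,T)'$, so that applying the continuous map $\varphi'_v$ to the connected component of $\Phi$ through $p$ and $q$ produces a connected subset of $\beta_v^{-1}(w)$ containing both $x$ and $y$; thus $x$ and $y$ lie in one connected component of $\beta_v^{-1}(w)$.

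Then I would establish (i) and conclude. A simplex of $\Delta'$ contained in $\interior\Delta$ is of the form $\langle b_{F_0},\ldots,b_{F_{k-1}},b_\Delta\rangle$ with $F_0\subsetneq\ldots\subsetneq F_{k-1}\subsetneq\Delta$, and $\beta_v$ sends it to $\langle C_{F_0},\ldots,C_{F_{k-1}},\{v\}\rangle$, where $C_F$ is the component of $(M,P)$ containing the local component dual to $b_F$, of intrinsic dimension $n-\dim F$. Since $\dim F_0<\ldots<\dim F_{k-1}<n$, these intrinsic dimensions are pairwise distinct and positive while $\{v\}$ has intrinsic dimension $0$; hence the listed vertices are pairwise distinct, $\beta_v$ is injective on the simplex, and so $\beta_v$ is non-degenerate on $\interior\Delta$. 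Consequently $\beta_v^{-1}(w)$, which we have placed inside $\interior\Delta$, meets each open simplex of $\Delta'$ in at most one point, hence is finite and totally disconnected. Therefore the connected component of $\beta_v^{-1}(w)$ containing $x$ and $y$ is a single point, i.e. $x=y$, which is what I want.

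The main obstacle is exactly the interplay with the Stein factorization: $\beta_v$ itself is in general \emph{not} injective on $\interior\Delta$, since distinct dual cells of $\interior{\Delta,\Pi^{n-1}}$ may be absorbed into a single component of $(M,P)$; the argument works only because passing from $\calN_0$ to $\calN$ separates the connected components of the fibers of the pre-nerve map, while $\beta_v$ nonetheless has totally disconnected fibers over $\interior\Delta$. The one genuinely delicate point is the bookkeeping of the successive subdivisions, in particular verifying that the fiber $\Phi$ is confined to the star of $v$ so that $\varphi'_v$ may legitimately be applied to it.
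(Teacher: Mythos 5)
Your proof is correct and follows essentially the same strategy as the paper's: for $x\in\interior\Delta$, observe that $\beta_v(x)$ lies in the open star of $\{v\}$ in $\calN_0$, use this to confine $\varphi_0^{-1}(\beta_v(x))$ and hence $\varphi^{-1}(\alpha_v(x))$ to $\st(v,T)$, then combine connectedness of the fibers of $\varphi$ (from the Stein factorization) with finiteness of the relevant fiber of $\beta_v$ (equivalently $\alpha_v$) over $\interior\Delta$ to conclude the fiber is a single point. The only place you do visibly more work than the paper is in justifying the finiteness: the paper asserts that $\alpha_v$ is finite-to-one ``by construction,'' whereas you explicitly verify that $\beta_v$ is non-degenerate on simplices of $\Delta'$ contained in $\interior\Delta$ by noting the distinct intrinsic dimensions of the components $C_{F_0},\ldots,C_{F_{k-1}},\{v\}$; this cleanly fills in a detail the paper leaves implicit.
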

\begin{proof}
We have a commutative diagram of topological maps
$$
\xymatrix{ 
M \ar[r]^{\varphi} & |\calN| \ar[r]^g & |\calN_0|\\
\st(v,T) \ar @{^{(}->} [u]^i \ar[r]_{\varphi_v} & \Delta \ar @{>}[u]^{\alpha_v} \ar[ur]_{\beta_v}
}
$$
Take $x\in\interior\Delta$. The image $\beta_v (x)$ lies in the interior of the star of $\{v\}$ in $\calN_0$. Then $\varphi_0^{-1}(\beta_v(x))$ lies in the interior of the star of $v$ in $T$. Therefore $\varphi^{-1}(\alpha_v(x))$ is also contained in $\st(v,T)$, and is connected since $\varphi$ has connected fibers.

Therefore $\varphi_v(\varphi^{-1}(\alpha_v(x)) = \alpha_v^{-1}(\alpha_v(x))$ is connected. The map $\alpha_v$ is finite-to-one by construction, thus this set is also finite. Therefore it consists of only one point $\{x\}$.
\end{proof}

\begin{prop} 
Every $n$-simplex of $\calN$ is contained in $\alpha_v(\Delta)$ for some $v$.
\end{prop}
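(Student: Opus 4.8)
The plan is to track, for each $n$-simplex $\sigma$ of $\calN$, the unique $n$-simplex of $T'$ lying over it and then push it through the commutative square relating $\varphi$ and $\alpha_v$ established above. First, since $\varphi\colon T'\to\calN$ is surjective with connected fibres, the argument in the proof of Proposition \ref{fundamental:prop} shows that there is exactly one $n$-simplex $\eta$ of $T'$ with $\varphi(\eta)=\sigma$; being a top simplex of the barycentric subdivision $T'$, this $\eta$ is a flag $\tau_0\subsetneq\tau_1\subsetneq\cdots\subsetneq\tau_n$ of simplices of $T$ with $\dim\tau_i=i$.

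Next I would read a vertex of $P$ off $\eta$. Because $g$ preserves dimensions (Section \ref{Stein:subsection}), $g(\sigma)=\varphi_0'(\eta)$ is an $n$-simplex of $\calN_0'$, which forces the images $\varphi_0(\tau_0)\subsetneq\cdots\subsetneq\varphi_0(\tau_n)$ to be a strictly increasing chain in $\calN_0$; hence $\varphi_0(\tau_n)$ is an $n$-simplex of $\calN_0$, namely a chain $C_0<C_1<\cdots<C_n$ of components of $(M,P)$. As in the proof of Proposition \ref{dimension:nerve:prop} one has $\dim C_i=i$, so $C_0$ is a $0$-component, that is, a single vertex $v$ of $P$; note that $v$ is a vertex of $T$ since it is a $0$-stratum of $P$ and $T$ triangulates $(M,P)$. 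As $\varphi_0$ is simplicial and maps the $n$-simplex $\tau_n$ onto the $n$-simplex $\varphi_0(\tau_n)$, it sends the $n+1$ vertices of $\tau_n$ bijectively onto $\{C_0,\dots,C_n\}$; in particular one vertex of $\tau_n$ lies in $C_0=\{v\}$, so that vertex equals $v$ and therefore $\tau_n\in\st(v,T)$.

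The conclusion is then formal: as $\st(v,T)$ is a subcomplex containing $\tau_n$, it contains all the faces $\tau_i$, so $\eta$ lies in the subcomplex $\st(v,T)'\subset T'$; pushing $\eta$ through the square $\varphi\circ i=\alpha_v\circ\varphi'_v$ gives $\sigma=\varphi(\eta)=\alpha_v\bigl(\varphi'_v(\eta)\bigr)\in\alpha_v(\Delta)$. The only genuinely delicate point is the very first step, that each $n$-simplex of $\calN$ has a \emph{single} $n$-simplex of $T'$ above it; this rests on $\varphi$ having connected fibres and is the same mechanism already used in Proposition \ref{fundamental:prop}. Everything afterwards is a dimension count in the barycentric subdivisions plus the defining property of the star, together with the standing assumption that $T$ is subdivided finely enough to contain $P$ as a subcomplex.
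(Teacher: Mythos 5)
Your proof is correct and follows essentially the same route as the paper: unique $n$-simplex $\eta$ of $T'$ above $\sigma$ via connected fibers, dimension preservation of $g$ forcing the top simplex $\tau_n$ of the underlying flag to have its vertices in $n+1$ distinct components $C_0<\cdots<C_n$ with $\dim C_i=i$, hence $C_0=\{v\}$ and $\tau_n\subset\st(v,T)$, then pushing through the commuting square. You merely spell out a few steps (the flag structure of $\eta$, the bijectivity of $\varphi_0$ on the vertices of $\tau_n$) that the paper leaves implicit.
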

\begin{proof}
Let $\sigma$ be a $n$-simplex of $\calN$.
Since the nerve map $\varphi:T'\to\calN$ has connected fibers, the preimage of $\sigma$ consists of a single $n$-simplex $\varphi^{-1}(\sigma)$.  We show that $\varphi^{-1}(\sigma)$ lies in the star $\st(v,T)$ of some vertex $v$: this implies that $\sigma$ is contained in $\alpha_v(\Delta)$.

Let $\bar\sigma$ be the $n$-simplex of $T$ which contains $\varphi^{-1}(\sigma)$. Consider the maps
$$
\begin{CD} 
T' @>\varphi>> \calN @>g>> \calN_0'.
\end{CD}
$$
The image $\varphi_0(\bar\sigma) = g(\varphi(\bar\sigma))$ is a $n$-simplex because $g$ preserves the dimension of simplexes. That is, the vertices of $\bar\sigma$ lie in distinct components $C_0<\ldots <C_n$ of $(M,P)$. This implies that $\dim C_i=i$, and thus $C_0=\{v\}$ is a vertex. Therefore $\bar\sigma$ lies in $\st(v,T)$.
\end{proof}

Finally, we show that the singular simplexes glue in pairs along their facets. An \emph{edge} of $P$ is a 1-component not homeomorphic to a circle. An edge $e$ connects two (possibly coinciding) vertices $v_1$ and $v_2$, see Fig~\ref{connecting:fig}. Identify the stars of $v_1$ and $v_2$ with $(\Delta, \Pi^{n-1})$. The edge $e$ intersects each $\Delta$ in the barycenter of a facet $f_i$ of $\Delta$. The edge $e$ induces a simplicial isomorphism $\psi:f_1\to f_2$ defined by taking track along $e$ of the incident components. We have the following.

\begin{figure}
 \begin{center}
  \includegraphics[width = 10cm]{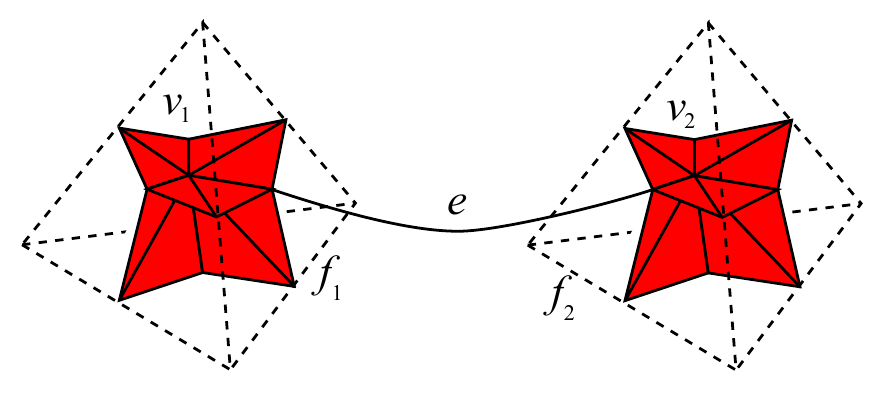}
 \end{center}
 \caption{An edge $e$ connecting two vertices $v_1$ and $v_2$. It intersects two facets $f_1$ and $f_2$ of the dual simplexes.}
 \label{connecting:fig}
\end{figure}

\begin{prop} \label{edge:prop}
We have $\alpha_{v_1}|_{f_1} = \alpha_{v_2}\circ\psi$.
\end{prop}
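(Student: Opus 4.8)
The plan is to unwind both sides of the claimed equation $\alpha_{v_1}|_{f_1} = \alpha_{v_2}\circ\psi$ through the defining commutative diagram of the map $\alpha_v$ (the square built from $\st(v,T)'$, $\Delta''$, $\calN$, $\calN_0'$), reducing everything to a statement about the pre-nerve map $\varphi_0$ restricted to the star of the edge $e$. The key point is that $\alpha_v$ is characterized by the property $g\circ\alpha_v = \beta_v'$ together with the compatibility $\alpha_v\circ\varphi_v' = \varphi\circ i$ on $\st(v,T)'$, and the fiber-connectedness of $\varphi$ guarantees uniqueness of such a lift. So to prove the equality it suffices to check that $\alpha_{v_2}\circ\psi$ satisfies the \emph{same} characterizing property that pins down $\alpha_{v_1}|_{f_1}$.

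First I would set up the geometry carefully: fix a sufficiently subdivided triangulation $T$ of $(M,P)$, identify $\st(v_i,T)$ with $(\Delta,\Pi^{n-1})$ for $i=1,2$, and locate the edge $e$ as meeting each copy of $\Delta$ in the barycenter of a facet $f_i$, with $\psi:f_1\to f_2$ the simplicial isomorphism obtained by tracking the incident components along $e$. The union $\st(v_1,T)\cup\st(v_2,T)$ together with a collar of $e$ forms a neighborhood $U$ of the closed edge; the key observation is that $f_1$ and $f_2$, under the identifications, are glued in $U$ precisely by $\psi$ — indeed, tracking local components of $P$ along $e$ from one endpoint to the other realizes exactly the identification of the facet $f_1$ with $f_2$. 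Hence the two maps $\beta_{v_1}|_{f_1}$ and $\beta_{v_2}\circ\psi$ from $f_1$ to $\calN_0$ agree, because a local component at $v_1$ lying in $f_1$ and the corresponding local component at $v_2$ lying in $f_2$ are joined along $e$ and therefore lie in the same (global) component of $(M,P)$, i.e.\ map to the same vertex of $\calN_0$. Passing to derived complexes, $\beta_{v_1}'|_{f_1'} = (\beta_{v_2}\circ\psi)'= \beta_{v_2}'\circ\psi'$.

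Next I would lift this equality from $\calN_0'$ to $\calN$. Apply $g$: we have $g\circ\alpha_{v_1}|_{f_1} = \beta_{v_1}'|_{f_1} = \beta_{v_2}'\circ\psi' = (g\circ\alpha_{v_2})\circ\psi = g\circ(\alpha_{v_2}\circ\psi)$. Thus the two simplicial maps $f_1''\to\calN$ given by $\alpha_{v_1}|_{f_1}$ and $\alpha_{v_2}\circ\psi$ become equal after composing with $g$. To upgrade this to equality in $\calN$ itself, I would use that both maps factor the restriction of the nerve map $\varphi:T'\to\calN$ over the relevant part of the star of $e$ in $T$, and that $\varphi$ has connected fibers: concretely, a vertex of $f_1''$ corresponds to a connected component of a fiber of $\varphi_0'$ over the star of $v_1$, this component is joined along $e$ to the corresponding component over $v_2$, hence they determine the \emph{same} vertex of $\calN$ (since vertices of $\calN$ are exactly connected components of fibers of $\varphi_0'$, and connectedness is preserved along $e$). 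So $\alpha_{v_1}$ and $\alpha_{v_2}\circ\psi$ agree on vertices of $f_1''$, hence as simplicial maps.

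**The main obstacle** I anticipate is the bookkeeping in the last step: making precise the claim that "tracking a local component along $e$" yields a well-defined bijection between the vertices of $f_1$ (resp.\ $f_1''$) and those of $f_2$ (resp.\ $f_2''$) that matches $\psi$ on the nose, and that this bijection is compatible with passing to the connected-components-of-fibers description defining $\calN$. This is essentially the content of the collar structure along $e$ (Proposition \ref{cut:prop} and the cut map of Section \ref{cut:subsection}): near $e$, away from $v_1$ and $v_2$, the pair $(M,P)$ looks like $(D^{n-1},\Pi^{n-2})\times e$ suspended, so the local components and their incidences are literally constant along $e$, which gives the identification $\psi$ and its compatibility with both $\beta$ and the Stein factorization. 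Once that product structure is invoked, the rest is a diagram chase using the uniqueness of lifts along $g$ furnished by the connected fibers of $\varphi$, exactly as in the proof that $\alpha_v$ is well-defined.
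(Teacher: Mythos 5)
Your proposal is correct and follows essentially the same route as the paper: the key step in both is identifying a product structure of $(M,P)$ along $e$ away from the two vertices, so that $\varphi_0$ is constant in the $[-1,1]$-direction, which yields $\beta_{v_1}|_{f_1}=\beta_{v_2}\circ\psi$ and simultaneously that the lifts to $\calN$ coincide. You spell out the last lifting step (via connectedness of fibers and the identification of vertices of $\calN$ with connected components of fibers of $\varphi_0'$) in more detail than the paper, which simply asserts that the lifts coincide, but the underlying reason is the same.
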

\begin{proof}
Take
$$ R = \overline{R(e,T) \setminus \big(R(v_1,T) \cup R(v_2, T)\big)}.$$
We can identify $R$ with a cylinder $\Delta^{n-1}\times [-1,1]$, which intersects $P$ as $\Pi^{n-2}\times [-1,1]$, such that $\varphi_0(x,t)$ does not depend on $t\in [-1,1]$ for every $x\in\Delta^{n-1}$. Therefore $\beta_{v_1}|_{f_1} = \beta_{v_2}\circ\psi$ and the lifts $\alpha_{v_1}|_{f_1}$ and $\alpha_{v_2}\circ \psi$ also coincide.
\end{proof}

Suppose $M$ is oriented. By Proposition \ref{fundamental:prop} the orientation on $M$ induces a fundamental class $[\calN]$ in $\calN$ and hence an orientation on its $n$-simplexes. A singular simplex $\alpha_v$ can thus be orientation-preserving or reversing, and we set $\sigma_v =0$ or $1$ correspondingly.

We would like to say that $\sum (-1)^{\sigma_v} \alpha_v$ is a cycle representing $[\calN]$. However, this argument is wrong: a \emph{singular} triangulation, without any additional data, does not yield a cycle, because boundary maps do not necessarily cancel algebraically (for that reason, the slightly richer notion of \emph{semisimplicial complex} \cite{semi} or \emph{$\Delta$-complex} \cite[Page  103]{Ha} is usually employed in the literature). We solve this problem by averaging each $\alpha_v$ over its $(n+1)!$ different representations. That is, we define
$$\tilde\alpha_v = \frac 1{(n+1)!}\sum_{p \in S_{n+1}} (-1)^{\sigma_v + {\rm sgn}(p)} \alpha_v\circ \theta_p$$
where $\theta_p:\Delta\to \Delta$ is the combinatorial isomorphism induced by the permutation $p$ of the vertices of $\Delta$ (which have a fixed ordering, \emph{i.e.}~a fixed identification with $\{1,\ldots, n+1\}$). The chain $\tilde \alpha_v$ depends only on $v$.

\begin{prop} \label{efficient:cycle:prop}
Let $M$ be oriented. We have
$$\varphi_*\big([M,\partial M]\big) = [\calN] = \left[ \sum_v \tilde\alpha_v\right].$$
\end{prop}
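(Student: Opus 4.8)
The plan is to show that the averaged singular chain $\sum_v \tilde\alpha_v$ is a genuine simplicial cycle representing $\varphi_*([M,\partial M])$, using the structural results of this subsection together with Proposition \ref{fundamental:prop}. First I would recall from Proposition \ref{fundamental:prop} that $\varphi_*([M,\partial M]) = [\calN]$ is already known to be a fundamental class, so the content of the statement is the last equality $[\calN] = [\sum_v \tilde\alpha_v]$: that is, the averaged chain is a cycle and its homology class is the fundamental class determined by the orientation.

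The key steps, in order: (1) Observe that, as a chain in simplicial homology of $\calN$, each $\tilde\alpha_v$ is supported on the $n$-simplexes contained in $\alpha_v(\Delta)$. By the proposition ``every $n$-simplex of $\calN$ is contained in $\alpha_v(\Delta)$ for some $v$'' and the disjointness proposition ($\alpha_v(\interior\Delta)\cap\alpha_{v'}(\interior\Delta)=\emptyset$ for $v\neq v'$), together with injectivity of $\alpha_v$ on $\interior\Delta$ (Proposition \ref{injective:prop}), each $n$-simplex $\sigma$ of $\calN$ occurs in exactly one $\tilde\alpha_v$, and there it appears with coefficient $(-1)^{\sigma_v}(-1)^{\mathrm{sgn}(p)}$ for the essentially unique permutation $p$ realizing $\alpha_v\circ\theta_p$ on $\sigma$; by the choice of $\sigma_v$ this coefficient is exactly the sign of $\sigma$ in $[\calN]$. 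Hence $\sum_v\tilde\alpha_v$ and $[\calN]$ agree as simplicial $n$-chains, up to checking the averaging does not dilute the coefficient — which it does not, because all $(n+1)!$ representations of a fixed $\alpha_v$ contribute the same signed multiple of each $n$-simplex it covers. (2) Since $[\calN]$ is a cycle (being $\varphi_*$ of a relative cycle, with the boundary landing in $\varphi(\partial M)$, which is lower-dimensional), it follows formally that $\sum_v\tilde\alpha_v$ is a cycle and $[\sum_v\tilde\alpha_v]=[\calN]$. Alternatively, and more in the spirit of the construction, one verifies directly that $\partial\big(\sum_v\tilde\alpha_v\big)=0$ using Proposition \ref{edge:prop}: the facet of $\alpha_{v_1}$ along $f_1$ is glued to the facet of $\alpha_{v_2}$ along $f_2$ via the isomorphism $\psi$ induced by the edge $e$, and the averaging over $S_{n+1}$ is precisely what makes the two boundary contributions cancel algebraically (this is the standard device replacing the passage to a $\Delta$-complex). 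Each $(n-1)$-simplex of $\calN$ that is a facet of an $n$-simplex lies, by Proposition \ref{0:or:2:prop}, on exactly two $n$-simplexes; tracing these two through the edge correspondence and comparing orientations via the fundamental class shows the coefficients cancel.

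The main obstacle is step (2): making the boundary-cancellation bookkeeping rigorous. One must check that the two $n$-simplexes adjacent to a given $(n-1)$-simplex of $\calN$ correspond, under $\alpha_{v_1}$ and $\alpha_{v_2}$, to a facet-pair identified by $\psi$ as in Proposition \ref{edge:prop}, and that the orientation induced by $[\calN]$ on these two $n$-simplexes is opposite relative to the shared facet. This is where the $\frac{1}{(n+1)!}\sum_p (-1)^{\mathrm{sgn}(p)}$ averaging earns its keep: without it, a facet of $\alpha_{v_1}$ and the matching facet of $\alpha_{v_2}$ need not be identified by an \emph{orientation-consistent} simplicial map, and the naive sum of singular simplexes fails to be a cycle. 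I would therefore organize the argument so that the averaged chains $\tilde\alpha_v$ are treated as simplicial chains on the twice-subdivided $n$-skeleton of $\calN$ from the outset (using that $\alpha_v:\Delta''\to\calN$ is simplicial), reducing everything to a finite combinatorial identity about signs of permutations and the face correspondence $\psi$, and then invoking Proposition \ref{fundamental:prop} to identify the resulting class with $\varphi_*([M,\partial M])$.
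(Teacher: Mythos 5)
Your proposal is correct and follows essentially the same route as the paper's (very terse) proof: the cycle property comes from the pairwise boundary cancellation enabled by Proposition \ref{edge:prop} together with the symmetrization over $S_{n+1}$, and the identification with $[\calN]$ comes from observing that the double subdivision of $\sum_v\tilde\alpha_v$ covers each $n$-simplex of $\calN$ exactly $(n+1)!$ times with coefficient $1/(n+1)!$ and the correct sign. You supply considerably more of the bookkeeping than the paper does, but no new idea is needed and none is missing.
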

\begin{proof} First, note that $\sum_v \tilde\alpha_v$ is indeed a cycle: the terms in the boundary cancel in pairs thanks to Proposition \ref{edge:prop}. The twice subdivision of $\sum_v\tilde\alpha_v$ yields the fundamental class $[\calN]$, each simplex being counted $(n+1)!$ times with coefficient $1/(n+1)!$.
\end{proof}

\section{Homotopy type} \label{homotopy:section}
We study here the relations between the complexity and some homotopy invariants of a manifold. First, note that the complexity is not a homotopy invariant, since it distinguishes homotopically equivalent lens spaces: we have $c(L_{7,1})=4$ and $c(L_{7,2})=2$, see \cite{Mat}.

\subsection{Fundamental groups}
It might be that every simply connected manifold has complexity zero. However, this question is open in dimension 4.
\begin{teo} \label{simply:connected:teo}
A simply connected compact manifold of dimension $n\neq 4$ has complexity zero.
\end{teo}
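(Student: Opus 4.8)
The plan is to settle $n\le 3$ by the explicit low-dimensional classification and to reduce $n\ge 5$ to the handle-addition estimates of Theorem \ref{handles:teo}. For $n\le 3$ there is essentially nothing to do: a simply connected compact surface is $S^2$ or $D^2$, both of complexity $0$ by Example \ref{surface:ex}; and, by the Poincar\'e conjecture, a simply connected compact $3$-manifold is either $S^3$ (with $c(S^3)=0$) or $S^3$ with finitely many open balls removed, i.e.\ a boundary-connected sum of copies of $S^2\times[0,1]$, which has complexity $0$ by Corollary \ref{sphere:product:cor} and Theorem \ref{sum:teo}. (The statement is understood for $n\ge 2$; the disc $D^1$, with $c(D^1)=1$, is an exception.)

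So assume $n\ge 5$. The idea is to build $M$ up by handles starting from a complexity-zero piece. Two base cases: $c(D^n)=0$, since $D^n=D^{n-1}\times D^1$ is a product with non-empty boundary and Theorem \ref{product:teo} applies; and likewise $c\big(B\times D^1\big)=0$ for any closed manifold $B$. The essential ingredient is classical handle theory: as $\pi_1(M)=1$ and $n\ge 5$, standard handle trading (trading $1$-handles for $3$-handles, and — turning $M$ upside down and applying the same argument, $M$ being still simply connected — trading $(n-1)$-handles for $(n-3)$-handles) produces a handle decomposition of $M$ with neither $1$-handles nor $(n-1)$-handles. Concretely: if $M$ is closed, one with exactly one $0$-handle, exactly one $n$-handle, and all other handles of index in $\{2,\dots,n-2\}$; if $\partial M\ne\emptyset$, one (dualizing) built on the collar $\partial M\times[0,1]$ with handles of index in $\{1,\dots,n-2\}$ together with finitely many $n$-handles.

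Granting this, the theorem follows by iterating Theorem \ref{handles:teo}. Start from the complexity-zero manifold $D^n$ (closed case) or $\partial M\times[0,1]$ (boundary case) and attach the handles in order of increasing index. Each handle of index $i\le n-2$ has $i<n-1$, so by Theorem \ref{handles:teo} its attachment does not increase the complexity; being a non-negative integer, the complexity stays $0$ throughout this stage. In the closed case we then reach $M$ with its $n$-handle removed, still of complexity $0$, and attaching the $n$-handle leaves the complexity unchanged (Theorem \ref{handles:teo}, case $i=n$, $n\ge 3$); in the boundary case the remaining handles are $n$-handles, each of which again leaves the complexity unchanged. Hence $c(M)=0$.

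The main obstacle is precisely this handle-theoretic input: producing a handle decomposition of the simply connected $M^n$ that simultaneously avoids $1$-handles and $(n-1)$-handles. This is where $n\ne 4$ is indispensable — the trading moves rely on the Whitney trick in the ambient dimension $n-1\ge 4$, and in dimension $4$ one cannot in general remove the $3$-handles; this is the same phenomenon behind the ``no $3$-handles'' discussion in the introduction, and the reason the $4$-dimensional case of the theorem is left open. Everything after the handle decomposition is the routine bookkeeping with Theorem \ref{handles:teo} described above.
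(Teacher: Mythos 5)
Your argument is correct and follows the same strategy as the paper (build $M$ from a complexity-zero base by attaching handles and iterate Theorem~\ref{handles:teo}), but you do strictly more handle theory than the paper needs. The paper only cancels the $1$-handles of $M$, citing \cite[Lemma~6.15]{PL}, and then turns the decomposition upside down: the dual handles have index $n-i$, so the absence of $1$-handles in the original is exactly the absence of $(n-1)$-handles in the dual, which is all Theorem~\ref{handles:teo} asks for. The dual decomposition is built on $\partial M\times[0,1]$ (a complexity-zero product when $\partial M\ne\emptyset$, or on a dual $0$-handle $D^n$ when $M$ is closed), and may well contain $1$-handles — duals of whatever $(n-1)$-handles the original had — but these have index $1<n-1$ and do not increase complexity. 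Your additional step of trading away the $(n-1)$-handles as well (by turning upside down, trading, and presumably turning back) is therefore superfluous for the argument, and is also the one place where a careful reader might pause: cancelling $1$-handles in the dualized, relative decomposition on $\partial M\times[0,1]$ is not verbatim the same lemma as cancelling $1$-handles in the absolute decomposition, and the paper's route sidesteps the question entirely. Your treatment of $n\le 3$ and your remark that $D^1$ is a genuine exception (so the statement is really about $n\ge 2$) are both correct and are points the paper glosses over, so they are a worthwhile addition.
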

\begin{proof}
A simply connected manifold of dimension $n\neq 4$ has a handle decomposition without 1-handles. In dimension 3, this follows from Perelman's proof of Poincar\'e Conjecture. Concerning dimension $n\geqslant 5$, see \cite[Lemma 6.15 and the subsequent remark]{PL}. By turning inside-out the handle decomposition, we get $M$ as $\partial M \times [0,1]$ plus some handles of index $\neq n-1$. The manifold has then complexity zero by Theorems \ref{product:teo} and \ref{handles:teo}.
\end{proof}
The smallest known fundamental group of a manifold with positive complexity is $\matZ/_{4\matZ}$: the lens space $L_{4,1}$ has $c(L_{4,1})=1$. We do not know if there are manifolds of positive complexity with fundamental group $\matZ/_{2\matZ}$ or $\matZ/_{3\matZ}$.

On the other hand, every finitely presented group is the fundamental group of a closed 4-manifold of complexity zero: see Corollary \ref{every:cor}. 

\subsection{Essential manifolds}
Following Gromov \cite{Gro2} a non-simply connected closed manifold $M$ is \emph{essential} if one of the following equivalent conditions holds:
\begin{enumerate}
\item the image $f_*([M])\in H_n(K(\pi,1))$ of its fundamental class in the corresponding Eilenberg-MacLane space is non-trivial;
\item there is no map $f:M\to X$ onto a lower-dimensional polyhedron $X$ which induces an isomorphism on fundamental groups.
\end{enumerate}
The equivalence between this definitions was proved by Babenko \cite{Ba}. A group 
is \emph{virtually torsion-free} if it contains a finite-index torsion-free subgroup. 

\begin{teo} \label{essential:teo}
Let $M$ be a closed manifold with virtually torsion-free infinite fundamental group. If $c(M)=0$ then $M$ is not essential.
\end{teo}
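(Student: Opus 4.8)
The plan is to combine the nerve machinery of Section~\ref{nerve:section} with the two characterizations of essentiality. Suppose $c(M)=0$, so $M$ has a simple spine $P\subset\interior M$ with no vertices. Let $\varphi:M\to|\calN|$ be the associated nerve map. By Proposition~\ref{dimension:nerve:prop}, since $P$ has no vertices we have $\dim|\calN|<n$. So we already have a map from $M$ to a lower-dimensional polyhedron; by characterization~(2) of essentiality it now suffices to show that $\varphi_*:\pi_1(M)\to\pi_1(|\calN|)$ is an isomorphism. Here is where the torsion-freeness hypothesis enters.

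First I would reduce to the torsion-free case. Let $\Gamma<\pi_1(M)$ be a finite-index torsion-free subgroup, and let $p:\widetilde M\to M$ be the corresponding finite covering, so $\widetilde M$ is closed with $\pi_1(\widetilde M)$ torsion-free and infinite. By Corollary~\ref{zero:covering:cor} we have $c(\widetilde M)=0$ as well. If I can prove the theorem under the extra hypothesis that $\pi_1(M)$ itself is torsion-free, then $\widetilde M$ is not essential, i.e.\ the classifying map $\widetilde f:\widetilde M\to K(\pi_1(\widetilde M),1)$ kills $[\widetilde M]$ in $H_n$; a standard transfer argument (the composition of the transfer $H_n(M)\to H_n(\widetilde M)$ with $p_*$ is multiplication by the degree, which is nonzero over $\matQ$) then forces $f_*([M])=0$ in $H_n(K(\pi_1(M),1);\matQ)$, hence $M$ is not essential in the sense of~(1). (One should be a little careful with $\matZ$ versus $\matQ$ coefficients here, but the usual convention is that essentiality is tested rationally, or one works directly with~(2) downstairs and the lift of the lower-dimensional map.)

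So the heart of the matter is the torsion-free case, and this is exactly Proposition~\ref{fiber:prop}-(2): if $M$ is closed and $\pi_1(M)$ is torsion-free, the nerve map $\varphi:M\to|\calN|$ induces an isomorphism $\varphi_*:\pi_1(M)\xrightarrow{\ \sim\ }\pi_1(|\calN|)$. Combined with $\dim|\calN|<n$ from Proposition~\ref{dimension:nerve:prop}, condition~(2) fails and $M$ is not essential. I expect the main obstacle to be precisely the passage through the covering: making sure the reduction is clean and that "not essential" is preserved going up and down a finite cover. Using formulation~(2) rather than~(1) makes this almost automatic: a map $g:\widetilde M\to X$ to a lower-dimensional polyhedron inducing a $\pi_1$-isomorphism does not obviously descend, but here we do not need it to descend — we already have the honest lower-dimensional target $|\calN|$ for $M$ itself from $c(M)=0$, and the only thing borrowed from torsion-freeness is the $\pi_1$-isomorphism, which is supplied directly by Proposition~\ref{fiber:prop}-(2) once we know $\pi_1(M)$ is torsion-free — but it need not be, only \emph{virtually} so. The correct fix, then, is: run Proposition~\ref{fiber:prop}-(1) (amenable fibers, valid for all closed $M$) to see that $\varphi_*$ is surjective with amenable-image kernel coming from $i_*(\varphi^{-1}(x))$, which by Lemma~\ref{component:lemma} is a finite subgroup of $\pi_1(M)$; a finite subgroup need not be trivial, so $\varphi_*$ need not be injective — and this is exactly why we cannot avoid the covering trick. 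Thus the final structure is: (i) $c(M)=0\Rightarrow\dim|\calN|<n$; (ii) pass to a finite torsion-free cover $\widetilde M$, still with $c=0$, apply Proposition~\ref{fiber:prop}-(2) to get $\pi_1(\widetilde M)\cong\pi_1(|\widetilde\calN|)$ with $\dim|\widetilde\calN|<n$, so $\widetilde M$ is not essential; (iii) transfer back: $[M]$ maps to zero in $H_n(B\pi_1(M);\matQ)$ because its restriction to the cover does and the transfer is injective rationally. I would write it in this order, flagging step (iii) as the only place requiring care about coefficients.
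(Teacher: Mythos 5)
Your proposal follows the same route as the paper: pass to a finite cover $\widetilde M\to M$ with $\pi_1(\widetilde M)$ infinite and torsion-free, use Corollary~\ref{zero:covering:cor} to get $c(\widetilde M)=0$, then apply Propositions~\ref{dimension:nerve:prop} and~\ref{fiber:prop}-(2) to the nerve of a vertex-free spine of $\widetilde M$ to conclude $\widetilde M$ is not essential, and finally descend to $M$. You also correctly diagnose why Proposition~\ref{fiber:prop}-(2) cannot be applied to $M$ directly, which is exactly why the covering trick is unavoidable. The one place you go beyond the paper is the descent step: the paper simply states ``This implies that $M$ is not essential,'' whereas you supply a transfer argument and honestly flag that it naively only yields $f_*[M]=0$ in $H_n(B\pi_1(M);\matQ)$ rather than with $\matZ$ coefficients — that is a genuine subtlety (compare $S^n\to\matRP^n$, which shows the implication fails without the hypotheses on $\pi_1$), and the paper does not address it either.
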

\begin{proof}
There is a finite covering $\widetilde M\to M$ such that $\pi_1(\widetilde M)$ is infinite and torsion-free. We have $c(\widetilde M)=0$ by Corollary \ref{zero:covering:cor}. Let $P$ be a spine of $\widetilde M$ without vertices. By Proposition \ref{dimension:nerve:prop}, the nerve $|\calN|$ of $(\widetilde M,P)$ has dimension smaller than $\dim \widetilde M$. On the other hand, the nerve map $\varphi:\widetilde M\to |\calN|$ yields an isomorphism on fundamental groups by Proposition \ref{fiber:prop}. Therefore $\widetilde M$ is not essential. This implies that $M$ is not essential.
\end{proof}
A manifold is \emph{aspherical} if $\pi_i(M)$ is trivial for all $i>1$. Equivalently, the universal cover $\widetilde M$ is contractible. Closed aspherical manifolds and real projective spaces are essential. The following fact is well-known.

\begin{prop}
An aspherical manifold has torsion-free fundamental group.
\end{prop}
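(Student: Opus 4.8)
The statement to prove is: \emph{An aspherical manifold has torsion-free fundamental group.} This is a classical fact and the proof is short.

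\bigskip

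The plan is to argue by contradiction using the standard fact that a nontrivial finite cyclic group has infinite-dimensional (co)homology, whereas a closed aspherical manifold has a finite-dimensional $K(\pi,1)$. First I would reduce to the case of a finite cyclic subgroup: if $\pi_1(M)$ had torsion, it would contain an element of finite order, hence a nontrivial finite cyclic subgroup $H \cong \matZ/_{m\matZ}$ for some $m \geqslant 2$. Let $\widetilde M$ be the universal cover of $M$; since $M$ is aspherical, $\widetilde M$ is contractible. The subgroup $H$ acts freely on $\widetilde M$ (deck transformations act freely), so the quotient $\widetilde M / H$ is a $K(H,1)$.

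\bigskip

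Next I would invoke the fact that $\widetilde M / H$ is a manifold of the same dimension $n = \dim M$, in particular a space homotopy equivalent to a finite-dimensional CW complex, so $H_k(\widetilde M / H; \matZ) = H_k(H; \matZ) = 0$ for all $k > n$. But the group (co)homology of a nontrivial finite cyclic group $\matZ/_{m\matZ}$ is nonzero in infinitely many degrees: indeed $H_{2j-1}(\matZ/_{m\matZ}; \matZ) \cong \matZ/_{m\matZ}$ for all $j \geqslant 1$. This contradiction shows $\pi_1(M)$ is torsion-free. The main (and only) point requiring a little care is the assertion that $\widetilde M / H$ has finite-dimensional homology: this follows because $M$ is a compact (PL) manifold, so the finite cover $\widetilde M / H \to M$ is again a compact manifold of dimension $n$, which has the homotopy type of a finite CW complex and hence vanishing homology above degree $n$.

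\bigskip

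I expect essentially no obstacle here; the only subtlety is that the excerpt's running assumption is that manifolds are compact (PL), which is exactly what makes $\widetilde M / H$ finite-dimensional. If one wanted to avoid even mentioning the cover explicitly, one could phrase it purely group-theoretically: a group of finite cohomological dimension is torsion-free (Cartan--Eilenberg), and $\pi_1$ of a closed aspherical $n$-manifold has cohomological dimension $\leqslant n$ since the manifold is a finite-dimensional $K(\pi,1)$. Either phrasing gives a two-line proof.
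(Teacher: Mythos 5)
Your proof is correct and follows essentially the same route as the paper's: pass to the intermediate covering of $M$ determined by a nontrivial finite cyclic subgroup, observe that it is a finite-dimensional $K(\matZ/_{m\matZ},1)$ (being a compact manifold), and contradict the fact that $H_*(\matZ/_{m\matZ};\matZ)$ is nonzero in infinitely many degrees. The only cosmetic difference is that you describe the cover as a quotient $\widetilde M/H$ of the universal cover, while the paper phrases it directly as the covering associated to the subgroup; these are of course the same space.
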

\begin{proof}
Suppose $M$ is aspherical and $\pi_1(M)$ contains a non-trivial finite cyclic group $C_p$. Let $\widetilde M\to M$ be the covering determined by $C_p$. The manifold $\widetilde M$ is also aspherical. Therefore $H_i(\widetilde M,\matZ) \cong H_i(C_p,\matZ)$. However, the group $H_i(C_p,\matZ)$ is non-trivial for infinitely many values of $i$: a contradiction because $M$ is a manifold.
\end{proof}

\begin{cor} \label{aspherical:cor}
A closed aspherical manifold $M$ has $c(M)>0$.
\end{cor}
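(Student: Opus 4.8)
The plan is to combine Theorem~\ref{essential:teo} with the fact, just established, that a closed aspherical manifold is essential and has torsion-free (hence virtually torsion-free) fundamental group. First I would observe that if $M$ is a point or more generally simply connected, the statement is vacuous or handled separately; so assume $\pi_1(M)$ is non-trivial. Since $M$ is aspherical, its universal cover is contractible, so $M$ is a $K(\pi_1(M),1)$ and the classifying map is (homotopic to) the identity; thus $f_*([M]) = [M] \neq 0$ in $H_n(K(\pi_1(M),1);\matZ_2)$ (using $\matZ_2$ coefficients, since the fundamental class is always non-trivial there for a closed manifold), which is exactly condition (1) in the definition of essential. Hence $M$ is essential.

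Next I would check the hypotheses of Theorem~\ref{essential:teo}. By the Proposition immediately preceding the corollary, an aspherical manifold has torsion-free fundamental group, and a torsion-free group is trivially virtually torsion-free (take the whole group as the finite-index subgroup). It remains to address whether $\pi_1(M)$ is infinite: if $\pi_1(M)$ were finite and non-trivial, asphericity would force $H_i(\pi_1(M);\matZ)$ to be non-trivial for infinitely many $i$ (the same group-homology argument as in the preceding proposition), contradicting that $M$ is a finite-dimensional manifold. So $\pi_1(M)$ is infinite. Now Theorem~\ref{essential:teo} applies: if we had $c(M)=0$, then $M$ would not be essential, contradicting the previous paragraph. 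Therefore $c(M)>0$.

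The only genuinely delicate point is the interface between the $\matZ$ and $\matZ_2$ versions of "essential": Theorem~\ref{essential:teo} is phrased with the integral fundamental class, while the non-triviality one gets for free from asphericity is cleanest with $\matZ_2$ coefficients (to sidestep orientability). I would resolve this by noting that the proof of Theorem~\ref{essential:teo} really shows that $\varphi\colon M\to|\calN|$ induces a $\pi_1$-isomorphism onto a complex of strictly smaller dimension, which kills \emph{any} top-dimensional homology class that is pushed forward from the classifying map — in particular the $\matZ_2$ fundamental class. So the argument goes through verbatim with $\matZ_2$ coefficients, and a closed aspherical manifold is essential in that sense; alternatively, one invokes that for aspherical $M$ the classifying map is a homotopy equivalence, so essentiality in any reasonable sense is automatic. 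Either way, the contradiction with $c(M)=0$ stands, giving $c(M)>0$.
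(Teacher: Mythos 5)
Your proof is correct and follows essentially the route the paper intends: the corollary is a direct consequence of Theorem~\ref{essential:teo} together with the immediately preceding Proposition (torsion-freeness of $\pi_1$ for aspherical manifolds) and the standard fact, quoted just before it, that closed aspherical manifolds are essential. The extra care you take with the $\matZ$ versus $\matZ_2$ fundamental class, the infiniteness of $\pi_1$ (which in fact already follows from torsion-freeness plus non-triviality), and the degenerate simply connected case are all sound and fill in details the paper leaves implicit, but they do not change the underlying argument.
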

In particular, the $n$-torus $S^1\times \cdots \times S^1$ has positive complexity. Products of $S^1$ and/or closed surfaces of positive genus also have positive complexity. 
\begin{rem} 
Some hypothesis on $\pi_1(M)$ is necessary in Theorem \ref{essential:teo}, since $\matRP^n$ is essential and $c(\matRP^n)=0$ for $n\geqslant 2$.
\end{rem}
\begin{rem} The same arguments show that connected sums of aspherical manifolds have positive complexity. \end{rem}

\section{Gromov norm} \label{norm:section}
Let $M$ be an orientable manifold, possibly with boundary. The \emph{Gromov norm} $||M||$ of $M$ is
$$||M|| = ||[M,\partial M]|| = \inf \Big\{|a_1|+\ldots +|a_k|\ \Big|\ [M,\partial M] = \sum a_i\sigma_i\Big\}$$
where the infimum is taken among all representations of the fundamental class $[M,\partial M]\in H_n(M,\partial M;\matR)$ as singular cycles $\sum a_i\sigma_i$. See \cite{Gro}.

\subsection{Complexity zero}
\begin{teo} \label{zero:norm:teo}
Let $M$ be closed or with amenable boundary. If $c(M)=0$ then $||M||=0$.
\end{teo}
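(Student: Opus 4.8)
The plan is to deduce $||M||=0$ from Gromov's vanishing theorem, which asserts that a closed oriented $n$-manifold --- or a compact one with amenable boundary --- admitting an open cover by amenable sets of multiplicity at most $n$ has vanishing (relative) Gromov norm \cite{Gro}. So the task reduces to producing such a cover of $M$, and I would produce it by pulling back the open-star cover of the nerve.

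First, since $c(M)=0$, I would fix a simple spine $P\subset\interior M$ with no vertices, form the nerve $\calN$ of $(M,P)$ and a nerve map $\varphi:M\to|\calN|$ induced by a sufficiently subdivided triangulation $T$ of $(M,P)$, so that $\varphi:T'\to\calN$ is simplicial and surjective. Two inputs are then immediately available: Proposition \ref{dimension:nerve:prop} gives $\dim|\calN|\leqslant n-1$ precisely because $P$ has no vertices, and Proposition \ref{fiber:prop}-(1) --- which uses exactly the hypothesis that $M$ is closed or has amenable boundary --- gives that every fiber $\varphi^{-1}(x)$ is amenable in $M$.

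Next I would set $U_w=\varphi^{-1}\big(\st(w,\calN)\big)$ for each vertex $w$ of $\calN$. Since $\varphi$ is simplicial, $U_w$ is the union of the open simplices of $T'$ whose image under $\varphi$ has $w$ as a vertex, and it carries a deformation retraction onto the subcomplex $\varphi^{-1}(w)$: namely the $\varphi$-lift of the standard deformation retraction of the open star $\st(w,\calN)$ onto $\{w\}$, realized on each simplex by collapsing it proportionally onto the face spanned by the vertices that $\varphi$ sends to $w$. Consequently each path component of $U_w$ has the same image in $\pi_1(M)$ as a path component of $\varphi^{-1}(w)$, so $U_w$ is amenable because $\varphi^{-1}(w)$ is. The $U_w$ cover $M$ with the same multiplicity as the open-star cover of $\calN$, namely $\dim|\calN|+1\leqslant n$. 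Gromov's vanishing theorem then yields $||M||=||[M,\partial M]||=0$.

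The hard part is the middle step: checking that each $U_w$ is amenable, equivalently that it deformation retracts onto the fiber $\varphi^{-1}(w)$. This is where one genuinely needs the simplicial model of the nerve map, not just the topological map $\varphi$; everything else is assembling results already in hand (Propositions \ref{dimension:nerve:prop} and \ref{fiber:prop}) together with Gromov's theorem, with only the standard care needed to accommodate the amenable boundary in the relative version of that theorem.
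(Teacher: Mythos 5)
For the closed case your argument matches the paper's, only fuller: you produce the cover explicitly as $U_w=\varphi^{-1}$ of the open stars of vertices of $\calN$, show each $U_w$ deformation retracts onto the amenable fiber $\varphi^{-1}(w)$ via the fiberwise simplicial collapse, and bound the multiplicity by $\dim|\calN|+1\leqslant n$; the paper only asserts that such an amenable cover exists, so your version supplies what was left implicit. You diverge genuinely in the bounded case. You invoke a relative version of Gromov's vanishing theorem for $(M,\partial M)$ directly, but you neither state it precisely nor check the boundary-compatibility it requires (each boundary component of $M$ should lie inside a single $U_w$ --- this is in fact true, because each boundary component is a single $(n-1)$-component of $(M,P)$ and hence is sent by $\varphi$ to a single vertex of $\calN$, but you do not record it). The paper instead doubles $M$: since each boundary component lies in one fiber, the amenable fibration extends over $DM$, so $||DM||=0$ by the closed case already treated, and Kuessner's additivity $||DM||=2||M||$ \cite{Ku} (valid because $\partial M$ is amenable) yields $||M||=0$. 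The doubling route reduces the bounded case cleanly to the closed vanishing theorem at the cost of one extra citation; your route avoids that citation but needs the relative vanishing theorem pinned down and its hypotheses --- in particular the compatibility of the cover with $\partial M$ --- verified.
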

\begin{proof}
Since $c(M)=0$, $M$ has a spine $P$ without vertices. Let $\calN$ be the nerve of $(M,P)$.
We have $\dim \calN<\dim M$ by Proposition \ref{dimension:nerve:prop}. Proposition \ref{fiber:prop} implies that $M$ fibers on a low-dimensional polyhedron with amenable fibers. Therefore $M$ can be covered by at most $n$ amenable (not necessarily connected) open sets. If $M$ is closed, Gromov's Vanishing Theorem \cite{Gro} implies that $||M||=0$. 

If $M$ has boundary, every boundary component lies in one fiber and the fibration extends naturally to the double $DM$. Therefore $||DM||=0$. Since $\partial M$ is amenable, we have $||DM|| = 2||M||$ by \cite{Ku}, so $||M||=0$. 
\end{proof}
The hypothesis on $\partial M$ is indeed necessary: for instance, a $3$-dimensional handlebody of genus $2$ has complexity zero and positive norm (because its boundary has positive Gromov norm).

\subsection{General inequality}
\begin{teo} \label{norm:teo}
Let $M$ be closed with virtually torsion-free $\pi_1(M)$. Then 
$$||M||\leqslant c(M).$$
\end{teo}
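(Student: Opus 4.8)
The plan is to extract an \emph{efficient} fundamental cycle from a minimal simple spine via its nerve, using the singular triangulation of the nerve built in Section \ref{nerve:section}, and then to transport the bound back to $M$ by means of Gromov's mapping theorem.

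First I would reduce to the case in which $\pi_1(M)$ is torsion-free. Since $\pi_1(M)$ is virtually torsion-free there is a finite covering $p\colon\widetilde M\to M$ of some degree $d$ with $\pi_1(\widetilde M)$ torsion-free; the covering $\widetilde M$ is again closed and orientable. Gromov norm is multiplicative under finite coverings, so $\|\widetilde M\|=d\,\|M\|$, while Theorem \ref{covering:teo} gives $c(\widetilde M)\leqslant d\,c(M)$. Hence proving $\|\widetilde M\|\leqslant c(\widetilde M)$ (where now $\pi_1$ is torsion-free) yields $d\,\|M\|\leqslant d\,c(M)$, i.e.\ the desired inequality. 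So from now on assume $\pi_1(M)$ itself is torsion-free.

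Let $P$ be a minimal spine of $M$, so $P$ has $c(M)$ vertices, let $\calN$ be the nerve of $(M,P)$, and let $\varphi\colon M\to|\calN|$ be a nerve map. By Proposition \ref{fiber:prop}-(2) the induced map $\varphi_*\colon\pi_1(M)\to\pi_1(|\calN|)$ is an isomorphism. By Proposition \ref{efficient:cycle:prop} we have $\varphi_*[M]=\big[\sum_v\tilde\alpha_v\big]$ in $H_n(|\calN|;\matR)$, the sum ranging over the $c(M)$ vertices $v$ of $P$. By its very definition each chain $\tilde\alpha_v$ is an average of $(n+1)!$ singular $n$-simplices, each carrying coefficient $\pm 1/(n+1)!$, so $\|\tilde\alpha_v\|_1\leqslant 1$; summing over the vertices, $\|\varphi_*[M]\|_{|\calN|}\leqslant c(M)$.

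It remains to show $\|M\|\leqslant\|\varphi_*[M]\|_{|\calN|}$, that is, that $\varphi_*$ does not decrease the relevant seminorm even though it goes the ``wrong way''. Here I would invoke Gromov's theorem \cite{Gro}: if $\pi=\pi_1(M)$ and $u\colon M\to K(\pi,1)$ is a classifying map, then $\|M\|$ equals the $\ell^1$-seminorm of $u_*[M]\in H_n(K(\pi,1);\matR)$ (this rests on the mapping theorem, which applies since $u$ is a $\pi_1$-isomorphism). Because $K(\pi,1)$ is aspherical and $\varphi$ is a $\pi_1$-isomorphism, a classifying map $u'\colon|\calN|\to K(\pi,1)$ satisfies $u\simeq u'\circ\varphi$, whence $u_*[M]=u'_*\big(\varphi_*[M]\big)$; since pushforward does not increase the $\ell^1$-seminorm, $\|M\|=\|u'_*\varphi_*[M]\|\leqslant\|\varphi_*[M]\|_{|\calN|}\leqslant c(M)$. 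The main obstacle is precisely this last step: the small cycle $\sum_v\tilde\alpha_v$ lives on $|\calN|$, which is genuinely ``smaller'' than $M$, and the only tool that lets one push its smallness back onto $[M]$ against $\varphi_*$ is the torsion-freeness of $\pi_1$ together with Gromov's mapping theorem — exactly the two hypotheses that the statement isolates.
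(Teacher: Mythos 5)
Your proposal is correct and follows essentially the same route as the paper: reduce to the torsion-free case via the finite covering together with multiplicativity of $\|\cdot\|$ and Theorem \ref{covering:teo}, pass to the nerve map $\varphi\colon M\to|\calN|$, bound $\|\varphi_*[M]\|$ by $c(M)$ via Proposition \ref{efficient:cycle:prop}, and recover $\|M\|$ from $\|\varphi_*[M]\|$ using Gromov's Mapping Theorem through the $\pi_1$-isomorphism of Proposition \ref{fiber:prop}-(2). The only cosmetic difference is that you factor through a classifying map into $K(\pi,1)$ before invoking the Mapping Theorem, whereas the paper applies it directly to $\varphi$ to conclude $\|\varphi_*[M]\|=\|[M]\|$; the two formulations are equivalent.
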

\begin{proof}
Let $\widetilde M\to M$ be the degree-$d$ covering induced by the torsion-free subgroup. Since $||\widetilde M||=d||M||$ and $c(\widetilde M)\leqslant dc(M)$, it suffices to prove the theorem for $\widetilde M$, which we still call $M$ for simplicity.

Now $\pi_1(M)$ is torsion-free. Let $P$ be a minimal spine of $M$, \emph{i.e.}~a spine with $c(M)$ vertices. Let $\calN$ be the nerve of $(M,P)$. A nerve map $\varphi:M\to |\calN|$ induces an isomorphism $\varphi_*:\pi_1(M)\to\pi_1(|\calN|)$ by Proposition \ref{fiber:prop}. Gromov's Mapping Theorem \cite{Gro} says that an isomorphism on fundamental groups yields an isomorphism and isometry on bounded cohomology, and hence $||\varphi_*(\alpha)||=||\alpha||$ for every cycle $\alpha\in H_*(M)$. In particular, 
$$||M|| = ||[M]|| = ||\varphi_*([M])||.$$
Proposition \ref{efficient:cycle:prop} shows a cycle that represents $\varphi_*([M])$ whose norm equals the number $c(M)$ of vertices of $P$. Therefore $||M||\leqslant c(M)$.
\end{proof}
Theorem \ref{norm:teo} is actually the only tool we have to prove the following.
\begin{cor}
There are manifolds of arbitrarily high complexity, in any dimension $n$.
\end{cor}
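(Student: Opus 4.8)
The plan is to feed Theorem \ref{norm:teo} with closed manifolds having torsion-free fundamental group and arbitrarily large Gromov norm. Since that theorem gives $||M||\leqslant c(M)$ whenever $M$ is closed with virtually torsion-free $\pi_1(M)$, it suffices, for each $n\geqslant 2$, to exhibit a sequence of such manifolds whose Gromov norms diverge. (In dimension $n=1$ the only compact manifolds are $S^1$ and $D^1$, both of complexity $1$, so the statement is understood for $n\geqslant 2$.)

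For this I would use closed hyperbolic manifolds. In every dimension $n\geqslant 2$ there is at least one closed hyperbolic $n$-manifold $N$: for $n=2,3$ this is classical, and for arbitrary $n$ one may take an arithmetic example. Each such $N$ is aspherical, hence $\pi_1(N)$ is torsion-free by the proposition preceding Corollary \ref{aspherical:cor}, and by Gromov--Thurston's theorem $||N|| = {\rm Vol}(N)/v_n>0$, where $v_n>0$ is the volume of the regular ideal simplex in hyperbolic $n$-space. Now for each $d\geqslant 1$ choose a connected $d$-sheeted covering $N_d\to N$ (these exist, for instance because $\pi_1(N)$ is residually finite, or simply by iterating a fixed finite cover). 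Then $N_d$ is again closed and aspherical with torsion-free fundamental group, and since Gromov norm is multiplicative under finite coverings we get $||N_d|| = d\,||N||$.

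Applying Theorem \ref{norm:teo} to $N_d$ yields
$$ c(N_d)\ \geqslant\ ||N_d||\ =\ d\,||N||, $$
which tends to infinity as $d\to\infty$ since $||N||>0$. Hence there exist closed $n$-manifolds of arbitrarily high complexity in every dimension $n\geqslant 2$, which is the assertion.

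I do not expect any genuine obstacle here. The only external ingredients are the existence of closed hyperbolic manifolds in all dimensions, the positivity of their simplicial volume (Gromov--Thurston), and the multiplicativity of Gromov norm under finite covers — all standard and already used elsewhere in the paper — while the nontrivial inequality $||M||\leqslant c(M)$ is precisely Theorem \ref{norm:teo} and the torsion-freeness hypothesis is just asphericity of hyperbolic manifolds, already invoked for Corollary \ref{aspherical:cor}.
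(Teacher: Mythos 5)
Your argument is correct and is exactly what the paper has in mind: the paper states no explicit proof but says just beforehand that Theorem \ref{norm:teo} is the tool, and your route (closed hyperbolic manifolds, their torsion-free fundamental groups, positivity and multiplicativity of Gromov norm under finite covers) is the standard way to produce closed manifolds with arbitrarily large simplicial volume in every dimension $n\geqslant 2$.
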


\section{Riemannian geometry} \label{riemannian:section}
\subsection{Hyperbolic manifolds}
Theorem \ref{zero:norm:teo} implies the following.

\begin{cor} \label{interior:cor}
Let $M$ be a compact manifold whose interior admits a complete hyperbolic metric of finite volume. Then $c(M)>0$.
\end{cor}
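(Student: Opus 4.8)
The plan is to combine Theorem~\ref{zero:norm:teo} with the positivity of the Gromov norm of a finite-volume hyperbolic manifold, so that $c(M)=0$ would force $\|M\|=0$, a contradiction. First I would describe the topology of $M$. Since $\interior M$ carries a complete hyperbolic metric of finite volume, the thick-thin decomposition shows that every end of $\interior M$ is a cusp, so $M$ is compact with (possibly empty) boundary $\partial M$ a disjoint union of closed flat $(n-1)$-manifolds $N_1,\ldots,N_r$ (horospherical cross-sections of the cusps).

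Next I would check that $\partial M$ is amenable in the sense of Section~\ref{norm:section}. Each $N_i$ is a closed flat manifold, hence $\pi_1(N_i)$ is a Bieberbach group, virtually isomorphic to $\matZ^{n-1}$, and in particular amenable. The image $i_*(\pi_1(N_i))<\pi_1(M)$ is a quotient of $\pi_1(N_i)$, and quotients of amenable groups are amenable; therefore the image of every component of $\partial M$ in $\pi_1(M)$ is amenable. Thus $M$ is closed or has amenable boundary, and Theorem~\ref{zero:norm:teo} applies to $M$.

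Now suppose, for contradiction, that $c(M)=0$. Then $\|M\|=0$ by Theorem~\ref{zero:norm:teo}. On the other hand, Gromov's computation of the simplicial volume of hyperbolic manifolds \cite{Gro} (valid in the cusped case as well) gives $\|M\| = {\rm Vol}(\interior M)/v_n$, where $v_n>0$ is the supremal volume of a geodesic $n$-simplex in $\matH^n$; since ${\rm Vol}(\interior M)>0$ we get $\|M\|>0$, a contradiction. Hence $c(M)>0$.

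I expect the only genuinely delicate point to be the positivity of $\|M\|$ in the cusped case, which is why the argument leans on the statement of \cite{Gro} for \emph{all} complete finite-volume hyperbolic manifolds rather than just the closed ones; the rest (structure of cusp cross-sections, permanence of amenability under quotients, applicability of Theorem~\ref{zero:norm:teo}) is routine.
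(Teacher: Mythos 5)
Your proposal is correct and follows essentially the same route as the paper: identify the boundary components as flat manifolds (cusp cross-sections), invoke Bieberbach to establish amenability of the boundary, cite Gromov's positivity of $\|M\|$ for finite-volume hyperbolic manifolds, and conclude via Theorem~\ref{zero:norm:teo}. Your extra remark that the image $i_*(\pi_1(N_i))$ is a quotient of $\pi_1(N_i)$ and hence amenable is a slightly more careful formulation of the amenability check than the paper gives, but it is the same argument.
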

\begin{proof}
Each boundary component $N^{n-1}\subset \partial M^n$ corresponds to a cusp of $M$ and is a flat $(n-1)$-manifold. By Bieberbach Theorem $\pi_1(N)$ has a finite-index subgroup isomorphic to $\matZ^{n-1}$ and is hence amenable. We have $||M||>0$ because $M$ admits a complete hyperbolic metric \cite{Gro} and therefore $c(M)>0$ by Theorem \ref{zero:norm:teo}.
\end{proof}
This result is sharp since the Gieseking 3-manifold (non-orientable with a Klein bottle cusp) has complexity 1, see \cite{CaHiWe}. 

\begin{rem} We do not know if Corollary \label{interior:cor} holds for hyperbolic manifolds with geodesic boundary; it does in dimensions 2 and 3 (because every compact irreducible, $\partial$-irreducible, and anannular 3-manifold has positive complexity \cite{Mat}).
\end{rem}

Theorem \ref{norm:teo} in turn implies the following finiteness result.
\begin{cor} \label{finitely:many:cor}
For every $n$ and $k$ there are finitely many closed hyperbolic manifolds $M^n$ with $c(M^n)<k$.
\end{cor}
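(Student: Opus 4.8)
The plan is to deduce the statement from Theorem \ref{norm:teo} together with the proportionality between Gromov norm and hyperbolic volume, after reducing the low-dimensional cases to results already available. I would distinguish three ranges of $n$.

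If $n\leqslant 2$ the claim is immediate: there is no closed hyperbolic manifold of dimension $0$ or $1$, and a closed hyperbolic surface $\Sigma$ has negative Euler characteristic, so by Example \ref{surface:ex} we have $c(\Sigma)=2-2\chi(\Sigma)$; thus $c(\Sigma)<k$ bounds $|\chi(\Sigma)|$, and only finitely many closed surfaces have bounded Euler characteristic.

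If $n=3$ I would invoke Matveev's finiteness theorem \cite{Mat}: there are finitely many closed irreducible $3$-manifolds of bounded complexity. A closed hyperbolic $3$-manifold is irreducible, and the complexity used by Matveev agrees with ours by the equivalence of definitions proved in Section \ref{alternative:section} (namely $c=c^{\rm alm}_{\rm str}$), so the conclusion follows. This case genuinely needs a combinatorial argument about spines with few vertices: the volume bound obtained below does not suffice here, since there are infinitely many closed hyperbolic $3$-manifolds of bounded volume.

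Finally, suppose $n\geqslant 4$. A closed hyperbolic $n$-manifold $M$ is a quotient $\matH^n/\Gamma$ with $\Gamma$ discrete and torsion-free, so $\pi_1(M)$ is torsion-free and Theorem \ref{norm:teo} applies, giving $||M||\leqslant c(M)<k$. By Gromov's proportionality principle (Gromov--Thurston), $||M||={\rm Vol}(M)/v_n$, where $v_n>0$ is the supremal volume of a geodesic $n$-simplex in $\matH^n$; hence ${\rm Vol}(M)<k\,v_n$. By Wang's finiteness theorem, valid precisely when $n\geqslant 4$, there are only finitely many closed hyperbolic $n$-manifolds of volume below a fixed bound, and this proves the corollary. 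The only delicate point is the dimension split: Theorem \ref{norm:teo} only yields a volume bound, and it is rigidity (Wang's theorem, resting on Mostow rigidity together with a Margulis-lemma thickness estimate) that upgrades this to genuine finiteness when $n\geqslant 4$, whereas for $n=3$ one must instead rely on Matveev's classification of low-complexity spines.
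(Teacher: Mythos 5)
Your proof is correct and follows essentially the same route as the paper: the case $n=3$ is handled by Matveev's finiteness theorem for irreducible $3$-manifolds, and the remaining dimensions by combining Theorem \ref{norm:teo} with the proportionality between Gromov norm and hyperbolic volume and the finiteness of closed hyperbolic manifolds of bounded volume (the paper folds $n=2$ into this volume argument rather than treating it separately, but that is a cosmetic difference).
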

\begin{proof}
In dimension $n = 3$, there are finitely many irreducible manifolds of any given complexity \cite{Mat}, so we are done. In dimension $n\neq 3$ there are finitely many hyperbolic manifolds (up to homeomorphism) of bounded volume. The volume of a hyperbolic manifold is proportional to its Gromov norm. Since $\pi_1(M)$ is torsion-free, Theorem \ref{norm:teo} gives $||M||\leqslant c(M)$ and we are done. \end{proof}
Note that the same assertion for Gromov norm (or equivalently hyperbolic volume) is not true for $n=3$. 

\begin{rem}
We do not know if Corollary \ref{finitely:many:cor} holds for hyperbolic manifolds with cusps and/or geodesic boundary: it does in dimensions 2 and 3 (because there are finitely many compact irreducible, $\partial$-irreducible, and anannular 3-manifolds having a fixed complexity \cite{Mat}).
\end{rem}

\subsection{Manifolds of non-positive curvature}
A closed riemannian manifold with non-positive sectional curvature is aspherical because of Cartan-Hadamard's Theorem. Corollary \ref{aspherical:cor} then implies the following.

\begin{cor} \label{sectional:cor}
A closed riemannian manifold $M$ with non-positive sectional curvature has $c(M)>0$.
\end{cor}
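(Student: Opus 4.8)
The plan is to reduce the statement to Corollary \ref{aspherical:cor} by checking that a closed Riemannian manifold of non-positive sectional curvature is aspherical. First I would invoke the Cartan--Hadamard theorem: since $M$ is closed it is complete, and a complete, simply connected Riemannian manifold with non-positive sectional curvature is diffeomorphic to $\matR^n$ via the exponential map $\exp_p$ at any point $p$. Applying this to the universal cover $\widetilde M$ — which is simply connected and inherits from $M$ a complete metric of non-positive sectional curvature, the covering projection being a local isometry and completeness lifting to covers — one gets that $\widetilde M$ is diffeomorphic to $\matR^n$, hence contractible.

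It follows by definition that $M$ is aspherical, i.e.\ $\pi_i(M)\cong\pi_i(\widetilde M)=0$ for every $i\geqslant 2$. Note in passing that $M$ cannot be simply connected, since a closed manifold of positive dimension has non-trivial top homology and so is not contractible; thus $M$ genuinely falls under the hypothesis ``closed aspherical'' of Corollary \ref{aspherical:cor}.

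Finally I would simply quote Corollary \ref{aspherical:cor}, which gives $c(M)>0$ for every closed aspherical manifold, and conclude. There is essentially no obstacle in this last step: all the real work has already been carried out in Corollary \ref{aspherical:cor} (and behind it Theorem \ref{essential:teo} together with the nerve machinery of Section \ref{nerve:section}, via the fact that closed aspherical manifolds are essential and have torsion-free fundamental group). The only Riemannian input needed is the classical Cartan--Hadamard theorem, and the argument is therefore just a chain of implications.
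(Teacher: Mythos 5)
Your argument is exactly the one the paper gives: Cartan--Hadamard makes the universal cover contractible, so $M$ is aspherical, and Corollary \ref{aspherical:cor} concludes. The extra observation that $M$ cannot itself be simply connected is a harmless (and correct) bit of due diligence the paper leaves implicit.
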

\begin{rem}
In contrast with the hyperbolic case,
Corollary \ref{sectional:cor} does not hold for every compact manifold $M$ whose interior admits a complete metric of non-positive (or even negative) curvature and finite volume (see Corollary \ref{interior:cor}). For instance, a product of a closed surface and a bounded surface, both with $\chi <0$, admits such a metric and has complexity zero by Theorem \ref{product:teo}.
\end{rem}

\subsection{Geometric invariants}
Complexity is related to other interesting geometric invariants. Let $(M,g)$ be a riemannian manifold. The \emph{volume entropy} of $(M,g)$ is the limit
$$\lambda (M,G) = \lim_{R\to\infty} \frac{\log{\rm Vol}\big((B(p,R)\big)} {R}$$
where ${\rm Vol}(B(p,R))$ is the volume of the ball of radius $R$ around a point $p$ in the universal cover $\widetilde M$, taken with respect to the lifted metric $\tilde g$. Such a quantity does not depend on $p$, see \cite{Man}. The \emph{systole} $L(M,g)$ is the length of the shortest closed geodesic which is not homotopically trivial. 

The \emph{volume entropy} $\lambda(M)$ and \emph{systolic constant} $\sigma(M)$ of $M$ are defined respectively as the infimum of the volume entropies and systoles among all metrics $g$ on $M$ of volume 1. The \emph{spherical volume} $T(M)$ is defined by Besson, Courtois, and Gallot in \cite{BCG}.

\begin{teo} \label{geometric:teo}
Let $M$ be a closed orientable manifold with virtually torsion-free infinite fundamental group. If $c(M)=0$ then 
$$T(M)=\lambda(M)=\sigma(M)=0.$$
\end{teo}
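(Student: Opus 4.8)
The plan is to reduce the whole statement to the single fact that $M$ is \emph{not essential}. This is already the content of Theorem \ref{essential:teo}, whose hypotheses ($M$ closed, $\pi_1(M)$ infinite and virtually torsion-free, $c(M)=0$) coincide with ours; the mechanism there is that a finite torsion-free cover $\widetilde M$ — still of complexity zero by Corollary \ref{zero:covering:cor} — carries its nerve map $\varphi\colon\widetilde M\to|\calN|$, which by Propositions \ref{dimension:nerve:prop} and \ref{fiber:prop} is a $\pi_1$-isomorphism onto a polyhedron of dimension $<n$, so that neither $\widetilde M$ nor $M$ is essential. Unwinding condition (2) in the definition of essentialness, $M$ itself then admits a map $f\colon M\to X$ onto a polyhedron $X$ with $\dim X=n-1$ inducing an isomorphism on fundamental groups; equivalently, the classifying map $M\to K(\pi_1(M),1)$ sends the fundamental class to $0$ in $H_n(K(\pi_1(M),1))$.

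Granting this, I would dispatch the three invariants as follows. For the \emph{systolic constant}: by Gromov's systolic inequality a closed manifold with $\sigma(M)>0$ is essential, and conversely a non-essential closed manifold has $\sigma(M)=0$; this is the converse half of Gromov's theorem in \cite{Gro2} (see also \cite{Ba}), so $\sigma(M)=0$. For the \emph{volume entropy}: the minimal volume entropy $\lambda(M)$ vanishes whenever $M$ admits a $\pi_1$-isomorphism onto a complex of strictly smaller dimension — more precisely, $\lambda(M)$ depends only on the image of the fundamental class in $H_n(K(\pi_1(M),1))$, which is zero here — so $\lambda(M)=0$. Finally, the \emph{spherical volume} needs no separate argument: the chain of inequalities quoted at the beginning of Section \ref{riemannian:section}, in particular $2^nn^{n/2}T(M)\leqslant\lambda(M)^n$, immediately gives $T(M)=0$ once $\lambda(M)=0$.

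I expect the genuine content to be already packaged inside Theorem \ref{essential:teo}; what remains is to line up these external facts with the present hypotheses. The main subtlety is the volume-entropy step: the vanishing of $\lambda(M)$ for non-essential $M$ is itself a non-trivial homological-vanishing statement — and it is exactly the point where the topological entropy $h(M)$, lacking such an analogue, escapes, which is why the theorem does not assert $h(M)=0$. If one prefers a self-contained proof rather than a citation, this step should be reproved directly from the low-dimensional nerve: using the collar of $P$ one thins $M$ onto a regular neighbourhood of $|\calN|$, builds volume-$1$ metrics whose balls in the universal cover grow only along the $(n-1)$-dimensional $|\calN|$-directions, and lets the thickness tend to $0$; carrying out this collapsing construction while controlling the non-manifold locus of $|\calN|$ and the at-most-one-dimensional fibres of $\varphi$ is the technical heart. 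A minor additional care point is to match the coefficient ring used in the definition of ``essential'' with the one under which the systolic and entropy vanishing theorems are stated, but since $\pi_1(M)$ is infinite and we have an honest $\pi_1$-isomorphism onto a lower-dimensional complex this causes no real difficulty.
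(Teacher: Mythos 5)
Your proposal matches the paper's proof essentially step for step: reduce to non-essentiality via Theorem \ref{essential:teo}, invoke Babenko's result (\cite{Ba}) that non-essential closed manifolds have $\lambda(M)=\sigma(M)=0$, and then conclude $T(M)=0$ from the Besson--Courtois--Gallot inequality $T(M)\leqslant\text{const}\cdot\lambda(M)^n$. The extra discussion (attributing the systolic vanishing also to \cite{Gro2}, and sketching a hypothetical self-contained collapsing argument) is supplementary colour, not a different route.
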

\begin{proof}
The manifold $M$ is not essential by Theorem \ref{essential:teo}. Babenko showed in \cite{Ba} that a non-essential manifold $M$ has $\lambda(M)=\sigma(M)=0$. Moreover $\lambda(M)=0$ implies $T(M)=0$, see \cite{BCG}.
\label{check referenza}
\end{proof}
The hypothesis on $\pi_1(M)$ is necessary for the vanishing of $\sigma$, since $c(\matRP^n)=0$ and $\sigma(\matRP^n)>0$. We do not know if it is necessary for the vanishing of $T$ and $\lambda$. It is not necessary for the vanishing of $||\cdot ||$ because of Theorem \ref{zero:norm:teo}.

\subsection{Thin riemannian manifolds} \label{thin:subsection}
The cut locus of a riemannian manifold is sometimes a simple spine. Alexander and Bishop \cite{AleBi, AleBi2} proved that the cut locus of a \emph{thin} riemannian manifold is a simple spine without vertices:  a thin riemannian manifold has therefore complexity zero. This happens for instance if we assign a product metric to $M\times [0,1]$ which is very small on $[0,1]$.

Following~\cite{AleBi}, a riemannian manifold $M$ is \emph{thin} if the 
radii of all metric balls in $M$ are small relative to $K_M$ and 
$\kappa_{\partial M}$, where $K_M$ is the sectional curvature of the interior 
and $\kappa_{\partial M}$ is the normal curvature of the boundary $\partial M$. 
To get a scale-free measure of the width of a manifold, Alexander
and Bishop used the curvature-normalized inradius 
$$J\cdot \max{\{\sup{\sqrt{|K_M|},}\ \sup{ |\kappa_{\partial M}|}\}},$$ 
where $J$ is the supremum over all points in $M$ of 
their distances to $\partial M$.

\begin{teo} [Alexander and Bishop \cite{AleBi}] \label{thin:teo}
There are universal constants $a_2<a_3<\ldots$ such that if a bounded riemannian manifold $M^n$
has (curvature-normalized) inradius less than $a_n$, then $c(M^n)=0$.
\end{teo}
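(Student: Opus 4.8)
The plan is to exhibit a simple spine of $M^n$ with no vertices; by the definition of complexity this gives $c(M^n)=0$ at once. The natural candidate is the cut locus of the boundary (note that ``bounded'' forces $\partial M\neq\emptyset$, as otherwise the inradius is infinite). Write $\rho\colon M\to[0,\infty)$, $\rho(x)=d(x,\partial M)$, and let $C\subset M$ be the cut locus of $\partial M$, i.e.\ the closure of the set of points joined to $\partial M$ by more than one minimizing segment. The normal exponential map identifies $M\setminus C$ with an open collar of $\partial M$, and the (generalized) gradient flow of $\rho$ pushes everything onto $C$; thus $M$ collapses onto $C$, so $C$ is a strict spine in the sense of Section~\ref{strict:subsection}, in particular a spine.

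Next I would recall the local structure of $C$. A point of $C$ joined to $\partial M$ by exactly $j$ minimizing segments arranged in general position has link the standard model $\partial\Pi^{n-1}_{\,n-j}$ of Section~\ref{local:subsection}: for $j=1$ a smooth hypersurface, for $j=2$ the bifurcation locus, and so on; a vertex (type $0$) is exactly a point sitting on $n+1$ minimizing segments in general position. For a generic metric all of $C$ looks like this, so $C$ is a simple polyhedron, and since the local models are the standard $\Pi^{n-1}_k$ it is automatically properly embedded (locally unknotted). The content of Alexander and Bishop \cite{AleBi,AleBi2} is twofold: (i) the \emph{thinness} hypothesis forces this generic local behaviour without any perturbation of the metric, so $C$ is a simple polyhedron for every sufficiently thin $M$; and (ii) --- the point we actually need --- once the curvature-normalized inradius drops below a dimensional threshold $a_n$, no point of $C$ can be joined to $\partial M$ by $n+1$ shortest segments, so the vertex stratum of $C$ is empty.

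Granting (i) and (ii) the argument is immediate: $C$ is a simple polyhedron, it is a spine of $M$ by the collapse above, and it has no vertices; hence $c(M^n)=0$. The monotonicity $a_2<a_3<\cdots$ is recorded as part of their statement (in higher dimension a vertex requires more coincident shortest segments to $\partial M$, which is harder to realize, so the threshold may be taken larger).

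The main obstacle is establishing (i) and (ii): that thinness \emph{alone}, not genericity, makes $C$ a simple polyhedron and kills the top stratum. This is the technical heart of \cite{AleBi,AleBi2} and rests on Alexandrov-type comparison for the distance function $\rho$: one controls the number and mutual position of shortest segments from $\partial M$ through a point, and shows via a Rauch/Alexandrov estimate that accommodating the $n+1$ segments needed for a type-$0$ point forces the normalized inradius above $a_n$, a contradiction. Since this is a cited theorem, in the paper I would simply invoke their result and record how it produces a vertex-free simple spine; reconstructing their comparison geometry is outside our scope.
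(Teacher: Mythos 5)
Your proposal takes the same route as the paper: the cut locus of $\partial M$ is the spine, and the cited Alexander--Bishop theorem supplies the key fact that below the threshold $a_n$ the cut locus is a simple polyhedron with no vertex stratum, whence $c(M)=0$. The only thing to fix is a small indexing slip---a cut-locus point meeting $j$ shortest segments in general position has link $\partial\Pi^{n-1}_{n+1-j}$ (type $n+1-j$, not $n-j$, since $\Pi^{n-1}$ has $n+1$ complementary regions in $D^n$), which is consistent with your later, correct, assertion that a vertex corresponds to $n+1$ such segments.
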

\begin{proof}
As shown in~\cite{AleBi}, 
there exists a sequence of universal constants $a_2<a_3<\dots $
such that if the curvature-normalized inradius of $M$ is less than $a_h$
and $h\leqslant n+1$, 
then the cut locus of $M$ is simple and every point is of type $\geqslant n+1-h$. Therefore it has no vertices for $h<n+1$. The cut locus is a spine, hence $c(M)=0$ in that case.
\end{proof}

\section{Four-manifolds} \label{four:manifolds:section}
We describe here some families of 4-manifolds having complexity zero. The various results proved in this paper show that the set of all 4-manifolds of complexity zero contains all products $N\times N'$ with non-empty boundary or $N\in\{S^2,S^3\}$ and is closed under connected sums, finite coverings, addition of handles of index $\neq 3$, and surgery along simple closed curves.

We show that Fintushel-Stern's infinitely many exotic K3's and Gompf's symplectic manifolds with arbitrary $\pi_1$ are constructed by adding handles of index $\neq 3$ to a product: they thus have complexity zero. The double of a 2-handlebody has also complexity zero, because it is obtained by surgerying along a link in a connected sum of some copies of $S^3\times S^1$.

\subsection{Arbitrary fundamental group}
Closed 4-manifolds with complexity zero may have arbitrary (finitely presented) fundamental groups.
\begin{teo} \label{double:teo}
The double $DM$ of a four-manifold $M$ made of only $0$-, $1$-, and $2$-handles has $c(DM)=0$.
\end{teo}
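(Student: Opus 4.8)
The plan is to realize $DM$ explicitly as the result of surgeries along simple closed curves starting from a connected sum of copies of $S^1\times S^3$, and then invoke the surgery inequality (Corollary \ref{surgery:cor}). First I would recall the standard handle-theoretic description of a double: if $M$ is built from $k_0$ zero-handles (we may take $k_0=1$ after sliding), $k_1$ one-handles, and $k_2$ two-handles, then $DM = M \cup_{\partial M} M$ is obtained by turning the second copy upside-down, so that $DM$ has a handle decomposition with one $0$-handle, $k_1$ one-handles, $k_2$ two-handles, then $k_2$ three-handles (the duals of the two-handles of the second copy), $k_1$ four-handles (the duals of the one-handles), and one more four-handle. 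The key classical observation is that attaching a two-handle immediately followed by its dual three-handle is the same as doing nothing to the underlying manifold — more precisely, $M\times[0,1]$ with such a cancelling pair attached and removed is again a product region — so the three-handles of $DM$ can be paired off. Concretely, the manifold $M\cup_\partial (\partial M\times[0,1])$, i.e.\ $M$ with a collar, is diffeomorphic to the ``first half'' of $DM$ consisting of the $0$-, $1$-, $2$-handles, and $DM$ is recovered from $\partial M\times[0,1]$... so the cleanest route is: $DM$ is the double, hence $DM = \partial(M\times[0,1])$.

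From here the surgery description is classical (it appears in Gompf--Stipsicz). Let $W = \natural_{k_1}(S^1\times D^3)$ be the boundary-connected sum of $k_1$ copies, so $W$ is $M$ with all its two-handles removed; then $M$ is obtained from $W$ by attaching $k_2$ two-handles along a framed link $L\subset \partial W = \#_{k_1}(S^1\times S^2)$. Taking doubles, $DM$ is obtained from $DW = \#_{k_1}(S^1\times S^3)$ by \emph{surgering along $L$}: each two-handle of $M$ contributes, in the double, an $S^2\times D^2$ replacing a $D^1\times D^3$ neighborhood of the corresponding curve, which is exactly a surgery along a simple closed curve in the sense defined before Corollary \ref{surgery:cor}. (Indeed, the double of $(D^1\times D^3)\cup_{\text{2-handle}}$ is the surgered manifold; this is the standard fact that doubling a $2$-handlebody turns $1$-handles into $S^1\times S^3$ summands and $2$-handles into surgery curves.) Thus
$$ DM = \big(\#_{k_1}(S^1\times S^3)\big)\ \text{surgered along the $k_2$ components of $L$}.$$

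Now I would conclude by two invocations of earlier results. First, $c\big(\#_{k_1}(S^1\times S^3)\big)=0$: this is Corollary \ref{sphere:product:cor} (each $S^3\times S^1$ has complexity zero, being $S^3$ times something, or by Theorem \ref{product:teo}) together with subadditivity of complexity under connected sum (Theorem \ref{sum:teo}). Second, apply the surgery inequality of Corollary \ref{surgery:cor} repeatedly, once for each component of $L$: since $n=4\geqslant 4$, each surgery along a simple closed curve does not increase complexity, so $c(DM)\leqslant c\big(\#_{k_1}(S^1\times S^3)\big)=0$, whence $c(DM)=0$. I expect the only real subtlety to be the first half of the argument — justifying carefully that the double of a $2$-handlebody is literally obtained from $\#(S^1\times S^3)$ by surgery along the attaching circles of the $2$-handles, i.e.\ identifying the cancelling $2$-/$3$-handle pairs correctly — but this is a well-documented fact in $4$-manifold topology and requires no new ideas here; the complexity estimates themselves are then immediate from the machinery already developed.
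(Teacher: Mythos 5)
Your proof is correct and follows essentially the same route as the paper: identify the $1$-handlebody $H$ (your $W$), observe that $DH = \#(S^1\times S^3)$ has complexity zero by Corollary \ref{sphere:product:cor} and Theorem \ref{sum:teo}, realize $DM$ as surgery on the attaching link $L$ of the $2$-handles inside $DH$, and conclude by Corollary \ref{surgery:cor}. The extra handle-cancellation discussion is just a more verbose justification of the same classical step.
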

\begin{proof}
The manifold $M$ can be decomposed into 0-, 1-, and 2-handles. The 0- and 1-handles form a 1-handlebody $H$. The cores of the 2-handles are attached along a link $L\subset\partial H$. The double $DM$ is the result of a surgery along $L$ in $DH$. Now $DH$ is the connected sum of some copies of $S^1\times S^3$ and hence $c(DH)=0$ by Corollary \ref{sphere:product:cor} and Theorem \ref{sum:teo}. Then $c(DM)=0$ by Corollary \ref{surgery:cor}.
\end{proof}
\begin{cor} \label{every:cor}
Every finitely presented group is the fundamental group of a closed orientable 4-manifold of complexity zero.
\end{cor}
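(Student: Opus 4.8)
The plan is to reduce everything to Theorem \ref{double:teo}, which already produces closed orientable $4$-manifolds of complexity zero as doubles of $2$-handlebodies. So the only thing I need is, for an arbitrary finitely presented group $G$, an orientable $2$-handlebody $W^4$ (a $4$-manifold built from $0$-, $1$-, and $2$-handles only) with $\pi_1(W)\cong G$, together with the identification $\pi_1(DW)\cong\pi_1(W)$.

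First I would build $W$ directly from a presentation $G=\langle g_1,\ldots,g_n\mid r_1,\ldots,r_m\rangle$: start with one $0$-handle, attach $n$ orientable $1$-handles to obtain $W_1=\natural^n(S^1\times D^3)$ with $\pi_1(W_1)\cong F_n$, then realise each relator $r_j$ by an embedded framed curve $\gamma_j$ in the current boundary and attach a $2$-handle along it. Embeddedness of the $\gamma_j$ and their mutual disjointness are automatic by general position, since at each stage the boundary is a $3$-manifold and the $\gamma_j$ are $1$-dimensional; the framing is irrelevant for $\pi_1$. Van Kampen then gives $\pi_1(W)\cong F_n/\langle\langle r_1,\ldots,r_m\rangle\rangle\cong G$, and $W$ is orientable because every attachment was chosen to be so. By Theorem \ref{double:teo}, the closed orientable $4$-manifold $DW$ satisfies $c(DW)=0$.

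It then remains to check $\pi_1(DW)\cong G$. By van Kampen, $\pi_1(DW)\cong\pi_1(W)\ast_{\pi_1(\partial W)}\pi_1(W)$, where both homomorphisms from $\pi_1(\partial W)$ are the map $\iota_*$ induced by the inclusion $\partial W\hookrightarrow W$; hence it suffices to show $\iota_*$ is surjective, for then the amalgamated product over a surjection collapses onto $\pi_1(W)$. Surjectivity holds because $\pi_1(W)$ is generated by the core loops of the $1$-handles of $W_1$, and each such loop can be pushed slightly off itself into $\partial W_1$; choosing these push-offs generically they miss all the $2$-handle attaching curves $\gamma_j$, so they survive in $\partial W$ while staying homotopic in $W$ to the corresponding core loops. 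This gives $\pi_1(DW)\cong\pi_1(W)\cong G$, which finishes the proof.

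I do not expect a genuine obstacle here: the substantive content is entirely in Theorem \ref{double:teo}, and the remaining ingredients — presenting $G$ by a $2$-handlebody and identifying the fundamental group of a $2$-handlebody with that of its double — are standard and could even be quoted rather than reproved. If one wanted to avoid the push-off argument, an alternative is to observe that $DW$ is obtained from $W$ by attaching handles of index $\geqslant 2$ whose attaching circles are belt spheres of the $2$-handles of $W$, and these bound disks in $W$, so the extra $2$-handles do not affect $\pi_1$.
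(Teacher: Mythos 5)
Your proposal is correct and follows the same route as the paper: realize $G$ as $\pi_1$ of an orientable $2$-handlebody built from a presentation, apply Theorem \ref{double:teo} to its double, and observe that doubling does not change the fundamental group. The paper leaves the last two steps as standard facts; your van Kampen/push-off argument (and the alternative via the dual handle decomposition) just supplies the routine details.
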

\begin{proof}
Every finitely presented group is the fundamental group of a 4-manifold made of 0-, 1-, and 2-handles. Doubling it does not change the fundamental group.
\end{proof}
\begin{rem} It is not true that every double has complexity zero. For instance, if $M$ is a product of a closed surface and a bounded surface, both with $\chi\leqslant 0$, then $DM$ is a product of closed aspherical surfaces and hence $c(DM)>0$ by Corollary \ref{aspherical:cor}. Note that $c(M)=0$ by Theorem \ref{product:teo}.
\end{rem}

\subsection{Capping with elliptic surfaces}

\begin{figure}
 \begin{center}
 \includegraphics[width = 8cm]{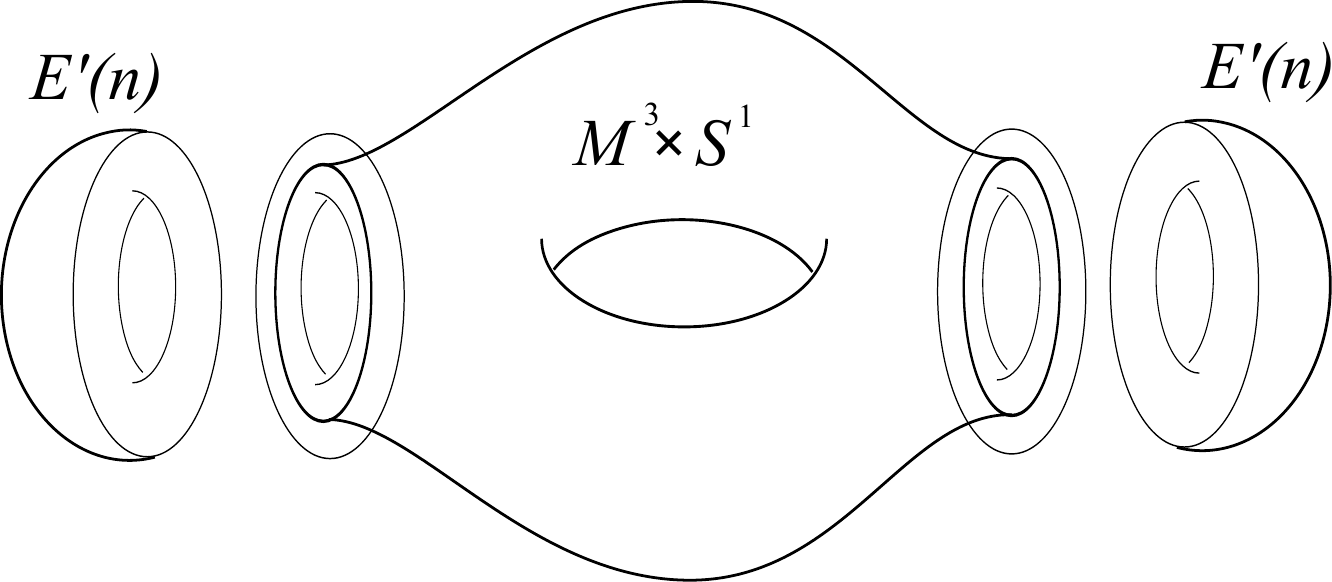}
 \end{center}
\caption{A manifold constructed by taking a product $M^3\times S^1$ with $\partial M^3$ consisting of some tori and capping the boundary components with some copies of $E'(n)$ has complexity zero. Among these manifolds we find infinitely many exotic $K3$ surfaces (Theorem \ref{K3:teo}) and symplectic manifolds with arbitrary fundamental group (Theorem \ref{symplectic:teo}).}
\label{cap:fig}
\end{figure}

Consider the elliptic surface $E(n)$, described as a fiber-connected sum of $n$ copies of $E(1)$ as in \cite{GoSti}. The elliptic surface has an elliptic fibration $E(n)\to S^2$, whose regular fiber is a torus. The regular neighborhood $R(T)$ of a regular fiber $T$ is homeomorphic to a product $T\times D^2$. Define 
$$E'(n) = E(n) \setminus \interior{R(T)}.$$
This is a simply connected manifold with a 3-torus as its boundary. It has a decomposition with only 0- and 2-handles \cite[Figg 8.9 and 8.10]{GoSti}. This manifold is sometimes used to cap some boundary component of a 4-manifold, see Fig~\ref{cap:fig}. We have the following.

\begin{lemma} \label{capping:lemma}
Let $M^3$ be a compact 3-manifold whose boundary consists of some tori. Let $N^4$ be obtained by capping some boundary components of $M^3\times S^1$ via some copies of $E'(n)$, along some maps. Then $c(N)=0$.
\end{lemma}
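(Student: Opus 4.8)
The plan is to realize $N$ as the result of attaching to the product $M^3\times S^1$ finitely many handles, none of which has index $3$, and then invoke Theorems \ref{product:teo} and \ref{handles:teo}.

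First I would recall that $E'(n)$ admits a handle decomposition with handles of index $0$ and $2$ only \cite{GoSti}. Turning this decomposition upside down, $E'(n)$ is obtained from a collar $\partial E'(n)\times[0,1]\cong T^3\times[0,1]$ of its boundary by attaching handles of index $4$ (dual to the $0$-handles) and index $2$ (dual to the $2$-handles). Capping a boundary component $T^3\subset\partial(M^3\times S^1)$ with a copy of $E'(n)$ via some gluing map therefore amounts to first enlarging $M^3\times S^1$ by a collar of that boundary component — which changes nothing up to homeomorphism — and then attaching the corresponding $2$- and $4$-handles. Performing this for each capped boundary component, we conclude that $N$ is obtained from $M^3\times S^1$ by attaching a finite collection of handles, all of index $2$ or $4$.

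Now $M^3$ has non-empty boundary (there is something to cap), so $M^3\times S^1$ has non-empty boundary and $c(M^3\times S^1)=0$ by Theorem \ref{product:teo}. Since $\dim N=4$, attaching a $2$-handle has index $2<4-1$ and hence does not increase complexity, while attaching a $4$-handle has index $4=\dim N$ and leaves complexity unchanged, both by Theorem \ref{handles:teo}. Applying these estimates to the handle attachments one at a time gives $c(N)\leqslant c(M^3\times S^1)=0$, hence $c(N)=0$.

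The main point requiring care is the handle bookkeeping in the second step: correctly dualizing the handle decomposition of $E'(n)$ and checking that gluing $E'(n)$ onto a boundary component of $M^3\times S^1$ is literally an attachment of the dual handles to $M^3\times S^1$, with the initial collar $T^3\times[0,1]$ being absorbed into the product. Once this is in place, everything else is a direct application of the product and handle estimates already established.
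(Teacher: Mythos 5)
Your proof is correct and follows essentially the same route as the paper: use the $0$- and $2$-handle decomposition of $E'(n)$, dualize so that the capping becomes the attachment of $2$- and $4$-handles to $M^3\times S^1$, then apply the product estimate (Theorem \ref{product:teo}) and the handle estimates (Theorem \ref{handles:teo}). You spell out the dualization step in more detail than the paper does, but the underlying argument is the same.
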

\begin{proof}
We have $c(M\times S^1)=0$ by Theorem \ref{product:teo}. The 0- and 2-handles of $E'(n)$ transform into 4- and 2-handles when attached to $\partial (M\times S^1)$: since there is no 3-handle, we have $c(N)=0$.
\end{proof}

\subsection{Simply connected manifolds}
We start with the following consequence of Theorem \ref{handles:teo}.
\begin{cor} A closed 4-manifold admitting a handle decomposition without 3-handles has complexity zero.
\end{cor}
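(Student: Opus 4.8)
The plan is to separate the handle decomposition of the closed $4$-manifold $M$ into its part of index at most $2$ and its $4$-handles, and then to apply Theorem \ref{handles:teo} to each of these in turn. Concretely, after the standard rearrangement that makes the handles appear in order of non-decreasing index (and since there are no $3$-handles, the indices occurring are $0$, $1$, $2$, $4$), let $W\subseteq M$ be the compact codimension-$0$ submanifold obtained once all the handles of index $\le 2$ have been attached --- a \emph{2-handlebody} in the sense of Section \ref{four:manifolds:section} --- so that $M$ is recovered from $W$ by attaching the remaining $4$-handles.

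First I would check that $c(W)=0$. The union of the $0$-handles is a disjoint union of copies of $D^4=D^3\times D^1$, and this has complexity zero by Theorem \ref{product:teo} (each $D^4$ is a product with non-empty boundary, and a simple spine of a disjoint union is the disjoint union of simple spines of the pieces, so vertex counts add). Now $W$ is obtained from this disjoint union by attaching the $1$- and $2$-handles one at a time; each such handle has index $i<n-1=3$, so by the case $i<n-1$ of Theorem \ref{handles:teo} no attachment increases the complexity. Hence $0\le c(W)\le 0$, i.e. $c(W)=0$.

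Finally, $M$ is obtained from $W$ by attaching the $4$-handles, again one at a time, each of index $i=n=4$. By the case $i=n$ of Theorem \ref{handles:teo} (valid since $n=4\ge 3$) each such attachment leaves the complexity unchanged, so $c(M)=c(W)=0$, which is the assertion.

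There is no real obstacle in this argument: its entire content is already carried by Theorem \ref{handles:teo} (together with the easy product estimate of Theorem \ref{product:teo}), and the only external input is the standard fact that a handle decomposition can be rearranged into non-decreasing order of index. If one wants to avoid even that, one can note directly that $W$ is built from the empty manifold by handle attachments of index $0$ (each a disjoint union with a ball, hence harmless), $1$, and $2$ in whatever order they occur --- none of which raises the complexity --- and then peel the top-index $4$-handles off last.
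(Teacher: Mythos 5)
Your proof is correct and is essentially the one the paper intends: the corollary appears immediately after the phrase ``We start with the following consequence of Theorem \ref{handles:teo}'' with no written proof, and the argument is exactly to start from the $0$-handles (complexity zero, e.g.\ via Theorem \ref{product:teo}) and observe that attaching handles of index $1,2<n-1$ and $4=n$ never raises the complexity. The only cosmetic simplification available is that a connected closed $4$-manifold always admits a handle decomposition with a single $0$-handle and a single $4$-handle, so the remarks about disjoint unions can be dispensed with, but your handling of them is harmless and correct.
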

Every such manifold is necessarily simply connected. However, it is still unknown whether every simply connected 4-manifold has a handle decomposition without 3-handles (or even without 1- and 3-handles). 

For every knot $K\subset S^3$, Fintushel and Stern constructed an exotic K3 surface $X_K$ whose Seiberg-Witten invariant is roughly the Alexander polynomial of the knot \cite{FiSt}. Among them there are infinitely many distinct exotic K3 surfaces.
\begin{teo} \label{K3:teo}
The manifold $X_K$ has complexity zero for every knot $K$.
\end{teo}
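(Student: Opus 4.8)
The plan is to recognize $X_K$ as one of the capped products already treated in Lemma \ref{capping:lemma}. First I would recall the Fintushel--Stern knot construction in the form most convenient here: starting from the elliptic surface $E(2)$ (which is the $K3$ surface), one fixes a regular fiber torus $T\subset E(2)$, whose regular neighbourhood $R(T)$ is homeomorphic to $T\times D^2$, and removes it to obtain $E'(2) = E(2)\setminus\interior{R(T)}$, a simply connected $4$-manifold with boundary the $3$-torus. One then glues $E'(2)$ to $\big(S^3\setminus\interior{R(K)}\big)\times S^1$ along their common boundary $3$-torus via a suitable diffeomorphism $\phi$ (matching the $S^1$ factor with the fiber direction and sending the meridian of $K$ to the normal $S^1$ of $T$). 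Thus
$$X_K = \Big(\big(S^3\setminus\interior{R(K)}\big)\times S^1\Big)\ \cup_\phi\ E'(2).$$

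Next I would set $M^3 = S^3\setminus\interior{R(K)}$, the exterior of the knot $K$: this is a compact $3$-manifold whose boundary is a single torus, so $M^3\times S^1$ is a compact $4$-manifold with boundary $T^2\times S^1 = T^3$. The displayed decomposition then exhibits $X_K$ as the manifold obtained by capping this single boundary component of $M^3\times S^1$ with a copy of $E'(n)$ for $n=2$, along the gluing map $\phi$. This is precisely the situation of Lemma \ref{capping:lemma}, whose conclusion gives $c(X_K)=0$, as required.

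I do not expect any genuine obstacle here: once the knot construction is written in the form above, the theorem is an immediate application of Lemma \ref{capping:lemma}. The only points needing a little care are purely expository: confirming that the $E'(2)$ appearing in Fintushel and Stern's construction is literally $E(2)$ with an open regular neighbourhood of a regular fiber removed (standard, see \cite{FiSt, GoSti}), and noting that the non-canonical nature of the gluing $\phi$ is harmless since Lemma \ref{capping:lemma} already allows capping ``along some maps''. The assertion that infinitely many distinct $X_K$ arise requires no new argument on our side: Fintushel and Stern show that varying $K$ produces infinitely many pairwise non-diffeomorphic manifolds, and by the above every one of them has complexity zero.
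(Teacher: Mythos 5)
Your proof is correct and follows exactly the paper's argument: identify $X_K$ as $\big(S^3\setminus\interior{R(K)}\big)\times S^1$ capped with one copy of $E'(2)$, then invoke Lemma \ref{capping:lemma}. The extra expository detail about the gluing map is fine but not needed beyond what the lemma already permits.
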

\begin{proof}
The manifold $X_K$ is constructed by capping $(S^3\setminus R(K))\times S^1$ with one copy of $E'(2)$, see \cite{FiSt}. The result follows from Lemma \ref{capping:lemma}.
\end{proof}
We do not know if $X_K$ admits a handle decomposition without 3-handles.

\subsection{Symplectic manifolds}
Gompf constructed in \cite{Go} a family of closed symplectic 4-manifolds with arbitrary fundamental group. It turns out that these manifolds have complexity zero. We can therefore strengthen Corollary \ref{every:cor}.

\begin{teo} \label{symplectic:teo}
Every finitely presented group is the fundamental group of a closed symplectic $4$-manifold of complexity zero. The manifold can be chosed to be spin or nonspin.
\end{teo}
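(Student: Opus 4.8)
The plan is to reduce everything to Lemma \ref{capping:lemma}. Gompf's paper \cite{Go} associates to an arbitrary finitely presented group $G$ a closed symplectic $4$-manifold $N$ with $\pi_1(N)\cong G$; what I would extract from that construction is that $N$ is obtained from a product $M^3\times S^1$, with $M^3$ a compact $3$-manifold whose boundary consists of tori, by capping off the boundary $3$-tori of $M^3\times S^1$ with copies of the elliptic piece $E'(n)$ along fibre-matching maps. Once this is in place, $N$ is literally of the form handled by Lemma \ref{capping:lemma}, so $c(N)=0$ is immediate. The mechanism underlying that lemma is what makes the argument go through: $c(M^3\times S^1)=0$ by Theorem \ref{product:teo}, and capping off a torus boundary component with $E'(n)$ only attaches $2$- and $4$-handles (the piece $E'(n)$ being built from $0$- and $2$-handles) — never a $3$-handle — so $c(N)=0$ follows from Theorem \ref{handles:teo}.

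For the spin/nonspin clause I would argue that the parity of $N$ can be toggled without leaving this class of manifolds. The relevant flexibility in Gompf's construction is the choice of the integer $n$ in the pieces $E'(n)$, together with, if necessary, one extra fibre sum: $E(n)$ has even intersection form exactly when $n$ is even, so capping with $E'(n)$ for $n$ even yields a spin $N$, while taking $n$ odd (or fibre-summing in one further copy of $E(1)=\matCP^2\#9\overline{\matCP^2}$, which is nonspin) yields a nonspin $N$. In each case $N$ is still obtained from $M^3\times S^1$ by capping toroidal boundary components with elliptic pieces assembled from $0$- and $2$-handles — and a fibre sum along a torus contributes, up to handle index, only $2$-handles — so Lemma \ref{capping:lemma}, or rather its proof, applies unchanged and gives $c(N)=0$ in both flavours.

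The work here is essentially bookkeeping rather than new mathematics, and the main obstacle is exactly that bookkeeping: one must check carefully against \cite{Go} (and the handle pictures in \cite{GoSti}) that \emph{(i)} the manifolds realizing arbitrary $\pi_1$ genuinely admit the presentation ``$M^3\times S^1$ with $\partial M^3$ toroidal, capped by copies of $E'(n)$'', rather than being capped by some symplectic filling that might force a $3$-handle, and \emph{(ii)} the spin/nonspin alternative is achieved by a modification of the same type. I expect \emph{(i)} to be the delicate point — verifying that every boundary component is filled by an $E'(n)$-type piece and that the handle decompositions assemble with all handle indices $\neq 3$ — while \emph{(ii)} then follows because fibre sums along tori introduce no $3$-handles, so that the resulting manifold still satisfies the hypotheses of Lemma \ref{capping:lemma}.
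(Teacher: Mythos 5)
Your proposal is correct and follows the paper's own argument exactly: Gompf's construction in \cite{Go} is recognized as capping boundary $3$-tori of a product $M^3\times S^1$ with copies of $E'(n)$, so Lemma \ref{capping:lemma} applies directly. The paper's proof is in fact terser — it defers the spin/nonspin clause entirely to \cite{Go} without discussing the parity of $n$ or fibre sums with $E(1)$ — so your elaboration on point \emph{(ii)} is additional bookkeeping the paper does not carry out, but it is consistent with the mechanism of the lemma (only $2$- and $4$-handles are added, never a $3$-handle).
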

\begin{proof}
Gompf's construction starts with a particular 3-manifold $M^3$ bounded by some tori, and hence caps the boundary components of $M^3\times S^1$ with copies of $E'(n)$, see \cite{Go}. The result follows from Lemma \ref{capping:lemma}.
\end{proof}


\begin{thebibliography}{99}

\bibitem {AleBi} \textsc{S.~B.~Alexander -- R.~L.~Bishop},
\emph{Spines and homology of thin Riemannian manifolds with boundary},
Adv. Math. \textbf{155} (2000), 23-48.

\bibitem {AleBi2} \textsc{S.~B.~Alexander -- R.~L.~Bishop},
\emph{Spines and topology of thin Riemannian manifolds},
Trans. Amer. Math. Soc. \textbf{355} (2003), 4933-4954.

\bibitem {Ak} \textsc{E.~Akin}, \emph{Manifold phenomena in the theory of polyhedra}, Trans. Amer. Math. Soc. \textbf{143} (1969), 413-473.

\bibitem{Ak2} \textsc{E.~Akin}, \emph{Transverse cellular mappings of polyhedra}, Trans. Amer. Math. Soc. \textbf{169} (1972), 401-438.

\bibitem{Arm}\textsc{M.~A.~Armstrong}, \emph{Transversality for polyhedra}, Ann. of Math. \textbf{86} (1967), 172-191.

\bibitem{Ba} \textsc{I.~K.~Babenko}, \emph{Asymptotic invariants of smooth manifolds}, Izv. Russ. Akad. Nauk. Ser. Mat. \textbf{56} (1992), 707-751 (Russian); Engl. transl. in Russian Acad. Sci. Izv. Math. \textbf{41} (1993), 1-38.

\bibitem{BCG} \textsc{G.~Besson -- G.~Courtois -- S.~Gallot}, \emph{Volume et entropie minimale des espaces localement sym\'etriques}, Invent. Math. \textbf{103} (1991), 417-445.

\bibitem{Bu} \textsc{B.~Burton}, \emph{Enumeration of non-orientable 3-manifolds using face pairing graphs and union-find}, Discrete Comput. Geom. \textbf{38} (2007), 527-571.

\bibitem{CaHiWe} \textsc{P.~J.~Callahan -- M.~V.~Hildebrand -- J.~R.~Weeks}, \emph{A census of cusped hyperbolic 3-manifolds}, Mathematics of Computation \textbf{68} (1999), 321-332.

\bibitem{CoTh} \textsc{F.~Costantino -- D.~Thurston},
  \emph{3-manifolds efficiently bound 4-manifolds}. J. Topol. \textbf{1} (2008), 703-745.

\bibitem{semi} \textsc{S. Eilenberg -- J. A. Zilber}, \emph{Semi-simplicial complexes and singular homology}, Ann. of Math. \textbf{51} (1950), 499-513. 

\bibitem{FiSt}\textsc{R.~Fintushel -- R.~J.~Stern},
  \emph{Knots, links, and $4$-manifolds}, Invent. Math. \textbf{134} (1998), 363-400.

\bibitem{FriMaPe3}\textsc{R.~Frigerio -- B.~Martelli -- C.~Petronio},
\emph{Small hyperbolic 3-manifolds with geodesic boundary},
Experimental Math. \textbf{13} (2004), 177-190.
 
\bibitem{GaMeMi} \textsc{D.~Gabai -- R.~Meyerhoff -- P.~Milley},
\emph{Mom technology and volumes of hyperbolic 3-manifolds} {\tt arXiv:math.GT/0606072}

\bibitem{Go} \textsc{R.~E.~Gompf},
\emph{A new construction of symplectic manifolds}, Ann.~of Math.~\textbf{142} (1995), 527-595.

\bibitem{GoSti} \textsc{R.~E.~Gompf -- A.~I.~Stipsicz}, 
``$4$-manifolds and Kirby calculus,'' 
Graduate Studies in Mathematics, 20. 
American Mathematical Society, Providence, RI, 1999.

\bibitem{Gro} \textsc{M.~Gromov}, \emph{Volume and bounded cohomology}, Publ. Math. Inst. Hautes Etud. Sci. \textbf{56} (1982), 5-99. 

\bibitem{Gro2} \bysame, \emph{Filling riemannian manifolds} J. Differential Geom. \textbf{18} (1983), 1-147.

\bibitem{Ha} \textsc{A.~Hatcher}, ``Algebraic Toplogy'', available from {\tt http://www.math.cornell.edu/$\sim$hatcher/}

\bibitem{Ko} \textsc{D.~Kotschick},
  \emph{Entropies, volumes, and Einstein metrics}, {\tt arXiv:math/040215}.

\bibitem{Ku} \textsc{T.~Kuessner},
  \emph{Multicomplexes, bounded cohomology and additivity of simplicial volume}, {\tt arXiv:math/0109057}.

\bibitem{Man} \textsc{A.~Manning},
  \emph{Topological entropy for geodesic flows}, Ann. of Math. \textbf{110} (1979), 567-573.

\bibitem{survey}\textsc{B.~Martelli},
  \emph{Complexity of 3-manifolds}, "Spaces of Kleinian groups",
London Math. Soc. Lec. Notes Ser. \textbf{329} (2006), 91-120.

\bibitem {MaPe} \textsc{B.~Martelli -- C.~Petronio},
  \emph{Three-manifolds having complexity at most $9$}, Experiment.
  Math. \textbf{10} (2001), 207-237.

\bibitem {Mat:special} \textsc{S.~V.~Matveev}, \emph{
Special skeletons of piecewise linear manifolds}, (Russian)
Mat. Sb. (N.S.) \textbf{92(134)} (1973), 282-293.

\bibitem {Mat88} \bysame,  {\em The theory of the
   complexity of three-dimensional manifolds}, Akad. Nauk Ukrain. SSR
   Inst. Mat. Preprint, \textbf{13} (1988), 32 pp.

\bibitem {Mat} \bysame, \emph{Complexity theory of
    three-dimensional manifolds}, Acta Appl. Math. \textbf{19} (1990),
  101-130.

\bibitem {Mat11} \bysame, \emph{Tabulation of
  three-dimensional manifolds}, Russ. Math. Surv. \textbf{60} (2005), 673-698.

\bibitem{Mat:book} \bysame ``Algorithmic topology and classification of 3-manifolds,'' Algorithms and Computation in Mathematics, \textbf{9}, Springer, Berlin, 2007.

\bibitem{Mc}\textsc{C.~McCrory}, \emph{Cone complexes and PL transverality}, Trans. Amer. Math. Soc. \textbf{207} (1975),  269-291.

\bibitem{Mik}\textsc{G.~Mikhalkin}, \emph{Decomposition into pairs-of-pants for complex algebraic hypersurfaces}, Topology \textbf{43}, (2004), 1035-1065.

\bibitem{Mor}\textsc{H.~R.~Morton}, \emph{Joins of polyhedra}, Topology \textbf{9} (1970), 243-249.

\bibitem{PaPe} \textsc{G.~P.~Paternain -- J.~Petean}, \emph{Minimal entropy and collapsing with curvature bounded from below}, Invent. Math. \textbf{151} (2003), 415-450. 

\bibitem{PL} \textsc{C.~Rourke -- B.~Sanderson},
 ``Introduction to piecewise-linear topology,''
Ergebnisse der Mathematik und ihrer Grenzgebiete, 
Springer-Verlag, New York-Heidelberg, 1972.

\bibitem{block} \textsc{C.~Rourke -- B.~Sanderson}, \emph{Block bundles. I}, Ann. of Math. \textbf{87} (1968), 1-28.

\bibitem{Sto} \textsc{D.~A.~Stone},  ``Stratified polyhedra," Lecture Notes in Mathematics, 252. Springer-Verlag, 1972.

\bibitem{Wh} \textsc{J.~H.~Whitehead}, \emph{On $C^1$-complexes}, Ann. of Math. \textbf{41} (1940), 809-824.

\end{thebibliography}
\end{document}